\numberwithin{equation}{section}
\newtheorem{theorem}{Theorem}[section]
\newtheorem{proposition}[theorem]{Proposition}
\newtheorem{corollary}[theorem]{Corollary}
\newtheorem{remark}{Remark}[section]
\newtheorem{example}{Example}[section]
\newcommand{\OMIT}[1]{{\bf [OMIT:} #1 \ {\bf --- end OMIT] }}  %%% For work
   \renewcommand{\OMIT}[1]{}            %%% For FINAL
\newcommand{\RR}{{\mathbb{R}}}
\newcommand{\ZZ}{{\mathbb{Z}}}
\newcommand{\vecone}{{\bf 1}}
\newcommand{\veczero}{{\bf 0}}
\newcommand{\dom}{{\rm dom\,}}
\newcommand{\domZ}{{\rm dom\,}}
\newcommand{\subg}{\partial}
\newcommand{\suppp}{{\rm supp}\sp{+}}
\newcommand{\suppm}{{\rm supp}\sp{-}}
\newcommand{\unitvec}[1]{\bm{1}\sp{#1}}
\newcommand{\argmax}{\arg \max}
\newcommand{\conv}{\Box\,}
\newcommand{\finbox}{\hspace*{\fill}$\rule{0.2cm}{0.2cm}$}
\newcommand{\todaye}{\the\year/\the\month/\the\day}
\newcommand{\Lnat}{{L$^{\natural}$}}
\newcommand{\Mnat}{{M$^{\natural}$}}
\newcommand{\Bvex}{\mbox{\rm (B-EXC)} }
\newcommand{\Bvexb}{\mbox{\rm\bf (B-EXC)}}
\newcommand{\Bnvex}{\mbox{\rm (B$\sp{\natural}$-EXC)} }
\newcommand{\Bnvexb}{\mbox{\rm\bf (B$\sp{\natural}$-EXC)}}
\newcommand{\Mvexb}{\mbox{\rm\bf (M-EXC)}}
\newcommand{\Mnvex}{\mbox{\rm (M$\sp{\natural}$-EXC)} }
\newcommand{\Mnvexb}{\mbox{\rm\bf (M$\sp{\natural}$-EXC)}}
\newcommand{\YES}{Y \ }
\newcommand{\NO}{\quad \textbf{\textit{N}}}
\begin{document}

\title{A Survey of Fundamental Operations on \\  
Discrete Convex Functions of Various Kinds%
%%\thanks{
%%This work was supported by CREST, JST, Grant Number JPMJCR14D2, Japan, and
%%JSPS KAKENHI Grant Number 26280004.}
}%title

\author{
Kazuo Murota%
\thanks{Department of Economics and Business Administration,
Tokyo Metropolitan University, 
Tokyo 192-0397, Japan, 
murota@tmu.ac.jp}
}%%author

%%\date{July 2019 (Version \today)}
\date{July 2019 / September 2019 / October 2019}

\maketitle

\begin{abstract}
Discrete convex functions are used in many areas,
including operations research, discrete-event systems, game theory, and economics.
The objective of this paper is to offer a survey on fundamental operations
for various kinds of discrete convex functions in discrete convex analysis
such as
integrally convex functions,
{\rm L}-convex functions, {\rm M}-convex functions,
and multimodular functions.
\end{abstract}

{\bf Keywords}:
Discrete convex analysis,  Integrally convex function,
{\rm L}-convex function, {\rm M}-convex function, 
Multimodular function, 
Submodular function, Valuated matroid
%%Integer programming
%%\subclass{52A41 \and 90C10}
%%52A41=Convex functions and convex programs
%%90C10=Integer programming
%%90C27 = combinatorial optimization
%%90C25 = convex programming

\newpage
\tableofcontents
\newpage

%%\input{fop1intro}

%% 2019-06-10 / 2019-09-18

\section{Introduction}
\label{SCintro}

Discrete convex functions are used in many areas,
including operations research, discrete-event systems, game theory, and economics
\cite{Fuj05book,Mdca98,Mdcasiam,Mbonn09,Mdcaeco16,ST15jorsj,SCB14}.
The objective of this paper is to offer a survey of 
fundamental operations of various kinds of discrete convex functions in discrete convex analysis.

Discrete convex functions treated in this paper
include
integrally convex functions \cite{FT90,MMTT19proxIC},
{\rm L}- and \Lnat-convex functions
\cite{FM00,Mdca98,Mdcasiam},
{\rm M}- and \Mnat-convex functions
\cite{Mstein96,Mdca98,Mdcasiam,MS99gp},
multimodular functions
\cite{AGH00,AGH03,GY94mono,Haj85},
globally and locally discrete midpoint convex functions 
\cite{MMTT19dmpc},
and M- and \Mnat-convex functions on jump systems
\cite{KMT07jump,Mmjump06,Mmnatjump19}.
L-convex functions on trees and graphs 
\cite{Hir15Lext,Hir16min0ext,Hir17dcfgrBK,Hir18Lgraph,Kol11}
are outside the scope of this paper.
It is worth noting that 
``L'' stands for ``Lattice'' and ``M'' for ``Matroid.''  
It is also noted that
``\Lnat'' and ``\Mnat''  should be pronounced as ``ell natural'' 
 and  ``em natural,'' respectively.

Various operations can be defined for discrete functions  
$f: \ZZ\sp{n} \to \RR \cup \{ +\infty  \}$.
With changes of variables 
we can define operations such as 
origin shift  $f(x) \mapsto f(x-b)$,
coordinate inversion $f(x) \mapsto f(-x)$, 
permutation of variables 
 $f(x) \mapsto f(x_{\sigma(1)}, x_{\sigma(2)}, \ldots, x_{\sigma(n)})$,
and scaling of variables $f(x) \mapsto f(\alpha x)$
with a positive integer $\alpha$.
With arithmetic or numerical operations on function values
we can define 
nonnegative multiplication of function values $f(x) \mapsto a f(x)$ with $a \geq 0$,
addition of a linear function
$f(x) \mapsto f(x) + \sum_{i=1}\sp{n} c_{i} x_{i}$
with  $c \in \RR\sp{n}$, projection
(partial minimization) $f(x) \mapsto  \inf_{z} f(y,z)$,
sum $f_{1} + f_{2}$ of two functions $f_{1}$ and $f_{2}$,
convolution 
$(f_{1} \conv f_{2})(x) 
= \inf\{ f_{1}(y) + f_{2}(z) \mid x= y + z, \  y, z \in \ZZ\sp{n} \}$
of two functions $f_{1}$ and $f_{2}$,
etc.
Stability of discrete convexity under these operations
has been investigated for many function classes 
in discrete convex analysis
\cite{KMT07jump,MM19projcnvl,MMTT19proxIC,MMTT19dmpc,Mdcasiam,MS01rel}.
By collecting these results scattered in the literature
and by adding new observations and examples,
we will present a fairly comprehensive survey 
on the operations on discrete convex sets and functions.

This paper is organized as follows.
Section~\ref{SCfnclass} is a review of the definitions of discrete convex
sets and functions.
Section~\ref{SCsetope} treats operations on discrete convex sets
such as restriction, projection, and Minkowski sum.
Section~\ref{SCfnope} treats operations on discrete convex functions
such as restriction, projection, convolution, and
discrete Legendre--Fenchel transformation.

The classes of discrete convex sets and functions considered in this paper
are listed in Tables  \ref{TBdefdiscconvset} and \ref{TBdefdiscconvfn}  
with brief descriptions of their definitions,
while the precise definitions are given in Section~\ref{SCfnclass}.

%%%%%%%%%% table %%%%%%%%%%
\begin{table}[h]
\caption{Various kinds of discrete convex sets}
\label{TBdefdiscconvset}
\begin{center}
\begin{tabular}{l|l}
 Convex set &  Defining condition (roughly)
\\ \hline
 Integer box & 
  $[a,b]_{\ZZ} = \{ x \in \ZZ\sp{n} \mid a \leq x  \leq b \}$ 
\\ 
 Integrally convex &
  The union of local convex hulls is convex
\\ 
\Lnat-convex &  
$x, y \in S \ \Rightarrow \ 
  \left\lceil \frac{x+y}{2} \right\rceil, \left\lfloor \frac{x+y}{2} \right\rfloor \in S $
\\ 
L-convex  & \Lnat-convex \& invariance in direction $\vecone$
\\ 
\Mnat-convex  & 
$x, y \in S \ \Rightarrow \ 
 x - \unitvec{i} + \unitvec{j}, y + \unitvec{i} - \unitvec{j} \in S$ 
\\ 
M-convex   &  \Mnat-convex \& constant component-sum 
\\ 
Multimodular  & 
$x+d, x+d' \in S,
d=\unitvec{i}-\unitvec{i+1}, 
d'=\unitvec{j}-\unitvec{j+1}$ 
\\ & 
 $(0 \leq i < j \leq n)
 \ \Rightarrow \ x, x+d+d'\in S$
\\ 
Disc.~midpt convex
& 
$x, y \in S, \| x - y \|_{\infty} \geq 2 \ \Rightarrow \ 
  \left\lceil \frac{x+y}{2} \right\rceil, \left\lfloor \frac{x+y}{2} \right\rfloor \in S $
\\ 
Simul.~exch. jump & 
$x, y \in S \ \Rightarrow \ 
 x \pm \unitvec{i} \pm \unitvec{j}, y \mp \unitvec{i} \mp \unitvec{j} \in S$ 
\\ 
Const-parity jump & 
Simul.~exch. jump \& constant-parity
\\ \hline
\end{tabular}
\end{center}
\end{table}
%%%%%%%%%% table %%%%%%%%%%

%%%%%%%%%% table %%%%%%%%%%
\begin{table}[h]
\caption{Various kinds of discrete convex functions}
\label{TBdefdiscconvfn}
\begin{center}
\begin{tabular}{l|c|l}
 Convex function & Dom & Defining condition (roughly)
\\ \hline
 Submod.~set fn  & $2\sp{N}$ & $f(X) +  f(Y) \geq f(X \cup Y) + f(X \cap Y)$
\\ 
 Valuated matroid & $2\sp{N}$ & $f(X) +  f(Y) \leq  \max \{ f(X-i+j) + f(Y+i-j) \}$
\\ \hline
 Separable convex & $\ZZ\sp{n}$ & 
 $f(x) = \varphi_{1}(x_1) + \varphi_{2}(x_2) + \cdots + \varphi_{n}(x_n)$ 
($\varphi_{i}$: convex)
\\ 
 Integrally convex & $\ZZ\sp{n}$ &  
  Local convex extension is (globally) convex
\\ 
\Lnat-convex &  $\ZZ\sp{n}$ &
$ f(x) + f(y) \ \geq \ 
   f \left(\left\lceil \frac{x+y}{2} \right\rceil\right) 
  + f \left(\left\lfloor \frac{x+y}{2} \right\rfloor\right) $
\\ 
L-convex  & $\ZZ\sp{n}$  &  \Lnat-convex \& linear in direction $\vecone$ 
\\ 
\Mnat-convex  & $\ZZ\sp{n}$ & 
$\displaystyle  f(x) + f(y) 
 \geq  \min \{f(x - \unitvec{i} + \unitvec{j}) + f(y + \unitvec{i} - \unitvec{j}) \}$ 
\\ 
M-convex  & $\ZZ\sp{n}$  &  \Mnat-convex \& constant sum of $\dom f$ 
\\ 
Multimodular  & $\ZZ\sp{n}$ & 
$f(x) = g(x_{1},x_{1}+x_{2},\ldots,x_{1}+ \cdots + x_{n})$, $g$: \Lnat-convex
\\ 
Globally d.m.c. 
&  $\ZZ\sp{n}$ &
$ f(x) + f(y) \ \geq \ 
   f \left(\left\lceil \frac{x+y}{2} \right\rceil\right) 
  + f \left(\left\lfloor \frac{x+y}{2} \right\rfloor\right) $
$( \| x - y \|_{\infty} \geq 2)$
\\ 
Locally d.m.c. 
&  $\ZZ\sp{n}$ &
$ f(x) + f(y) \ \geq \ 
   f \left(\left\lceil \frac{x+y}{2} \right\rceil\right) 
  + f \left(\left\lfloor \frac{x+y}{2} \right\rfloor\right) $
$( \| x - y \|_{\infty} = 2)$
\\ 
Jump \Mnat-convex  & $\ZZ\sp{n}$  & 
$\displaystyle  f(x) + f(y)  
 \geq 
  \min\{  f(x \pm \unitvec{i} \pm \unitvec{j}) + f(y \mp \unitvec{i} \mp \unitvec{j}) \}$ 
\\ 
Jump M-convex  & $\ZZ\sp{n}$  & 
     Jump \Mnat-convex  \& constant-parity of $\dom f$
\\ \hline
\multicolumn{3}{l}{${}\sp{*}$ A valuated matroid is discrete concave.} 
\end{tabular}
\end{center}
\end{table}
%%%%%%%%%% table %%%%%%%%%%

\paragraph*{Notations}
We use the following notations.

\begin{itemize}
\item
The set of all real numbers is denoted by $\RR$, and
the set of all integers is denoted by $\ZZ$.

\item 
We assume  $N = \{ 1,2, \ldots, n \}$
for a positive integer $n$.

\item
The characteristic vector of a subset
$A \subseteq N= \{ 1,2, \ldots, n \}$
is denoted by $\unitvec{A}$,   
that is,
\begin{equation} \label{charvecdefnotat}
 (\unitvec{A})_{i} =
   \left\{  \begin{array}{ll}
     1      & (i \in A) , \\
     0      & (i \in N \setminus A). \\
                      \end{array}  \right.
\end{equation}
For $i \in \{ 1,2, \ldots, n \}$,
we write $\unitvec{i}$ for $\unitvec{ \{ i \} }$, 
which is the $i$th unit vector.
We define $\unitvec{0}=\veczero$
where
$\veczero =(0,0,\ldots,0)$. 
We also define $\vecone=(1,1,\ldots,1)$.

\item
For a vector $x=(x_{1},x_{2}, \ldots, x_{n})$
 and a subset $A \subseteq \{ 1,2, \ldots, n \}$,
$x(A)$ denotes the component sum within $A$,
i.e.,
$x(A) = \sum \{ x_{i} \mid i \in A \}$.

\item
For two vectors 
$x=(x_{1},x_{2}, \ldots, x_{n})$
and  $y=(y_{1},y_{2}, \ldots, y_{n})$,
$x \leq y$ means that
$x_{i} \leq y_{i}$ for all $i=1,2,\ldots,n$.

\item 

For a vector $x=(x_{1},x_{2}, \ldots, x_{n})$
the {\em positive} and {\em negative supports} of $x$ are defined as
\begin{equation} \label{vecsupportdef}
 \suppp(x) = \{ i \mid x_{i} > 0 \},
\qquad 
 \suppm(x) = \{ i \mid x_{i} < 0 \}.
\end{equation}

\item 
The $1$-norm of a vector $x$
is denoted as $\| x \|_{1}$, i.e.,
$\| x \|_{1} = |x_{1}| +  |x_{2}| + \cdots + |x_{n}|$.

\item 
The $\ell_{\infty}$-norm of a vector $x$
is denoted as $\| x \|_{\infty}$, i.e.,
$\| x \|_{\infty} = \max( |x_{1}|, |x_{2}|, \ldots, |x_{n}| )$.

\end{itemize}

%% end of file

%%\clearpage
%%\newpage

%%\input{fop2classes}

%% 2019-06-10 / 2019-08-24 / 2019-09-18

\section{Definitions of Discrete Convex Sets and Functions}
\label{SCfnclass}

We consider functions defined on integer lattice points, 
$f: \ZZ\sp{n} \to \RR \cup \{ +\infty \}$,
where the function may possibly take $+\infty$.
The {\em effective domain} of $f$ means the set of $x$
with $f(x) <  +\infty$ and is denoted 
by $\dom f =   \{ x \in \ZZ\sp{n} \mid  f(x) < +\infty \}$.
We always assume that $\dom f$ is nonempty.
The {\em indicator function} of a set $S \subseteq \ZZ\sp{n}$
is the function 
$\delta_{S}: \ZZ\sp{n} \to \{ 0, +\infty \}$
defined by
\begin{equation}  \label{indicatordef}
\delta_{S}(x)  =
   \left\{  \begin{array}{ll}
    0            &   (x \in S) ,      \\
   + \infty      &   (x \not\in S) . \\
                      \end{array}  \right.
\end{equation}
The convex hull of a set $S$ is denoted by $\overline{S}$.
A set $S$ is said to be {\em hole-free} if
\begin{equation}  \label{holefree}
S =  \overline{S} \cap \ZZ^{n},
\end{equation}
which means  that all integer points contained in the convex hull of $S$ are members of $S$.

\subsection{Separable convexity}

For integer vectors 
$a \in (\ZZ \cup \{ -\infty \})\sp{n}$ and 
$b \in (\ZZ \cup \{ +\infty \})\sp{n}$ 
with $a \leq b$,
$[a,b]_{\ZZ}$ denotes the integer box  
(discrete rectangle, integer interval)
between $a$ and $b$,
i.e.,
\begin{equation} \label{intboxdef}
[a,b]_{\ZZ} = \{ x \in \ZZ\sp{n} \mid a_{i} \leq x_{i} \leq b_{i} \ (i=1,2,\ldots,n)  \}.
\end{equation}

A function
$f: \ZZ^{n} \to \RR \cup \{ +\infty \}$
in $x=(x_{1}, x_{2}, \ldots,x_{n}) \in \ZZ^{n}$
is called  {\em separable convex}
if it can be represented as
\begin{equation}  \label{sepfndef}
f(x) = \varphi_{1}(x_{1}) + \varphi_{2}(x_{2}) + \cdots + \varphi_{n}(x_{n})
\end{equation}
with univariate functions
$\varphi_{i}: \ZZ \to \RR \cup \{ +\infty \}$ satisfying 
\begin{equation}  \label{univarconvdef}
\varphi_{i}(t-1) + \varphi_{i}(t+1) \geq 2 \varphi_{i}(t)
\qquad (t \in \ZZ).
\end{equation}

\subsection{Integrally convexity}

In this section we introduce the concept of integrally convex functions.

For $x \in \RR^{n}$ the integral neighborhood of $x$ is defined as 
\begin{equation}  \label{intneighbordef}
N(x) = \{ z \in \ZZ^{n} \mid | x_{i} - z_{i} | < 1 \ (i=1,2,\ldots,n)  \}.
\end{equation}
For a set $S \subseteq \ZZ^{n}$
and $x \in \RR^{n}$
we call the convex hull of $S \cap N(x)$ 
the {\em local convex hull} of $S$ at $x$.
A nonempty set $S \subseteq \ZZ^{n}$ is said to be 
{\em integrally convex} if
the union of the local convex hulls $\overline{S \cap N(x)}$ over $x \in \RR^{n}$ 
is convex \cite{Mdcasiam}.
This is equivalent to saying that,
for any $x \in \RR^{n}$, 
$x \in \overline{S} $ implies $x \in  \overline{S \cap N(x)}$.
An integrally convex set $S$ is hole-free in the sense 
of \eqref{holefree}.

For a function
$f: \ZZ^{n} \to \RR \cup \{ +\infty  \}$
the {\em local convex extension} 
$\tilde{f}: \RR^{n} \to \RR \cup \{ +\infty \}$
of $f$ is defined 
as the union of all convex envelopes of $f$ on $N(x)$.  That is,
\begin{equation} \label{fnconvclosureloc2}
 \tilde f(x) = 
  \min\{ \sum_{y \in N(x)} \lambda_{y} f(y) \mid
      \sum_{y \in N(x)} \lambda_{y} y = x,  \ 
  (\lambda_{y})  \in \Lambda(x) \}
\quad (x \in \RR^{n}) ,
\end{equation} 
where $\Lambda(x)$ denotes the set of coefficients for convex combinations indexed by $N(x)$:
\[ 
  \Lambda(x) = \{ (\lambda_{y} \mid y \in N(x) ) \mid 
      \sum_{y \in N(x)} \lambda_{y} = 1, 
      \lambda_{y} \geq 0 \ \ \mbox{for all } \   y \in N(x)  \} .
\] 
If $\tilde f$ is convex on $\RR^{n}$,
then $f$ is said to be {\em integrally convex}
\cite{FT90}.
The effective domain of an integrally convex function is an integrally convex set.
A set $S \subseteq \ZZ\sp{n}$ is integrally convex if and only if its indicator function
$\delta_{S}: \ZZ\sp{n} \to \{ 0, +\infty \}$
is an integrally convex function.

Integral convexity of a function can be characterized by a local condition
under the assumption that the effective domain is an integrally convex set.

\begin{theorem}[\cite{FT90,MMTT19proxIC}]
\label{THfavtarProp33}
Let $f: \ZZ^{n} \to \RR \cup \{ +\infty  \}$
be a function with an integrally convex effective domain.
Then the following properties are equivalent:

%% To Typesetter:  Please do NOT change the labels "(a)", "(b)" below, 
%% as they are cited at other places.
%% (a) and (b) denote properties, and do not indicate parts of the statement.

{\rm (a)}
$f$ is integrally convex.

{\rm (b)}
For every $x, y \in \ZZ\sp{n}$ with $\| x - y \|_{\infty} =2$  we have \ 
\begin{equation}  \label{intcnvconddist2}
\tilde{f}\, \bigg(\frac{x + y}{2} \bigg) 
\leq \frac{1}{2} (f(x) + f(y)).
\end{equation}
\vspace{-1.7\baselineskip} \\
\finbox
\end{theorem}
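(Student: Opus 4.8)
The plan is to prove the equivalence through its two implications separately, disposing of (a)~$\Rightarrow$~(b) in one line and concentrating all the work on (b)~$\Rightarrow$~(a). Throughout I would work with the local convex extension $\tilde{f}$ of \eqref{fnconvclosureloc2}, using repeatedly that $\tilde{f}(z) = f(z)$ for every $z \in \ZZ^{n}$ and that, because $\dom f$ is assumed integrally convex, $\tilde{f}$ is a finite piecewise-linear function precisely on $\overline{\dom f}$; this domain hypothesis is exactly what guarantees that any midpoint formed from points of $\dom f$ again lies in $\overline{\dom f}$, so that every evaluation of $\tilde{f}$ below is finite and genuinely computed from a \emph{local} neighborhood $N(\cdot)$. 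For (a)~$\Rightarrow$~(b): if $f$ is integrally convex then $\tilde{f}$ is convex on $\RR^{n}$ by definition, so for $x, y \in \ZZ^{n}$ the midpoint inequality $\tilde{f}((x+y)/2) \le \tfrac{1}{2}(\tilde{f}(x) + \tilde{f}(y)) = \tfrac{1}{2}(f(x) + f(y))$ holds, and restricting to pairs with $\| x - y \|_{\infty} = 2$ gives exactly \eqref{intcnvconddist2}.

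For (b)~$\Rightarrow$~(a) I would first reformulate the goal so as to avoid the circularity of ``using convexity to prove convexity.'' Let $\check{f}$ denote the convex closure of $f$, the pointwise largest convex function on $\RR^{n}$ with $\check{f}(z) \le f(z)$ for all $z \in \ZZ^{n}$. One checks directly that $\check{f} \le \tilde{f}$ always holds (apply convexity of $\check{f}$ to the local combination defining $\tilde{f}$), and that a convex $\tilde{f}$, agreeing with $f$ on $\ZZ^{n}$, is itself a convex function lying below $f$ on $\ZZ^{n}$, whence $\tilde{f} \le \check{f}$; thus $\tilde{f}$ is convex \emph{iff} $\tilde{f} = \check{f}$, i.e.\ \emph{iff} $\tilde{f} \le \check{f}$. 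Since $\check{f}(x)$ is the infimum of $\sum_{j} \lambda_{j} f(z_{j})$ over all expressions $x = \sum_{j} \lambda_{j} z_{j}$ with $z_{j} \in \ZZ^{n}$ and $(\lambda_{j})$ a system of convex weights, the entire problem reduces to the single clean inequality
\begin{equation*}
\tilde{f}(x) \ \le \ \sum_{j} \lambda_{j} f(z_{j})
\qquad \Bigl(x = \sum_{j} \lambda_{j} z_{j}, \ z_{j} \in \ZZ^{n}\Bigr) .
\end{equation*}

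I would then prove this inequality by peeling the global convex combination on the right down to the purely local, distance-two data supplied by hypothesis~(b). The natural base case is a single pair $x = \lambda z_{1} + (1-\lambda) z_{2}$: for $\lambda = \tfrac{1}{2}$ this is midpoint convexity at an integer pair, which I would establish first for $\| z_{1} - z_{2} \|_{\infty} \le 2$ (the cases $0$ and $1$ being automatic because both points then lie in a common neighborhood $N((z_{1}+z_{2})/2)$, and the case $2$ being precisely \eqref{intcnvconddist2}), and then for all larger $\| z_{1} - z_{2} \|_{\infty}$ by a telescoping argument modeled on the one-dimensional identity that combines three distance-two inequalities so that the intermediate function values cancel. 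A dyadic-subdivision-plus-continuity argument upgrades $\lambda = \tfrac{1}{2}$ to arbitrary $\lambda$, and an induction on the number of points reduces a general convex combination to the pairwise case.

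The main obstacle, I expect, is exactly this reduction in dimension $n \ge 2$: the multidimensional telescoping must select intermediate integer points so that each successive pair has strictly smaller $\ell_{\infty}$-spread, so that the unwanted function values cancel in the sum, and so that every midpoint produced stays inside $\overline{\dom f}$ with $\tilde{f}$ finite and locally computed — it is here that integral convexity of $\dom f$, as opposed to mere convexity of its hull, becomes essential. Controlling $\tilde{f}$ at the non-integer points arising in the subdivision, where $N(\cdot)$ may be a full lower-dimensional box rather than a single simplex, is the delicate bookkeeping that the references \cite{FT90,MMTT19proxIC} carry out in full.
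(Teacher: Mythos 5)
The paper itself offers no proof of Theorem~\ref{THfavtarProp33}; it is quoted from \cite{FT90,MMTT19proxIC}, so the comparison can only be against the arguments in those references. Your direction (a)~$\Rightarrow$~(b) and your reduction of (b)~$\Rightarrow$~(a) to the single inequality $\tilde{f}(x) \le \sum_{j}\lambda_{j}f(z_{j})$ for all integral convex representations of $x$ (equivalently, $\tilde{f}\le\check{f}$, hence $\tilde{f}=\check{f}$) are both correct, and you correctly identify why integral convexity of $\dom f$ is needed to keep $\tilde{f}$ finite on $\overline{\dom f}$. But the two steps that would actually prove that inequality are missing, and one of them fails as described.

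First, the Jensen-type ``induction on the number of points'' does not close. Writing $x=\lambda_{1}z_{1}+(1-\lambda_{1})w$ with $w=\sum_{j\ge2}\tfrac{\lambda_{j}}{1-\lambda_{1}}z_{j}$, the inductive step needs $\tilde{f}(\lambda_{1}z_{1}+(1-\lambda_{1})w)\le\lambda_{1}f(z_{1})+(1-\lambda_{1})\tilde{f}(w)$ where $w$ is generally \emph{non-integral}; your base case only delivers the two-point inequality for integer pairs (and only at $\lambda=\tfrac12$, upgraded dyadically). A two-point inequality valid for arbitrary real endpoints is already convexity of $\tilde{f}$ along segments, which is what you are trying to prove, so the reduction is circular as stated. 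Second, the ``multidimensional telescoping'' that extends the distance-$2$ inequality \eqref{intcnvconddist2} to integer pairs of arbitrary $\ell_{\infty}$-distance is not a bookkeeping detail: choosing intermediate points whose function values cancel, while keeping all auxiliary midpoints inside $\overline{\dom f}$ and inside a common integral neighborhood, is the combinatorial heart of the theorem, and you explicitly defer it to the references. For what it is worth, the proof in \cite{MMTT19proxIC} (Appendix~A) is organized differently: rather than building up midpoint convexity by induction on distance, it takes an arbitrary representation $x=\sum_{j}\lambda_{j}z_{j}$ nearly attaining $\check{f}(x)$ and performs an exchange/descent step — replacing a pair containing a point $z_{j}\notin N(x)$ by nearer points via \eqref{intcnvconddist2} without increasing $\sum_{j}\lambda_{j}f(z_{j})$, while strictly decreasing a potential — until all points lie in $N(x)$, which yields $\check{f}(x)\ge\tilde{f}(x)$ directly and avoids both of the gaps above. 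As it stands, your proposal is a plausible outline whose essential content is not supplied.
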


The reader is referred to \cite{FT90},
\cite[Section 3.4]{Mdcasiam}, and 
\cite[Section 13]{Mdcaeco16}
for more about integral convexity,
and \cite{MM19projcnvl} and \cite{MMTT19proxIC}
for recent developments.

\subsection{L-convexity}

\subsubsection{\Lnat-convex sets and functions}

A nonempty set $S \subseteq  \ZZ\sp{n}$ is called {\em \Lnat-convex} if
\begin{equation} \label{midptcnvset}
 x, y \in S
\ \Longrightarrow \
\left\lceil \frac{x+y}{2} \right\rceil ,
\left\lfloor \frac{x+y}{2} \right\rfloor  \in S ,
\end{equation}
where, for $t \in \RR$ in general, 
$\left\lceil  t   \right\rceil$ 
denotes the smallest integer not smaller than $t$
(rounding-up to the nearest integer)
and $\left\lfloor  t  \right\rfloor$
the largest integer not larger than $t$
(rounding-down to the nearest integer),
and this operation is extended to a vector
by componentwise applications.
The property \eqref{midptcnvset} is called {\em discrete midpoint convexity}.

A function $f : \ZZ\sp{n} \to \RR \cup \{ +\infty \}$ with $\dom f \not= \emptyset$
is said to be {\em L$\sp{\natural}$-convex}
if it satisfies a quantitative version of discrete midpoint convexity,
i.e., if 
\begin{equation} \label{midptcnvfn}
 f(x) + f(y) \geq
   f \left(\left\lceil \frac{x+y}{2} \right\rceil\right) 
  + f \left(\left\lfloor \frac{x+y}{2} \right\rfloor\right) 
\end{equation}
holds for all $x, y \in \ZZ\sp{n}$.
The effective domain of an \Lnat-convex function is an \Lnat-convex set.
A set $S$ is \Lnat-convex if and only if its indicator function
$\delta_{S}$ is an \Lnat-convex function.

It is known \cite[Section 7.1]{Mdcasiam} that
${\rm L}^{\natural}$-convex functions can be characterized 
by several different conditions,
stated in Theorem \ref{THlnatfncond} below.
The condition (b) in Theorem \ref{THlnatfncond}
imposes discrete midpoint convexity (\ref{midptcnvfn}) 
for all points $x,y$ at $\ell_\infty$-distance 1 or 2.
The condition (c) refers to 
{\em submodularity}, which means that
\begin{equation} \label{submfn}
f(x) + f(y) \geq f(x \vee y) + f(x \wedge y)
\end{equation}
holds for all $x, y \in \ZZ\sp{n}$, 
where
$x \vee y$ and $x \wedge y$ denote,
respectively, the vectors of componentwise maximum and minimum of $x$ and $y$, 
i.e.,
\begin{equation} \label{veewedgedef}
  (x \vee y)_{i} = \max(x_{i}, y_{i}),
\quad
  (x \wedge y)_{i} = \min(x_{i}, y_{i})
\qquad (i =1,2,\ldots, n).
\end{equation}
The condition (d) refers to 
a generalization of submodularity called
{\em translation-submodularity}, which means that
\begin{equation} \label{trsubmfn}
  f(x) + f(y) \geq f((x - \mu {\bf 1}) \vee y)
                 + f(x \wedge (y + \mu {\bf 1}))
\end{equation}
holds for all $x, y \in \ZZ^{n}$ and nonnegative integers $\mu$,
where $\bm{1}=(1,1,\ldots, 1)$.
The condition (e) refers to the condition%
\footnote{%%%%%%%%%%%%%%%%%%%%
This condition \eqref{lnatfnAPR} is labeled as  (L$\sp{\natural}$-APR[$\ZZ$]) in \cite[Section 7.2]{Mdcasiam}.
} %%%% footnote %%%%%%%%%%%%%%%
 that, for any $x, y \in \ZZ\sp{n}$ with $\suppp(x-y)\not= \emptyset$, 
the inequality
\begin{equation} \label{lnatfnAPR}
 f(x) + f(y) \geq f(x - \unitvec{A}) + f(y + \unitvec{A}) 
\end{equation}
holds with $\displaystyle A = \argmax_{i} \{ x_{i} - y_{i} \}$,
where $\unitvec{A}$ denotes the characteristic vector of $A$.
The condition (f) refers to submodularity of
the function 
\begin{equation}\label{lfnlnatfnrelation}
 \tilde f(x_{0},x) = f(x - x_{0} \vecone)
 \qquad ( x_{0} \in \ZZ, x \in \ZZ\sp{n})
\end{equation}
in $n+1$ variables associated with the given function $f$.

\begin{theorem} \label{THlnatfncond}
For $f: \ZZ^{n} \to \RR \cup \{ +\infty \}$
the following conditions, {\rm (a)} to {\rm (f)}, are equivalent:

%% To Typesetter:  Please do NOT change the labels "(a)", "(b)", .... below, 
%% as they are cited at other places.

{\rm (a)}
$f$ is an \Lnat-convex function, that is,
it satisfies 
discrete midpoint inequality
{\rm (\ref{midptcnvfn})} for all $ x, y \in \ZZ^{n}$.

{\rm (b)}
$\dom f$ is an \Lnat-convex set,
and $f$ satisfies
discrete midpoint inequality
{\rm (\ref{midptcnvfn})} for all $ x, y \in \ZZ^{n}$
with $\| x-y \|_{\infty} \leq 2$.

{\rm (c)}
$f$ is integrally convex and submodular \eqref{submfn}.

{\rm (d)}
$f$ satisfies translation-submodularity \eqref{trsubmfn}
for all nonnegative $\mu \in \ZZ$.

{\rm (e)}
$f$ satisfies the condition \eqref{lnatfnAPR}.

{\rm (f)}
$\tilde f$ in \eqref{lfnlnatfnrelation} is submodular \eqref{submfn}.
\finbox
\end{theorem}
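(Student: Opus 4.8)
The plan is to prove the six conditions equivalent through the cycle $(a)\Rightarrow(b)\Rightarrow(c)\Rightarrow(a)$, and then to attach $(d)$, $(e)$, $(f)$ to this core, isolating the passage from local to global midpoint convexity as the one genuinely hard step. I would first record two cheap reductions. The implication $(a)\Rightarrow(b)$ is immediate, since \eqref{midptcnvfn} for all $x,y$ in particular holds for $\|x-y\|_\infty\le 2$ and forces $\dom f$ to be \Lnat-convex. The equivalence $(d)\Leftrightarrow(f)$ is also essentially formal: writing $p=(s,u)$, $q=(t,v)$ in $\ZZ^{n+1}$, using the invariance of $\tilde f$ in \eqref{lfnlnatfnrelation} along $(1,\vecone)$ to normalize $\min(s,t)=0$, and setting $\mu=|s-t|\ge 0$, a direct computation of the componentwise $\vee$ and $\wedge$ turns submodularity $\tilde f(p)+\tilde f(q)\ge\tilde f(p\vee q)+\tilde f(p\wedge q)$ into exactly one instance of translation-submodularity \eqref{trsubmfn}, and every instance arises this way.

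For $(b)\Rightarrow(c)$ I would read the two length scales separately. Applying \eqref{midptcnvfn} to the distance-$1$ pair $x+\unitvec i$, $x+\unitvec j$ gives $f(x+\unitvec i)+f(x+\unitvec j)\ge f(x+\unitvec i+\unitvec j)+f(x)$, the unit-cell exchange inequality; together with integral convexity this propagates to global submodularity \eqref{submfn}. For integral convexity itself I would use that an \Lnat-convex set is integrally convex, so $\dom f$ qualifies, and then observe that for $\|x-y\|_\infty=2$ the pair $\lceil\frac{x+y}{2}\rceil,\lfloor\frac{x+y}{2}\rfloor$ lies in $N(\frac{x+y}{2})$ and averages to $\frac{x+y}{2}$; hence \eqref{midptcnvfn} yields $\tilde f(\frac{x+y}{2})\le\frac12(f(x)+f(y))$, which is precisely \eqref{intcnvconddist2}, and Theorem \ref{THfavtarProp33} delivers integral convexity.

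The core difficulty is $(c)\Rightarrow(a)$. Since $f$ is integrally convex, $\tilde f$ is convex on $\RR^n$, so $\tilde f(\frac{x+y}{2})\le\frac12(f(x)+f(y))$ for all $x,y$, and the midpoint inequality \eqref{midptcnvfn} reduces to the claim that at a midpoint $m=\frac{x+y}{2}$ the local convex extension is realized by the rounding pair, i.e.\ $\tilde f(m)=\frac12\bigl(f(\lceil m\rceil)+f(\lfloor m\rfloor)\bigr)$. The inequality ``$\le$'' is automatic from the definition of $\tilde f$; the reverse is where submodularity enters, and it is the main obstacle. My plan is to take any convex combination $\sum_z\lambda_z z=m$ on $N(m)$ attaining $\tilde f(m)$ and to \emph{uncross} it, repeatedly replacing a pair $z,z'$ in the support by $z\vee z',\,z\wedge z'$: this preserves the barycenter $m$ and, by \eqref{submfn}, does not increase the objective, so one may push the support onto the chain $\{\lceil m\rceil,\lfloor m\rfloor\}$ without raising the value. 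Making this rearrangement terminate and respect the neighborhood $N(m)$ is the step requiring the most care.

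Finally I would splice in $(d)$ and $(e)$. Once $(c)$ is available, convexity of $\tilde f$ upgrades translation-submodularity to \eqref{midptcnvfn}, giving $(d)\Rightarrow(a)$, while $(a)\Rightarrow(d)$ follows through $(f)$ by applying the same uncrossing to the lift; with $(d)\Leftrightarrow(f)$ already in hand this closes $(d)$ and $(f)$ into the equivalence. Condition $(e)$ I would treat as the elementary exchange underlying a descent: \eqref{lnatfnAPR} moves a pair $(x,y)$ one step towards its midpoints along the coordinates $A=\argmax_i\{x_i-y_i\}$ of largest difference, conserving $x+y$ and strictly decreasing the potential $\|x-y\|_\infty$, so iterating it and telescoping the exchange inequalities yields \eqref{midptcnvfn} and hence $(e)\Rightarrow(a)$; the reverse $(a)\Rightarrow(e)$ follows by specializing the global inequality together with submodularity. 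The two global passages --- the submodular uncrossing in $(c)\Rightarrow(a)$ and the convergence of the exchange iteration for $(e)$ --- are the delicate points, for whose detailed rearrangement arguments I would follow \cite[Section 7.1]{Mdcasiam}.
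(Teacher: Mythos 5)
First, a point of comparison: the paper does not actually prove Theorem~\ref{THlnatfncond} --- it is quoted as a known result with a pointer to \cite[Section 7.1]{Mdcasiam} and no argument is given --- so your sketch can only be judged on its own terms, not against an in-paper proof.

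Your overall architecture (the cycle (a)$\Rightarrow$(b)$\Rightarrow$(c)$\Rightarrow$(a), the formal computation for (d)$\Leftrightarrow$(f), the uncrossing for (c)$\Rightarrow$(a), and the descent reading of (e)) is sound and close to the standard treatment. The genuine gap is the step (d)$\Rightarrow$(a). You write that ``once (c) is available, convexity of $\tilde f$ upgrades translation-submodularity to \eqref{midptcnvfn}.'' But to prove (d)$\Rightarrow$(a) you may assume only (d); convexity of the local convex extension is equivalent to integral convexity of $f$, which is half of (c) and is not known to follow from translation-submodularity at that point in your scheme, so the implication as described presupposes part of what it must establish. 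The standard route is a direct induction: from \eqref{trsubmfn} (equivalently from \eqref{lnatfnAPR}) one derives \eqref{midptcnvfn} by a descent on $\| x-y \|_{\infty}$ with no appeal to convex extensions, and that inductive descent is the irreducible core of the theorem --- it cannot be absorbed into the convex-analytic steps. Two further cautions. Your (e)$\Rightarrow$(a) iteration as stated only moves $x$ toward $y$ on coordinates with $x_{i}>y_{i}$; to land on the pair $\left\lceil \frac{x+y}{2} \right\rceil, \left\lfloor \frac{x+y}{2} \right\rfloor$ you must also apply \eqref{lnatfnAPR} with the roles of $x$ and $y$ exchanged (on $\suppp(y-x)$), since coordinates with $x_{i}-y_{i}\leq -2$ are otherwise never corrected (try $x-y=(2,-2)$). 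And the two steps you defer wholesale --- that unit-cell submodularity plus integral convexity propagates to global submodularity in (b)$\Rightarrow$(c), and the termination of the uncrossing in (c)$\Rightarrow$(a) --- are each genuine lemmas carrying most of the weight; deferring them to \cite[Section 7.1]{Mdcasiam} is legitimate for a sketch but should be acknowledged as such.
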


For a set $S \subseteq  \ZZ\sp{n}$ we consider conditions
\begin{align} 
 x, y \in S
& \ \Longrightarrow \
 x \vee y, \ x \wedge y \in S ,
\label{submset}
\\
 x, y \in S
& \ \Longrightarrow \
 (x - \mu {\bf 1}) \vee y, \  x \wedge (y + \mu {\bf 1}) \in S ,
\label{trsubmset}
\\
x,y \in S, \  \suppp(x-y)\not= \emptyset 
& \ \Longrightarrow \
x - \unitvec{A}, \  y + \unitvec{A} \  \in S
\mbox{ for } A = \argmax_{i} \{ x_{i} - y_{i} \},
\label{lnatsetAPR}
\end{align}
which correspond to submodularity \eqref{submfn},
 translation-submodularity \eqref{trsubmfn}, and the condition \eqref{lnatfnAPR},
respectively.
The first condition \eqref{submset} means that $S$ forms a sublattice of $\ZZ\sp{n}$.

\begin{proposition}  \label{PRlnatset}
For a nonempty set $S \subseteq  \ZZ\sp{n}$ 
the following conditions, {\rm (a)} to {\rm (d)}, are equivalent:

%% To Typesetter:  Please do NOT change the labels "(a)", "(b)", ....  below, 
%% as they are cited at other places.

{\rm (a)}
$S$ is an \Lnat-convex set, that is,
it satisfies {\rm (\ref{midptcnvset})}.

{\rm (b)}
$S$ is an integrally convex set that
satisfies \eqref{submset}.

{\rm (c)}
$S$ satisfies \eqref{trsubmset} for all nonnegative $\mu \in \ZZ$.

{\rm (d)}
$S$ satisfies \eqref{lnatsetAPR}.
\finbox
\end{proposition}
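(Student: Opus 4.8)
The plan is to deduce the whole proposition from the functional characterization in Theorem~\ref{THlnatfncond} by specializing that result to the indicator function $\delta_S$ defined in \eqref{indicatordef}. The point is that each of the four set conditions (a)--(d) is precisely the translation, into a statement about membership in $S$, of one of the functional conditions of Theorem~\ref{THlnatfncond} applied to $f = \delta_S$; once this dictionary is established, the equivalences (a)$\Leftrightarrow$(b)$\Leftrightarrow$(c)$\Leftrightarrow$(d) follow immediately from the equivalences among the corresponding conditions (a), (c), (d), (e) of that theorem.

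First I would record the elementary but decisive observation that drives every translation: since $\delta_S$ takes only the values $0$ and $+\infty$, an inequality of the form $\delta_S(x) + \delta_S(y) \ge \delta_S(u) + \delta_S(v)$ holds \emph{vacuously} unless $x, y \in S$, in which case its left-hand side is $0$ and the inequality is equivalent to $u, v \in S$. Hence every quantitative two-point inequality for $\delta_S$ collapses to the qualitative implication ``$x, y \in S \Rightarrow u, v \in S$''. Applying this with $(u,v) = (\lceil (x+y)/2 \rceil, \lfloor (x+y)/2 \rfloor)$ turns discrete midpoint convexity \eqref{midptcnvfn} of $\delta_S$ into \eqref{midptcnvset}, so condition (a) of Theorem~\ref{THlnatfncond} for $\delta_S$ is exactly condition (a) here. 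The same mechanism identifies submodularity \eqref{submfn} of $\delta_S$ with the sublattice condition \eqref{submset}, translation-submodularity \eqref{trsubmfn} of $\delta_S$ with \eqref{trsubmset}, and the condition \eqref{lnatfnAPR} for $\delta_S$ with \eqref{lnatsetAPR}.

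Next I would assemble the dictionary into the four equivalences. Condition (a) here matches condition (a) of the theorem as above. For (b), I would invoke the fact recorded earlier in Section~\ref{SCfnclass} that $S$ is an integrally convex set if and only if $\delta_S$ is an integrally convex function; combining this with the identification of \eqref{submset} with submodularity of $\delta_S$ shows that (b) here is exactly condition (c) of the theorem for $\delta_S$. Similarly (c) here is condition (d), and (d) here is condition (e). Since Theorem~\ref{THlnatfncond} asserts that (a), (c), (d), (e) are all mutually equivalent for the function $\delta_S$, the four set conditions are equivalent.

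The proof is therefore essentially a bookkeeping argument, and I expect no genuine obstacle. The only point needing care is the translation for condition (b): one must verify both halves separately, namely that integral convexity of the \emph{set} $S$ coincides with integral convexity of the \emph{function} $\delta_S$ (the stated equivalence for indicator functions) and that \eqref{submset} is the verbatim $\delta_S$-form of \eqref{submfn}. I would also check that the quantifier ranges and side conditions transcribe without change --- in particular that the hypothesis $\suppp(x-y) \ne \emptyset$ and the choice $A = \argmax_i\{x_i - y_i\}$ in \eqref{lnatfnAPR} pass intact into \eqref{lnatsetAPR} --- so that nothing is lost in moving between the functional and set formulations.
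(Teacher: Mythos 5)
Your proof is correct and follows exactly the route the paper intends: the paper states Proposition~\ref{PRlnatset} as the specialization of Theorem~\ref{THlnatfncond} to the indicator function $\delta_{S}$, having already set up the correspondence between the set conditions \eqref{submset}--\eqref{lnatsetAPR} and the functional conditions \eqref{submfn}, \eqref{trsubmfn}, \eqref{lnatfnAPR}, and between set and function integral convexity. Your collapse argument (a two-point inequality for a $\{0,+\infty\}$-valued function is equivalent to the membership implication) is the correct and complete justification of that dictionary.
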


The concept of \Lnat-convex functions
was introduced in \cite{FM00}
as an equivalent variant of 
{\rm L}-convex functions introduced earlier in \cite{Mdca98}.
L- and \Lnat-convex functions form  major classes of discrete convex functions
\cite[Chapter 7]{Mdcasiam}.
They have applications in several fields including
image processing,
auction theory, 
inventory theory, 
and scheduling
\cite{Mdcaeco16,Shi17L,SCB14}.

\subsubsection{L-convex sets and functions}

A function 
$f(x_{1},x_{2}, \ldots, x_{n} )$
with $\dom f \not= \emptyset$ is called {\em L-convex}
if it is submodular \eqref{submfn}
 and there exists
$r \in \RR$ such that 
\begin{equation}\label{shiftlfnZ}
f(x + \vecone) = f(x) +  r
\end{equation}
for all $x  \in \ZZ\sp{n}$.
If $f$ is {\rm L}-convex,
the function
$g(x_{2}, \ldots, x_{n} ) := f(0, x_{2}, \ldots, x_{n} )$
is an \Lnat-convex function, and any \Lnat-convex function 
arises in this way.
The function $\tilde f$ in (\ref{lfnlnatfnrelation}) derived from an \Lnat-convex function $f$  
is an {\rm L}-convex function
with $\tilde f(x_{0}+1, x + \vecone) = \tilde f(x_{0},x)$,
and we have $f(x)  =  \tilde f(0,x)$.

\begin{theorem} \label{THlfncond}
For $f: \ZZ^{n} \to \RR \cup \{ +\infty \}$
the following conditions, {\rm (a)} to {\rm (c)}, are equivalent:

%% To Typesetter:  Please do NOT change the labels "(a)", "(b)", ....  below, 
%% as they are cited at other places.

{\rm (a)}
$f$ is an L-convex function, that is, it satisfies \eqref{submfn} 
and \eqref{shiftlfnZ} for some $r \in \RR$.

{\rm (b)}
$f$ is an \Lnat-convex function that satisfies \eqref{shiftlfnZ} for some $r \in \RR$.

{\rm (c)}
$f$ satisfies translation-submodularity \eqref{trsubmfn}
for all $\mu \in \ZZ$ 
(including $\mu < 0$).
\finbox
\end{theorem}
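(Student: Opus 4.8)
The plan is to prove the cycle of implications (b) $\Rightarrow$ (a) $\Rightarrow$ (c) $\Rightarrow$ (b), relying throughout on the already-established characterizations of \Lnat-convexity in Theorem~\ref{THlnatfncond}. The implication (b) $\Rightarrow$ (a) is immediate: by Theorem~\ref{THlnatfncond} ((a) $\Rightarrow$ (c)) an \Lnat-convex function is submodular, so an \Lnat-convex $f$ satisfying \eqref{shiftlfnZ} fulfils both requirements in the definition of L-convexity. The implication (a) $\Rightarrow$ (c) is the cleanest and is obtained by a single application of submodularity to translated arguments. Given $\mu \in \ZZ$ and $x, y \in \ZZ\sp{n}$, apply submodularity \eqref{submfn} to the pair $(x - \mu\vecone,\, y)$:
\[
f(x - \mu\vecone) + f(y) \ \geq \ f((x-\mu\vecone)\vee y) + f((x-\mu\vecone)\wedge y).
\]
Using \eqref{shiftlfnZ} in the form $f(x - \mu\vecone) = f(x) - \mu r$, together with the componentwise identity $x \wedge (y + \mu\vecone) = ((x-\mu\vecone)\wedge y) + \mu\vecone$ (which by \eqref{shiftlfnZ} gives $f((x-\mu\vecone)\wedge y) = f(x \wedge (y+\mu\vecone)) - \mu r$), and cancelling the common term $-\mu r$, we arrive at exactly the translation-submodularity inequality \eqref{trsubmfn}. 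No assumption on the sign of $\mu$ was used, so this holds for all $\mu \in \ZZ$, which is (c).

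The implication (c) $\Rightarrow$ (b) is where the real work lies. Restricting (c) to nonnegative $\mu$ yields translation-submodularity for all nonnegative $\mu$, so by Theorem~\ref{THlnatfncond} ((d) $\Rightarrow$ (a)) the function $f$ is \Lnat-convex; it remains to extract the linearity \eqref{shiftlfnZ} from the negative-$\mu$ instances. First I would show that $f$ is affine along every line in direction $\vecone$. Discrete midpoint convexity \eqref{midptcnvfn} applied to $x+\vecone$ and $x-\vecone$ (whose midpoint is the integral point $x$) gives $f(x+\vecone)+f(x-\vecone) \geq 2f(x)$, whereas (c) with $\mu = -1$ and $y = x$ gives the reverse inequality $2f(x) \geq f(x+\vecone)+f(x-\vecone)$. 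Hence $f(x+\vecone)+f(x-\vecone) = 2f(x)$ for all $x$, so the increment $r(x) := f(x+\vecone) - f(x)$ is constant along each $\vecone$-line.

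The final and most delicate step is to prove that $r(x)$ does not depend on $x$ at all. For this I would compare neighbouring $\vecone$-lines by applying (c) with $\mu = -1$ to the pairs $(x,\, x+\unitvec{i})$ and $(x+\unitvec{i},\, x)$. After evaluating the resulting $\vee$ and $\wedge$ terms and rewriting the function values at the points lying one $\vecone$-step away from $x$ or $x+\unitvec{i}$ via the line-constancy of $r$ from the previous step, the two inequalities collapse to $r(x+\unitvec{i}) \geq r(x)$ and $r(x) \geq r(x+\unitvec{i})$, respectively, whence $r(x) = r(x+\unitvec{i})$ for every coordinate $i$. Since any two points of $\ZZ\sp{n}$ are joined by unit-coordinate steps, $r$ is a global constant $r$, and $f(x+\vecone) = f(x) + r$ is precisely \eqref{shiftlfnZ}. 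Combined with \Lnat-convexity this gives (b), closing the cycle.

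The main obstacle is exactly this slope-constancy argument: translation-submodularity delivers only ``one-sided'' inequalities, so both the affineness along $\vecone$ (obtained by matching a negative-$\mu$ instance against discrete midpoint convexity) and the equality of slopes across coordinate directions (obtained by matching two negative-$\mu$ instances) must be squeezed out of coinciding upper and lower bounds. Some care is also needed with the convention $a + \infty = \infty$, so that the cancellations of $\mu r$ and the identities $r(x+\unitvec{i}) = r(x)$ remain meaningful on the boundary of $\dom f$; this is harmless because $\dom f$ is an \Lnat-convex set and is itself invariant under translation by $\vecone$.
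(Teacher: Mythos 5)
Your implications (b) $\Rightarrow$ (a) and (a) $\Rightarrow$ (c) are correct, and in (c) $\Rightarrow$ (b) the deduction of \Lnat-convexity and of the affineness of $f$ along each line in direction $\vecone$ (matching $\mu=-1$, $y=x$ against discrete midpoint convexity) is also sound. The gap is in the last step, the global constancy of the slope $r(x)=f(x+\vecone)-f(x)$. Your two applications of \eqref{trsubmfn} with $\mu=-1$ yield $r(x)=r(x+\unitvec{i})$ only when both $x$ and $x+\unitvec{i}$ lie in $\dom f$; otherwise the inequalities read $+\infty\geq\cdots$ and carry no information. So what you actually prove is that $r$ is constant on each component of $\dom f$ under the moves $\pm\unitvec{i}$ and $\pm\vecone$, and $\dom f$ need not be connected under these moves. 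For instance, $S=\{x\in\ZZ\sp{4}\mid x_{1}=x_{2},\ x_{3}=x_{4}\}$ is an L-convex set (a sublattice invariant under $\pm\vecone$) in which no point admits any unit step staying inside $S$; the component of the origin under your moves is only the line $\{t\vecone\mid t\in\ZZ\}$, which misses $(1,1,0,0)\in S$. Hence, for a function with such an effective domain, your argument does not exclude different slopes on different $\vecone$-lines; your closing remark that $\dom f$ is $\vecone$-invariant addresses the $+\infty$ conventions but not this connectivity issue.

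The repair is easy and stays within your framework. Either (i) redo the $\mu=-1$ computation with $\unitvec{i}$ replaced by a characteristic vector $\unitvec{A}$: for the pairs $(x,x+\unitvec{A})$ and $(x+\unitvec{A},x)$ one finds $(x+\vecone)\vee(x+\unitvec{A})=x+\vecone$, $x\wedge(x+\unitvec{A}-\vecone)=x+\unitvec{A}-\vecone$, $(x+\unitvec{A}+\vecone)\vee x=x+\unitvec{A}+\vecone$, and $(x+\unitvec{A})\wedge(x-\vecone)=x-\vecone$, whence $r(x)=r(x+\unitvec{A})$ whenever both points are in $\dom f$; since $\dom f$ is a sublattice invariant under $\vecone$-translation, any two of its points are joined by such steps (e.g., the points $w_{k}=y\vee\bigl((x\vee y)-k\vecone\bigr)$ all lie in $\dom f$ and consecutive ones differ by characteristic vectors). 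Or (ii), more directly, apply (c) once with $\mu=-k$ for sufficiently large $k$ to an arbitrary pair $x,y\in\dom f$: then $(x+k\vecone)\vee y=x+k\vecone$ and $x\wedge(y-k\vecone)=y-k\vecone$, so $f(x)+f(y)\geq f(x+k\vecone)+f(y-k\vecone)=f(x)+kr(x)+f(y)-kr(y)$, giving $r(y)\geq r(x)$ and, by symmetry, $r(x)=r(y)$ with no connectivity argument at all. (The paper states this theorem without proof, so there is no in-text argument to compare against.)
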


A nonempty set $S$ is called {\em L-convex} if its indicator function
$\delta_{S}$
is an L-convex function.
This means that $S$ is L-convex if and only if 
it satisfies 
\eqref{submset} and
\begin{equation} \label{invarone}
 x  \in S
\ \Longrightarrow \
 x -  {\bf 1},  \  x +  {\bf 1} \in S .
\end{equation}
The effective domain of an L-convex function is an L-convex set.

The following proposition gives equivalent conditions for L-convex sets.

\begin{proposition}  \label{PRlset}
For a nonempty set $S \subseteq  \ZZ\sp{n}$ 
the following conditions, {\rm (a)} to {\rm (c)}, are equivalent:

%% To Typesetter:  Please do NOT change the labels "(a)", "(b)", ....  below, 
%% as they are cited at other places.

{\rm (a)}
$S$ is an L-convex set, that is, it satisfies \eqref{submset} and \eqref{invarone}.

{\rm (b)}
$S$ is an \Lnat-convex set that satisfies \eqref{invarone}.

{\rm (c)}
$S$ satisfies \eqref{trsubmset} for all $\mu \in \ZZ$
(including $\mu < 0$).
\finbox
\end{proposition}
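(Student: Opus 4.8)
The plan is to run the cycle of implications (a) $\Rightarrow$ (c) $\Rightarrow$ (b) $\Rightarrow$ (a), leaning on the already-established equivalences for \Lnat-convex sets in Proposition~\ref{PRlnatset}. The guiding observation is that combining the sublattice property \eqref{submset} with the $\vecone$-invariance \eqref{invarone} is by itself enough to produce translation-submodularity \eqref{trsubmset} for \emph{every} integer $\mu$, of either sign; once this is in hand, Proposition~\ref{PRlnatset} supplies \Lnat-convexity for free.

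For (a) $\Rightarrow$ (c), I would fix $x, y \in S$ and an arbitrary $\mu \in \ZZ$. Iterating \eqref{invarone} gives $x + k\vecone \in S$ and $x - k\vecone \in S$ for all $k \geq 0$, so both $x - \mu\vecone \in S$ and $y + \mu\vecone \in S$ hold irrespective of the sign of $\mu$. Applying \eqref{submset} to the pair $\{x - \mu\vecone,\, y\}$ yields $(x - \mu\vecone)\vee y \in S$, and applying it to the pair $\{x,\, y + \mu\vecone\}$ yields $x \wedge (y + \mu\vecone) \in S$; these are precisely the two memberships demanded by \eqref{trsubmset}, now for all $\mu \in \ZZ$.

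For (c) $\Rightarrow$ (b), I observe that \eqref{trsubmset} in particular holds for every nonnegative $\mu$, so Proposition~\ref{PRlnatset} ((a)$\Leftrightarrow$(c)) immediately gives that $S$ is \Lnat-convex; and specializing \eqref{trsubmset} to $y = x$, $\mu = -1$ turns its two elements into $(x+\vecone)\vee x = x+\vecone$ and $x \wedge (x-\vecone) = x-\vecone$, recovering \eqref{invarone}. Finally (b) $\Rightarrow$ (a) is immediate, since an \Lnat-convex set satisfies \eqref{submset} by Proposition~\ref{PRlnatset} ((a)$\Leftrightarrow$(b)) and \eqref{invarone} is assumed in (b). None of the three steps is genuinely hard; the only point needing care is the sign bookkeeping in the first step---verifying that a single uniform argument also covers $\mu < 0$, which works precisely because $\vecone$-invariance lets plain lattice operations after an integer shift replace the rounding operations $\lceil\cdot\rceil, \lfloor\cdot\rfloor$ appearing in the discrete-midpoint definition \eqref{midptcnvset}.
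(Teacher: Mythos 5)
Your proof is correct, and since the paper states Proposition~\ref{PRlset} without giving an argument, your cycle (a)~$\Rightarrow$~(c)~$\Rightarrow$~(b)~$\Rightarrow$~(a), delegating the \Lnat-convexity equivalences to Proposition~\ref{PRlnatset}, is exactly the standard route one would expect. All three steps check out, including the sign bookkeeping for $\mu<0$ and the specialization $y=x$, $\mu=-1$ that recovers \eqref{invarone}.
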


\subsubsection{Discrete midpoint convex sets and functions}

A generalization of the concept of \Lnat-convexity has been 
introduced recently in \cite{MMTT19dmpc}.
A nonempty set $S \subseteq  \ZZ\sp{n}$ is said to be {\em discrete midpoint convex} if
\begin{equation} \label{dirintcnvsetdef}
 x, y \in S, \ \| x - y \|_{\infty} \geq 2
\ \Longrightarrow \
\left\lceil \frac{x+y}{2} \right\rceil ,
\left\lfloor \frac{x+y}{2} \right\rfloor  \in S.
\end{equation}
This condition is  weaker than the defining condition (\ref{midptcnvset})
for an \Lnat-convex set,
and hence every \Lnat-convex set
is a discrete midpoint convex set.

A function $f: \ZZ\sp{n} \to \RR \cup \{ +\infty \}$
is called {\em  globally discrete midpoint convex} if
the discrete midpoint convexity 
\begin{equation} \label{midptcnvfn2}
 f(x) + f(y) \geq
   f \left(\left\lceil \frac{x+y}{2} \right\rceil\right) 
  + f \left(\left\lfloor \frac{x+y}{2} \right\rfloor\right) 
\end{equation}
is satisfied by every pair $(x, y) \in \ZZ\sp{n} \times \ZZ\sp{n}$
with $\| x - y \|_{\infty} \geq 2$.
The effective domain of a
globally discrete midpoint convex function
is necessarily a discrete midpoint convex set.
A function $f: \ZZ\sp{n} \to \RR \cup \{ +\infty \}$
is called {\em  locally discrete midpoint convex}
if $\dom f$ is a discrete midpoint convex set
and the discrete midpoint convexity (\ref{midptcnvfn2})
is satisfied by every pair $(x, y) \in \ZZ\sp{n} \times \ZZ\sp{n}$
with $\| x - y \|_{\infty} = 2$ (exactly equal to $2$).
Obviously, 
every \Lnat-convex function is
globally discrete midpoint convex, and 
every globally discrete midpoint convex function is
locally discrete midpoint convex.

We sometimes abbreviate ``discrete midpoint convex(ity)'' to ``d.m.c.''

\subsection{M-convexity}

\subsubsection{\Mnat-convex sets and functions}
\label{SCmnatfn}

We say that a function
$f: \ZZ\sp{N} \to \RR \cup \{ +\infty \}$
with $\dom f \not= \emptyset$
is {\em M$\sp{\natural}$-convex}, if,
for any $x, y \in \ZZ\sp{N}$ and $i \in \suppp(x-y)$, 
we have (i)
\begin{equation}  \label{mconvex1Z}
f(x) + f(y)  \geq  f(x -\unitvec{i}) + f(y+\unitvec{i})
\end{equation}
or (ii) there exists some $j \in \suppm(x-y)$ such that
\begin{equation}  \label{mconvex2Z}
f(x) + f(y)   \geq 
 f(x-\unitvec{i}+\unitvec{j}) + f(y+\unitvec{i}-\unitvec{j}) .
\end{equation}
This property is referred to as the {\em exchange property}.
See Fig.~\ref{FGmnatfndef}, in which 
$(x',y')= (x -\unitvec{i}, y+\unitvec{i})$ 
for \eqref{mconvex1Z} and
$(x'',y'')= ( x-\unitvec{i}+\unitvec{j}, y+\unitvec{i}-\unitvec{j})$
for \eqref{mconvex2Z}.

%%%  FIGURE %%%%%%%%%%%%%%%%%%
%% \input{DCAfgMnatdef}
\begin{figure}\begin{center}
\includegraphics[width=0.35\textwidth,clip]{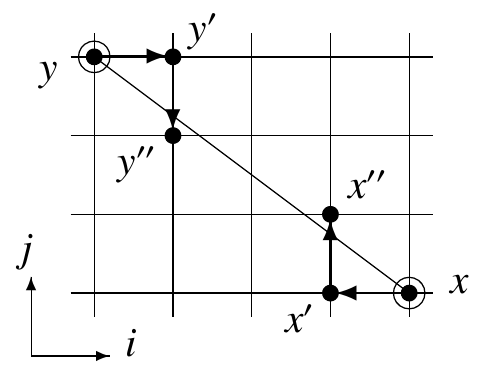}
\\
$(x',y')= (x -\unitvec{i}, y+\unitvec{i})$, \ 
$(x'',y'')= ( x-\unitvec{i}+\unitvec{j}, y+\unitvec{i}-\unitvec{j})$
\caption{Definition of M$\sp{\natural}$-convex functions}
 \label{FGmnatfndef}
\end{center}\end{figure}
%%%  FIGURE %%%%%%%%%%%%%%%%%%

A more compact expression of the exchange property is as follows:
\begin{description}
\item[\Mnvexb]
 For any $x, y \in \ZZ\sp{N}$ and $i \in \suppp(x-y)$, we have
\begin{equation} \label{mnconvexc2Z}
f(x) + f(y)   \geq 
\min_{j \in \suppm(x - y) \cup \{ 0 \}} 
 \{ f(x - \unitvec{i} + \unitvec{j}) + f(y + \unitvec{i} - \unitvec{j}) \},
\end{equation}
%% 2019-08-24 correction
\end{description}
where $\unitvec{0}=\veczero$ (zero vector).
In the above statement we may change
``For any $x, y \in \ZZ\sp{N}$\,'' to ``For any $x, y \in \dom f$\,''
 since if $x \not\in \dom f$ or $y \not\in \dom f$,
the inequality (\ref{mnconvexc2Z}) trivially holds with $f(x) + f(y)  = +\infty$.

It follows from \Mnvex that the effective domain $S = \dom f$
of an M$\sp{\natural}$-convex function $f$
has the following exchange property:
\begin{description}
\item[\Bnvexb] 
For any $x, y \in S$ and $i \in \suppp(x-y)$, we have
(i)
$x -\unitvec{i} \in S$ and $y+\unitvec{i} \in S$
\ or \  
\\
(ii) there exists some $j \in \suppm(x-y)$ such that
$x-\unitvec{i}+\unitvec{j}  \in S$ and $y+\unitvec{i}-\unitvec{j} \in S$.
\end{description}
A nonempty set $S \subseteq \ZZ\sp{N}$ having this property 
is called an {\em M$\sp{\natural}$-convex set},
which is an alias for the set of integer points in an integral generalized polymatroid.

M$\sp{\natural}$-convex functions can be characterized by 
a number of different exchange properties including a local exchange property
under the assumption that 
function $f$ is (effectively) defined on an M$\sp{\natural}$-convex set.
See \cite{MS18mnataxiom} 
as well as \cite[Theorem~4.2]{Mdcaeco16} and \cite[Theorem~6.8]{ST15jorsj}.

It is known \cite[Theorem 3.8]{MS01rel} 
(see also \cite[Theorem 6.19]{Mdcasiam})
that an M$\sp{\natural}$-convex function
$f: \ZZ\sp{N} \to \RR \cup \{ +\infty \}$
 is {\em supermodular} on the integer lattice, i.e.,
\begin{equation} \label{setfnsubmZ}
 f(x) + f(y) \leq f(x \vee y) + f(x \wedge y)
 \qquad (x, y \in \ZZ\sp{N}) .
\end{equation}
Not every supermodular function is M$\sp{\natural}$-convex,
that is, M$\sp{\natural}$-convex functions
form a proper subclass of supermodular functions on $\ZZ\sp{N}$.

The concept of \Mnat-convex functions
was introduced in \cite{MS99gp}
as an equivalent variant of 
{\rm M}-convex functions introduced earlier in \cite{Mstein96}.
{\rm M}- and \Mnat-convex functions form major classes of discrete convex functions
\cite[Chapter 6]{Mdcasiam}.
M-convexity and its various variants have applications in 
economics
\cite{Mdcasiam,Mbonn09,Mdcaeco16,MTcompeq03,ST15jorsj}.
See 
\cite[Chapters 4 and 6]{Mdcasiam},
\cite[Section 4]{Mdcaeco16},
\cite{MS18mnataxiom}
for detailed discussion about the exchange properties.

\subsubsection{M-convex sets and functions}

If a set $S \subseteq \ZZ\sp{N}$ lies on a hyperplane with a constant component sum
(i.e., $x(N) = y(N)$ for all  $x, y \in S$),
the exchange property \Bnvex
takes a simpler form
(without the possibility of the first case (i)): 
\begin{description}
\item[\Bvexb] 
For any $x, y \in S$ and $i \in \suppp(x-y)$, 
there exists some $j \in \suppm(x-y)$ such that
$x-\unitvec{i}+\unitvec{j}  \in S$ and $y+\unitvec{i}-\unitvec{j} \in S$.
\end{description}
A nonempty set $S \subseteq \ZZ\sp{N}$ having this exchange property 
is called an {\em M-convex set},
which is an alias for the set of integer points in an integral base polyhedron.  

An M$\sp{\natural}$-convex function 
whose effective domain is an M-convex set
is called an {\em M-convex function}
\cite{Mstein96,Mdca98,Mdcasiam}.
In other words,  a function 
$f: \ZZ\sp{N} \to \RR \cup \{ +\infty \}$
is M-convex
if and only if it satisfies
the exchange property:
\begin{description}
\item[\Mvexb]
 For any $x, y \in \dom f$  and $i \in \suppp(x-y)$, 
there exists
$j \in \suppm(x-y)$ such that
\eqref{mconvex2Z} holds.
%% 2019-09-18 %%%
\end{description}
M-convex functions can be characterized by a local exchange property
under the assumption 
that function $f$ is (effectively) defined on an M-convex set.
See \cite[Section 6.2]{Mdcasiam}.

When the effective domain of a function $f$ is contained in the unit cube $\{ 0, 1 \}\sp{N}$,
$f$ is M-convex 
if and only if
$-f$ is a valuated matroid introduced earlier in \cite{DW90,DW92}.
Historically, the concept of M-concave functions was introduced in \cite{Mstein96} 
as a generalization of valuated matroids.
See also \cite[Chapter 5]{Mspr2000} for valuated matroids. 

M-convex functions and
M$\sp{\natural}$-convex functions are equivalent concepts,
in that M$\sp{\natural}$-convex functions in $n$ variables 
can be obtained as projections of M-convex functions in $n+1$ variables.
More formally, let ``$0$'' denote a new element not in $N$ and
$\tilde{N} = \{0\} \cup N$.
A function $f : \ZZ\sp{N} \to \RR \cup \{ +\infty \}$ is M$\sp{\natural}$-convex
if and only if  the function 
$\tilde{f} : \ZZ\sp{\tilde{N}} \to \RR \cup \{ +\infty \}$ 
defined by
\begin{equation} \label{mfnmnatfnrelationvex}
  \tilde{f}(x_{0},x) = \left\{ \begin{array}{ll}
      f(x)    & \mbox{ if $x_{0} = {-}x(N)$} \\
      +\infty & \mbox{ otherwise}
    \end{array}\right.
 \qquad ( x_{0} \in \ZZ, x \in \ZZ\sp{N})
\end{equation}
is an M-convex function.

\subsubsection{M-convex functions on jump systems}

Let $x$ and $y$ be integer vectors.
The smallest integer box containing $x$ and $y$
is given by $[x \wedge y,x \vee y]_{\ZZ}$
using the notations $x \wedge y$ and $x \vee y$ introduced in \eqref{veewedgedef}.
A vector $s \in \ZZ\sp{N}$ is called an {\em $(x,y)$-increment} if
$s= \unitvec{i}$ or $s= -\unitvec{i}$ for some $i \in N$ and 
$x+s \in [x \wedge y,x \vee y]_{\ZZ}$.

A nonempty set $S \subseteq \ZZ\sp{N}$ is said 
to be a {\em jump system} 
\cite{BouC95}
if satisfies an exchange axiom,
called the {\em 2-step axiom}:
\begin{description}
\item[(2-step axiom)]
For any  $x, y \in S$
and any $(x,y)$-increment $s$ with $x+s \not\in S$,
there exists an $(x+s,y)$-increment $t$ such that $x+s+t \in S$.
\end{description}
Note that we have the possibility of $s = t$ in the 2-step axiom.

A set $S \subseteq \ZZ\sp{N}$ is called a {\em constant-sum system}
if $x(N)=y(N)$ for any $x,y \in S$.
A constant-sum jump system is nothing but an M-convex set,
since an M-convex set is a constant-sum system
and, for a constant-sum system, the 2-step axiom reduces to 
the exchange axiom \Bvex for an M-convex set.

A set $S \subseteq \ZZ\sp{N}$ is called a {\em constant-parity system}
if $x(N)-y(N)$ is even for any $x,y \in S$.
For a constant-parity system $S$,
the 2-step axiom is simplified to:
\begin{description}
\item[(J-EXC$_{+}$)]
For any  $x, y \in S$
and for any $(x,y)$-increment $s$,
there exists an $(x+s,y)$-increment $t$ such that $x+s+t \in S$.
\end{description}
It is known (\cite[Lemma~2.1]{Mmjump06})
that this exchange property (J-EXC$_{+}$) is equivalent to 
the following (seemingly stronger) exchange property:
\begin{description}
\item[(J-EXC)]
For any  $x, y \in S$ and any $(x,y)$-increment $s$,
there exists an $(x+s,y)$-increment $t$ such that
$x+s+t \in S$ and $y-s-t \in S$.
\end{description}
That is, (J-EXC) characterizes 
a {\em constant-parity jump system}
(or {\em c.p.~jump system} for short).

\begin{example}  \rm  \label{EXjumpdim1}
Let  $S = \{ 0, 2 \}$, which is a subset  of $\ZZ$ (with $n=1$).
This is a constant-parity jump system.
Indeed, 
for $(x,y,s)=(0,2,1)$ we can take $t=1$ in (J-EXC),
and for $(x,y,s)=(2,0,-1)$ we can take $t=-1$ in (J-EXC).
In contrast,
this set is not an \Mnat-convex set,
since we cannot take $t=s$ in 
the exchange property 
\Bnvex for \Mnat-convex sets.
\finbox
\end{example}

A function  $f: \ZZ\sp{N} \to \RR \cup \{ +\infty \}$
is called%
\footnote{%%%%%%%%%%%%%%%%%%%%%%%%%
This concept (``jump M-convex function'')  is the same as ``M-convex function on a jump system"
in \cite{KMT07jump,Mmjump06}.
} %%%% footnote %%%%%%%%%%%%%%%%%%%
{\em jump M-convex} 
if it satisfies
the following exchange axiom:
\begin{description}
\item[(JM-EXC)]
For any  $x, y \in \dom f$
and any $(x,y)$-increment $s$,
 there exists an $(x+s,y)$-increment $t$ such that
$x+s+t \in \dom f$,  \  $y-s-t \in \dom f$, and
\begin{equation}  \label{jumpmexc2}
 f(x)+f(y) \geq  f(x+s+t)  + f(y-s-t) .
\end{equation} 
\end{description}
%% To Typesetter: Please do not indent here %%
The effective domain of a jump M-convex function is 
a constant-parity jump system.

Just as we consider \Mnat-convex functions as well as
M-convex functions,
we can introduce 
the concept of {\em jump \Mnat-convex functions}
by the following exchange axiom
\cite{Mmnatjump19}:
\begin{description}
\item[(J\Mnat-EXC)]
For any  $x, y \in \dom f$
and any $(x,y)$-increment $s$, we have
\\
(i) $x+s \in \dom f$, \  $y-s \in \dom f$, and 
\begin{equation}  \label{jumpmexc1}
 f(x)+f(y) \geq  f(x+s)  + f(y-s) , 
\end{equation}
or (ii) there exists an $(x+s,y)$-increment $t$ such that
$x+s+t \in \dom f$,  \  $y-s-t \in \dom f$, and
\eqref{jumpmexc2} holds.
\end{description}

The condition 
(J\Mnat-EXC) is weaker than (JM-EXC), and hence
every jump M-convex function is a jump \Mnat-convex function.
However, the concepts of jump M-convexity and
jump \Mnat-convexity are in fact equivalent to each other in the sense that 
jump \Mnat-convex function in $n$ variables 
can be identified with jump M-convex functions in $n+1$ variables.
More specifically, for any integer vector $x \in \ZZ\sp{N}$
we define $\pi(x)=0$ if $x(N)$ is even, and
$\pi(x)=1$ if $x(N)$ is odd,
and let
$\tilde{N} = \{0\} \cup N$
with a new element ``$0$'' not in $N$.
It is known \cite{Mmnatjump19} that 
a function $f : \ZZ\sp{N} \to \RR \cup \{ +\infty \}$ is 
jump \Mnat-convex
if and only if  the function 
$\tilde{f} : \ZZ\sp{\tilde{N}} \to \RR \cup \{ +\infty \}$ 
defined by
\begin{equation}  \label{ftildeJ}
\tilde f(x_{0}, x)= 
   \left\{  \begin{array}{ll}
   f(x)            &   (x_{0} = \pi(x))       \\
   + \infty      &   (\mbox{\rm otherwise})  \\
                      \end{array}  \right.
 \qquad ( x_{0} \in \ZZ, x \in \ZZ\sp{N})
\end{equation}
is a jump M-convex function.

As a consequence of (J\Mnat-EXC),
the effective domain of a jump \Mnat-convex function
is a jump system that satisfies 
\begin{description}
\item[(J$\sp{\natural}$-EXC)]
For any  $x, y \in S$ and any $(x,y)$-increment $s$, we have
(i) $x+s \in S$ and $y-s \in S$, or 
(ii)
there exists an $(x+s,y)$-increment $t$ such that
$x+s+t \in S$ and $y-s-t \in S$.
\end{description}
A jump system that satisfies
(J$\sp{\natural}$-EXC) 
is called a {\em simultaneous exchange jump system}
(or {\em s.e.~jump system} for short) \cite{Mmnatjump19}.
Every constant-parity jump system
is a simultaneous exchange jump system,
since the condition (J-EXC) implies (J$\sp{\natural}$-EXC).

When a set $S$ is a subset of $\{ 0, 1 \}\sp{N}$,
$S$ is a c.p.~jump system
if and only if
it is an even delta-matroid \cite{BouC95},
and $S$ is an s.e.~jump system
if and only if
it is a simultaneous delta-matroid considered in \cite{Tak14fores},
where a subset of $N$ is identified with its characteristic vector.
Furthermore, 
when the effective domain of a function $f$ is contained in the unit cube $\{ 0, 1 \}\sp{N}$,
$f$ is jump M-convex 
if and only if
$-f$ is a valuated delta-matroid of \cite{DW91valdel},
and $f$ is jump \Mnat-convex 
if and only if
$-f$ is a valuation on a simultaneous delta-matroid in the sense of \cite{Tak14fores}.

Not every jump system is a simultaneous exchange jump system,
as the following examples show.

\begin{example}  \rm  \label{EXnonsejumpdim1}
Let  $S = \{ 0, 2, 3 \}$, which is a subset of $\ZZ$ (with $n=1$).
This set satisfies the 2-step axiom, and hence a jump system.
However, it does not satisfy
the simultaneous exchange property (J$\sp{\natural}$-EXC).
Indeed, (J$\sp{\natural}$-EXC) fails for $x=0$, $y=3$, and $s=1$.
\finbox
\end{example}

\begin{example}[{\cite{Tak14fores}}]  \rm  \label{EXnonsejump}
Let 
$S = \{ (0,0,0), (1,1,0), (1,0,1), (0,1,1), (1,1,1) \}$.
This set satisfies the 2-step axiom, and hence a jump system.
However, it does not satisfy
the simultaneous exchange property (J$\sp{\natural}$-EXC).
Indeed, (J$\sp{\natural}$-EXC) fails 
for $x=(0,0,0)$, $y=(1,1,1)$, and $s=(1,0,0)$.
It is worth noting that 
$S$ consists of the characteristic vectors of 
the rows (and columns) of nonsingular principal minors of 
the symmetric matrix
\[ 
A = \left[ \begin{array}{ccc}
0 & 1 & 1 \\
1 & 0 & 1 \\
1 & 1 & 0 \\
\end{array} \right]
\]
and hence it is a delta-matroid.
\finbox
\end{example}

The following example demonstrates the difference of
\Mnvex and (J\Mnat-EXC) for functions.

\begin{example}  \rm  \label{EXjmnatmnat}
Let 
$S=\{ (0,0), (1,0), (0,1), (1,1) \}$ 
and define $f: \ZZ\sp{2} \to \RR \cup \{ +\infty \}$ 
by
$f(0,0)=f(1,1)=a$ and $f(1,0)=f(0,1)=b$
with  $\dom f = S$.
\Mnvex is satisfied
if and only if $a \geq b$, whereas
(J\Mnat-EXC) is true for any $(a, b)$.
\finbox
\end{example}

The inclusion relations for sets and functions may be summarized as follows:
\begin{align}
& 
\{ \mbox{\rm M-convex sets} \}  \subsetneqq \
  \left\{  \begin{array}{l}  \{ \mbox{\rm \Mnat-convex  sets} \}
                          \\ \{ \mbox{\rm c.p. jump systems} \} \end{array}  \right \}
\subsetneqq \   \{ \mbox{\rm s.e. jump systems} \} 
\subsetneqq \   \{ \mbox{\rm jump systems} \} ,
\label{setfamjump0}
\\
&
\{ \mbox{\rm M-convex fns} \}  \subsetneqq \
  \left\{  \begin{array}{l}  \{ \mbox{\rm \Mnat-convex fns} \}
                          \\ \{ \mbox{\rm jump M-convex fns} \} \end{array}  \right \}
\subsetneqq \  
\{ \mbox{\rm jump \Mnat-convex fns} \} .
\label{fnfamjump0}
\end{align}
It is noted that no convexity class is introduced for functions defined on general jump systems.

Jump M- and \Mnat-convex functions find applications in several fields including
matching theory \cite{BK12,KST12cunconj,KT09evenf,Tak14fores}
and algebra \cite{Bra10halfplane}.

\subsection{Multimodularity}

Recall that $\unitvec{i}$ denotes the $i$th unit vector for $i =1,2,\ldots, n$,
and 
$\mathcal{F} \subseteq \ZZ\sp{N}$ 
be the set of vectors defined by
\begin{equation} \label{multimodirection1}
\mathcal{F} = \{ -\unitvec{1}, \unitvec{1}-\unitvec{2}, \unitvec{2}-\unitvec{3}, \ldots, 
  \unitvec{n-1}-\unitvec{n}, \unitvec{n} \} .
\end{equation}
A finite-valued function $f: \ZZ\sp{n} \to \RR$
is said to be {\em multimodular}
\cite{Haj85}
if it satisfies
\begin{equation} \label{multimodulardef1}
 f(z+d) + f(z+d') \geq   f(z) + f(z+d+d')
\end{equation}
for all 
$z \in \ZZ\sp{n}$ and all distinct $d, d' \in \mathcal{F}$.
It is known \cite[Proposition 2.2]{Haj85} 
that $f: \ZZ\sp{n} \to \RR$
is multimodular if and only if the function 
$\tilde f: \ZZ\sp{n+1} \to \RR$ 
defined by 
\begin{equation} \label{multimodular1}
 \tilde f(x_{0}, x) = f(x_{1}-x_{0},  x_{2}-x_{1}, \ldots, x_{n}-x_{n-1})  
 \qquad ( x_{0} \in \ZZ, x \in \ZZ\sp{n})
\end{equation}
is submodular in $n+1$ variables. 
This characterization enables us to define multimodularity 
for a function that may take the infinite value $+\infty$.
That is, we say
\cite{MM19multm,Mmult05} 
that a function $f: \ZZ\sp{n} \to \RR \cup \{ +\infty \}$ with $\dom f \not= \emptyset$
is multimodular if the function 
$\tilde f: \ZZ\sp{n+1} \to \RR \cup \{ +\infty \}$
associated with $f$ by (\ref{multimodular1}) is submodular.

Multimodularity and L$\sp{\natural}$-convexity
have the following close relationship.

\begin{theorem}[\cite{Mmult05}]  \label{THmmfnlnatfn}
A function $f: \ZZ\sp{n} \to \RR \cup \{ +\infty \}$
is multimodular if and only if the function $g: \ZZ\sp{n} \to \RR \cup \{ +\infty \}$
defined by
\begin{equation} \label{mmfnGbyF}
 g(p) = f(p_{1}, \  p_{2}-p_{1}, \  p_{3}-p_{2}, \ldots, p_{n}-p_{n-1})  
 \qquad ( p \in \ZZ\sp{n})
\end{equation}
is L$\sp{\natural}$-convex.
\finbox
\end{theorem}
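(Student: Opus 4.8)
The plan is to reduce both sides of the equivalence to the submodularity of a single function on $\ZZ\sp{n+1}$, obtained by ``lifting'' with one extra variable, and then to observe that the two a priori different lifts actually coincide. On the multimodularity side, the definition adopted in the excerpt is precisely that $f$ is multimodular if and only if the function $\tilde f$ associated with $f$ through \eqref{multimodular1},
\[
 \tilde f(x_{0}, x) = f(x_{1} - x_{0},\, x_{2} - x_{1},\, \ldots,\, x_{n} - x_{n-1})
 \qquad (x_{0} \in \ZZ,\ x \in \ZZ\sp{n}),
\]
is submodular on $\ZZ\sp{n+1}$. On the \Lnat\ side, I would invoke the equivalence (a)$\Leftrightarrow$(f) of Theorem \ref{THlnatfncond}: applied to $g$, it states that $g$ is \Lnat-convex if and only if the lift $\tilde g(p_{0}, p) = g(p - p_{0} \vecone)$ of \eqref{lfnlnatfnrelation} is submodular on $\ZZ\sp{n+1}$.

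The core of the argument is then a direct computation showing $\tilde g = \tilde f$. Starting from the defining relation $g(p) = f(p_{1},\, p_{2} - p_{1},\, \ldots,\, p_{n} - p_{n-1})$ and substituting $p - p_{0} \vecone$ for $p$, the first argument becomes $p_{1} - p_{0}$, while for $i \geq 2$ the $i$th argument becomes $(p_{i} - p_{0}) - (p_{i-1} - p_{0}) = p_{i} - p_{i-1}$, the shift $p_{0}$ telescoping away in every consecutive difference. Hence
\[
 \tilde g(p_{0}, p) = g(p - p_{0} \vecone) = f(p_{1} - p_{0},\, p_{2} - p_{1},\, \ldots,\, p_{n} - p_{n-1}),
\]
which is exactly $\tilde f(p_{0}, p)$ under the identification $(x_{0}, x) = (p_{0}, p)$. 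The identity holds pointwise on all of $\ZZ\sp{n+1}$, including at points where the value is $+\infty$, so the two lifts are literally the same extended-real-valued function.

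Chaining the three equivalences then finishes the proof: $f$ is multimodular $\iff$ $\tilde f$ is submodular $\iff$ $\tilde g$ is submodular $\iff$ $g$ is \Lnat-convex. I expect the only delicate point to be bookkeeping rather than substance: one must confirm that the two lifts introduced for different purposes---one from the definition of multimodularity, the other from the submodular characterization of \Lnat-convexity in Theorem \ref{THlnatfncond}(f)---genuinely coincide, which rests on the telescoping cancellation of the uniform shift $p_{0} \vecone$ inside the consecutive differences. Once that identity is in hand there is no further obstacle, since submodularity is simply transported verbatim between the two descriptions.
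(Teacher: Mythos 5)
Your proof is correct, and it is essentially the argument of the cited source \cite{Mmult05}: the paper itself states this theorem without proof, and the standard proof is exactly the identification, via the telescoping of the uniform shift $p_{0}\vecone$, of the submodular lift $\tilde f$ from the definition of multimodularity with the submodular lift $\tilde g$ from Theorem~\ref{THlnatfncond}(f). The one hypothesis worth a passing word—that $\dom g \neq \emptyset$ iff $\dom f \neq \emptyset$—is immediate since $p \mapsto (p_{1}, p_{2}-p_{1}, \ldots, p_{n}-p_{n-1})$ is a bijection of $\ZZ\sp{n}$.
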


Note that the relation (\ref{mmfnGbyF}) between $f$ and $g$ can be rewritten as
\begin{equation} \label{mmfnFbyG}
 f(x) = g(x_{1}, \  x_{1}+x_{2}, \  x_{1}+x_{2}+x_{3}, 
   \ldots, x_{1}+ \cdots + x_{n})  
 \qquad ( x \in \ZZ\sp{n}) .
\end{equation}
Using a bidiagonal matrix 
$D=(d_{ij} \mid 1 \leq i,j \leq n)$ defined by
\begin{equation} \label{matDdef}
 d_{ii}=1 \quad (i=1,2,\ldots,n),
\qquad
 d_{i+1,i}=-1 \quad (i=1,2,\ldots,n-1),
\end{equation}
we can express (\ref{mmfnGbyF}) and (\ref{mmfnFbyG}) 
more compactly  as $g(p)=f(Dp)$ and $f(x)=g(D\sp{-1}x)$, respectively. 
The matrix $D$ is unimodular, and its inverse
$D\sp{-1}$ is an integer matrix
with $(D\sp{-1})_{ij}=1$ for $i \geq j$ and 
$(D\sp{-1})_{ij}=0$ for $i < j$.
For $n=4$, for example, we have
\[
D = {\small
\left[ \begin{array}{rrrr}
1 & 0 & 0 & 0 \\
-1 & 1 & 0 & 0 \\
0 & -1 & 1 & 0 \\
0 & 0 & -1 & 1 \\
\end{array}\right]},
\qquad
D\sp{-1} = {\small
\left[ \begin{array}{rrrr}
1 & 0 & 0 & 0 \\
1 & 1 & 0 & 0 \\
1 & 1 & 1 & 0 \\
1 & 1 & 1 & 1 \\
\end{array}\right]}.
\]

A nonempty set $S$ is called {\em multimodular}
if its indicator function $\delta_{S}$ is multimodular. 
A multimodular set $S$ can be represented as 
$S = \{ D p \mid p \in T \}$
for some \Lnat-convex set $T$,
where $T$ is uniquely determined from $S$ as 
$T = \{ D\sp{-1} x \mid x \in S \}$.
It follows from (\ref{mmfnGbyF}) that
the effective domain of a multimodular function is a multimodular set.

A quadratic multimodular function admits a simple characterization 
in terms of its coefficient matrix.

\begin{proposition}[{\cite[Proposition 3]{MM19multm}}] \rm \label{PRmmfnquadr}
A quadratic function $f(x) = x^{\top} A x$ is multimodular
if and only if
\begin{equation} \label{mmfquadrcond}
a_{ij} - a_{i,j+1} - a_{i+1,j} + a_{i+1,j+1} \leq 0
\qquad (0 \leq i < j \leq n),
\end{equation}
where $A=(a_{ij} \mid i,j =1,2,\ldots, n)$ and $a_{ij} =0$ if $i=0$ or $j=n+1$.
\finbox
\end{proposition}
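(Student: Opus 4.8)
The plan is to reduce the quadratic case to the general multimodularity criterion already encoded in Theorem~\ref{THmmfnlnatfn} and the submodularity characterization~\eqref{multimodular1}. By definition, $f(x)=x^{\top}Ax$ is multimodular precisely when the associated function $\tilde f(x_0,x)=f(x_1-x_0,\dots,x_n-x_{n-1})$ is submodular on $\ZZ^{n+1}$. Since a quadratic function's submodularity is equivalent to the nonpositivity of all off-diagonal entries of its (symmetrized) coefficient matrix, the whole statement should collapse to computing the coefficient matrix of $\tilde f$ and reading off its off-diagonal entries. So the first step is to write $\tilde f$ explicitly as a quadratic form in the $n+1$ variables $(x_0,x_1,\dots,x_n)$.

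Concretely, I would introduce the difference operator that sends $(x_0,x_1,\dots,x_n)$ to the vector with components $x_k-x_{k-1}$ for $k=1,\dots,n$; call its matrix $B$, an $n\times(n+1)$ matrix with $+1$ on one diagonal and $-1$ on the adjacent one. Then $\tilde f(x_0,x)=(Bx)^{\top}A(Bx)=x^{\top}(B^{\top}AB)x$, so the coefficient matrix of $\tilde f$ is $\tilde A=B^{\top}AB$, indexed by $\{0,1,\dots,n\}$. The key computation is to expand the $(i,j)$ entry of $\tilde A$ for $0\le i,j\le n$: using $(Bx)_k=x_k-x_{k-1}$, each entry $\tilde a_{ij}$ becomes a signed sum of four entries of $A$, namely $a_{ij}-a_{i,j+1}-a_{i+1,j}+a_{i+1,j+1}$, exactly matching the left-hand side of~\eqref{mmfquadrcond} once the boundary convention $a_{ij}=0$ for $i=0$ or $j=n+1$ is adopted (this convention is precisely what makes the formula uniform at the edges where one of the shifted indices falls outside $\{1,\dots,n\}$). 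I would verify this expansion carefully, since getting the index bookkeeping and the boundary rows/columns right is where errors creep in.

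Having identified $\tilde a_{ij}=a_{ij}-a_{i,j+1}-a_{i+1,j}+a_{i+1,j+1}$, the proof concludes by invoking the fact that a quadratic form on $\ZZ^{n+1}$ is submodular if and only if all its off-diagonal coefficients are nonpositive, i.e.\ $\tilde a_{ij}\le 0$ for all $i\ne j$. Because submodularity is symmetric in the two arguments and $A$ may be taken symmetric, it suffices to impose the condition for $i<j$, which yields exactly the stated inequalities $a_{ij}-a_{i,j+1}-a_{i+1,j}+a_{i+1,j+1}\le 0$ for $0\le i<j\le n$. I should also check that the diagonal terms $\tilde a_{ii}$ impose no constraint, consistent with submodularity placing no sign condition on diagonal coefficients of a quadratic form.

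The main obstacle I anticipate is not conceptual but bookkeeping: correctly handling the boundary indices $i=0$ and $j=n$ where the difference operator's shifts reach outside the native index range $\{1,\dots,n\}$ of $A$. The paper's convention $a_{ij}=0$ for $i=0$ or $j=n+1$ is engineered to absorb these edge cases, and the delicate point is confirming that this single convention simultaneously produces the correct entries $\tilde a_{0j}$, $\tilde a_{i n}$, and $\tilde a_{0 n}$. A secondary subtlety is justifying the ``quadratic submodular iff off-diagonal nonpositive'' step at the level of the discrete (second-difference) definition rather than the smooth Hessian, but for a quadratic $q(x)=x^{\top}\tilde A x$ the discrete cross-difference $q(x+\unitvec{i}+\unitvec{j})-q(x+\unitvec{i})-q(x+\unitvec{j})+q(x)$ equals $2\tilde a_{ij}$ identically, so submodularity~\eqref{submfn} is equivalent to $\tilde a_{ij}\le 0$ for $i\ne j$ with no further work.
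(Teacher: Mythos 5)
Your proposal is correct: the paper states this proposition only by citation to \cite[Proposition 3]{MM19multm} and gives no proof, but your argument via $\tilde{A}=B^{\top}AB$ and the fact that a (symmetric) quadratic form is submodular iff its off-diagonal entries are nonpositive is exactly the intended route, and your index bookkeeping (including the boundary convention absorbing $i=0$ and $j+1=n+1$) checks out. The only point worth making explicit is that the statement implicitly takes $A$ symmetric (otherwise the submodularity condition reads $\tilde{a}_{ij}+\tilde{a}_{ji}\leq 0$ rather than $\tilde{a}_{ij}\leq 0$), which you already note by saying $A$ may be taken symmetric.
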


The reader is referred to 
\cite{AGH00,AGH03,Haj85,Mmult05,Mdcaprimer07}
for more about multimodularity,
and \cite{MM19multm} 
for a recent development.

%%%%%%%%%%%%%%%%%%%%%%%%%%%%%%%%%%%%%
\subsection{Relations among classes of discrete convex sets}
\label{SCclasssetZ}

Figure \ref{FGdcsetclassZR} shows the inclusion relations 
among the most fundamental classes of discrete convex sets.
Integrally convex sets
contain both \Lnat-convex sets and \Mnat-convex sets.
\Lnat-convex sets contain L-convex sets as a special case.
Similarly,
\Mnat-convex sets contain M-convex sets as a special case.
The classes of L-convex sets and M-convex sets
are disjoint, whereas the intersection of the classes 
of \Lnat-convex sets and \Mnat-convex sets
is precisely the class of integer boxes.
Integer boxes are neither L-convex nor M-convex,
with the exception that the entire lattice
$\ZZ\sp{n}$ is an L-convex set and every singleton set is an M-convex set.
%% 2019-08-24 %%%

%%%%% FIGURE %%%%%%%%%%%%%%
%%\input{DCAEfg1dcsetclassZRwIC2}
%%\input{DCAEfg1dcsetclassZRwIC}
\begin{figure}\begin{center}
\includegraphics[width=0.7\textwidth,clip]{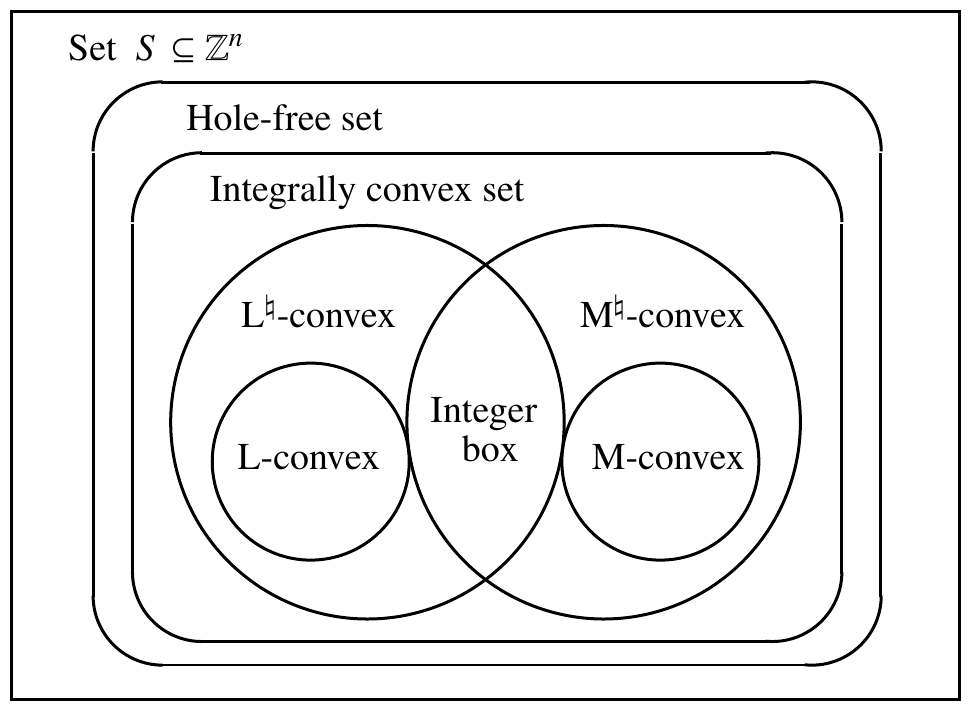}
\caption{Classes of discrete convex sets
(\Lnat-convex $\cap$ \Mnat-convex $=$ integer box)}
 \label{FGdcsetclassZR}
%%\memo{L-convex/M-convex touch Int.box (DCAEfg1dcsetclassZRwIC2.tex)}
\end{center}\end{figure}
%%%% FIGURE %%%%%%%%%%%%%%

Other kinds of discrete convex sets treated in this paper
are multimodular sets,
discrete midpoint convex sets, and jump systems.
Multimodular sets can be obtained from \Lnat-convex sets by a unimodular transformation,
and vice versa.
Integer boxes are multimodular, and multimodular sets are integrally convex.
\Lnat-convex sets are discrete midpoint convex,
and discrete midpoint convex sets are integrally convex.
Simultaneous exchange jump systems contain \Mnat-convex sets as a special case,
and
constant-parity jump systems contain M-convex sets as a special case.
Jump systems,
whether simultaneous exchange or constant-parity,  
are not necessarily integrally convex.

The inclusion relations among these set classes
are summarized in the following theorem.
We mention that \eqref{setfamL} to \eqref{setfamsepM}
are shown in Fig.~\ref{FGdcsetclassZR}.

\begin{theorem} \label{THsetclassinclusion}
The following inclusion relations hold for subsets of $\ZZ\sp{n}$:
\begin{align}
&
\{ \mbox{\rm {\rm L}-convex} \} \subsetneqq \ 
\{ \mbox{\rm \Lnat-convex} \} \subsetneqq \ \{ \mbox{\rm integrally convex} \},
\label{setfamL}
\\ &
\{ \mbox{\rm {\rm M}-convex} \} \subsetneqq \ 
\{ \mbox{\rm \Mnat-convex} \}
\subsetneqq \ \{ \mbox{\rm integrally convex} \},
\label{setfamM}
\\ &
\{ \mbox{\rm integer box} \} = \{ \mbox{\rm \Lnat-convex} \} \cap  \{ \mbox{\rm \Mnat-convex} \},
\label{setfamsepLnMn}
\\ &
%% 2019-08-24 %%%
\{ \mbox{\rm integer box} \} \cap  \{ \mbox{\rm L-convex} \} = \{ \ZZ\sp{n} \},
\label{setfamsepL}
\\ &
\{ \mbox{\rm integer box} \} \cap \{ \mbox{\rm M-convex} \}
 = \{ \mbox{\rm singleton} \},
\label{setfamsepM}
%%%%%%%%%%%%%%%%%%%%%%%%%%
\\ &
\{ \mbox{\rm integer box} \}
\subsetneqq \ \{ \mbox{\rm multimodular} \}
\subsetneqq \ \{ \mbox{\rm integrally convex} \} ,
\label{setfammultm}
\\ &
\{ \mbox{\rm \Lnat-convex} \}
\subsetneqq \ \{ \mbox{\rm discrete midpoint convex} \}
\subsetneqq \ \{ \mbox{\rm integrally convex} \},
\label{setfamdmc}
\\ &
\{ \mbox{\rm M-convex} \}  \subsetneqq \
  \left\{  \begin{array}{l}  \{ \mbox{\rm \Mnat-convex} \}
                          \\ \{ \mbox{\rm c.p. jump} \} \end{array}  \right \}
\subsetneqq \  
\{ \mbox{\rm s.e. jump} \} 
\not\subseteq \ \{ \mbox{\rm integrally convex} \},
\label{setfamjump}
\end{align}
where ``c.p.~jump" and ``s.e.~jump" 
mean constant-parity jump system and simultaneous exchange jump system, 
respectively. 
\finbox
\end{theorem}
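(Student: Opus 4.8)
The plan is to establish each displayed inclusion of Theorem~\ref{THsetclassinclusion} separately, treating the set-level statements as specializations of the function-level machinery already assembled in the excerpt. For the containments \eqref{setfamL} and \eqref{setfamM}, the forward inclusions are immediate from Proposition~\ref{PRlnatset} and the \Mnat/\Bnat-EXC characterizations: an \Lnat-convex set is integrally convex by condition (b) of Proposition~\ref{PRlnatset}, and every L-convex set is \Lnat-convex since \eqref{invarone} is an extra constraint on top of \eqref{midptcnvset}; the \Mnat-case follows since \Bvex is \Bnvex with case (i) suppressed. The strictness symbols $\subsetneqq$ require exhibiting separating examples, and here the cleanest route is to produce small witnesses in $\ZZ\sp{2}$ or $\ZZ\sp{3}$ (e.g. a triangle-like \Mnat-convex set that is not M-convex because its points do not share a constant component-sum, and an \Lnat-convex set violating the extra shift-invariance \eqref{invarone}).

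For the three equalities \eqref{setfamsepLnMn}, \eqref{setfamsepL}, \eqref{setfamsepM}, I would argue by double inclusion. An integer box is trivially both \Lnat-convex and \Mnat-convex by checking \eqref{midptcnvset} and \Bnvex componentwise, so the nontrivial direction of \eqref{setfamsepLnMn} is that a set which is simultaneously \Lnat- and \Mnat-convex must be an integer box; the natural tactic is to show that such a set is closed under $\vee$ and $\wedge$ (from \eqref{submset}) and also under the simultaneous single-coordinate exchanges, forcing it to factor as a product of one-dimensional intervals. For \eqref{setfamsepL} and \eqref{setfamsepM}, I would use the defining conditions directly: an integer box that is L-convex must satisfy \eqref{invarone} in both directions, which for a bounded interval is impossible unless the box is all of $\ZZ\sp{n}$; and an integer box that is M-convex must lie on a constant-component-sum hyperplane, which forces it to be a single point.

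The containments \eqref{setfammultm} and \eqref{setfamdmc} follow from results already stated. For multimodularity, the representation $S=\{Dp \mid p \in T\}$ with $T$ an \Lnat-convex set (from the discussion around Theorem~\ref{THmmfnlnatfn}) together with the fact that $D$ is unimodular transfers integral convexity of the \Lnat-convex $T$ to $S$; an integer box corresponds to an appropriate $T$, and strictness is witnessed by a multimodular set whose preimage $T$ is genuinely \Lnat-convex but not a box. The discrete-midpoint chain \eqref{setfamdmc} is essentially immediate: \eqref{midptcnvset} implies \eqref{dirintcnvsetdef} by restricting to pairs with $\|x-y\|_\infty \ge 2$, and discrete midpoint convex sets are asserted to be integrally convex in the subsection text, so I would invoke or reprove that implication via Theorem~\ref{THfavtarProp33} applied to the indicator function.

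The chain \eqref{setfamjump} is where the real work lies, and I expect the last relation $\{\mbox{\rm s.e. jump}\} \not\subseteq \{\mbox{\rm integrally convex}\}$ to be the main obstacle in spirit, though it is dispatched by a single counterexample rather than a general argument. The inclusions $\{\text{M-convex}\}\subsetneqq\{\text{\Mnat-convex}\}$ and $\{\text{M-convex}\}\subsetneqq\{\text{c.p. jump}\}$ reduce to the remark that an M-convex set is a constant-sum (hence constant-parity) system and that \Bvex is stronger than both \Bnvex and (J-EXC); the inclusions into s.e. jump systems follow because (J-EXC) implies (J$\sp{\natural}$-EXC) and because an \Mnat-convex set satisfies (J$\sp{\natural}$-EXC) with $t$ taken from $\{0\}\cup\suppm$, as already noted after the definition of s.e. jump systems. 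For the non-inclusion I would reuse Example~\ref{EXnonsejump} (or the one-dimensional $\{0,2\}$ of Example~\ref{EXjumpdim1}): a set like $\{0,2\}\subseteq\ZZ$ is a c.p. jump system, hence s.e., yet its convex hull $[0,2]$ contains the integer point $1\notin S$, so $S$ is not hole-free and therefore not integrally convex. The strictness of $\{\text{s.e. jump}\}\subsetneqq\{\text{jump}\}$ is supplied verbatim by Examples~\ref{EXnonsejumpdim1} and~\ref{EXnonsejump}. The one genuinely delicate point is verifying that the two-element brace in \eqref{setfamjump} is not linearly ordered — that is, exhibiting an \Mnat-convex set that is not a c.p. jump system and a c.p. jump system that is not \Mnat-convex — for which Example~\ref{EXjumpdim1} already furnishes the second direction, and a box such as $\{0,1\}\subseteq\ZZ$ (\Mnat-convex but of non-constant parity) furnishes the first.
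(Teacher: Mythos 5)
Your route differs from the paper's: the paper derives Theorem~\ref{THsetclassinclusion} by specializing the function-level inclusions of Theorem~\ref{THfnclassinclusion} to indicator functions, with the individually nontrivial facts (integral convexity of multimodular and discrete midpoint convex sets, the identity $\{\mbox{\rm \Lnat-convex}\}\cap\{\mbox{\rm \Mnat-convex}\}=\{\mbox{\rm integer box}\}$) delegated to cited references, whereas you argue directly at the set level from the definitions. Most of your direct arguments are sound, and your treatment of \eqref{setfamsepL}, \eqref{setfamsepM} and of the jump-system chain \eqref{setfamjump} (including the witness $\{0,2\}$ for non-integral-convexity and the incomparability of the braced pair) is correct.

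There is, however, a genuine error in your argument for the second inclusion of \eqref{setfammultm}. You claim that integral convexity of the \Lnat-convex set $T$ transfers to $S=\{Dp\mid p\in T\}$ ``because $D$ is unimodular.'' Unimodular coordinate changes do \emph{not} preserve integral convexity: the notion depends on the coordinate system through the $\ell_\infty$-neighborhoods $N(x)$. Concretely, $T=\{(0,0),(1,1)\}$ is \Lnat-convex (hence integrally convex), yet its image under the unimodular matrix $D\sp{-1}=\left[\begin{smallmatrix}1&0\\1&1\end{smallmatrix}\right]$ is $\{(0,0),(1,2)\}$, whose convex hull contains $(1/2,1)$ while $N((1/2,1))\cap\{(0,0),(1,2)\}=\emptyset$; so the image is not integrally convex. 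Thus ``unimodular'' cannot be the reason, and the fact that the \emph{specific} map $D$ sends \Lnat-convex sets to integrally convex sets is a substantive result that the paper attributes to Hajek's explicit convex extension (cf.\ Remark~\ref{RMfnclassbib}); you must either cite it or supply that construction. Two smaller gaps: your fallback of ``reproving'' that discrete midpoint convex sets are integrally convex via Theorem~\ref{THfavtarProp33} is circular, since that theorem presupposes the effective domain is already integrally convex (the paper cites this as a separate result); and the hard direction of \eqref{setfamsepLnMn} (that a simultaneously \Lnat- and \Mnat-convex set is a box) is only sketched --- closure under $\vee$ and $\wedge$ plus single-coordinate exchanges does not by itself ``force'' the product structure without the argument of \cite[Lemma 5.7]{MS01rel} or an equivalent.
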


\begin{remark} \rm \label{RMsetclassbib}
Here is a supplement to Theorem~\ref{THsetclassinclusion}.
The relation \eqref{setfamsepLnMn} originates in \cite[Lemma 5.7]{MS01rel}.
The inclusion relations given in 
Theorem~\ref{THsetclassinclusion} follow from the inclusion relations 
for discrete convex functions in Theorem~\ref{THfnclassinclusion}.
Integral convexity of L-convex sets 
is proved in \cite[Theorem 5.10]{Mdcasiam},
and that of M-convex sets in \cite[Theorem 4.24]{Mdcasiam}.
Integral convexity of discrete midpoint convex sets 
is established in
\cite[Proposition 1]{MMTT19dmpc}.
\finbox
\end{remark}

\begin{remark} \rm \label{RMsetdim2}
For subsets of $\ZZ\sp{2}$,
\Lnat-convexity and \Mnat-convexity are essentially the same 
in the sense that 
$S \subseteq \ZZ\sp{2}$ is \Lnat-convex if and only if
$T = \{ (x_{1}, -x_{2}) \mid  (x_{1}, x_{2}) \in S \}$
is \Mnat-convex.
Moreover, multimodularity is the same as \Mnat-convexity,
that is, a set $S \subseteq \ZZ\sp{2}$
is multimodular if and only if it is \Mnat-convex.
\finbox
\end{remark}

\paragraph*{Hole-free property}

Integrally convex sets are hole-free
($S =  \overline{S} \cap \ZZ^{n}$),
and hence the same is true for 
L-convex, \Lnat-convex, M-convex, \Mnat-convex, 
multimodular, and discrete midpoint convex sets.
In contrast, jump systems may have a hole in its convex hull.

\subsection{Relations among classes of discrete convex functions}
\label{SCclassfnZ}

Figure \ref{FGdcfclassZR} shows the inclusion relations 
among the most fundamental classes of discrete convex functions.
Integrally convex functions
contain both \Lnat-convex functions and \Mnat-convex functions.
\Lnat-convex functions contain L-convex functions as a special case.
Similarly,
\Mnat-convex functions contain M-convex functions as a special case.
The classes of L-convex functions and M-convex functions
are disjoint, whereas the intersection of the classes 
of \Lnat-convex functions and \Mnat-convex functions
is precisely the class of separable convex functions.
A separable convex function is neither L-convex nor M-convex,
except that every linear function on the entire lattice $\ZZ\sp{n}$
is an L-convex function and every function on a singleton set is an M-convex function. 
%%% 2019-08-24 %%

%%%%% FIGURE %%%%%%%%%%%%%%
%%\input{DCAEfg1dcfclassZRwIC2}
%%\input{DCAEfg1dcfclassZRwIC}
\begin{figure}\begin{center}
\includegraphics[width=0.7\textwidth,clip]{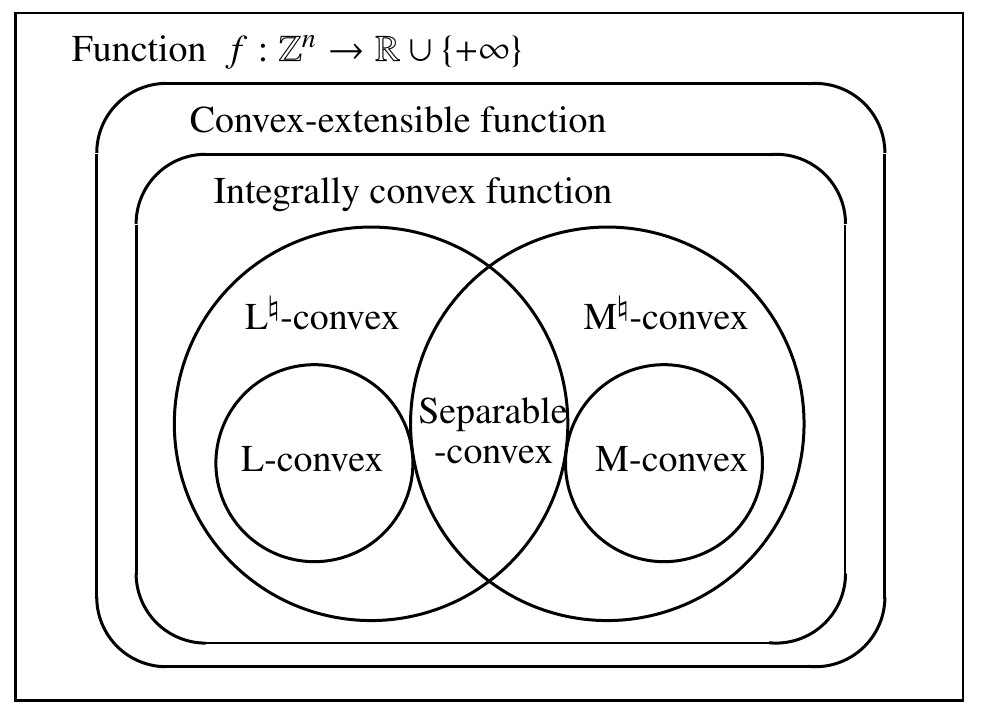}
\caption{Classes of discrete convex functions
(\Lnat-convex $\cap$ \Mnat-convex $=$ separable convex)}
 \label{FGdcfclassZR}
%%\memo{L-convex/M-convex touch Separ-conv (DCAEfg1dcfclassZRwIC2.tex)}
\end{center}\end{figure}
%%%% FIGURE %%%%%%%%%%%%%%

Other kinds of discrete convex functions treated in this paper
are multimodular functions,
globally and locally discrete midpoint convex functions, and
jump \Mnat- and M-convex functions.
Multimodular functions can be obtained from \Lnat-convex functions  
by a unimodular coordinate transformation, and vice versa.
Separable convex functions are multimodular, 
and multimodular functions are integrally convex.
The class of globally discrete midpoint convex functions
lies between the classes of \Lnat-convex functions
and locally discrete midpoint convex functions.
Locally discrete midpoint convex functions are integrally convex.
Jump \Mnat-convex and jump M-convex functions are not necessarily integrally convex.
Jump \Mnat-convex functions contain \Mnat-convex functions 
a special case, and similarly 
jump M-convex functions contain M-convex functions a special case.

The inclusion relations among these function classes
are summarized in the following theorem.
We mention that \eqref{fnfamL} to \eqref{fnfamsepM}
are shown in Fig.~\ref{FGdcfclassZR}.

\begin{theorem} \label{THfnclassinclusion}
The following inclusion relations hold for functions on $\ZZ\sp{n}$:
\begin{align}
&
\{ \mbox{\rm {\rm L}-convex} \} \subsetneqq \ 
\{ \mbox{\rm \Lnat-convex} \} \subsetneqq \ \{ \mbox{\rm integrally convex} \},
\label{fnfamL}
\\ &
\{ \mbox{\rm {\rm M}-convex} \} \subsetneqq \ 
\{ \mbox{\rm \Mnat-convex} \}
\subsetneqq \ \{ \mbox{\rm integrally convex} \},
\label{fnfamM}
\\ &
\{ \mbox{\rm separable convex} \} = \{ \mbox{\rm \Lnat-convex} \} \cap  \{ \mbox{\rm \Mnat-convex} \},
\label{fnfamsepLnMn}
\\ &
\{ \mbox{\rm separable convex} \}  \cap  \{ \mbox{\rm L-convex} \}
 = \{ \mbox{\rm linear on $\ZZ\sp{n}$} \}, 
\label{fnfamsepL}
%% 2019-08-24 %%%
\\ &
\{ \mbox{\rm separable convex} \}  \cap  \{ \mbox{\rm M-convex} \}
 = \{ \mbox{\rm singleton effective domain} \}, 
\label{fnfamsepM}
\\ &
\{ \mbox{\rm separable convex} \}
\subsetneqq \ \{ \mbox{\rm multimodular} \}
\subsetneqq \ \{ \mbox{\rm integrally convex} \} ,
\label{fnfammultm}
\\ &
\{ \mbox{\rm \Lnat-convex} \}
\subsetneqq \ \{ \mbox{\rm globally d.m.c.} \}
\subsetneqq \ \{ \mbox{\rm locally d.m.c.} \}
\subsetneqq \ \{ \mbox{\rm integrally convex} \},
\label{fnfamdmc}
\\ &
\{ \mbox{\rm M-convex} \}  \subsetneqq \
  \left\{  \begin{array}{l}  \{ \mbox{\rm \Mnat-convex} \}
                          \\ \{ \mbox{\rm jump M-convex} \} \end{array}  \right \}
\subsetneqq \  
\{ \mbox{\rm jump \Mnat-convex} \} 
\not\subseteq \ \{ \mbox{\rm integrally convex} \}.
\label{fnfamjump}
\end{align}
\vspace{-1.3\baselineskip}\\
\finbox
\end{theorem}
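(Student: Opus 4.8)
The plan is to establish the eight display lines one at a time; each inclusion is either immediate from the definitions together with the characterizations proved above, a reduction to one foundational fact, or a matter of exhibiting a separating example. I would first dispatch the purely definitional forward inclusions. In \eqref{fnfamL}, the inclusion of L-convex in \Lnat-convex functions is Theorem \ref{THlfncond}(b), and the inclusion of \Lnat-convex in integrally convex functions is Theorem \ref{THlnatfncond}(c). In \eqref{fnfamM}, every M-convex function is \Mnat-convex by definition, being an \Mnat-convex function whose effective domain has constant component sum. The chain from \Lnat-convex through globally to locally discrete midpoint convex functions in \eqref{fnfamdmc} was already observed after \eqref{midptcnvfn2}, and local discrete midpoint convexity implies integral convexity by \cite{MMTT19dmpc}. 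In \eqref{fnfamjump}, the passages from \Mnat-convex and from jump M-convex functions to jump \Mnat-convex functions are recorded in the discussion of the axiom (J\Mnat-EXC), while every M-convex function is jump M-convex because an M-convex exchange $x-\unitvec{i}+\unitvec{j}$ is a (JM-EXC) step with increments $s=-\unitvec{i}$ and $t=\unitvec{j}$.

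I would then treat the separable-convex characterizations and multimodularity. A separable convex function is both \Lnat- and \Mnat-convex coordinatewise: the discrete midpoint inequality \eqref{midptcnvfn} reduces in each coordinate to the one-dimensional convexity \eqref{univarconvdef}, and the exchange property \Mnvex already holds with $j=0$, since for $i\in\suppp(x-y)$ one has $x_{i}\geq y_{i}+1$ and convexity of $\varphi_{i}$ gives $\varphi_{i}(x_{i})-\varphi_{i}(x_{i}-1)\geq\varphi_{i}(y_{i}+1)-\varphi_{i}(y_{i})$. For the reverse inclusion in \eqref{fnfamsepLnMn}, a function that is simultaneously \Lnat- and \Mnat-convex is both submodular \eqref{submfn} and supermodular \eqref{setfnsubmZ}, hence modular; its effective domain, being both an \Lnat- and an \Mnat-convex set, is an integer box by \cite[Lemma~5.7]{MS01rel}, and a modular function on an integer box with convex one-dimensional sections is precisely a separable convex function \eqref{sepfndef}. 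The two corner identities follow by imposing one further constraint: the invariance \eqref{shiftlfnZ} required by L-convexity forces each $\varphi_{i}$ to be affine and $\dom f=\ZZ\sp{n}$, yielding \eqref{fnfamsepL}, whereas the constant-component-sum requirement of M-convexity collapses an integer-box domain to a single point, yielding \eqref{fnfamsepM}. Finally, a separable convex function is multimodular by the direct check that the associated $\tilde f$ of \eqref{multimodular1} is a sum of terms $\varphi_{k}(x_{k}-x_{k-1})$, each submodular on $\ZZ\sp{n+1}$; that multimodular functions are integrally convex I would take from \cite{Mmult05}, the point being that this rests on the specific bidiagonal structure of $D$ in \eqref{matDdef} (via Theorem \ref{THmmfnlnatfn}) rather than on any false claim that unimodular maps preserve integral convexity.

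The main obstacle is the inclusion of \Mnat-convex functions in integrally convex functions in \eqref{fnfamM}: this is the one genuinely nontrivial implication, with no short derivation from the material above. I would establish it by quoting the foundational result that \Mnat-convex functions --- and in particular M-convex functions --- are integrally convex, proved in \cite[Chapter~6]{Mdcasiam} (cf.\ the set statement \cite[Theorem~4.24]{Mdcasiam}). Were a self-contained argument preferred, I would first invoke the set result to see that $\dom f$ is integrally convex, and then verify the local criterion (b) of Theorem \ref{THfavtarProp33}, bounding $\tilde f\big(\tfrac{x+y}{2}\big)$ for $\|x-y\|_{\infty}=2$ by applying \Mnvex to split the exchange into two unit coordinate moves; the careful bookkeeping of the local convex combination that realizes $\tilde f$ is the delicate point, which is exactly why I would prefer to cite the established result.

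It remains to justify the strictness symbols $\subsetneqq$ and the single non-inclusion $\not\subseteq$, in every case by an example. An \Lnat-convex function that violates \eqref{shiftlfnZ} separates L-convex from \Lnat-convex, and an integrally convex function that is not submodular separates \Lnat-convex from integrally convex (by the characterization Theorem \ref{THlnatfncond}(c)); small analogues handle the corresponding strict inclusions in \eqref{fnfamM}, non-separable \Lnat-convex functions together with an integrally convex non-multimodular function give the two strict inclusions in \eqref{fnfammultm}, and the discrete-midpoint-convex separations in \eqref{fnfamdmc} are supplied by \cite{MMTT19dmpc}. For \eqref{fnfamjump}, the indicator function $\delta_{\{0,2\}}$ of the constant-parity jump system of Example \ref{EXjumpdim1} is jump M-convex, being constant on its domain, yet its effective domain has a hole at the point $1$ and is therefore not integrally convex; this proves the non-inclusion $\not\subseteq$. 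Lastly, Example \ref{EXjmnatmnat} taken with $a<b$ provides a jump \Mnat-convex function that is not \Mnat-convex, which witnesses the strictness of the inclusion of \Mnat-convex functions in jump \Mnat-convex functions.
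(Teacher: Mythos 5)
Your proposal is correct and follows essentially the same route as the paper, whose ``proof'' of this theorem is the collection of citations in Remark~\ref{RMfnclassbib}: the genuinely nontrivial inclusions (\Mnat-convex, multimodular, and locally d.m.c.\ functions being integrally convex) are delegated to the literature, while the definitional inclusions, the identity \eqref{fnfamsepLnMn} via modularity on an integer-box domain, and the separating examples are filled in directly. One small slip: a non-separable \Lnat-convex function does not by itself witness the first strict inclusion in \eqref{fnfammultm}, since \Lnat-convex functions need not be multimodular; a correct witness is a non-separable multimodular function such as $f(x_{1},x_{2})=|x_{1}+x_{2}|$ (cf.\ Example~\ref{EXmfnsigninv}), and this is trivially repaired.
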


\begin{remark} \rm \label{RMfnclassbib}
Here is a supplement to Theorem~\ref{THfnclassinclusion}.
The relation \eqref{fnfamsepLnMn} originates in \cite[Theorem 3.17]{MS01rel}
and is stated in \cite[Theorem 8.49]{Mdcasiam}.
Integral convexity is established for 
\Lnat-convex functions in 
\cite[Theorem 7.20]{Mdcasiam},
for \Mnat-convex functions in 
\cite[Theorem 3.9]{MS01rel}
(see also \cite[Theorem 6.42]{Mdcasiam}), and
for locally (hence globally) discrete midpoint convex functions in
\cite[Theorem 6]{MMTT19dmpc}.
The integral convexity of multimodular functions in \eqref{fnfammultm} was
pointed out first in \cite[Section 14.6]{Mdcaprimer07},
while this is implicit in the construction of the convex extension given earlier in 
\cite[Theorem 4.3]{Haj85}.
The first inclusion in \eqref{fnfammultm} for multimodular functions 
is given in 
\cite[Proposition 2]{MM19multm}.
The inclusion relations in \eqref{fnfamdmc} are given in \cite[Theorem 6]{MMTT19dmpc}.
\finbox
\end{remark}

\begin{remark} \rm \label{RMfndim2}
For functions in two variables,
\Lnat-convexity and \Mnat-convexity are essentially the same 
in the sense that 
a function $f: \ZZ\sp{2} \to \RR \cup \{ +\infty \}$
is \Lnat-convex if and only if
$g(x_{1}, x_{2}) = f(x_{1}, -x_{2})$
is \Mnat-convex.
Moreover, multimodularity is the same as \Mnat-convexity,
that is, a function in two variables
is multimodular if and only if it is \Mnat-convex.
\finbox
\end{remark}

\paragraph*{Convex-extensibility}

Integrally convex functions are convex-extensible (by definition),
and therefore,
separable convex,
L-convex, \Lnat-convex, M-convex, \Mnat-convex, multimodular, and 
globally/locally discrete midpoint convex functions
are convex-extensible.
In contrast, jump \Mnat- or M-convex functions
are not necessarily convex-extensible.
The convex extension of an \Lnat -convex function
can be given by a collection of the (local) Lov{\'a}sz extensions 
(Choquet integrals)
\cite[Section 7.7]{Mdcasiam}.
The convex extension of a multimodular function can be 
constructed in an explicit manner \cite[Theorem 4.3]{Haj85}
(also \cite[Theorem 2.1]{AGH00}),
which may be regarded as a variant of the Lov{\'a}sz extension.
No explicit expression is available for 
the convex extension of an \Mnat- or M-convex function 
\cite[Section 6.10]{Mdcasiam}.

\paragraph*{Set function}

A function $f: 2\sp{N} \to \RR \cup \{ +\infty \}$
that assigns a real number (or $ +\infty$) to each subset of 
$N = \{ 1,2, \ldots, n \}$
is called a {\em set function}.
A set function $f$ is said to be {\em submodular}
\cite{Edm70,Fuj05book,Lov83,Top98}
if 
\begin{equation} \label{submdef=feat}
 f(X) + f(Y) \geq f(X \cup Y) + f(X \cap Y)
 \qquad (X,Y \subseteq N) ,
\end{equation}
where it is understood that the inequality is satisfied
if $f(X)$ or $f(Y)$ is equal to $+\infty$.

A set function $f: 2\sp{N} \to \RR \cup \{ +\infty \}$
can be identified with 
a function $g: \ZZ\sp{n} \to \RR \cup \{ +\infty \}$ with
$\dom g \subseteq \{ 0,1 \}\sp{n}$
through the correspondence
$f(X) = g(\unitvec{X})$ for $X \subseteq N$.
With this correspondence in mind we can say that 
submodular set functions are exactly 
\Lnat-convex functions on $\{ 0,1 \}\sp{n}$,
and {\em valuated matroids}
are exactly M-concave functions on $\{ 0,1 \}\sp{n}$.
\Mnat-concave functions are known to be submodular
\cite[Theorem 3.8]{MS01rel}
(see also \cite[Theorem 6.19]{Mdcasiam}).

\begin{theorem} \label{THsetfnclassinclusion}
The following inclusion relations hold for set functions:
\begin{align}
&
\{ \mbox{\rm valuated matroid} \}
= \{ \mbox{\rm M-concave} \}, 
\\ &
\{ \mbox{\rm \Mnat-concave} \} \subsetneqq \ 
\{ \mbox{\rm submodular} \}
= \{ \mbox{\rm \Lnat-convex} \}.
\end{align}
\vspace{-1.3\baselineskip}\\
\finbox
\end{theorem}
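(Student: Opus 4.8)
The plan is to prove Theorem~\ref{THsetfnclassinclusion} by translating each statement about set functions into the corresponding statement about functions $g:\ZZ\sp{n}\to\RR\cup\{+\infty\}$ with $\dom g\subseteq\{0,1\}\sp{n}$, via the correspondence $f(X)=g(\unitvec{X})$, and then invoking the definitions and results already established in the excerpt. The two equalities $\{\text{valuated matroid}\}=\{\text{M-concave}\}$ and $\{\text{submodular}\}=\{\text{\Lnat-convex}\}$ are in fact assertions made in the paragraph immediately preceding the theorem (``submodular set functions are exactly \Lnat-convex functions on $\{0,1\}\sp{n}$, and valuated matroids are exactly M-concave functions on $\{0,1\}\sp{n}$''), so the main content to verify is (i) these two identifications and (ii) the strict inclusion $\{\text{\Mnat-concave}\}\subsetneqq\{\text{submodular}\}$.

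First I would establish $\{\text{submodular}\}=\{\text{\Lnat-convex}\}$ on $\{0,1\}\sp{n}$. For the inclusion $\supseteq$, note that an \Lnat-convex function is submodular by Theorem~\ref{THlnatfncond}(c); restricting to $\{0,1\}$-vectors, the lattice operations $\unitvec{X}\vee\unitvec{Y}=\unitvec{X\cup Y}$ and $\unitvec{X}\wedge\unitvec{Y}=\unitvec{X\cap Y}$ turn \eqref{submfn} into exactly \eqref{submdef=feat}. For $\subseteq$, a set function is defined only on $\{0,1\}\sp{n}$, whose convex hull is the unit cube; since any two points of $\{0,1\}\sp{n}$ are at $\ell_\infty$-distance at most $1$, there are no pairs at distance $2$, so the local condition in Theorem~\ref{THlnatfncond}(b) is vacuous beyond distance $1$, and one checks that the discrete midpoint inequality at distance $\le 1$ on the cube is equivalent to submodularity. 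Thus a submodular set function extends to an \Lnat-convex function on $\{0,1\}\sp{n}$. The identification $\{\text{valuated matroid}\}=\{\text{M-concave}\}$ is the definitional statement recorded in the excerpt (``$f$ is M-convex if and only if $-f$ is a valuated matroid'' when $\dom f\subseteq\{0,1\}\sp{N}$), applied with signs flipped to the concave side.

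Next I would handle $\{\text{\Mnat-concave}\}\subseteq\{\text{submodular}\}$: this is precisely the cited fact that \Mnat-concave functions are submodular (stated via \eqref{setfnsubmZ}, the supermodularity of \Mnat-convex functions, which by sign reversal gives submodularity of \Mnat-concave functions), restricted to the cube; equivalently by Theorem~\ref{THsetclassinclusion}/\ref{THfnclassinclusion} every \Mnat-convex function is integrally convex and, combined with the established submodularity, this matches the submodular class. The strictness of the inclusion is the one step requiring an explicit witness: I would exhibit a submodular set function on some small $N$ (say $n=3$) that violates the \Mnat-exchange property \eqref{mnconvexc2Z} (on the concave side), for instance a submodular function whose local behavior fails the single-element exchange. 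Constructing such a counterexample, and verifying it is submodular yet not \Mnat-concave, is the only genuinely computational part.

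The main obstacle I anticipate is not conceptual but notational bookkeeping around the concave/convex sign flip and the restriction to the cube: one must be careful that ``valuated matroid'' and ``\Mnat-concave'' are concave objects while the ambient theory (Theorems~\ref{THlnatfncond}, \ref{THlfncond}, \ref{THfnclassinclusion}) is stated for convex functions, so every invocation passes through $f\mapsto -f$. A secondary subtlety is confirming that a submodular set function, defined only on vertices of the cube, genuinely is \emph{equal} as a class to \Lnat-convex functions supported on $\{0,1\}\sp{n}$ rather than merely restricting to one; this hinges on the observation that on the cube there are no distance-$2$ pairs, so \Lnat-convexity collapses to submodularity with no extra local conditions. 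Once the sign conventions and the cube geometry are pinned down, the equalities are immediate from the preceding results and only the strict-inclusion witness needs to be displayed.
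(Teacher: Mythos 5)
Your argument is correct and is essentially the paper's own justification: the theorem is stated without a formal proof, being backed by the identifications in the paragraph immediately preceding it (submodular set functions are exactly \Lnat-convex functions on $\{0,1\}\sp{n}$, since for $0$--$1$ vectors the rounded midpoints are $x\vee y$ and $x\wedge y$; valuated matroids are by definition M-concave functions there) together with the cited supermodularity \eqref{setfnsubmZ} of \Mnat-convex functions, which under $f\mapsto -f$ gives $\{\mbox{\Mnat-concave}\}\subseteq\{\mbox{submodular}\}$. The only item you defer---an explicit submodular set function that is not \Mnat-concave, needed for the strictness of $\subsetneqq$---is likewise not exhibited in the paper (which only asserts properness in Section~\ref{SCmnatfn}), so your outline matches the source; to make it self-contained you would still have to display the witness, e.g.\ a suitable submodular function on a three-element ground set violating the exchange inequality \eqref{mnconvexc2Z} in its concave form.
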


%% end of file %%

%%\clearpage
%%\newpage

%%\input{fop3opeset}

%% 2019-06-06 / 2019-07-09 / 2019-09-18

\section{Operations on Discrete Convex Sets}
\label{SCsetope}

\subsection{Operations via simple coordinate changes}
\label{SCchangevarset}

In this section we consider operations on discrete convex sets 
defined by changes of variables such as origin shift,
coordinate inversion, permutation of variables, and scaling of variables.

Let $S$ be a subset of $\ZZ\sp{n}$, i.e.,  $S \subseteq \ZZ\sp{n}$.
For an integer vector $b \in \ZZ\sp{n}$, the
{\em origin shift}
of $S$ by $b$
means a subset $T$ of $\ZZ\sp{n}$ defined by%
\begin{align}  \label{shiftsetdef}
 T &  = \{  x - b \mid x \in S \} .
\end{align}

\begin{proposition} \label{PRsetshift}
The origin shift operation \eqref{shiftsetdef} for a set preserves
integral convexity,
L$^{\natural}$-convexity, L-convexity,
M$^{\natural}$-convexity, M-convexity,
multimodularity, and
discrete midpoint convexity.
Moreover, the origin shift of an integer box is an integer box,
and the origin shift of 
an s.e. (resp., c.p.) jump system
is an s.e. (resp., c.p.) jump system.
\finbox
\end{proposition}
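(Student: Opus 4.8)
The plan is to show that the origin shift $T = \{x - b \mid x \in S\}$, being a mere translation by the integer vector $b$, commutes with every defining operation and thereby preserves each listed convexity class. The unifying principle is that translation by an integer vector is a bijection $\tau_b : \ZZ\sp{n} \to \ZZ\sp{n}$, $\tau_b(x) = x - b$, whose extension to $\RR\sp{n}$ is an affine isometry that maps integer points to integer points and convex hulls to convex hulls. Concretely, $\overline{T} = \overline{S} - b$ and, for the integral neighborhood, $N(x - b) = N(x) - b$ since the defining inequalities $|x_i - z_i| < 1$ are translation-invariant. These two facts are the engine that drives every case.

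For integral convexity I would argue directly from the characterization $x \in \overline{S} \Rightarrow x \in \overline{S \cap N(x)}$: translating both the ambient point and $S$ by $b$ preserves the inclusion because local convex hulls translate verbatim, $\overline{(S \cap N(x)) - b} = \overline{T \cap N(x - b)}$. For \Lnat-convexity, L-convexity, and discrete midpoint convexity, the key observation is that the discrete midpoint operation commutes with integer translation: $\lceil \frac{(x - b) + (y - b)}{2} \rceil = \lceil \frac{x + y}{2} \rceil - b$ and likewise for $\lfloor \cdot \rfloor$, since $b \in \ZZ\sp{n}$ can be pulled out of both rounding operations. Thus \eqref{midptcnvset} and \eqref{dirintcnvsetdef} transfer immediately, and $\|(x-b)-(y-b)\|_\infty = \|x - y\|_\infty$ keeps the distance hypotheses intact. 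For \Mnat- and M-convexity I would invoke the exchange axioms \Bnvex and \Bvex, noting that $\suppp((x-b)-(y-b)) = \suppp(x-y)$ and $\suppm((x-b)-(y-b)) = \suppm(x-y)$, so the exchanged points $x - b - \unitvec{i} + \unitvec{j}$ lie in $T$ exactly when $x - \unitvec{i} + \unitvec{j}$ lies in $S$; the constant-component-sum condition for M-convex sets is preserved since $(x - b)(N) = x(N) - b(N)$ shifts all sums by the same constant $b(N)$.

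For multimodularity I would use the representation that $S$ is multimodular iff $T' = \{D\sp{-1}x \mid x \in S\}$ is \Lnat-convex, where $D$ is the bidiagonal matrix of \eqref{matDdef}; since $D\sp{-1}$ is an integer matrix and $b \in \ZZ\sp{n}$, the vector $D\sp{-1}b$ is integral, and shifting $S$ by $b$ corresponds to shifting $T'$ by $D\sp{-1}b$, reducing this case to the already-established \Lnat-convex case. Finally, for jump systems I would verify that the $(x,y)$-increment structure is translation-invariant: the smallest box $[x \wedge y, x \vee y]_\ZZ$ translates to $[(x-b) \wedge (y-b), (x-b) \vee (y-b)]_\ZZ$, so $s$ is an $(x,y)$-increment iff it is an $((x-b),(y-b))$-increment (the increment $s$ itself is unchanged), whence both (J-EXC) and (J$\sp{\natural}$-EXC) transfer directly; constant-parity is preserved because $(x-b)(N) - (y-b)(N) = x(N) - y(N)$. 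The integer-box case is immediate from $[a,b']_\ZZ - b = [a - b, b' - b]_\ZZ$.

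There is no genuine obstacle here — every step is a routine translation-invariance check — so the only care needed is bookkeeping: ensuring that $b \in \ZZ\sp{n}$ is used precisely where the ceiling/floor and support arguments require integrality, and confirming that the jump-system increment definition depends on the \emph{relative} position of $x$ and $y$, which translation leaves fixed. I would present the integrally convex and \Lnat-convex cases in full and then remark that the remaining classes follow by the same translation-invariance, citing the relevant defining conditions.
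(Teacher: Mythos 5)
Your proof is correct and is exactly the routine translation-invariance verification that the paper itself omits (the proposition is stated with no proof, being regarded as immediate from the definitions). All the key points --- that integer translation commutes with $N(\cdot)$, convex hulls, rounding of midpoints, supports of $x-y$, the $(x,y)$-increment structure, and the $D\sp{-1}$ correspondence for multimodularity --- are the right ones and are used correctly.
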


For coordinate inversions, 
two types are distinguished.
The {\em simultaneous coordinate inversion} for $S$
means a subset $T$ of $\ZZ\sp{n}$ defined by
\begin{align} 
T  &= 
\{ (-x_{1},-x_{2}, \ldots, -x_{n}) \mid (x_{1},x_{2}, \ldots, x_{n}) \in S \},
\label{signinvsetdef}
\end{align}
and the {\em independent coordinate inversion} for $S$ 
means a subset $T$ of $\ZZ\sp{n}$ defined by
\begin{align}
T  &=
\{ (\tau_{1} x_{1}, \tau_{2} x_{2}, \ldots,  \tau_{n}x_{n}) \mid (x_{1},x_{2}, \ldots, x_{n}) \in S \}
 \label{indepsigninvsetdef}
\end{align}
with an arbitrary choice of $\tau_{i} \in \{ +1, -1 \}$ $(i=1,2,\ldots,n)$.

\begin{proposition} \label{PRsetsigninv}
The simultaneous coordinate inversion operation \eqref{signinvsetdef} for a set preserves
integral convexity,
L$^{\natural}$-convexity, L-convexity,
M$^{\natural}$-convexity, M-convexity,
multimodularity, and
discrete midpoint convexity.
Moreover,
the simultaneous coordinate inversion of an integer box is an integer box,
and the simultaneous coordinate inversion of
an s.e. (resp., c.p.) jump system is an s.e. (resp., c.p.) jump system.
\finbox
\end{proposition}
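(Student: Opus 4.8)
The plan is to verify each preservation claim by reducing the simultaneous coordinate inversion to operations whose behavior is already understood, chiefly origin shift (Proposition \ref{PRsetshift}) and the defining conditions collected in Section \ref{SCfnclass}. First I would fix notation: write $\sigma(x) = -x = (-x_1, \ldots, -x_n)$, so that $T = \sigma(S)$, and note that $\sigma$ is an involution that commutes with taking convex hulls (i.e. $\overline{\sigma(S)} = \sigma(\overline{S})$) and maps the integer lattice bijectively onto itself. The governing principle throughout is that $\sigma$ negates every coordinate simultaneously, hence it preserves coordinate differences up to sign: for any $x,y$ we have $\supp^{+}(\sigma(x) - \sigma(y)) = \supp^{+}(y - x) = \supp^{-}(x-y)$, and symmetrically for the negative support. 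This swap of positive and negative supports is the single observation that drives most of the cases.

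Next I would handle the classes defined by midpoint-type or lattice conditions, which behave most transparently. For integral convexity, I would use the characterization via the neighborhood $N(x)$ in \eqref{intneighbordef}: since $\sigma$ maps $N(x)$ bijectively onto $N(\sigma(x))$ and commutes with local convex hulls, $x \in \overline{\sigma(S)}$ implies $x \in \overline{\sigma(S) \cap N(x)}$, giving integral convexity of $T$. For \Lnat-convexity of a set I would use the discrete midpoint condition \eqref{midptcnvset}, observing that $\lceil \frac{\sigma(x)+\sigma(y)}{2}\rceil = \sigma(\lfloor \frac{x+y}{2}\rfloor)$ and $\lfloor \frac{\sigma(x)+\sigma(y)}{2}\rfloor = \sigma(\lceil \frac{x+y}{2}\rceil)$, so the floor and ceiling merely trade places under $\sigma$; L-convexity then follows by adding invariance \eqref{invarone}, which is clearly preserved since $\sigma(\vecone) = -\vecone$. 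Discrete midpoint convexity \eqref{dirintcnvsetdef} is identical except for the extra hypothesis $\|x-y\|_\infty \geq 2$, which $\sigma$ leaves unchanged since $\|\sigma(x)-\sigma(y)\|_\infty = \|x-y\|_\infty$. Integer boxes are immediate: $\sigma([a,b]_{\ZZ}) = [-b,-a]_{\ZZ}$.

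For the matroidal classes I would work from the exchange axioms directly, using the support swap. Given $x,y \in T$, write $x = \sigma(u)$, $y = \sigma(v)$ with $u,v \in S$; an index $i \in \supp^{+}(x-y)$ is exactly an index in $\supp^{-}(u-v)$. Applying \Bnvex (respectively \Bvex) to the pair $(v,u) \in S$ with the roles reversed produces an exchange index $j$, and translating back through $\sigma$ yields the required $x \mp \unitvec{i} \pm \unitvec{j} \in T$; the unit vectors are preserved because $\sigma(\unitvec{i}) = -\unitvec{i}$, so a forward exchange for $S$ becomes a forward exchange for $T$ after the sign flip. The constant-component-sum condition characterizing M-convex sets is preserved since $\sigma$ negates all sums. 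For jump systems I would argue analogously via the 2-step axiom and (J$\sp{\natural}$-EXC)/(J-EXC): an $(x,y)$-increment $s$ corresponds to the $(v,u)$-increment $-s$, because $\sigma$ maps the box $[u\wedge v, u\vee v]_{\ZZ}$ onto $[x\wedge y, x\vee y]_{\ZZ}$, and constant-parity resp.\ simultaneous-exchange structure survives the negation of all component sums. Finally, multimodularity I would reduce to the \Lnat-convex case already settled, via Theorem \ref{THmmfnlnatfn}: a set is multimodular iff it is $D$-image of an \Lnat-convex set, and I would check that $\sigma$ on $S$ corresponds to $\sigma$ on the underlying $T = D^{-1}S$ (since $D\sigma = \sigma D$ as $\sigma = -I$ commutes with every linear map), so multimodularity transfers.

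The main obstacle I anticipate is bookkeeping the support swap in the M-type and jump cases without sign errors: one must be careful that inverting coordinates exchanges $\suppp$ with $\suppm$ and which of the two points $x$ or $y$ plays the ``donor'' role, so the cleanest route is to apply the exchange axiom to the \emph{reversed} pair $(v,u)$ rather than $(u,v)$ and let the involution straighten out the signs. Everything else is a direct substitution into the defining conditions, so no genuinely hard estimate arises; the content is entirely in checking that $\sigma$ respects each structural primitive (neighborhoods, floors/ceilings, supports, increments, component sums, and the matrix $D$).
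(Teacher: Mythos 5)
Your proof is correct. The paper states Proposition \ref{PRsetsigninv} without proof (it is regarded as elementary), and your argument --- direct substitution of $x \mapsto -x$ into each defining condition, using that negation swaps floors with ceilings and $\suppp$ with $\suppm$, reverses increments, and commutes with $D$ for the multimodular case --- is exactly the intended routine verification.
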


\begin{proposition} \label{PRsetindepsigninv}
The independent coordinate inversion operation \eqref{indepsigninvsetdef} for a set preserves
integral convexity.
Moreover,
the independent coordinate inversion of an integer box is an integer box,
and the independent coordinate inversion of an s.e. (resp., c.p.) jump system
is an s.e. (resp., c.p.) jump system.
\finbox
\end{proposition}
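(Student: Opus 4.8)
The plan is to verify each of the three claimed preservation properties for the independent coordinate inversion \eqref{indepsigninvsetdef} separately, since they involve genuinely different defining conditions. The key observation throughout is that an independent coordinate inversion is the composition of simultaneous inversions applied to individual coordinates; equivalently it is a diagonal unimodular change of variables $x \mapsto (\tau_1 x_1, \ldots, \tau_n x_n)$ with each $\tau_i \in \{+1,-1\}$. It therefore suffices to treat the case where exactly one coordinate, say the $k$th, is flipped ($\tau_k = -1$, all other $\tau_i = +1$) and then compose.

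For integral convexity, I would work from the local-hull characterization given in Section~\ref{SCfnclass}: a set is integrally convex iff for every $x \in \RR^n$, membership $x \in \overline{S}$ implies $x \in \overline{S \cap N(x)}$. The map $\Phi(x) = (\tau_1 x_1, \ldots, \tau_n x_n)$ is an affine bijection of $\RR^n$ that maps $\ZZ^n$ onto $\ZZ^n$ and, crucially, commutes with the integral neighborhood operation: since $|x_i - z_i| = |\tau_i x_i - \tau_i z_i|$, we have $\Phi(N(x)) = N(\Phi(x))$, and because $\Phi$ is affine it also commutes with taking convex hulls. From $T = \Phi(S)$ one then checks that $x \in \overline{T}$ translates under $\Phi^{-1}$ to a point of $\overline{S}$, applies integral convexity of $S$, and pushes the conclusion back through $\Phi$. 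This is the cleanest part and should go through directly.

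For the jump-system claims I would return to the increment/exchange axioms. The essential point is how independent inversion acts on an $(x,y)$-increment $s = \pm \unitvec{i}$: the condition $x + s \in [x \wedge y, x \vee y]_\ZZ$ is equivalent to $\Phi(x) + \Phi(s) \in [\Phi(x) \wedge \Phi(y), \Phi(x) \vee \Phi(y)]_\ZZ$, because flipping a coordinate swaps the roles of $\wedge$ and $\vee$ in that coordinate while simultaneously negating the relevant component of $s$. Thus $\Phi$ carries $(x,y)$-increments bijectively to $(\Phi(x),\Phi(y))$-increments, sending $\pm \unitvec{i}$ to $\pm\unitvec{i}$ with the sign reversed exactly on the flipped coordinate. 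With this correspondence in hand, the 2-step axiom, the constant-parity simplification (J-EXC), and the simultaneous-exchange axiom (J$^\natural$-EXC) each transfer verbatim from $S$ to $T$; note that constant parity is preserved because $\Phi(x)(N) - \Phi(y)(N)$ differs from $x(N) - y(N)$ only by an even multiple of the flipped components' differences, keeping the parity of the pairwise component-sum differences intact. The integer-box case is immediate, since $\Phi$ maps $[a,b]_\ZZ$ to another integer box (with the $k$th interval reflected).

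I expect the main obstacle to be bookkeeping in the jump-system arguments rather than any conceptual difficulty: one must check carefully that the increment $t$ supplied by the axiom for $S$ maps to a valid increment for $T$ with the correct sign, and that membership of both $x+s+t$ and $y-s-t$ transfers through $\Phi$. A subtle point worth flagging is that, unlike \Lnat-convexity or \Mnat-convexity, independent inversion generally destroys those finer structures (hence the proposition claims only integral convexity and the jump-system/box properties); so the proof should not attempt to prove more than is stated, and the contrast with Proposition~\ref{PRsetsigninv} is precisely that the lattice/matroidal exchange structure is not symmetric under flipping a single coordinate. Once the single-coordinate case is settled, closure under composition gives the general $\tau \in \{+1,-1\}^n$ statement with no extra work.
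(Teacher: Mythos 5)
Your proof is correct and is essentially the argument the paper relies on implicitly: the paper states Proposition~\ref{PRsetindepsigninv} without proof, and the standard justification is exactly your observation that the diagonal map $\Phi(x)=(\tau_{1}x_{1},\ldots,\tau_{n}x_{n})$ commutes with integral neighborhoods and convex hulls (giving integral convexity) and carries $(x,y)$-increments bijectively to $(\Phi(x),\Phi(y))$-increments while preserving parity of component-sum differences (giving the jump-system claims). All the individual verifications you flag (the $\wedge/\vee$ swap in flipped coordinates, the evenness of $\sum_{\tau_i=-1}(-2)(x_i-y_i)$, the transfer of the exchanged increment $t$) check out, so nothing further is needed.
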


The independent coordinate inversion operation 
does not preserve
L$^{\natural}$-convexity, L-convexity,
M$^{\natural}$-convexity, M-convexity,
multimodularity, or
discrete midpoint convexity.

\begin{example}  \rm \label{EXlsetsigninv} 
Let $S = \{ (t,t) \mid t \in \ZZ \}$ and $T = \{ (t,-t) \mid t \in \ZZ \}$,
which is obtained from $S$ by an independent coordinate inversion 
\eqref{indepsigninvsetdef} with $\tau_{1}=+1$ and $\tau_{2}=-1$.
The set $S$ is L-convex,  and hence  \Lnat -convex
and discrete midpoint convex,
whereas $T$ is not L-convex,  \Lnat -convex, or discrete midpoint convex.
\finbox
\end{example}

\begin{example}  \rm \label{EXmsetsigninv} 
Let $S = \{ (t,-t) \mid t \in \ZZ \}$ and $T = \{ (t,t) \mid t \in \ZZ \}$,
which is obtained from $S$ by an independent coordinate inversion 
\eqref{indepsigninvsetdef} with $\tau_{1}=+1$ and $\tau_{2}=-1$.
The set $S$ is M-convex  and hence  \Mnat -convex,
whereas $T$ is not M-convex or \Mnat -convex.
The set $S$ is multimodular, and $T$ is not.
\finbox
\end{example}

For a permutation $\sigma$ of $(1,2,\ldots,n)$,
the {\em permutation} of $S$ by $\sigma$
means a subset $T$ of $\ZZ\sp{n}$ defined by
\begin{align}
T &=   \{ (y_{1},y_{2}, \ldots, y_{n})   
 \mid (y_{\sigma(1)}, y_{\sigma(2)}, \ldots, y_{\sigma(n)}) \in S \}.
 \label{permsetdef}
\end{align}

\begin{proposition} \label{PRsetperm}
The permutation operation \eqref{permsetdef} for a set preserves
integral convexity,
L$^{\natural}$-convexity, L-convexity,
M$^{\natural}$-convexity, M-convexity,
and
discrete midpoint convexity.
Moreover,
the permutation of an integer box is an integer box,
and the permutation of  
an s.e. (resp., c.p.) jump system is an s.e. (resp., c.p.) jump system.
\finbox
\end{proposition}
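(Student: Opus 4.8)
The plan is to verify that each listed property is preserved under the permutation operation \eqref{permsetdef}, treating the classes in groups according to how their defining conditions interact with a relabeling of coordinates. The key observation is that a permutation $\sigma$ acts on $\ZZ\sp{n}$ by the linear isometry $P_\sigma : x \mapsto (x_{\sigma(1)}, \ldots, x_{\sigma(n)})$, and $T = P_\sigma\sp{-1}(S)$ in the notation of \eqref{permsetdef}. First I would record the elementary facts that $P_\sigma$ commutes with all the lattice operations appearing in the definitions: it satisfies $P_\sigma(x \vee y) = P_\sigma(x) \vee P_\sigma(y)$ and likewise for $\wedge$, it commutes with componentwise rounding $\lceil \cdot \rceil$ and $\lfloor \cdot \rfloor$, it maps unit vectors to unit vectors via $P_\sigma(\unitvec{i}) = \unitvec{\sigma\sp{-1}(i)}$ (so it permutes the index set), it preserves the component sum $x(N)$ and the parity of $x(N)$, and it preserves the integral neighborhood structure in the sense that $z \in N(x)$ iff $P_\sigma(z) \in N(P_\sigma(x))$.

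With these facts in hand, each preservation claim reduces to a direct substitution. For \Lnat-convexity I would take $x, y \in T$, apply the defining implication \eqref{midptcnvset} to $P_\sigma(x), P_\sigma(y) \in S$, and pull the rounded midpoints back through $P_\sigma\sp{-1}$ using commutativity of rounding; L-convexity then follows by additionally checking invariance in direction $\vecone$, which is preserved because $P_\sigma(\vecone) = \vecone$. For \Mnat- and M-convexity I would use the exchange characterizations \Bnvex and \Bvex: given $x, y \in T$ and $i \in \suppp(x - y)$, note $\sigma\sp{-1}(i) \in \suppp(P_\sigma(x) - P_\sigma(y))$, apply the exchange property in $S$ to obtain an index $j$, and translate the witness back, using that $P_\sigma\sp{-1}$ sends $\unitvec{\sigma\sp{-1}(i)}$ back to $\unitvec{i}$. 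Integral convexity follows from the neighborhood-compatibility fact together with the observation that $P_\sigma$ maps convex hulls to convex hulls (being linear), so local convex hulls are carried to local convex hulls. Discrete midpoint convexity uses \eqref{dirintcnvsetdef} exactly as in the \Lnat-convex case, noting that $P_\sigma$ is an $\ell_\infty$-isometry so the condition $\|x - y\|_\infty \geq 2$ is preserved. The integer-box and jump-system statements are handled the same way, using that an integer box $[a,b]_\ZZ$ is carried to $[P_\sigma(a), P_\sigma(b)]_\ZZ$, and that the $(x,y)$-increment notion and the 2-step/simultaneous-exchange axioms are phrased purely in terms of unit vectors and the box $[x \wedge y, x \vee y]_\ZZ$, all of which $P_\sigma$ respects.

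I do not anticipate a genuine obstacle here, since permutation is the most symmetric of the coordinate changes and no class depends on the order of the coordinates in an essential way. The one point demanding care is the conspicuous absence of multimodularity from the list: unlike the other classes, the definition of multimodularity via the direction set $\mathcal{F}$ in \eqref{multimodirection1} singles out the linear order $1, 2, \ldots, n$ through the consecutive differences $\unitvec{k} - \unitvec{k+1}$, and an arbitrary permutation destroys this order. Thus the main substantive task is not to prove preservation of multimodularity but to be certain it is correctly omitted; I would confirm this by exhibiting a small counterexample, e.g. a multimodular set in $\ZZ\sp{3}$ whose image under a transposition of two non-adjacent coordinates fails condition \eqref{multimodulardef1} (equivalently, whose unimodular transform under $D\sp{-1}$ is no longer \Lnat-convex), paralleling Example~\ref{EXmsetsigninv}. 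The writing effort therefore lies in stating the routine substitutions compactly and in flagging multimodularity as the genuine exception.
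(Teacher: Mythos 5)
Your proposal is correct and follows exactly the routine substitution argument that the paper implicitly relies on (the proposition is stated without proof, being regarded as immediate from the definitions since every defining condition except multimodularity is symmetric in the coordinates). Your treatment of the multimodularity exception also matches the paper, which supplies precisely the kind of three-dimensional transposition counterexample you describe in its Example~\ref{EXmmsetperm}.
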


The permutation operation does not preserve multimodularity.

\begin{example}  \rm \label{EXmmsetperm} 
$S = \{ (0,0,0), (0,1,-1), (0,1,0), (1,0,0) \}$
is a multimodular set, but the set  
$T = \{ (0,0,0), (1,0,-1), (1,0,0), (0,1,0) \}$
obtained from $S$ by a permutation (transposition) 
$\sigma: (1,2,3) \mapsto (2,1,3)$ is not multimodular.
Indeed, the transformed set
\[
\tilde S = \{ D\sp{-1} x \mid x \in S \} = \{ (0,0,0), (0,1,0), (0,1,1), (1,1,1) \} 
\]
is \Lnat-convex, whereas
\[
\tilde T = \{ D\sp{-1} x \mid x \in T \} = \{ (0,0,0), (1,1,0), (1,1,1), (0,1,1) \}
\]
is not \Lnat-convex, since
$\lfloor  ( (1,1,0)+(0,1,1)) /2 \rfloor = (0,1,0)$
does not belong to $\tilde T$.
\finbox
\end{example}

For a positive integer $\alpha$,
the {\em scaling} of $S$ by $\alpha$
means a subset $T$ of $\ZZ\sp{n}$ defined by
\begin{equation} \label{scalesetdef}
T = \{ (y_{1},y_{2}, \ldots, y_{n}) \in \ZZ\sp{n}
 \mid (\alpha y_{1}, \alpha y_{2}, \ldots, \alpha y_{n}) \in S \}.
\end{equation}
Note that the same scaling factor $\alpha$ is used for all coordinates.

L-convexity and its relatives is well-behaved 
with respect to the scaling operation.%
\footnote{%%%%%%%%%%%%%
The scaled set $T$ can be empty even when $S$ is nonempty.
Therefore, strictly speaking, we should 
add a proviso in Propositions \ref{PRsetscale} and \ref{PRsetscdim2}
that the resulting set is nonempty.
}%%% footnote %%%

\begin{proposition} \label{PRsetscale}
The scaling operation \eqref{scalesetdef} for a set preserves
L$^{\natural}$-convexity, L-convexity,
multimodularity,  and discrete midpoint convexity.
Moreover, the scaling of an integer box is an integer box.
\finbox
\end{proposition}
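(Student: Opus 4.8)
The plan is to treat each class by reducing to a characterization that commutes with the scaling map $y \mapsto \alpha y$, writing $T = \{ y \in \ZZ^{n} \mid \alpha y \in S \}$ throughout. The integer-box case is immediate: if $S = [a,b]_{\ZZ}$, then the coordinatewise inequalities $a_{i} \leq \alpha y_{i} \leq b_{i}$ are equivalent to $\lceil a_{i}/\alpha \rceil \leq y_{i} \leq \lfloor b_{i}/\alpha \rfloor$, so $T = [\,\lceil a/\alpha \rceil, \lfloor b/\alpha \rfloor\,]_{\ZZ}$ is again an integer box.

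For the \Lnat- and L-convex cases I would avoid the midpoint definition \eqref{midptcnvset} entirely (the trouble being that $\alpha \lfloor (y+y')/2 \rfloor$ need not equal $\lfloor \alpha(y+y')/2 \rfloor$) and instead use the translation-submodularity characterizations, Proposition~\ref{PRlnatset}(c) and Proposition~\ref{PRlset}(c). The key observation is that componentwise maximum, componentwise minimum, and the shift by $\mu \vecone$ all commute with multiplication by the positive integer $\alpha$: for $y, y' \in T$ we have $\alpha y, \alpha y' \in S$, and applying \eqref{trsubmset} for $S$ with parameter $\alpha\mu$ gives
\[
(\alpha y - \alpha\mu\vecone) \vee \alpha y' = \alpha\bigl((y - \mu\vecone)\vee y'\bigr) \in S, \qquad
\alpha y \wedge (\alpha y' + \alpha\mu\vecone) = \alpha\bigl(y \wedge (y' + \mu\vecone)\bigr) \in S .
\]
Hence $(y-\mu\vecone)\vee y'$ and $y \wedge (y'+\mu\vecone)$ lie in $T$, which is exactly \eqref{trsubmset} for $T$. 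Since $S$ satisfies \eqref{trsubmset} for all $\mu \geq 0$ (resp.\ for all $\mu \in \ZZ$), and $\alpha\mu$ then also ranges over such values, so does $T$; the characterization yields that $T$ is \Lnat-convex (resp.\ L-convex).

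The multimodular case then follows by the unimodular reduction: $S$ is multimodular exactly when $\tilde S = \{ D^{-1} x \mid x \in S \}$ is \Lnat-convex. Because $D$ is unimodular, $D^{-1}$ maps $\ZZ^{n}$ onto $\ZZ^{n}$, and since scaling commutes with the linear map $D^{-1}$ one checks that the transform of the scaled set $T$ is precisely the scaling of $\tilde S$, namely
\[
\{ D^{-1} y \mid y \in T \} = \{ p \in \ZZ^{n} \mid \alpha p \in \tilde S \} .
\]
By the \Lnat-case just proved this set is \Lnat-convex, so $T$ is multimodular.

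I expect the discrete midpoint convex case to be the main obstacle, precisely because its defining condition \eqref{dirintcnvsetdef} is a midpoint condition restricted to pairs at $\ell_{\infty}$-distance at least $2$, and there is no translation-submodularity surrogate available to sidestep the rounding mismatch. Here I would argue directly: given $y, y' \in T$ with $\| y - y' \|_{\infty} \geq 2$, set $a = \alpha y$, $b = \alpha y' \in S$ with $\| a - b \|_{\infty} \geq 2\alpha$, and aim to show $\alpha\lceil (y+y')/2\rceil, \alpha\lfloor (y+y')/2\rfloor \in S$. These two targets straddle the exact midpoint $(a+b)/2$ and differ from each other by $\alpha\,\unitvec{O}$, where $O$ is the set of coordinates on which $y_{i} + y_{i}'$ is odd. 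The delicate point is that reaching them requires iterating discrete midpoint operations while never leaving the safe regime $\| \cdot \|_{\infty} \geq 2$, and the iteration that works verbatim for \Lnat-convex sets can stall at the final distance-$1$ steps. I would therefore reduce to the distance-exactly-$2$ case and combine d.m.c.\ with the integral convexity, hence hole-freeness, of discrete midpoint convex sets guaranteed by \eqref{setfamdmc}, showing that each target lies in $\overline{S} \cap \ZZ^{n} = S$; alternatively this case can be quoted from the dedicated analysis in \cite{MMTT19dmpc}.
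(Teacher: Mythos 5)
Your arguments for the integer box, \Lnat-convex, L-convex, and multimodular cases are correct. The paper itself does not write out a proof: Remark~\ref{RMsetscalebib} disposes of the \Lnat- and L-convex cases as ``easy and well known,'' cites \cite[Proposition 7]{MM19multm} for multimodularity and \cite[Proposition 9]{MMTT19dmpc} for discrete midpoint convexity, and notes that everything follows from the function version (Proposition~\ref{PRfnscale}, itself given by citation). Where you supply details you are therefore going beyond the text, and the details are the standard ones: translation-submodularity (Propositions~\ref{PRlnatset}(c), \ref{PRlset}(c)) commutes with $y \mapsto \alpha y$ because $\vee$, $\wedge$ and the shift by $\mu\vecone$ do, and the identity $\{ D^{-1}y \mid y \in T\} = \{ p \mid \alpha p \in \tilde S\}$ reduces multimodularity to the \Lnat-case exactly as in \cite{MM19multm}. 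Your observation that the naive midpoint definition does not survive scaling directly is the right reason to route through these characterizations.

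The one genuine weakness is the self-contained argument you sketch for discrete midpoint convex sets. The two targets are $\alpha\lceil (y+y')/2\rceil = \frac{a+b}{2} + \frac{\alpha}{2}\unitvec{O}$ and $\alpha\lfloor (y+y')/2\rfloor = \frac{a+b}{2} - \frac{\alpha}{2}\unitvec{O}$ with $a=\alpha y$, $b=\alpha y'$; whenever $O \neq \emptyset$ these lie off the segment $[a,b]$, so they are not in $\overline{S}$ for any reason you have established. Hole-freeness only upgrades ``$z \in \overline{S}\cap\ZZ\sp{n}$'' to ``$z \in S$''; producing a convex combination of points of $S$ equal to these displaced targets is essentially the whole difficulty, and the midpoint iteration you would use to generate such points is exactly the iteration you admit can stall below $\ell_\infty$-distance $2$. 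Moreover, ``reduce to the distance-exactly-$2$ case'' has no counterpart here: the local/global distinction exists in this paper only for d.m.c.\ functions, not for d.m.c.\ sets, and no analogue of Theorem~\ref{THlnatfncond}(b) is available for condition \eqref{dirintcnvsetdef}. So that branch of your proof must, as you anticipate, fall back on quoting \cite[Proposition 9]{MMTT19dmpc} --- which is precisely what the paper does, and is acceptable, but the sketched alternative should not be presented as a viable proof.
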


\begin{remark} \rm \label{RMsetscalebib}
Here is a supplement to Proposition \ref{PRsetscale} about scaling.
The statements for L$^{\natural}$-convex and L-convex sets
are easy to prove and well known.
The statement for multimodular sets
is given in \cite[Proposition 7]{MM19multm} 
and that for discrete midpoint convex sets
in \cite[Proposition 9]{MMTT19dmpc}.
The statements given in Proposition \ref{PRsetscale}
follow from the corresponding statements for functions 
in Proposition \ref{PRfnscale}.
\finbox
\end{remark}

In contrast, M-convexity and integral convexity
are not compatible with the scaling operation, as follows.

\begin{itemize}

\item
The scaling of an \Mnat-convex set
is not necessarily \Mnat-convex.
See Example~\ref{EXmnatsetscdim3}.

\item
The scaling of an M-convex set
is not necessarily M-convex.
See Example~\ref{EXmsetscdim3}.

\item
The scaling of an s.e. (resp., c.p.) jump system
is not necessarily 
an s.e. (resp., c.p.) jump system.
See Examples \ref{EXmnatsetscdim3} and \ref{EXmsetscdim3}.

\item
The scaling of an integrally convex set
is not necessarily integrally convex.
See Example~\ref{EXscICsetNG422}.
\end{itemize}

\begin{example}[{\cite[Example 1.1]{MMTT19proxIC}}] \rm \label{EXmnatsetscdim3}
This example shows that ${\rm M}^{\natural}$-convexity is not preserved under scaling.
Let $S$ be a subset of $\ZZ^{3}$ defined as
\[
S =  \{ c_{1} (1,0,-1) +c_{2} (1,0,0) + c_{3} (0,1,-1) + c_{4} (0,1,0)
      \mid c_{i} \in \{ 0,1 \} \ (i=1,2,3,4)  \} .
\]
This is an ${\rm M}^{\natural}$-convex set,
but the scaled set
$T = \{ y \in \ZZ\sp{3} \mid 2 y \in S \} =  \{ (0,0,0), (1,1,-1) \}$
(with the scaling factor $\alpha=2$)
is not ${\rm M}^{\natural}$-convex.
This example also shows that 
the scaling of an s.e.~jump system
is not necessarily 
an s.e.~jump system.
\finbox
\end{example}

\begin{example}[{\cite[Note 6.18]{Mdcasiam}}] \rm \label{EXmsetscdim3}
This example shows that {\rm M}-convexity is not preserved under scaling.
Let $S$ be a subset of $\ZZ^{4}$ defined as
\begin{align*}
S =& \{ c_{1} (1,0,-1,0) +c_{2} (1,0,0,-1) + c_{3} (0,1,-1,0) + c_{4} (0,1,0,-1) \mid
\\ & \phantom{ \{ }
 c_{i} \in \{ 0,1 \} \ (i=1,2,3,4)  \} .
\end{align*}
This is an {\rm M}-convex set,
but the scaled set
$T = \{ y \in \ZZ\sp{4} \mid 2 y \in S \}  =  \{ (0,0,0,0), (1,1,-1,-1) \}$
(with the scaling factor $\alpha=2$)
is not {\rm M}-convex.
This example also shows that 
the scaling 
of a c.p.~jump system
is not necessarily 
a c.p.~jump system.
\finbox
\end{example}

%%%  FIGURE %%%%%%%%%%%%%%%%%%
%%\input{fgICsetscale}
\begin{figure}\begin{center}
\includegraphics[height=45mm]{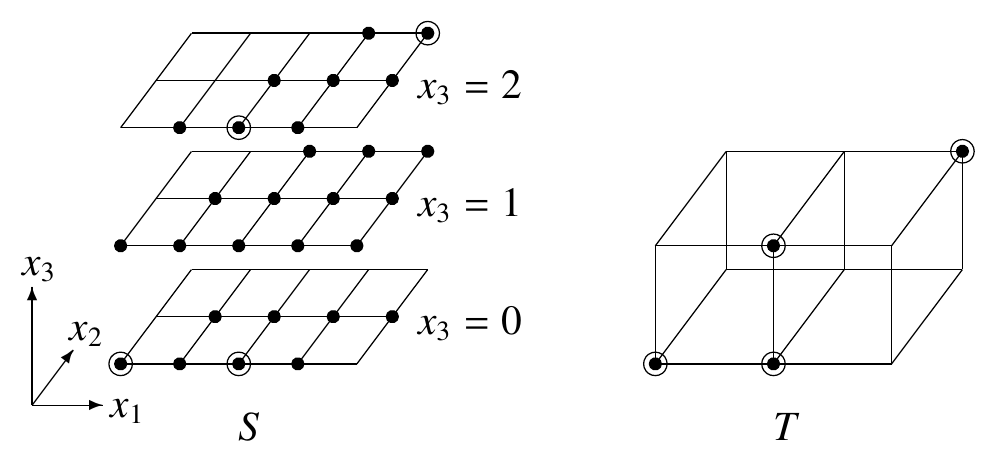}
\caption{An integrally convex set $S$ and its scaled set $T$ (Example \ref{EXscICsetNG422})}
\label{FGicsetsc}
\end{center}\end{figure}
%%%  FIGURE %%%%%%%%%%%%%%%%%%

\begin{example}[{\cite[Example 3.1]{MMTT19proxIC}}] \rm \label{EXscICsetNG422}
This example shows that integral convexity is not preserved under scaling.
Let $S$ be a subset of $\ZZ^{3}$ defined  by
\begin{align*}
S  = & 
\{ (x_{1},x_{2},0) \mid 
0 \leq x_{2} \leq 1,  \  0 \leq x_{1} - x_{2} \leq 3 
\}  
\\ & \cup
\{ (x_{1},x_{2},1) \mid 
0 \leq x_{2} \leq 2, \  x_{2} \leq x_{1} \leq 4
\}  
\\ & \cup
\{ (x_{1},x_{2},2) \mid 
0 \leq x_{2} \leq 2, \ 1 \leq x_{1} - x_{2} \leq 3, \  x_{1} \leq 4
\}  ,
\end{align*}
which is an integrally convex set (cf., Fig.~\ref{FGicsetsc}, left).
With the scaling factor $\alpha=2$,
however, 
the scaled set
\[
 T = \{ y \in \ZZ\sp{3} \mid 2 y \in S \} = \{ (0,0,0), (1,0,0), (1,0,1), (2,1,1) \}
\]
is not integrally convex  
(cf., Fig.~\ref{FGicsetsc}, right).
\finbox
\end{example}

In the case of $n = 2$,
M-convexity and integral convexity admit the scaling operation.

\begin{proposition}  \label{PRsetscdim2}
Let $\alpha$ be a positive integer,
$S \subseteq \ZZ^{2}$, 
and  $T = \{ y \in \ZZ^{2} \mid \alpha y \in S \}$.

%% To Typesetter:  Please do NOT change the labels "(1)", "(2)", ....  below, 
%% as they are cited at other places.

\noindent
{\rm (1)}
If $S$ is integrally convex, then $T$ is integrally convex.

\noindent
{\rm (2)}
If $S$ is \Mnat-convex, then $T$ is \Mnat-convex.

\noindent
{\rm (3)}
If $S$ is M-convex, then $T$ is M-convex.
\finbox
\end{proposition}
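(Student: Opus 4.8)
The plan is to prove the three parts separately, since each concerns a different class and the $n=2$ restriction is essential in each. For all three I would work with the set $T=\{y\in\ZZ^2 \mid \alpha y\in S\}$ and verify the relevant defining condition from Section~\ref{SCfnclass} directly, exploiting the fact that in two dimensions the geometry of convex sets is very constrained. The key simplification in dimension $2$ is that the local neighborhoods $N(x)$ from \eqref{intneighbordef} are small and that an integrally convex set in $\ZZ^2$ is simply a hole-free set whose ``diagonal gaps'' never occur; moreover Remark~\ref{RMsetdim2} tells us \Lnat-convexity and \Mnat-convexity coincide up to the coordinate flip $(x_1,x_2)\mapsto(x_1,-x_2)$, which lets me reduce statements about one class to the other.

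For part (1), the approach is to use the characterization that $S\subseteq\ZZ^2$ is integrally convex if and only if for every $x\in\overline{S}$ we have $x\in\overline{S\cap N(x)}$. Given $y\in\overline{T}$, I would first show $\alpha y\in\overline{S}$ (scaling a convex combination representing $y$ by $\alpha$), then use integral convexity of $S$ to express $\alpha y$ as a convex combination of points of $S\cap N(\alpha y)$, and finally divide back by $\alpha$ and argue that the relevant integer points of $S$ descend to points of $T\cap N(y)$. The delicate point here is that dividing the supporting integer points of $S$ by $\alpha$ need not land in $\ZZ^2$; the argument must instead reconstruct a two-dimensional local certificate for $T$, and this is exactly where $n=2$ is needed, because Example~\ref{EXscICsetNG422} shows the statement fails for $n=3$.

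For parts (2) and (3), I would first reduce part (2) to part (3): by Remark~\ref{RMsetdim2}, an \Mnat-convex set in $\ZZ^2$ becomes \Lnat-convex under $(x_1,x_2)\mapsto(x_1,-x_2)$, and scaling commutes with this coordinate flip, so part (2) follows once I know that scaling preserves \Lnat-convexity—which is already guaranteed by Proposition~\ref{PRsetscale}. Alternatively, and more in the spirit of a self-contained M-convexity proof, I would verify the exchange axiom \Bnvex directly for $T$: given $y,y'\in T$ and $i\in\suppp(y-y')$, I would lift to $\alpha y,\alpha y'\in S$, apply \Bnvex (or, in the constant-sum case of part (3), \Bvex) in $S$, and track how the unit-vector exchange in $S$ projects down to a unit-vector exchange in $T$. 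For part (3) I would additionally note that an M-convex set is an \Mnat-convex set lying on a constant-sum hyperplane, and that scaling preserves the constant-sum property, so part (3) reduces to part (2) together with this observation.

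The main obstacle I expect is the descent step common to all three parts: an exchange or a local convex representation valid at the scale of $S$ must be transported back to the scale of $T$, and the naive preimage of an integer witness in $S$ under multiplication by $\alpha$ is generally not integral. The crux is that in $\ZZ^2$ one can always repair this—using the two-dimensional structure to find, near $\alpha y$, integer points of $S$ whose arithmetic supports an integral exchange step for $T$—whereas in $\ZZ^3$ the counterexamples (Examples~\ref{EXmnatsetscdim3}, \ref{EXmsetscdim3}, \ref{EXscICsetNG422}) show no such repair exists. I therefore anticipate that the heart of the proof lies in a careful two-dimensional case analysis of the possible configurations of $S$ near $\alpha y$, rather than in any abstract argument, and I would lean on the equivalence in Remark~\ref{RMsetdim2} to avoid repeating essentially the same geometric work three times.
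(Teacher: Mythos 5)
Your treatment of parts (2) and (3) is essentially the paper's own route: the paper (Remark~\ref{RMfnscdim2}) derives part (2) from the scaling result for \Lnat-convex sets in Proposition~\ref{PRsetscale} via the two-dimensional equivalence of Remark~\ref{RMsetdim2}, exactly as you propose, and dismisses part (3) as ``almost a triviality'' --- your reduction of (3) to (2) plus preservation of the constant-sum property is a correct way to make that precise (an M-convex set in $\ZZ^{2}$ is just the set of integer points of a segment on a line $x_{1}+x_{2}=r$, so $T$ is again such a set). These two parts are fine.

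Part (1) is where the proposal has a genuine gap. You correctly identify the obstacle --- the integer points of $S\cap N(\alpha y)$ certifying $\alpha y\in\overline{S\cap N(\alpha y)}$ do not descend to integer points of $T$ under division by $\alpha$ --- but the ``repair'' you invoke is precisely the entire content of the theorem, and you explicitly defer it (``I anticipate that the heart of the proof lies in a careful two-dimensional case analysis'') without carrying it out. Note in particular that a local certificate for $T$ at $y$ requires points of $S$ of the form $\alpha z$ with $z\in N(y)$, i.e., points of $S$ at distance up to $\alpha$ from $\alpha y$ in each coordinate; producing these from integral convexity of $S$ requires a global argument tracking the local convex hulls of $S$ across the whole box $[\alpha(\lfloor y\rfloor), \alpha(\lceil y\rceil)]_{\ZZ}$, not just the single neighborhood $N(\alpha y)$. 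This is a nontrivial piece of work, which is why the paper does not prove it either but cites \cite[Theorem~3.2]{MMTT19proxIC}. As written, your part (1) is a plan with the decisive step missing rather than a proof.
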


\begin{remark} \rm \label{RMfnscdim2}
Here is a supplement to Proposition \ref{PRsetscdim2} about scaling for two-dimensional sets.
Part (1) for integrally convex sets is due to \cite[Theorem 3.2]{MMTT19proxIC}.
Part (2) for \Mnat-convex sets follows from 
the statement for \Lnat-convex sets in
Proposition \ref{PRsetscale} and Remark \ref{RMsetdim2} in Section~\ref{SCclasssetZ},
while Part (3) for M-convex sets is almost a triviality.
\finbox
\end{remark}

Table \ref{TBoperation1dcsetZ}
is a summary of the behavior of discrete convex sets 
with respect to the simple coordinate changes considered above.
In this table,
``Y'' means ``Yes, this set class is closed under this operation''
and  ``\textbf{\textit{N}}'' means ``No, this set class is not closed under this operation.''

%%%%%%%%%% table %%%%%%%%%%
\begin{table}
\begin{center}
\caption{Operations on discrete convex sets via simple coordinate changes}
\label{TBoperation1dcsetZ}

\medskip

%% To Typesetter: The Y and N in the table are intentionally staggered.  
%% Please do not align vertically, but use the macros \YES and \NO as they are.

\addtolength{\tabcolsep}{-3pt}
\begin{tabular}{l|c|cc|c|c|l}
 Discrete convex set & Origin & \multicolumn{2}{c|}{Coord.~inversion}  & Permu- & Scaling & Reference
\\ \cline{3-4}
 & shift &  simult. & indep. &  tation & 
\\ \hline
 Integer box  & \YES & \YES & \YES & \YES & \YES  & 
\\
 Integrally convex & \YES & \YES & \YES  & \YES & \NO
   & \cite{FT90,MMTT19proxIC,Mdcasiam}
\\
\Lnat-convex       & \YES & \YES & \NO & \YES & \YES 
  & \cite{FM00,Mdcasiam} 
\\ 
L-convex           & \YES & \YES & \NO & \YES & \YES 
  & \cite{Mdca98,Mdcasiam}
\\ 
\Mnat-convex       & \YES & \YES & \NO& \YES & \NO
  & \cite{Mdcasiam,MS99gp}
\\ 
M-convex           & \YES & \YES & \NO & \YES & \NO
  & \cite{Mstein96,Mdcasiam}
\\ 
Multimodular       & \YES & \YES & \NO  &  \NO  & \YES
  & \cite{AGH00,AGH03,Haj85,MM19multm}
\\ 
Disc.~midpt convex   & \YES & \YES & \NO & \YES & \YES 
  & \cite{MMTT19dmpc}
\\ 
Simul.~exch. jump  & \YES & \YES & \YES & \YES & \NO  
  &  \cite{Mmnatjump19} 
\\ 
Const-parity jump   & \YES & \YES & \YES & \YES & \NO  
  &  \cite{BouC95} 
\\ \hline
\multicolumn{7}{l}{``Y'' means ``Yes, this set class is closed under this operation.''} 
\\
\multicolumn{7}{l}{``\textbf{\textit{N}}'' means ``No, this set class is not closed under this operation.''} 
\end{tabular}
\addtolength{\tabcolsep}{3pt}
\end{center}
\end{table}
%%%%%%%%%% table %%%%%%%%%%

\subsection{Restriction}
\label{SCrestrset}

For a set $S \subseteq \ZZ\sp{N}$
and a subset $U$ of the index set $N = \{ 1,2,\ldots, n \}$,
the {\em restriction} 
of $S$ to $U$ is a subset $T$ of $\ZZ\sp{U}$ defined by%
\footnote{%%%%%%%%%%%%%%%%%%%%%
More generally, we can define the restriction of $S$ to $U$ as
$T =  \{ y \in \ZZ\sp{U} \mid  (y,z) \in S \}$
using an arbitrary $z\in \ZZ\sp{N \setminus U}$.
For simplicity of description we here choose $z=\veczero_{N \setminus U}$.
} %%%%%% footnote %%%%%%%%%%%%%%%%%
\begin{equation} \label{restrsetdef}
 T =  \{ y \in \ZZ\sp{U} \mid  (y,\veczero_{N \setminus U}) \in S \}, 
\end{equation}
where $\veczero_{N \setminus U}$ denotes the zero vector 
in $\ZZ\sp{N \setminus U}$.
The notation 
$(y,\veczero_{N \setminus U})$ means the vector in $\ZZ\sp{N}$
whose $i$th component is equal to $y_{i}$ for $i \in U$
and to $0$ for $i \in N \setminus U$;
for example, if $N = \{ 1,2,3 \}$ and $U = \{ 1,3 \}$,
then
$(y,\veczero_{N \setminus U})$ means $(y_{1}, 0, y_{3})$.

The restriction operation preserves discrete convexity as follows.%
\footnote{%%%%%%%%%%%%%
The restriction operation may result in an empty set.
Therefore, strictly speaking, we should 
add a proviso in Proposition \ref{PRsetrestr}
that the resulting set is nonempty.
}%%% footnote %%%

\begin{proposition} \label{PRsetrestr}
The restriction operation \eqref{restrsetdef} for a set preserves
integral convexity,
L$^{\natural}$-convexity, 
M$^{\natural}$-convexity, M-convexity,
multimodularity, and
discrete midpoint convexity.
Moreover,
the restriction of an integer box is an integer box,
and the restriction 
of an s.e. (resp., c.p.) jump system is an s.e. (resp., c.p.) jump system.
\finbox
\end{proposition}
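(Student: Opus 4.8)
The plan is to verify the restriction operation class-by-class, exploiting that restriction to $U$ is geometrically the intersection of $S$ with the affine coordinate subspace $\{x \in \ZZ\sp{N} \mid x_i = 0 \ (i \in N \setminus U)\}$ followed by forgetting the zeroed coordinates. For the two lattice-type classes (\Lnat-convexity and discrete midpoint convexity) the argument is direct from the defining inequalities: if $x, y \in T$, then $(x,\veczero_{N \setminus U}), (y,\veczero_{N \setminus U}) \in S$, and since the coordinates outside $U$ are all zero in both vectors, the discrete midpoints $\lceil (x+y)/2 \rceil$ and $\lfloor (x+y)/2 \rfloor$ also have zero entries outside $U$. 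Hence the midpoints produced by \eqref{midptcnvset} (resp.\ \eqref{dirintcnvsetdef}) stay in the subspace, so their $U$-parts lie in $T$. The only subtlety for discrete midpoint convexity is that $\|x-y\|_\infty \geq 2$ in $\ZZ\sp{U}$ is equivalent to $\|(x,\veczero)-(y,\veczero)\|_\infty \geq 2$ in $\ZZ\sp{N}$, which holds because the appended coordinates agree.

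For M-convexity and \Mnat-convexity I would argue via the exchange properties \Bvex and \Bnvex. Take $x, y \in T$ and $i \in \suppp(x-y) \subseteq U$; lift to $\hat x = (x,\veczero_{N\setminus U})$, $\hat y = (y,\veczero_{N\setminus U}) \in S$. Applying the exchange property in $\ZZ\sp{N}$ produces some $j$, and the key point is that the relevant $j$ can be chosen inside $U$: since $\hat x$ and $\hat y$ agree (both zero) on $N \setminus U$, we have $\suppm(\hat x - \hat y) \subseteq U$, so any $j \in \suppm(\hat x - \hat y)$ automatically lies in $U$, and the exchanged vectors $\hat x - \unitvec{i} + \unitvec{j}$, $\hat y + \unitvec{i} - \unitvec{j}$ again vanish outside $U$. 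For the \Mnat\ case the option $j = 0$ likewise keeps the coordinates outside $U$ at zero. Restricting these witnesses to $U$ gives the required exchange in $T$. The same lifting argument, now with $(x,y)$-increments $s$ and $t$ constrained to directions in $U$ (again because the increment must keep $\hat x + s$ inside the box $[\hat x \wedge \hat y, \hat x \vee \hat y]_\ZZ$, whose $(N \setminus U)$-faces are degenerate), handles the s.e.\ and c.p.\ jump system cases via (J$\sp{\natural}$-EXC) and (J-EXC).

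Integral convexity is most cleanly obtained by reduction rather than a direct neighborhood computation: since $T$ is, up to the coordinate identification, $S \cap H$ for the coordinate subspace $H$, and $H = \dom \delta_H$ is itself an integer box hence integrally convex, integral convexity of $T$ should follow from the stability of integral convexity under intersection with such a subspace. A convenient route is to invoke the local characterization via $N(x)$: for $x \in \RR\sp{U}$ the integral neighborhood $N((x,\veczero))$ decomposes as $N(x) \times \{\veczero\}$, so the local convex hulls of $S \cap H$ coincide with those of $T$ after projection, and convexity of the union transfers. For multimodularity I would use Theorem~\ref{THmmfnlnatfn} (equivalently the defining unimodular transformation): a multimodular set corresponds under $x \mapsto D\sp{-1}x$ to an \Lnat-convex set $T'$, but care is needed because the natural restriction for multimodular sets drops a \emph{contiguous} block of indices so that the transformation $D$ and the restriction commute appropriately; the integer-box and singleton bookkeeping for $[a,b]_\ZZ$ is immediate.

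The main obstacle I anticipate is the multimodular case, since restriction does not commute with an arbitrary unimodular change of variables, so the reduction to \Lnat-convexity must be set up carefully (effectively one must restrict to a suffix or prefix block of coordinates for the bidiagonal structure of $D$ in \eqref{matDdef} to be respected), and the footnote's freedom to restrict using an arbitrary $z \in \ZZ\sp{N \setminus U}$ rather than $\veczero$ may be what makes the multimodular statement go through. All other classes reduce to the uniform ``append zeros, apply the defining property, observe the witness stays in the subspace'' template, which I expect to be routine.
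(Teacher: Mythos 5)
Most of your proposal is correct and matches the substance of what the paper relies on: the paper itself gives no self-contained proof here, deferring in Remark~\ref{RMsetrestrbib} to the function version (Proposition~\ref{PRfnrestr}) and to external references, and the underlying arguments are exactly your ``append zeros, apply the defining property, observe the witness stays in the coordinate subspace'' template. Your treatment of \Lnat-convexity, discrete midpoint convexity, \Mnat- and M-convexity, and the jump systems is sound (the key observations --- that $\suppp(\hat x-\hat y)\cup\suppm(\hat x-\hat y)\subseteq U$, and that every $(\hat x,\hat y)$-increment points in a $U$-direction because the box $[\hat x\wedge\hat y,\hat x\vee\hat y]_{\ZZ}$ is degenerate outside $U$ --- are the right ones), and your reduction for integral convexity via $N((x,\veczero_{N\setminus U}))=N(x)\times\{\veczero_{N\setminus U}\}$ is correct and complete once the words ``should follow'' are replaced by the two-line verification that $x\in\overline{T}$ implies $(x,\veczero)\in\overline{S}$, hence $(x,\veczero)\in\overline{S\cap N((x,\veczero))}=\overline{(T\cap N(x))\times\{\veczero\}}$.

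The genuine gap is the multimodular case, which you flag but do not close, and your proposed escape routes are off target. The proposition asserts preservation of multimodularity under restriction to an \emph{arbitrary} subset $U$ (this is \cite[Proposition 9]{MM19multm}, and for functions \cite[Lemma 2.3]{AGH00}), so you cannot retreat to prefix/suffix blocks, and the footnote's freedom in the choice of $z$ is irrelevant to the difficulty. The reduction via $g(p)=f(Dp)$ indeed breaks for non-contiguous $U$ because the directions $\unitvec{i_{k}}-\unitvec{i_{k+1}}$ relevant to $T$ are sums of consecutive elements of $\mathcal{F}$ rather than elements of $\mathcal{F}$ itself. The correct argument uses the submodularity characterization \eqref{multimodular1} instead: fixing the $i$-th argument of $f$ to $0$ amounts to restricting the submodular function $\tilde f(x_{0},x)=f(x_{1}-x_{0},\ldots,x_{n}-x_{n-1})$ to the set $\{x\in\ZZ^{n+1}\mid x_{i}=x_{i-1}\}$, which is a sublattice of $\ZZ^{n+1}$ (closed under $\vee$ and $\wedge$); submodularity survives restriction to a sublattice, and after identifying the duplicated coordinate the restricted $\tilde f$ is precisely the function $\tilde g$ associated by \eqref{multimodular1} with the restricted function $g$ on $\ZZ^{U}$ (the surviving arguments are the differences of consecutive surviving $x$-variables). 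Iterating over $i\in N\setminus U$ gives multimodularity of the restriction for arbitrary $U$; the same works for an arbitrary fixed value $z_{i}=c$ since $\{x\mid x_{i}-x_{i-1}=c\}$ is also a sublattice. Without some such argument your proof of the proposition is incomplete in exactly the one case you identified as the obstacle.
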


\begin{remark} \rm \label{RMsetrestrbib}
Here is a supplement to Proposition \ref{PRsetrestr} about restriction. 
The statements given in Proposition \ref{PRsetrestr}
follow from the corresponding statements for functions 
in Proposition \ref{PRfnrestr}.
The statement for multimodular sets is given in \cite[Proposition 9]{MM19multm}. 
The statements for (s.e. or c.p.) jump systems are immediate from  \cite[Section 3]{BouC95}.
\finbox
\end{remark}

The restriction of an L-convex set is not necessarily L-convex,
simply because the restricted set lacks in
the translation invariance 
in the direction of $\vecone$.

\begin{example}  \rm \label{EXlsetrestr} 
$S = \{ (t,t) \mid t \in \ZZ \}$
is an L-convex set in $\ZZ\sp{2}$ and its restriction to $U = \{ 1 \}$ is
$T = \{ 0 \}$, which is not L-convex.
\finbox
\end{example}

Table~\ref{TBoperation2dcsetZ} 
is a summary of the behavior of discrete convex sets 
with respect to the restriction operation as well as
the operations of projection, intersection, and Minkowski sum,
to be discussed below.
Two types of intersections are distinguished:
one is the intersection of $S$ 
with an integer box $B$,
denoted $S \cap B$,
and the other is the intersection  
$S_{1} \cap S_{2}$ 
of two sets $S_{1}$ and $S_{2}$ in the same convexity class. 
Similarly for the Minkowski sum,
denoted $S + B$ and $S_{1} +  S_{2}$.

%%%%%%%%%% table %%%%%%%%%%
\begin{table}
\begin{center}
\caption{Operations on discrete convex sets}
\label{TBoperation2dcsetZ}

\medskip

\addtolength{\tabcolsep}{-3pt}
\begin{tabular}{l|c|c|cc|cc|l}
 Discrete  & Restric- & Projec- & \multicolumn{2}{c|}{Intersection} & \multicolumn{2}{c|}{Minkow.~sum} & Reference
 \\ \cline{4-7}
\quad convex set & tion  &  tion  &  box & general &   box & general      
\\ 
   & \footnotesize $\{ y | (y,\bm{0}) \! \in \! S \! \}$ 
   & \footnotesize $\{ y | (y,z) \! \in \! S \! \}$  
   & \footnotesize $S \cap B$  
   & \footnotesize  $S_{1} \cap S_{2}$ 
   & \footnotesize  $S + B$ 
   & \footnotesize  $S_{1}+ S_{2}$
\\ \hline  
 Integer box & \YES & \YES & \YES & \YES & \YES & \YES
\\ 
 Integrally convex & \YES & \YES & \YES & \NO & \YES &\NO
   & \cite{FT90,MM19projcnvl,Mdcasiam}
\\
\Lnat-convex   & \YES & \YES & \YES & \YES & \YES & \NO 
  & \cite{FM00,Mdcasiam} 
\\ 
L-convex  & \NO  & \YES & \NO \quad & \YES & \YES & \NO 
  & \cite{Mdca98,Mdcasiam}
\\
\Mnat-convex  &  \YES & \YES & \YES & \NO & \YES & \YES
  & \cite{Mdcasiam,MS99gp}
\\ 
M-convex  & \YES & \NO & \YES & \NO & \NO & \YES
  & \cite{Mstein96,Mdcasiam}
\\ 
Multimodular  &  \YES  & \NO  & \YES & \YES & \NO & \NO 
  & \cite{AGH00,AGH03,Haj85,MM19multm}
\\ 
Disc.~midpt conv.  &  \YES  & \YES & \YES & \YES 
  & \NO  & \NO 
  & \cite{MM19projcnvl,MMTT19dmpc}
\\ 
Sim.~exch. jump & \YES  & \YES & \YES 
  & \NO & \YES & \YES
  &  \cite{Mmnatjump19}
\\ 
Const-par.~jump & \YES  &  \NO  & \YES & \NO 
  & \NO & \YES
  &  \cite{BouC95} 
\\ \hline
\multicolumn{3}{l}{} &
\multicolumn{4}{r}{($B$: integer box)} 
\\
\multicolumn{7}{l}{``Y'' means ``Yes, this set class is closed under this operation.''} 
\\
\multicolumn{7}{l}{``\textbf{\textit{N}}'' means ``No, this set class is not closed under this operation.''} 
\end{tabular}
\addtolength{\tabcolsep}{3pt}
\end{center}
\end{table}
%%%%%%%%%% table %%%%%%%%%%

\subsection{Projection}

For a set $S \subseteq \ZZ\sp{N}$
and a subset $U$ of the index set $N = \{ 1,2,\ldots, n \}$,
the
{\em projection}
of $S$ to $U$ is a subset $T$ of $\ZZ\sp{U}$ defined by
\begin{equation}  \label{projsetdef}
 T =  \{ y \in \ZZ\sp{U} \mid  (y,z) \in S \mbox{ for some $z \in \ZZ\sp{N \setminus U}$} \}, 
\end{equation}
where the notation $(y,z)$ means the vector in $\ZZ \sp{N}$
whose $i$th component is equal to $y_{i}$ for $i \in U$
and to $z_{i}$ for $i \in N \setminus U$;
for example, if $N = \{ 1,2,3,4 \}$ and $U = \{ 2,3 \}$,
we have
$(y,z) = (z_{1}, y_{2}, y_{3}, z_{4})$.

The projection operation preserves discrete convexity as follows.

\begin{proposition} \label{PRsetproj}
The projection operation \eqref{projsetdef} for a set preserves
integral convexity,
L$^{\natural}$-convexity, L-convexity,
M$^{\natural}$-convexity, 
and
discrete midpoint convexity.
Moreover,
the projection of an integer box is an integer box,
and the projection of an s.e.~jump system is an s.e.~jump system.
\finbox
\end{proposition}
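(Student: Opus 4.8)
The plan is to exploit the product splitting $\ZZ\sp{N} = \ZZ\sp{U} \times \ZZ\sp{N \setminus U}$ and to argue, class by class, by the same three-step scheme: lift a point $y \in T$ to a witness $(y,z) \in S$, invoke the defining property of $S$ on such witnesses, and project the resulting point(s) back to the $U$-coordinates. Writing $\pi\colon \ZZ\sp{N} \to \ZZ\sp{U}$ (and its real extension) for the coordinate projection, the reason this works uniformly is that every ingredient occurring in the definitions — componentwise rounding of midpoints, the lattice operations $\vee,\wedge$, the supports $\suppp$ and $\suppm$, the integer increments, and the integral neighborhood $N(\cdot)$ — respects the coordinate split, so that the $U$-part of any vector produced inside $S$ automatically lands in $T$. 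In particular I would not reduce to projecting out a single coordinate; each argument runs verbatim for a general $U$. The integer-box claim is immediate, since $\pi([a,b]_{\ZZ}) = [a,b]_{\ZZ}$ read off on the $U$-coordinates.

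For \Lnat-convexity, discrete midpoint convexity, and L-convexity the argument is essentially automatic. Given $y, y' \in T$, pick witnesses $(y,z),(y',z') \in S$; since $\lceil ((y,z)+(y',z'))/2\rceil = (\lceil (y+y')/2\rceil, \lceil (z+z')/2\rceil)$ and likewise for $\lfloor\cdot\rfloor$, discrete midpoint convexity \eqref{midptcnvset} of $S$ forces $\lceil (y+y')/2\rceil, \lfloor (y+y')/2\rfloor \in T$, so $T$ is \Lnat-convex. For a discrete midpoint convex $S$ the same computation applies, once one notes that $\| y-y'\|_{\infty} \geq 2$ implies $\|(y,z)-(y',z')\|_{\infty} \geq 2$, so that \eqref{dirintcnvsetdef} may be invoked. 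For L-convexity I add that if $y\in T$ with witness $z$, then $(y,z)\pm\vecone = (y\pm\vecone, z\pm\vecone) \in S$ yields $y\pm\vecone \in T$; together with the preservation of \Lnat-convexity this gives \eqref{invarone}, hence L-convexity, for $T$.

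For integral convexity I would use the characterization that $S$ is integrally convex iff $x \in \overline{S} \Rightarrow x \in \overline{S \cap N(x)}$. A point $p \in \overline{T} = \pi(\overline{S})$ (using that $\pi$ commutes with taking convex hulls and that $\pi(S)=T$) lifts to some $(p,q) \in \overline{S}$; integral convexity of $S$ gives $(p,q) \in \overline{S \cap N(p,q)}$. Because the neighborhood factorizes, $N(p,q) = N(p) \times N(q)$, the projection $\pi$ maps $S \cap N(p,q)$ into $T \cap N(p)$, and applying $\pi$ yields $p = \pi(p,q) \in \overline{\pi(S \cap N(p,q))} \subseteq \overline{T \cap N(p)}$. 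Hence $T$ is integrally convex. The only points needing care are $\pi(\overline{S})=\overline{\pi(S)}=\overline{T}$ and the factorization of $N(\cdot)$, both routine.

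The substantive cases are \Mnat-convexity and the simultaneous-exchange jump system, and the crux is a \emph{degeneration} phenomenon. For \Mnat-convexity, take $y,y' \in T$ with witnesses $(y,z),(y',z')\in S$ and $i \in \suppp(y-y')$; then $i \in \suppp((y,z)-(y',z'))$, so \Bnvex for $S$ applies. If its conclusion is the one-sided move, it projects to $y-\unitvec{i},\ y'+\unitvec{i} \in T$. If it is the exchange with some $j$, there are two subcases: when $j \in U$ the exchange projects verbatim to the \Mnat-exchange in $T$; when $j \in N\setminus U$ the vector $\unitvec{j}$ is annihilated by $\pi$, so the exchange collapses to the one-sided move $y-\unitvec{i},\ y'+\unitvec{i} \in T$. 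In every case $T$ satisfies \Bnvex. The s.e.\ jump system is handled by the same bookkeeping applied to the axiom (J$\sp{\natural}$-EXC): an $(y,y')$-increment $s$ (necessarily supported on $U$) lifts to a $((y,z),(y',z'))$-increment, and the second increment $t$ supplied by (J$\sp{\natural}$-EXC) either lies in $U$, giving the two-step move $y+s+t,\ y'-s-t \in T$, or lies in $N\setminus U$, in which case $\pi$ annihilates it and the two-step move degenerates to the one-step alternative $y+s,\ y'-s \in T$; that $T$ is still a jump system is a standard closure property of projection. I expect exactly this case analysis — recognizing that an exchange, or a second increment, hitting a coordinate outside $U$ degenerates to the ``(i)'' alternative of the relevant axiom — to be the main obstacle, since it is the only place where the projection interacts with the combinatorics of the exchange properties rather than passing through formally.
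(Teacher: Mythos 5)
Your proof is correct, but it takes a more self-contained route than the paper. The paper does not verify the set-level exchange/midpoint properties directly: it derives Proposition~\ref{PRsetproj} from the corresponding function statements in Proposition~\ref{PRfnproj} (via indicator functions, since the projection of $\delta_{S}$ is $\delta_{T}$ for the projected set $T$) and otherwise defers to the literature (integrally convex and d.m.c.\ sets to \cite{MM19projcnvl}, \Lnat-, L-, \Mnat-convex sets to \cite{Mdcasiam}, s.e.~jump systems to \cite{Mmnatjump19}). Your direct arguments are sound in every case: the lift-apply-project scheme works verbatim for \Lnat-convexity, d.m.c., and L-convexity; the integral convexity argument correctly uses $\overline{\pi(S)}=\pi(\overline{S})$ and $N(p,q)=N(p)\times N(q)$; and you correctly identify the one genuinely combinatorial point, namely that when the exchange index $j$ (for \Bnvex) or the second increment $t$ (for (J$\sp{\natural}$-EXC)) lies in $N\setminus U$, the projected conclusion degenerates to the one-sided alternative (i) of the respective axiom --- this is exactly why \Mnat-convexity and simultaneous exchange survive projection while M-convexity and constant parity do not. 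One small simplification: your appeal to a ``standard closure property'' to see that $T$ is a jump system is unnecessary, since (J$\sp{\natural}$-EXC) already implies the 2-step axiom (if $y+s\notin T$ then alternative (i) fails, so (ii) supplies the required $t$). What your approach buys is independence from the function-level results and the external references; what the paper's approach buys is brevity, since Proposition~\ref{PRfnproj} must be established anyway.
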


\begin{remark} \rm \label{RMsetprojbib}
Here is a supplement to Proposition \ref{PRsetproj} about projection.
The result for integrally convex sets
is established in \cite[Theorem 3.1]{MM19projcnvl}, 
those for L$^{\natural}$-convex, L-convex, and 
M$^{\natural}$-convex sets
follow from 
\cite[Theorem 7.11]{Mdcasiam}, 
\cite[Theorem 7.10]{Mdcasiam}, and
\cite[Theorem 6.15]{Mdcasiam},
respectively.
The result for discrete midpoint convex sets
is proved in \cite[Theorem 3.4]{MM19projcnvl}, and 
that for s.e.~jump systems in \cite{Mmnatjump19}.
The statements given in Proposition \ref{PRsetproj}
follow from the corresponding statements for functions 
in Proposition \ref{PRfnproj}.
\finbox
\end{remark}

The projection of an M-convex set is not necessarily M-convex,
simply because the projected set does not lie on a hyperplane of a constant component sum.
Similarly,
the projection of a constant-parity jump system
is not necessarily a constant-parity jump system.

%%%%%%%%%%%%%%
\begin{example}  \rm \label{EXmsetproj} 
$S = \{ (t,-t) \mid t \in \ZZ \}$
is an M-convex set in $\ZZ\sp{2}$ 
and its projection to $U = \{ 1 \}$ is
$T = \ZZ$, which is not M-convex.
The set $S$ is a constant-parity jump system, while $T$ is not.
\finbox
\end{example}

The projection of a multimodular set is not necessarily multimodular.

\begin{example}   \rm \label{EXmmsetproj}
$S = \{ (0,0,0), (0,1,-1), (0,1,0), (1,0,0) \}$
is a multimodular set (Example~\ref{EXmmsetperm}).
Its projection to $U= \{ 1,3 \}$ is given by 
$T = \{ (0,0), (0,-1), (1,0) \}$,
which is not multimodular, since the transformed set 
$\tilde T = \{ D\sp{-1} x \mid x \in T \} = \{ (0,0), (0,-1), (1,1) \}$
is not \Lnat-convex.
\finbox
\end{example}

A subset $U$ of the index set $N = \{ 1,2,\ldots, n \}$
is said to be {\em consecutive}
if it consists of consecutive numbers, that is,
it is a set of the form 
$\{ k, k+1, \ldots, l -1, l \}$
for some $k \leq l$.
The projection of a multimodular set to a consecutive index subset $U$ is multimodular.

\begin{proposition}[{\cite[Proposition 10]{MM19multm}}]  \label{PRmmsetprojinterval}
The projection of a multimodular set $S$ 
to a consecutive index subset $U$ is multimodular.
\finbox
\end{proposition}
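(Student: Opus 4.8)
The plan is to transport the problem into the \Lnat-convex world through the unimodular matrix $D$, and there to realize the projection to a consecutive block as a composition of a projection and a restriction, each of which is already known to preserve \Lnat-convexity. Recall that $S$ is multimodular if and only if $\tilde S = \{ D\sp{-1}x \mid x \in S \}$ is \Lnat-convex, and that $(D\sp{-1}x)_{i} = x_{1}+\cdots+x_{i} =: p_{i}$. Write $U = \{ k, k+1, \ldots, l \}$ and $m = l-k+1$. For $y = (x_{k}, \ldots, x_{l}) \in \ZZ\sp{U}$ the vector $D_{U}\sp{-1}y$ has $j$-th component $x_{k}+\cdots+x_{k+j-1} = p_{k+j-1} - p_{k-1}$ (with the convention $p_{0}=0$); here the consecutiveness of $U$ is exactly what allows every such cumulative sum to be written as a difference from the \emph{single} reference coordinate $p_{k-1}$. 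Tracing the definition \eqref{projsetdef} of projection then gives
\[
  \{ D_{U}\sp{-1}y \mid y \in T \}
  = \{ (p_{k}-p_{k-1}, \ldots, p_{l}-p_{k-1}) \mid p \in \tilde S \} =: \tilde T ,
\]
so it suffices to prove that $\tilde T$ is \Lnat-convex.

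First I would project $\tilde S$ onto the consecutive block $\{ k-1, k, \ldots, l \}$; by Proposition~\ref{PRsetproj} the result $P \subseteq \ZZ\sp{m+1}$ is \Lnat-convex. (When $k=1$ there is no coordinate $p_{0}$ and $\tilde T$ is directly the projection of $\tilde S$ onto $\{ 1, \ldots, l \}$, already \Lnat-convex, so I assume $k \ge 2$.) Relabelling the coordinates of $P$ as $(w_{0}, w_{1}, \ldots, w_{m})$ with $w_{0}=p_{k-1}$, we have $\tilde T = \chi(P)$, where $\chi(w_{0}, w_{1}, \ldots, w_{m}) = (w_{1}-w_{0}, \ldots, w_{m}-w_{0})$. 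The decisive structural feature is that $\chi$ is invariant along $\vecone$, i.e.\ $\chi(w + \lambda\vecone)=\chi(w)$ for all $\lambda \in \ZZ$, so that $\chi(P) = \chi(P + \ZZ\vecone)$.

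The key step is to show that $\hat P := P + \ZZ\vecone$ is L-convex. It is $\vecone$-invariant by construction, so by Proposition~\ref{PRlset} it remains to verify discrete midpoint convexity \eqref{midptcnvset}. For two points $u + \lambda\vecone$ and $v + \kappa\vecone$ of $\hat P$ with $u, v \in P$, a short parity computation using $\lfloor t + \tfrac12 \rfloor = \lceil t \rceil$ and $\lceil t + \tfrac12 \rceil = \lfloor t \rfloor + 1$ for integers $t$ reduces each of the two rounded midpoints to $\lfloor (u+v)/2 \rfloor$ or $\lceil (u+v)/2 \rceil$ shifted by an integer multiple of $\vecone$; since $P$ is \Lnat-convex these lie in $P$, hence the midpoints lie in $\hat P$. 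Finally, using the $\vecone$-invariance of $\hat P$ to normalize the zeroth coordinate I would identify
\[
  \chi(\hat P) = \{ (w_{1}, \ldots, w_{m}) \mid (0, w_{1}, \ldots, w_{m}) \in \hat P \},
\]
which is precisely the restriction \eqref{restrsetdef} of the (L-convex, hence \Lnat-convex) set $\hat P$ to the index block $\{ 1, \ldots, m \}$. By Proposition~\ref{PRsetrestr} this restriction is \Lnat-convex, so $\tilde T = \chi(P) = \chi(\hat P)$ is \Lnat-convex and $T$ is multimodular.

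The main obstacle is the L-convexity of $\hat P$, namely the parity bookkeeping in the midpoint check, together with the realization that the $\vecone$-invariance of $\chi$ is exactly the device converting the awkward ``subtract one coordinate from all the others'' map into a restriction of an honestly L-convex set. I expect everything else to be routine, and I emphasize that the consecutiveness hypothesis enters in a single, essential place: only a consecutive $U$ lets all the required cumulative sums be expressed as differences from the one reference coordinate $p_{k-1}$, which is what makes the reduction to projection and restriction possible.
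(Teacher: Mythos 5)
Your argument is correct. The paper itself gives no proof of this statement, only the citation to \cite[Proposition 10]{MM19multm}, and your route --- transporting $S$ to the \Lnat-convex set $\tilde S = \{D\sp{-1}x \mid x \in S\}$, observing that consecutiveness of $U$ lets every cumulative sum $x_k + \cdots + x_{k+j-1}$ be written as $p_{k+j-1}-p_{k-1}$ against the single reference coordinate $p_{k-1}$, and then realizing $\tilde T$ as a projection followed by a quotient along $\vecone$ --- is exactly the mechanism that makes the cited result work. I checked the two places where something could go wrong: the identification $\tilde T = \chi(P)$ is a correct bookkeeping of \eqref{projsetdef} under $D_U\sp{-1}$, and the parity computation showing that $\hat P = P + \ZZ\vecone$ is discrete midpoint convex (hence L-convex by Proposition~\ref{PRlset}) is sound in both the even and odd cases of $\lambda+\kappa$. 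One small simplification you could have used: $\hat P$ is the projection onto the last $m+1$ coordinates of the L-convex set $\{(x_0,w) \mid w - x_0\vecone \in P\}$, so its L-convexity follows from Proposition~\ref{PRsetproj} without any parity bookkeeping; but your direct verification is equally valid. The final step, recognizing $\chi(\hat P)$ as the restriction \eqref{restrsetdef} of $\hat P$ to $\{1,\dots,m\}$ and invoking Proposition~\ref{PRsetrestr}, closes the argument correctly.
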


\subsection{Intersection}

We consider discrete convexity of
the intersection $S_{1} \cap S_{2}$ 
of two discrete convex sets 
$S_{1}$, $S_{2} \subseteq \ZZ\sp{n}$.
L-convexity and its relatives are well-behaved 
with respect to the intersection operation.%
\footnote{%%%%%%%%%%%%%
The intersection operation may result in an empty set.
Therefore, strictly speaking, we should 
add a proviso in Propositions 
\ref{PRlsetinter}, \ref{PRmsetinter}, and \ref{PRsetboxinter}
that the resulting set is nonempty.
}%%% footnote %%%

\begin{proposition} \label{PRlsetinter}
\quad

%% To Typesetter:  Please do NOT change the labels "(1)", "(2)", ....  below, 
%% as they are cited at other places.

\noindent
{\rm (1)} 
The intersection of integer boxes is an integer box.

\noindent
{\rm (2)} 
The intersection of \Lnat-convex sets is \Lnat-convex.

\noindent
{\rm (3)} 
The intersection of L-convex sets is L-convex.

\noindent
{\rm (4)} 
The intersection of multimodular sets is multimodular.

\noindent
{\rm (5)} 
The intersection of discrete midpoint convex sets
is discrete midpoint convex.
\finbox
\end{proposition}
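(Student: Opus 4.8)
The plan is to prove each of the five parts by verifying the appropriate defining closure condition directly on the intersection, exploiting in each case that the defining condition is a pointwise implication of the form ``if certain points lie in the set, then certain derived points lie in the set.'' Such conditions are automatically preserved under intersection, because if $x, y \in S_1 \cap S_2$, then $x, y \in S_1$ and $x, y \in S_2$ separately, so each set contains the derived points, whence so does their intersection. I would organize the argument so that each part is an instance of this single observation applied to the relevant characterization.

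For part (1), an integer box is $[a,b]_{\ZZ}$ for some $a \le b$, and the intersection of $[a,b]_{\ZZ}$ and $[a',b']_{\ZZ}$ is $[a \vee a', b \wedge b']_{\ZZ}$, which is again an integer box (empty only if the lower bound exceeds the upper bound componentwise, which the nonemptiness proviso rules out). For part (2), I would use the defining discrete midpoint convexity \eqref{midptcnvset}: if $x, y \in S_1 \cap S_2$ then $\lceil (x+y)/2 \rceil$ and $\lfloor (x+y)/2 \rfloor$ lie in both $S_1$ and $S_2$, hence in $S_1 \cap S_2$. Part (5) is verbatim the same argument using the discrete midpoint convex condition \eqref{dirintcnvsetdef}, the only difference being the side hypothesis $\|x-y\|_\infty \ge 2$, which is a condition on $x$ and $y$ alone and is unaffected by intersection. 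For part (3), by Proposition \ref{PRlset} an L-convex set is an \Lnat-convex set that additionally satisfies the direction-$\vecone$ invariance \eqref{invarone}; both \eqref{midptcnvset} and \eqref{invarone} are closure implications, so each is inherited by $S_1 \cap S_2$ from $S_1$ and $S_2$, giving L-convexity via the same proposition.

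Part (4), the multimodular case, is the one requiring slightly more care, since multimodularity is defined through the unimodular transform rather than by a direct closure rule. The clean route is to pass to the \Lnat-convex world: by the correspondence following Theorem \ref{THmmfnlnatfn}, a set $S$ is multimodular if and only if $\tilde S = \{ D^{-1} x \mid x \in S \}$ is \Lnat-convex, and this correspondence is a bijection that commutes with intersection, i.e., $\widetilde{S_1 \cap S_2} = \tilde S_1 \cap \tilde S_2$. Thus part (4) reduces immediately to part (2) applied to $\tilde S_1$ and $\tilde S_2$. I expect this reduction to be the main (though still minor) obstacle, as it is the only part where one must invoke an auxiliary transform rather than argue directly; the remaining parts are routine applications of the closure-under-intersection principle and can be stated in a line or two each.
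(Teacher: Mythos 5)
Your proof is correct and takes essentially the approach the paper indicates: the paper itself gives no argument beyond Remark~\ref{RMsetinterbib}'s citations, and the underlying proofs there are exactly the direct closure verifications you describe (the defining conditions \eqref{midptcnvset}, \eqref{submset}, \eqref{invarone}, \eqref{dirintcnvsetdef} are pointwise implications preserved by intersection, and the multimodular case reduces to the \Lnat-convex case via the unimodular bijection $x \mapsto D^{-1}x$, which commutes with intersection). Your handling of the nonemptiness proviso and of part (4) via the correspondence $S \mapsto \{D^{-1}x \mid x \in S\}$ matches the paper's framework.
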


\begin{remark} \rm \label{RMsetinterbib}
Here is a supplement to Proposition \ref{PRlsetinter} about intersection.
This property for L-convex sets is given in \cite[Theorem 5.7]{Mdcasiam}, 
whereas that for L$^{\natural}$-convex sets is an immediate corollary thereof.
This property for multimodular sets is stated in \cite[Proposition 8]{MM19multm},
and that for discrete midpoint convex sets in \cite[Proposition 2]{MMTT19dmpc}.
\finbox
\end{remark}

In contrast, M-convexity and integral convexity
are not compatible with the intersection operation.

\begin{itemize}
\item
The intersection of integrally convex sets
is not necessarily integrally convex.
See Example~\ref{EXicsetinter}.

\item
The intersection of \Mnat-convex sets
is not necessarily \Mnat-convex.
See Example~\ref{EXmnatsetinter}.

\item
The intersection of M-convex sets
is not necessarily M-convex.
See Example~\ref{EXmsetinter}.

\item
The intersection 
of s.e. (resp., c.p.) jump systems
is not necessarily 
an s.e. (resp., c.p.) jump system.
See Example~\ref{EXmsetinter}.

\end{itemize}

\begin{example}[{\cite[Example 4.4]{MS01rel}}] \rm \label{EXicsetinter}
The intersection of two integrally convex sets
is not necessarily integrally convex. 
Let
\[
S_1  =  \{(0, 0, 0), (0, 1, 1), (1, 1, 0), (1, 2, 1)\},
\quad
S_2  =  \{(0, 0, 0),  \allowbreak  (0, 1, 0), (1, 1, 1), (1, 2, 1)\},
\]
for which
$S_1 \cap S_2= \{(0, 0, 0), (1, 2, 1)\}$.
The sets $S_1$ and $S_2$ are both integrally convex,
whereas  $S_1 \cap S_2$ is not integrally convex.
\finbox
\end{example}

\begin{example}[{\cite[Example 3.7]{MS01rel}}] \rm \label{EXmnatsetinter}
The intersection of two \Mnat -convex sets is not necessarily \Mnat -convex. 
Let
\begin{align*}
S_{0} &= \{(0, 0, 0), (1, 0, 0), (0, 1, 0),  (0, 0, 1), (1, 0, 1)\},
\\
S_{1} &= S_{0} \cup \{(0, 1, 1)\}, 
\qquad
S_{2} = S_{0} \cup \{(1, 1, 0)\}.
\end{align*}
Here $S_{1}$ and $S_{2}$ are M$\sp{\natural}$-convex sets,
whereas $S_{0} = S_{1} \cap S_{2}$ is not.
Indeed, the exchange property 
\Bnvex
for \Mnat-convex sets
fails for $S_{0}$ with
$x = (1, 0, 1)$, $y = (0, 1, 0)$, and $i = 1 \in \suppp(x-y)$.
\finbox
\end{example}

\begin{example}[{\cite[Note 4.25]{Mdcasiam}}] \rm \label{EXmsetinter}
The intersection of two M-convex sets is not necessarily M-convex. 
Let
\begin{align*}
S_{0} & = \{(0, 0, 0,0), (1, 0, 0,-1), (0, 1, 0,-1), (0, 0, 1,-1), (1, 0, 1,-2)\},
\\
S_{1} &= S_{0} \cup \{(0, 1, 1,-2)\}, 
\qquad
S_{2} = S_{0} \cup \{(1, 1, 0,-2)\}.
\end{align*}
Here $S_{1}$ and $S_{2}$ are M-convex,
whereas $S_{0} = S_{1} \cap S_{2}$ is not.
Indeed,
the exchange property 
\Bvex 
for M-convex sets
fails for $S_{0}$ with
$x = (1, 0, 1,-2)$, $y = (0, 1, 0,-1)$, and $i = 1 \in \suppp(x-y)$.

This example also shows that 
the intersection of two 
(s.e.~or c.p.) 
jump systems is not necessarily a jump system,
since a constant-sum system
is a jump system if and only if it is an M-convex set.
\finbox
\end{example}

The intersection of two M$^{\natural}$-convex sets is integrally convex,
though not M$^{\natural}$-convex.

\begin{proposition}[{\cite[Theorem 8.31]{Mdcasiam}}]  \label{PRmsetinter}
The intersection of two M$^{\natural}$-convex sets is integrally convex.
In particular, the intersection of two M-convex sets is integrally convex.
\finbox
\end{proposition}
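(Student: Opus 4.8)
The plan is to exploit the classical description, recalled in Section~\ref{SCmnatfn}, that an \Mnat-convex set is precisely the set of integer points of an integral generalized polymatroid (g-polymatroid). Writing $P_i = \overline{S_i}$ for the two given \Mnat-convex sets $S_1, S_2 \subseteq \ZZ\sp{n}$, both $P_1$ and $P_2$ are integral g-polymatroids, and since \Mnat-convex sets are integrally convex (hence hole-free) by \eqref{setfamM}, the integer points of $P_i$ are exactly $S_i$. I would base the whole argument on two classical facts from polymatroid theory: (i) the intersection of two integral g-polymatroids is an integral polytope whose integer points are the intersection of the two individual sets of integer points (the g-polymatroid intersection theorem), and (ii) the intersection of an integral g-polymatroid with an integral box is again an integral g-polymatroid --- the latter being exactly the box-restriction entry recorded for \Mnat-convex sets in Table~\ref{TBoperation2dcsetZ}.

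First I would settle the global hull. By (i), $P_1 \cap P_2$ is an integral polytope whose integer points are $S_1 \cap S_2 =: S$; since an integral polytope is the convex hull of its integer points, this gives $\overline{S} = P_1 \cap P_2$ (and in particular $S$ is hole-free).

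The heart of the proof is verifying integral convexity, i.e., that $x \in \overline{S}$ implies $x \in \overline{S \cap N(x)}$ for every $x \in \RR\sp{n}$. Fix such an $x \in \overline{S} = P_1 \cap P_2$ and let $B = \prod_{i} [\lfloor x_i \rfloor, \lceil x_i \rceil]$ be the smallest integer box containing $x$; by construction $x \in B$ and $N(x) = B \cap \ZZ\sp{n}$. Using (ii), $Q_1 := P_1 \cap B$ is an integral g-polymatroid, and because $S_1$ is hole-free its integer points are $S_1 \cap N(x)$, so $Q_1 = \overline{S_1 \cap N(x)}$. Now $x \in Q_1 \cap P_2$, and applying (i) to the two integral g-polymatroids $Q_1$ and $P_2$ shows that $Q_1 \cap P_2$ is an integral polytope whose integer points are $(S_1 \cap N(x)) \cap S_2 = S \cap N(x)$; hence $Q_1 \cap P_2 = \overline{S \cap N(x)}$. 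Therefore $x \in \overline{S \cap N(x)}$, which is exactly integral convexity of $S$. The ``in particular'' clause is immediate, since every M-convex set is \Mnat-convex by \eqref{setfamM}.

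I expect the only real content to be the appeal to the g-polymatroid intersection theorem in fact (i); everything else is bookkeeping about boxes and hulls. The subtle point to get right is that the same integral box $B$ simultaneously captures the integer neighborhood $N(x)$ and lets me localize $P_1$, so that a single application of the intersection theorem to $Q_1$ and $P_2$ delivers the local hull $\overline{S \cap N(x)}$ directly. An alternative, more self-contained route would work with the indicator function $\delta_{S_1} + \delta_{S_2}$ and the local characterization of Theorem~\ref{THfavtarProp33}, but that theorem presupposes an integrally convex effective domain, which is essentially what has to be proved here; the polymatroid route avoids this circularity.
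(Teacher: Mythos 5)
Your argument is correct. Note first that the paper itself offers no proof of Proposition~\ref{PRmsetinter}: it simply cites \cite[Theorem~8.31]{Mdcasiam}, where the (more general) statement that the sum of two \Mnat-convex \emph{functions} is integrally convex is proved via the M-convex intersection theorem, working with $f_1+f_2$ and its convex closure. Your route specializes to sets and argues purely polyhedrally: you invoke the integral g-polymatroid intersection theorem once globally (to get $\overline{S}=P_1\cap P_2$) and once locally (applied to $Q_1=P_1\cap B$ and $P_2$, to identify $Q_1\cap P_2$ with $\overline{S\cap N(x)}$). The key observation that the box $B=\prod_i[\lfloor x_i\rfloor,\lceil x_i\rceil]$ simultaneously encodes $N(x)$ and localizes $P_1$ while staying inside the g-polymatroid class is exactly right, and your verification that $(Q_1\cap P_2)\cap\ZZ^n=S\cap N(x)$ together with integrality of that intersection delivers $x\in\overline{S\cap N(x)}$ is sound; you also correctly avoid the circularity that Theorem~\ref{THfavtarProp33} would introduce. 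Both approaches ultimately rest on the same deep ingredient (integrality of the intersection of two integral g-polymatroids, equivalently the M-convex intersection theorem), so neither is more elementary; what your version buys is a self-contained, set-level proof that never mentions functions or conjugacy, at the cost of importing the polyhedral intersection theorem as a black box. Two cosmetic points: $P_1\cap P_2$ may be an unbounded integral \emph{polyhedron} rather than a polytope, but an integral rational polyhedron is still the convex hull of its integer points, so nothing breaks; and you should record that $S\cap N(x)\neq\emptyset$ follows from $x\in Q_1\cap P_2$ and the integrality of that nonempty polytope.
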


\begin{example} \rm \label{EXlnatsetMinter2}
In Example~\ref{EXmsetinter},
the intersection $S_{0} = S_{1} \cap S_{2}$ of two M-convex sets $S_{1}$ and $S_{2}$
is not M-convex, but it is integrally convex.
For $x = (0, 0, 0, 0)$ and $y = (1,0,1,-2)$ in $S_{0}$
with $\| x - y \|_{\infty} =2$, 
for example,
the midpoint
$z = (x+y)/2 = (1/2, 0, 1/2, -1)$
can be expressed as 
$z = ((1,0, 0,-1) + (0,0,1,-1))/2$
using two points
$(1,0, 0,-1)$ and $(0,0,1,-1)$
in $S_{0} \cap N(z)$.
\finbox
\end{example}

%%%%%%%%%%%%%%
\begin{remark} \rm \label{RMdcnvhull}
Proposition~\ref{PRlsetinter} enables us to define
the ``\Lnat-convex hull'' of a set $S \subseteq \ZZ\sp{n}$.
Indeed, the intersection 
of all \Lnat-convex sets containing $S$
is the smallest \Lnat-convex set containing $S$,
which we can naturally call the ``\Lnat-convex hull'' of $S$.
In a similar manner we can define
the ``L-convex hull'' of $S$ and 
the ``discrete midpoint convex hull'' of $S$.
However, such definition does not work for
\Mnat-convexity, M-convexity, or integral convexity.
\finbox
\end{remark}

We next consider 
the intersection
of a set $S \subseteq \ZZ\sp{n}$ with an integer box $B$.
If $S$ is equipped with a certain discrete convexity, 
the intersection $S \cap B$ also possesses the same kind of
discrete convexity, as follows.

\begin{proposition} \label{PRsetboxinter}
\quad

%% To Typesetter:  Please do NOT change the labels "(1)", "(2)", ....  below, 
%% as they are cited at other places.

\noindent
{\rm (1)} 
The intersection of an integrally convex set
with an integer box is integrally convex.

\noindent
{\rm (2)} 
The intersection of an \Lnat-convex set 
with an integer box is \Lnat-convex.

\noindent
{\rm (3)} 
The intersection of an \Mnat-convex set 
with an integer box is \Mnat-convex.

\noindent
{\rm (4)} 
The intersection of an M-convex set 
with an integer box is M-convex.

\noindent
{\rm (5)} 
The intersection of a multimodular set 
with an integer box is multimodular.

\noindent
{\rm (6)} 
The intersection of a discrete midpoint convex set
with an integer box is discrete midpoint convex.

\noindent
{\rm (7)} 
The intersection of an s.e.~jump system
with an integer box is an s.e.~jump system.

\noindent
{\rm (8)} 
The intersection of a c.p.~jump system
with an integer box is a c.p.~jump system.
\finbox
\end{proposition}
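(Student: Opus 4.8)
The guiding principle is that each convexity class on the list is defined by a \emph{local} condition---a discrete midpoint inequality, an exchange property, or integral convexity---whose derived points, when generated from two points of an integer box, remain inside that box. Throughout I assume, as in the stated proviso, that $S \cap B$ is nonempty, and I would split the eight parts into three groups according to how this principle is exploited.

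\textbf{Group A (midpoint-type and multimodular): parts (2), (5), (6).} Here I would simply view $S \cap B$ as the intersection of two sets in the same class. By Theorem~\ref{THsetclassinclusion} every integer box is \Lnat-convex, multimodular, and discrete midpoint convex, and each of these three classes is closed under general intersection by Proposition~\ref{PRlsetinter}. Hence $S \cap B$ lies in the class, with no further computation.

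\textbf{Group B (integral convexity): part (1).} This cannot be handled as in Group A, since integrally convex sets are not closed under intersection (Example~\ref{EXicsetinter}). Instead I would use the elementary observation that $z \in \overline{B}$ implies $N(z) \subseteq B$: each coordinate of a point of $N(z)$ equals $\lfloor z_{i} \rfloor$ or $\lceil z_{i} \rceil$, and $a_{i} \leq z_{i} \leq b_{i}$ with integral $a_{i}, b_{i}$ forces $a_{i} \leq \lfloor z_{i} \rfloor \leq \lceil z_{i} \rceil \leq b_{i}$. Then for any $z \in \overline{S \cap B}$ we have $z \in \overline{S}$ and $z \in \overline{B}$, so integral convexity of $S$ gives $z \in \overline{S \cap N(z)}$, while $N(z) \subseteq B$ yields $S \cap N(z) = (S \cap B) \cap N(z)$. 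Thus $z \in \overline{(S \cap B) \cap N(z)}$, which is exactly integral convexity of $S \cap B$.

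\textbf{Group C (exchange-type): parts (3), (4), (7), (8).} For these the closure route is genuinely unavailable: none of the classes is closed under intersection, and an integer box need not be M-convex or a c.p.\ jump system. The key point is that every exchange step moves the two points \emph{toward each other}, so it stays within the integer interval $[x \wedge y, x \vee y]_{\ZZ}$, and $[x \wedge y, x \vee y]_{\ZZ} \subseteq B$ whenever $x, y \in B$. Concretely, for $i \in \suppp(x-y)$ and $j \in \suppm(x-y)$ one checks coordinatewise that $x - \unitvec{i}$ and $x - \unitvec{i} + \unitvec{j}$ lie in $[x \wedge y, x \vee y]_{\ZZ}$; the companion points on the $y$-side are handled uniformly by the reflection identity $(x+y) - x' \in [x \wedge y, x \vee y]_{\ZZ}$ valid for any $x' \in [x \wedge y, x \vee y]_{\ZZ}$. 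For the jump cases the interval constraint is already built into the definition of an $(x,y)$-increment, and a subsequent $(x+s,y)$-increment $t$ keeps $x+s+t$ inside $[(x+s) \wedge y, (x+s) \vee y]_{\ZZ} \subseteq [x \wedge y, x \vee y]_{\ZZ}$. Consequently, whenever $x, y \in S \cap B$, the single exchange guaranteed for $S$ by \Bnvex, \Bvex, (J$\sp{\natural}$-EXC), or (J-EXC) produces points lying in $S$ and, being inside $[x \wedge y, x \vee y]_{\ZZ} \subseteq B$, also in $B$; hence in $S \cap B$, establishing the exchange property for $S \cap B$.

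I expect Group C to be the part demanding the most care, precisely because one cannot introduce a generic box as a second set of the class and appeal to an intersection theorem; the argument instead rests on checking that the \emph{same} exchange already supplied by $S$ is simultaneously valid in $B$, which is exactly what the monotone (toward-each-other) direction of the exchanges and the interval built into the $(x,y)$-increment guarantee. Group B is the only place needing a conceptually different device, namely the containment $N(z) \subseteq B$ for $z \in \overline{B}$, which replaces the failed closure-under-intersection property.
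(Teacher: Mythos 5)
Your proposal is correct and follows essentially the same route as the paper, which (in Remark~\ref{RMsetboxinterbib}) treats parts (2), (5), (6) exactly as your Group~A does---as special cases of Proposition~\ref{PRlsetinter} since an integer box belongs to those classes---and declares the remaining parts immediate from the definitions. Your Groups~B and~C simply spell out those omitted definitional checks (the containment $N(z) \subseteq B$ for $z \in \overline{B}$, and the fact that exchange steps stay inside $[x \wedge y, x \vee y]_{\ZZ} \subseteq B$), and they do so correctly.
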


\begin{remark} \rm \label{RMsetboxinterbib}
Here is a supplement to Proposition \ref{PRsetboxinter}
about the intersection with an integer box.
The statements (1)--(8) are immediate from the definitions.
Parts (2), (5), and (6)
for \Lnat-convex, multimodular, and discrete midpoint convex sets
  are special cases 
of Proposition~\ref{PRlsetinter} (2), (4), and (5),
respectively, since an integer box is 
\Lnat-convex, multimodular, and discrete midpoint convex
(cf., Theorem~\ref{THsetclassinclusion}).
\finbox
\end{remark}

In this connection we note the following fact.

\begin{itemize}
\item
The intersection of an L-convex set with an integer box is 
not necessarily L-convex.
This is because the intersected set lacks in
the translation invariance in the direction of $\vecone$.
See Example~\ref{EXlsetboxinter}.
\end{itemize}

\begin{example}  \rm \label{EXlsetboxinter} 
Let $S = \{ (t,t) \mid t \in \ZZ \}$
and $B = [ \veczero, \vecone ]_{\ZZ}$.
$S$ is an L-convex set and $B$ is an integer box.
Their intersection $S \cap B$ is equal to 
$\{ (0,0), (1,1) \}$, which is not L-convex.
\finbox
\end{example}

\subsection{Minkowski sum}

The
{\em Minkowski sum}
of two sets $S_{1}$, $S_{2} \subseteq \ZZ\sp{n}$ 
means the subset of $\ZZ\sp{n}$ 
defined by
\begin{equation} \label{minkowsumZdef}
S_{1}+S_{2} = 
\{ x + y \mid x \in S_{1}, \  y \in S_{2} \} .
\end{equation}
We sometimes refer to this
as the discrete (or integral) Minkowski sum
to emphasize discreteness.
The Minkowski sum is often a source of difficulty in a discrete setting,
since 
\[
 ( \overline{S_{1}+ S_{2}}) \cap \ZZ\sp{n} \not=   S_{1}+S_{2}
\]
in general,
as is  demonstrated by Example~\ref{EXicdim2sumhole} below.

%%%%%%%%%%%%%%
\begin{example}[{\cite[Example 3.15]{Mdcasiam}}] \rm \label{EXicdim2sumhole}
The Minkowski sum of
$S_{1} = \{ (0,0), (1,1) \}$
and
$S_{2} = \{ (1,0), (0,1) \}$
is equal to 
\[
S_{1}+S_{2} = \{ (1,0), (0,1), (2,1), (1,2) \},
\]
which has a ``hole'' at $(1,1)$, i.e.,
$(1,1) \in \overline{S_{1}+S_{2}}$ and
$(1,1) \not\in S_{1}+S_{2}$.
\finbox
\end{example}

M-convexity  and its relatives are well-behaved 
with respect to the Minkowski sum.

\begin{theorem} \label{THmsetMinkow}
\quad

%% To Typesetter:  Please do NOT change the labels "(1)", "(2)", ....  below, 
%% as they are cited at other places.

\noindent
{\rm (1)} 
The Minkowski sum of integer boxes is an integer box.

\noindent
{\rm (2)} 
The Minkowski sum of \Mnat-convex sets is \Mnat-convex.

\noindent
{\rm (3)} 
The Minkowski sum of M-convex sets is M-convex.

\noindent
{\rm (4)} 
The Minkowski sum of s.e.~jump systems is an s.e.~jump system.

\noindent
{\rm (5)} 
The Minkowski sum of c.p.~jump systems is a c.p.~jump system.
\finbox
\end{theorem}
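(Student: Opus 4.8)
The plan is to prove (1) by direct coordinatewise arithmetic, reduce (3) to (2) and (5) to (4), and then establish the two genuinely substantive cases (2) and (4) by different means. For (1), if $S_{k}=[a\sp{k},b\sp{k}]_{\ZZ}$ then $S_{1}+S_{2}=[a\sp{1}+a\sp{2},b\sp{1}+b\sp{2}]_{\ZZ}$, since the defining inequalities decouple across coordinates. For the reductions, the constant-sum condition ($x(N)=y(N)$) and the constant-parity condition ($x(N)-y(N)$ even) are each preserved under Minkowski sum, because $(x_{1}+x_{2})(N)-(y_{1}+y_{2})(N)=(x_{1}(N)-y_{1}(N))+(x_{2}(N)-y_{2}(N))$. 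Since an M-convex set is exactly a constant-sum \Mnat-convex set, and (as one checks from the axioms) a constant-parity s.e.\ jump system is exactly a c.p.\ jump system---in a constant-parity system $x+s$ has the opposite parity to $x$, so alternative (i) of (J$\sp{\natural}$-EXC) is impossible and the axiom collapses to (J-EXC)---statements (3) and (5) follow once (2) and (4) are in hand.

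For (2) I would exploit the polyhedral description recalled in Section~\ref{SCmnatfn}: an \Mnat-convex set is the set of integer points of an integral generalized polymatroid. Writing $S_{1}$ and $S_{2}$ as the integer points of the integral g-polymatroids determined by their (super/sub)modular defining pairs, I would use the classical fact that the polyhedral Minkowski sum of two g-polymatroids is again a g-polymatroid, namely the one associated with the sum of the defining pairs, which remains integer-valued and retains the required supermodularity and submodularity. The essential extra point is integrality: every integer point of the summed g-polymatroid decomposes as a sum of one integer point from each summand, so that the discrete Minkowski sum $S_{1}+S_{2}$ coincides with the full set of integer points of the summed polyhedron and is therefore \Mnat-convex.

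For (4) I would argue directly from the exchange axiom (J$\sp{\natural}$-EXC). Given $x,y\in S_{1}+S_{2}$ and an $(x,y)$-increment $s$, fix decompositions $x=x_{1}+x_{2}$, $y=y_{1}+y_{2}$ with $x_{k},y_{k}\in S_{k}$ chosen to minimise $\|x_{1}-y_{1}\|_{1}+\|x_{2}-y_{2}\|_{1}$. Minimality should force the coordinatewise signs of $x_{1}-y_{1}$ and $x_{2}-y_{2}$ to agree, so that $s$ is an $(x_{k},y_{k})$-increment for some $k$, say $k=1$. Applying (J$\sp{\natural}$-EXC) inside $S_{1}$ yields either $x_{1}+s,\,y_{1}-s\in S_{1}$, or an $(x_{1}+s,y_{1})$-increment $t$ with $x_{1}+s+t\in S_{1}$ and $y_{1}-s-t\in S_{1}$; adding back $x_{2}$ and $y_{2}$ produces the required members of $S_{1}+S_{2}$, and the sign agreement is what guarantees that $s$ (resp.\ $s+t$) is admissible as an increment for the combined pair $(x,y)$.

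I expect the main obstacle to lie in exactly these two integrality/admissibility points. For (2) it is the integer-decomposition property: a discrete Minkowski sum can in general acquire holes (Example~\ref{EXicdim2sumhole}), so one must invoke specifically the g-polymatroid structure---not merely integral convexity---to rule out any missing integer point of the summed polyhedron. For (4) the delicate step is to prove that a distance-minimising decomposition actually has matching signs and that the increment produced inside a single summand remains a valid $(x+s,y)$-increment for the whole sum; this is precisely where the 2-step structure of jump systems, rather than a naive transfer of a single exchange move, must be brought to bear.
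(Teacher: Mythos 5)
Your treatment of (1), the reductions of (3) to (2) and of (5) to (4), and the g-polymatroid route to (2) are sound; for (2) you correctly isolate the integer decomposition property of integral g-polymatroids as the real content. (For the record, the paper does not prove the theorem directly: it derives it from the convolution statement for functions, Theorem~\ref{THmfnconvol}, and cites exactly the classical submodular-function fact you invoke for (2)--(3), \cite{BouC95} for (5), and \cite{Mmnatjump19} for (4).)

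The argument for (4), however, has a genuine gap: the lemma you rely on is false. Take $S_{1}=\{(0,0),(1,1)\}$ and $S_{2}=\{(0,0),(1,-1)\}$, both c.p.\ (hence s.e.) jump systems. The points $x=(1,1)$ and $y=(1,-1)$ of $S_{1}+S_{2}=\{(0,0),(1,-1),(1,1),(2,0)\}$ have \emph{unique} decompositions $x=(1,1)+(0,0)$ and $y=(0,0)+(1,-1)$, for which $x_{1}-y_{1}=(1,1)$ and $x_{2}-y_{2}=(-1,1)$ disagree in sign in the first coordinate; since no other decomposition exists, distance minimisation cannot restore sign agreement. Worse, for the unique $(x,y)$-increment $s=-\unitvec{2}$ (which happens to be an $(x_{k},y_{k})$-increment for both $k$), applying (J-EXC) inside $S_{1}$ forces $t=-\unitvec{1}$ and inside $S_{2}$ forces $t=+\unitvec{1}$, and neither is an $(x+s,y)$-increment for the sum, because $(x+s)_{1}=y_{1}=1$ leaves no slack in coordinate $1$. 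The correct witness for the sum is $t=s=-\unitvec{2}$, and $x+s+t=(1,-1)$ decomposes only as $(0,0)+(1,-1)$: both summands move, each by a full two-step displacement, and neither moves by $s$ alone or by $s+t$. So a single application of the exchange axiom in one summand cannot in general be transferred to the sum; the proofs in the literature must coordinate moves in both summands simultaneously, and that coordination is precisely the step your sketch leaves unproved.
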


\begin{remark} \rm \label{RMsetMsumbib}
Here is a supplement to Theorem~\ref{THmsetMinkow}
about the Minkowski sum.
The Minkowski sum of integer boxes 
$[a_{1},b_{1}]_{\ZZ}$ and
$[a_{2},b_{2}]_{\ZZ}$ 
is equal to the integer box
$[a_{1}+ a_{2} ,b_{1} + b_{2}]_{\ZZ}$.
Part (3) for M-convex sets is 
a translation of a known fact in submodular function theory
and is given explicitly in \cite[Theorem 4.23]{Mdcasiam}.
Part (2) for \Mnat-convex sets is easily obtained as a variant of (3) for M-convex sets,
or as a special case of \cite[Theorem 6.15]{Mdcasiam}
for \Mnat-convex functions. 
Part (4) for s.e.~jump systems is due to \cite{Mmnatjump19},
and Part (5) for c.p.~jump systems is immediate from a result in \cite[Section 3]{BouC95}.
The statements given in Theorem~\ref{THmsetMinkow}
follow from the corresponding statements for functions 
in Theorem~\ref{THmfnconvol}.
\finbox
\end{remark}

In contrast, other kinds of discrete convexity
are not compatible with the Minkowski sum operation, as follows.

\begin{itemize}
\item
The Minkowski sum of integrally convex sets
is not necessarily integrally convex.
In Example~\ref{EXicdim2sumhole},
both $S_{1}$ and $S_{2}$ are integrally convex,
but $S_{1}+S_{2}$ is not.
See also Example~\ref{EXminkow3lnatset}.

\item
The Minkowski sum of L$^{\natural}$-convex sets
is not necessarily L$^{\natural}$-convex. 
See Example~\ref{EXlnatsetMsum}.

\item
The Minkowski sum of L-convex sets
is not necessarily L-convex. 
See Example~\ref{EXlsetsum}.

\item
The Minkowski sum of multimodular sets
is not necessarily multimodular. 
See Example~\ref{EXmmsetMsum}.

\item
The Minkowski sum of discrete midpoint convex sets
is not necessarily discrete midpoint convex.
In Example~\ref{EXicdim2sumhole},
both $S_{1}$ and $S_{2}$ are discrete midpoint convex,
but $S_{1}+S_{2}$ is not.
\end{itemize}

%%%%%%%%%%%%%%
\begin{example}[{\cite[Example 3.11]{MS01rel}}] \rm \label{EXlnatsetMsum}
The Minkowski sum of
$S_{1} =  \{(0, 0, 0), (1, 1, 0)\}$
and
$S_{2} =$ $ \{(0, 0, 0)$, $(0, 1, 1)\}$
is equal to 
\begin{equation}  \label{lnatsetsum1}
S_{1} + S_{2} = \{(0, 0, 0), (0, 1, 1), (1, 1, 0), (1, 2, 1)\}.
\end{equation}
Both $S_{1}$ and $S_{2}$ are L$^{\natural}$-convex.
For $x=(0, 1, 1)$ and $y=(1, 1, 0)$ in $S_{1} + S_{2}$,
we have
$(x+y)/2 = (1/2, 1, 1/2)$,
for which
$\left\lceil (x+y)/2 \right\rceil = (1, 1, 1) \not\in S_{1} + S_{2}$,
and
$\left\lfloor (x+y)/2 \right\rfloor = (0, 1, 0) \not\in S_{1} + S_{2}$.
Therefore, $S_{1}+S_{2}$ is not L$^{\natural}$-convex.
This example also shows that the Minkowski sum of two
\Lnat-convex sets is not discrete midpoint convex, either.
\finbox
\end{example}

%%%%%%%%%%%%%%
\begin{example}[{\cite[Note 5.11]{Mdcasiam}}] \rm \label{EXlsetsum}
(This is an adaptation of Example~\ref{EXlnatsetMsum} to L-convex sets.)
Let
$S_{1} =  \{(0, 0, 0, 0), (1, 1, 0, 0)\}$,
$S_{2} =$ $ \{(0, 0, 0, 0)$, $(0, 1, 1, 0)\}$,
and
$D = \{ \alpha (1,1,1,1) \mid \alpha \in \ZZ \}$.
Define $\tilde{S}_{1} = S_{1} + D $
and 
$\tilde{S}_{2} = S_{2} + D $, which are both L-convex.
The Minkowski sum 
\[
\tilde{S}_{1} + \tilde{S}_{2} = S_{1} + S_{2} + D
= 
\{(0, 0, 0, 0), (0, 1, 1, 0), (1, 1, 0, 0), (1, 2, 1, 0)\} + D
\]
is not L-convex, since for the elements 
$x=(0, 1, 1, 0)$ and $y=(1, 1, 0, 0)$ of $\tilde{S}_{1} + \tilde{S}_{2}$,
we have
$\left\lceil (x+y)/2 \right\rceil = (1, 1, 1, 0) \not\in \tilde{S}_{1} + \tilde{S}_{2}$
and
$\left\lfloor (x+y)/2 \right\rfloor = (0, 1, 0, 0) \not\in \tilde{S}_{1} + \tilde{S}_{2}$.
\finbox
\end{example}

\begin{example}[\cite{MM19multm}]  \rm \label{EXmmsetMsum}
Let
$S_{1} = \{ (0,0,0), (1,0,-1) \}$ and
$S_{2} = \{ (0,0,0), (0,1,0) \}$.
Both $S_{1}$ and $S_{2}$ are multimodular
($S_{2}$ is in fact an integer box).
However, their Minkowski sum
\[
S_{1} + S_{2} = \{ (0,0,0), (1,0,-1),  (0,1,0), (1,1,-1) \}
\]
is not multimodular.
We can check this directly or via transformation 
to $T_{i} = \{ D\sp{-1} x \mid x \in S_{i} \}$
for $i=1,2$.
We have
$T_{1} = \{ (0,0,0), (1,1,0) \}$ and $T_{2} = \{ (0,0,0), (0,1,1) \}$,
which are \Lnat-convex.
But their Minkowski sum 
\[
T_{1} +  T_{2} = \{(0, 0, 0), (0, 1, 1),  \allowbreak    (1, 1, 0), (1, 2, 1)\}
\]
is not \Lnat-convex,
since for $p=(0, 1, 1)$ and $q=(1, 1, 0)$ in $T_{1} + T_{2}$,
we have
$\left\lceil (p+q)/2 \right\rceil = (1, 1, 1) \not\in T_{1} + T_{2}$
and
$\left\lfloor (p+q)/2 \right\rfloor = (0, 1, 0) \not\in T_{1} + T_{2}$.
Since $T_{1} +  T_{2} =  \{ D\sp{-1} x \mid x \in S_{1} + S_{2} \}$,
this means that $S_{1} +  S_{2}$ is not multimodular.
It is mentioned that this example is based on
Example~\ref{EXlnatsetMsum} for \Lnat-convex sets due to \cite[Example 3.11]{MS01rel}.
\finbox
\end{example}

The Minkowski sum of two L$^{\natural}$-convex sets is integrally convex,
though not L$^{\natural}$-convex.

\begin{proposition} [{\cite[Theorem 8.42]{Mdcasiam}}] \label{PRlsetMinkow}
The Minkowski sum of two L$^{\natural}$-convex sets is integrally convex.
In particular,
the Minkowski sum of two L-convex sets is integrally convex.
\finbox
\end{proposition}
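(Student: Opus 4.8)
The plan is to verify integral convexity of $S:=S_1+S_2$ directly from the definition, namely to show that every point $x^{*}\in\overline{S}\cap\RR^{n}$ satisfies $x^{*}\in\overline{S\cap N(x^{*})}$. Conceptually this is the Minkowski-sum counterpart of Proposition~\ref{PRmsetinter} (the intersection of two \Mnat-convex sets is integrally convex): under the discrete Legendre--Fenchel duality that exchanges \Lnat- and \Mnat-convexity, Minkowski sum is dual to intersection, so one expects the two facts to be two faces of the same coin. I will nonetheless argue directly, since that duality machinery is not developed in the excerpt.

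First I would record the elementary reductions. Because the convex hull of a Minkowski sum is the Minkowski sum of the convex hulls, $\overline{S}=\overline{S_1}+\overline{S_2}$, so any $x^{*}\in\overline{S}$ splits as $x^{*}=p+q$ with $p\in\overline{S_1}$ and $q\in\overline{S_2}$. Since an \Lnat-convex set is integrally convex by Theorem~\ref{THsetclassinclusion}, we have $p\in\overline{S_1\cap N(p)}$ and $q\in\overline{S_2\cap N(q)}$; that is, $p=\sum_a\lambda_a a$ and $q=\sum_b\mu_b b$ are convex combinations of integer points $a\in S_1\cap N(p)$ and $b\in S_2\cap N(q)$. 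Forming $x^{*}=\sum_{a,b}\lambda_a\mu_b\,(a+b)$ then exhibits $x^{*}$ as a convex combination of points $a+b\in S$, which is almost what we want.

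The crux, and the step I expect to be the main obstacle, is that the integer points $a+b$ produced this way need not lie in $N(x^{*})$: from $a\in N(p)$ and $b\in N(q)$ one only gets $|a_i+b_i-x^{*}_i|<2$, so $a+b$ can sit in the radius-$2$ neighbourhood rather than in $N(x^{*})$. Coordinatewise, writing $p_i=\lfloor p_i\rfloor+\alpha_i$ and $q_i=\lfloor q_i\rfloor+\beta_i$ with $\alpha_i,\beta_i\in[0,1)$, the sum $a_i+b_i$ fails to equal $\lfloor x^{*}_i\rfloor$ or $\lceil x^{*}_i\rceil$ exactly when the chosen neighbours round ``the same way'' on a coordinate where $\alpha_i+\beta_i$ falls on the wrong side of $1$. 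So the task is to replace the two independent decompositions by a single coupled one in which, for every coordinate $i$, the neighbours of $p$ and of $q$ round compatibly. Here I would invoke the sublattice (submodularity) property \eqref{submset} of the \Lnat-convex sets $S_1$ and $S_2$, guaranteed by Proposition~\ref{PRlnatset}: closure under $\vee$ and $\wedge$ permits exchange/uncrossing steps on the supports $\{a\}$ and $\{b\}$, reorganising them into comonotone families whose pairwise sums stay inside $N(x^{*})$ while their convex combination still equals $x^{*}$. Carrying out this rounding argument is the technical heart of the proof.

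Finally, I would note the clean alternative that sidesteps the combinatorics: pass to indicator functions and observe that $\delta_{S_1+S_2}=\delta_{S_1}\conv\delta_{S_2}$, so the set statement is the effective-domain shadow of the function fact that the convolution of two \Lnat-convex functions is integrally convex. The ``in particular'' for L-convex sets is then immediate, since every L-convex set is \Lnat-convex by Theorem~\ref{THsetclassinclusion} and the extra invariance in the direction $\vecone$ is irrelevant to integral convexity.
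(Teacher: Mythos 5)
Your write-up correctly identifies the obstacle, but it does not overcome it, so what you have is a proof plan with the decisive step missing. The paper itself offers no proof of Proposition~\ref{PRlsetMinkow}; it cites \cite[Theorem 8.42]{Mdcasiam}, where the statement appears as the integral convexity of L$^{\natural}_2$-convex functions, and that proof is genuinely nontrivial. Your reduction up to the point where $x^{*}=p+q$, $p=\sum_a\lambda_a a$, $q=\sum_b\mu_b b$ with $a\in S_1\cap N(p)$, $b\in S_2\cap N(q)$, is fine, and you are right that the product decomposition $\sum_{a,b}\lambda_a\mu_b(a+b)$ can use points $a+b\notin N(x^{*})$. (Already for $S_1=\{(0,0,0),(1,1,0)\}$, $S_2=\{(0,0,0),(0,1,1)\}$ and $x^{*}=(1/2,1,1/2)$ the term $(0,0,0)+(0,0,0)$ falls outside $N(x^{*})$, while a suitable coupling of the two decompositions works.) But the sentence in which you ``invoke the sublattice property \eqref{submset} \ldots permitting exchange/uncrossing steps \ldots reorganising them into comonotone families whose pairwise sums stay inside $N(x^{*})$'' is precisely the theorem; closure of $S_1$ and $S_2$ under $\vee$ and $\wedge$ does not by itself produce the required coupled decomposition, and you concede that carrying it out is ``the technical heart of the proof'' without doing so. What is actually needed is the finer local structure of \Lnat-convex sets: the vertices of the local simplex containing $p$ form a chain $\lfloor p\rfloor+\unitvec{A_1},\ldots$ with $A_1\supseteq A_2\supseteq\cdots$ determined by sorting the fractional parts of $p$, and one must show that the two chains (for $p$ and for $q$), together with a judicious choice of the splitting $x^{*}=p+q$ itself (which you also leave unconstrained), can be merged so that the carries in $p_i+q_i$ are handled consistently across coordinates. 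None of this is in your argument. Finally, your ``clean alternative'' via $\delta_{S_1}\conv\delta_{S_2}$ is circular here: the function statement (Proposition~\ref{PRlfnconvol}) is the same Theorem 8.42 of \cite{Mdcasiam}, of which the present proposition is exactly the indicator-function special case. Only the last step --- deducing the L-convex case from the \Lnat-convex case via Theorem~\ref{THsetclassinclusion} --- is complete as written.
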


\begin{example} \rm \label{EXlnatsetMsum2}
In Example~\ref{EXlnatsetMsum},
the Minkowski sum $S_{1} + S_{2}$ 
in  \eqref{lnatsetsum1} is not \Lnat-convex, but it is integrally convex.
For $x=(0, 1, 1)$ and $y=(1, 1, 0)$ in $S_{1} + S_{2}$,
the midpoint
$z = (x+y)/2 = (1/2, 1, 1/2)$
can be expressed as 
$z = ((0, 1, 1) + (1, 1, 0))/2$
using two points
$(0, 1, 1)$ and $(1, 1, 0)$ 
in $(S_{1} + S_{2}) \cap N(z)$.
\finbox
\end{example}

The Minkowski sum of three L$^{\natural}$-convex sets
is no longer integrally convex, as the following example shows.

%%%%%%%%%%%%%%
\begin{example}[{\cite[Example 4.12]{MS01rel}}] \rm \label{EXminkow3lnatset}
Let $S_1 = \{(0, 0, 0), (1, 1, 0)\}$, $S_2 = \{(0, 0, 0), (0, 1, 1)\}$, and
$S_3 = \{(0, 0, 0), (1, 0, 1)\}$.
These  three sets are all L$^\natural$-convex.
Their Minkowski sum 
\[
S = S_1 + S_2 + S_3
 = \{(0,0,0),(0,1,1),(1,1,0),(1,0,1),(2,1,1),(1,1,2),(1,2,1),(2,2,2)\}
\]
has a ``hole'' at $(1,1,1)$, i.e.,
$(1,1,1) \in \overline{S}$ and
$(1,1,1) \not\in S$.
Hence $S$ is not integrally convex.
It is worth noting that this gives another example to show
that the Minkowski sum of two integrally convex sets is not necessarily
integrally convex, since $S_{1}$ and $S_{2}+S_{3}$ 
are both integrally convex by
\eqref{setfamL} in Theorem~\ref{THsetclassinclusion} and
Proposition \ref{PRlsetMinkow}.
\finbox
\end{example}

We next consider the Minkowski sum 
of a set $S \subseteq \ZZ\sp{n}$ with an integer box $B$.
If $S$ is equipped with a certain discrete convexity, 
the Minkowski sum $S + B$ also possesses the same kind of
discrete convexity, as follows.

\begin{proposition} \label{PRminkowsetbox}
\quad

%% To Typesetter:  Please do NOT change the labels "(1)", "(2)", ....  below, 
%% as they are cited at other places.

\noindent
{\rm (1)} 
The Minkowski sum of an integrally convex set
with an integer box is integrally convex.
%%{\rm (\cite[Theorem 4.1]{MM19projcnvl})}.

\noindent
{\rm (2)} 
The Minkowski sum of an L$^{\natural}$-convex set
with an integer box is L$^{\natural}$-convex.
%%{\rm \cite[Theorem 7.11]{Mdcasiam}}.

\noindent
{\rm (3)} 
The Minkowski sum of an L-convex set
with an integer box is L-convex.
%%{\rm \cite[Theorem 7.10]{Mdcasiam}}.

\noindent
{\rm (4)} 
The Minkowski sum of an M$^{\natural}$-convex set
with an integer box is M$^{\natural}$-convex.

\noindent
{\rm (5)} 
The Minkowski sum of an s.e.~jump system
with an integer box is an s.e.~jump system.
\finbox
\end{proposition}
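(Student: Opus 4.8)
The plan is to reduce every part to the case where the box is a single unit segment, except for the two parts that follow instantly from the Minkowski-sum theorem already established. By Proposition~\ref{PRsetshift} an origin shift preserves each of the classes in question, so after translating I may assume $B=[\veczero,b]_{\ZZ}$. Since $[\veczero,b]_{\ZZ}$ is the iterated Minkowski sum of the unit segments $\{\veczero,\unitvec{k}\}$, taken $b_{k}$ times over $k=1,\dots,n$, associativity of the Minkowski sum lets me add one segment at a time. Thus for parts (1) and (2) it suffices to prove the single-segment lemma: \emph{if $S$ belongs to the class, then so does} $T:=S\cup(S+\unitvec{k})$, after which (3) will be deduced from (2). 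Parts (4) and (5) need no reduction: an integer box is \Mnat-convex by \eqref{setfamsepLnMn}, hence a simultaneous-exchange jump system by \eqref{setfamjump}, so these cases are immediate from Theorem~\ref{THmsetMinkow}(2) and~(4) applied to the two sets $S$ and $B$.

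For part (2) I would verify discrete midpoint convexity \eqref{midptcnvset} for $T$. If $u,v\in T$ both lie in $S$, the two discrete midpoints lie in $S\subseteq T$; if both lie in $S+\unitvec{k}$, write $u=a+\unitvec{k}$, $v=b+\unitvec{k}$ with $a,b\in S$ and use $\lceil (a+b)/2+\unitvec{k}\rceil=\lceil (a+b)/2\rceil+\unitvec{k}$ (rounding commutes with integer translation) to land in $S+\unitvec{k}\subseteq T$. The substantive case is $u\in S$, $v=w+\unitvec{k}$ with $w\in S$. Setting $m=u+w$, $P=\lceil m/2\rceil\in S$ and $Q=\lfloor m/2\rfloor\in S$ (so $P+Q=m$), a parity split on $m_{k}$ shows that the two discrete midpoints of $u$ and $v$ are exactly $\{P+\unitvec{k},\,Q\}$ when $m_{k}$ is even and $\{P,\,Q+\unitvec{k}\}$ when $m_{k}$ is odd; in either case both belong to $S\cup(S+\unitvec{k})=T$. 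Part (3) then follows: by (2) the set $S+B$ is \Lnat-convex, and the direction-$\vecone$ invariance \eqref{invarone} is inherited since $u=x+p$ with $x\in S$, $p\in B$ gives $u\pm\vecone=(x\pm\vecone)+p\in S+B$.

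The main obstacle is part (1). After the same reduction it amounts to showing that $T=S\cup(S+\unitvec{k})$ is integrally convex whenever $S$ is. I would argue directly from the definition: for a real point $z\in\overline{T}=\overline{S}+[0,1]\,\unitvec{k}$ I must exhibit $z$ as a convex combination of points of $T\cap N(z)$. Writing $z=w+s\,\unitvec{k}$ with $w\in\overline{S}$ and $s\in[0,1]$, and expanding $w$ over $S\cap N(w)$ using integral convexity of $S$, the natural idea is to lift each representing integer point $x$ to the convex combination $(1-s)x+s(x+\unitvec{k})$ of the two members $x,\,x+\unitvec{k}$ of $T$.

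The delicate point, where this naive argument breaks, is that the integral neighborhood $N(z)$ in the $k$th coordinate is governed by $z_{k}=w_{k}+s$ rather than by $w_{k}$, so one of $x$ or $x+\unitvec{k}$ may have its $k$th coordinate just outside $N(z)$. Resolving this forces a coordinated choice of the decomposition $(w,s)$ together with the expansion of $w$, so that the $k$th coordinates of all representing points are confined to $\{\lfloor z_{k}\rfloor,\lceil z_{k}\rceil\}$; this is precisely where the hole-free property and the local-convex-hull structure of the integrally convex set $S$ must be exploited, and it is the technical core carried out in \cite{MM19projcnvl}. I therefore expect part (1) to require the full strength of that analysis, whereas (2)--(5) are comparatively routine once the segment reduction and the two cited Minkowski-sum facts are in place.
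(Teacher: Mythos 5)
Your proposal is correct, and for parts (1), (4), (5) it coincides with the paper's treatment: the paper also disposes of (4) and (5) by observing that an integer box is \Mnat-convex (by \eqref{setfamsepLnMn}) and an s.e.~jump system, and then invoking Theorem~\ref{THmsetMinkow}, and it also handles (1) purely by citation to \cite[Theorem 4.1]{MM19projcnvl} (see Remark~\ref{RMsetMsumboxbib}); your diagnosis of where the naive lifting argument fails for integral convexity is accurate and is indeed the technical content of that reference. Where you genuinely diverge is in parts (2) and (3): the paper obtains these as special cases of \cite[Theorems 7.10 and 7.11]{Mdcasiam}, i.e., of the general result that the convolution of an L-/\Lnat-convex function with a separable convex function stays in the class (Proposition~\ref{PRsepfnconvol}), applied to indicator functions. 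Your route instead decomposes the box into unit segments and verifies discrete midpoint convexity of $S\cup(S+\unitvec{k})$ directly by the parity split on $m_{k}$; the computation is correct (in the even case the midpoints are $P+\unitvec{k}$ and $Q$, in the odd case $P$ and $Q+\unitvec{k}$), and the passage to (3) via \eqref{invarone} is fine. What your approach buys is a short, self-contained, set-level proof that does not presuppose the function-convolution machinery; what it loses is that it does not generalize to part (1), and it has one small gap: the paper's integer boxes \eqref{intboxdef} may be unbounded ($a_{i}=-\infty$ or $b_{i}=+\infty$), so the box is not literally a finite iterated sum of unit segments. This is easily repaired by writing an unbounded box as a nested increasing union of finite ones and noting that both discrete midpoint convexity and integral convexity pass to nested unions (any point of $\overline{S}$ is a convex combination of finitely many points of $S$), but you should say so explicitly.
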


\begin{remark} \rm \label{RMsetMsumboxbib}
Here is a supplement to Proposition \ref{PRminkowsetbox}
about the Minkowski sum with an integer box.
Part (1) for integrally convex sets is established in \cite[Theorem 4.1]{MM19projcnvl}.
Part (2) for \Lnat-convex sets is a special case of \cite[Theorem 7.11]{Mdcasiam}.
Part (3) for L-convex sets is a special case of \cite[Theorem 7.10]{Mdcasiam}.
Part (4) for \Mnat-convex sets is a special case of Theorem~\ref{THmsetMinkow} (2),
since an integer box is \Mnat-convex.
Part (5) for s.e.~jump systems is a special case of Theorem~\ref{THmsetMinkow} (4),
since an integer box is an s.e.~jump system.
\finbox
\end{remark}

In this connection we note the following facts.

\begin{itemize}
\item
The Minkowski sum of an M-convex set with an integer box is 
not necessarily M-convex.
See Example~\ref{EXmsetboxsum}.

\item
The Minkowski sum of a constant-parity jump system
with an integer box is 
not necessarily a constant-parity jump system.
See Example~\ref{EXmsetboxsum}.

\item
The Minkowski sum of a discrete midpoint convex set
with an integer box is 
not necessarily discrete midpoint convex.
See Example~\ref{EXdicdim3set}.

\item
The Minkowski sum of a multimodular set
with an integer box is 
not necessarily multimodular.
See Example~\ref{EXmmsetMsum}.
\end{itemize}

\begin{example} \rm \label{EXmsetboxsum}
The Minkowski sum of
$S = \{ (1,0), (0,1) \}$
and
$B = \{ (0,0), (1,0) \}$
is equal to 
\[
S+B = \{ (1,0), (0,1), (2,0), (1,1)  \}.
\]
The set $S$ is an M-convex set (and hence a constant-parity jump system)
and $B$ is an integer box.
The component-sums of the members of $S+B$ are 1 and 2,
and therefore, $S+B$ is neither an M-convex set nor a constant-parity jump system.
\finbox
\end{example}

%%%%%%%%%%%%%%
\begin{example}[{\cite[Example 4.3]{MM19projcnvl}}]  \rm \label{EXdicdim3set}
The Minkowski sum of
$S = \{ (0,0,1), (1,1,0) \}$
and
$B = \{ (0,0,0), (1,0,0) \}$
is equal to 
\[
S+B = \{ (0,0,1), (1,1,0),  (1,0,1), (2,1,0) \}.
\]
The set $S$ is discrete midpoint convex
and $B$ is an integer box.
For
$x = (0,0,1)$ and $y = (2,1,0)$ in $S+B$ we have
$\| x - y \|_{\infty} = 2$,
$\left\lceil \frac{x+y}{2} \right\rceil 
= (1,1,1) \not\in S+B$,
and $\left\lfloor \frac{x+y}{2} \right\rfloor
= (1,0,0) \not\in S+B$.
Therefore, $S+B$ is not discrete midpoint convex.
\finbox
\end{example}

%%%%%%%%%%%%%%
\begin{remark} \rm \label{RMprojFromMinkow}
The Minkowski sum operation contains the projection operation as a special case.
Let
$T =  \{ y \in \ZZ\sp{U} \mid  (y,z) \in S \mbox{ for some $z \in \ZZ\sp{N \setminus U}$} \}$
be the projection of $S$ to $U$
as defined in \eqref{projsetdef}.
Consider 
$B = \{ (y',z') \in \ZZ\sp{U} \times \ZZ\sp{N \setminus U} \mid y' = \veczero_{U} \}$,
which is an integer box.
The Minkowski sum $S + B$ is given as
\begin{align*}
 S + B &= \{ (y,z) + (y',z')  \mid (y,z) \in S,  \  y' = \veczero \}
\\ & =
 \{ (y,z+z') \mid (y,z) \in S, \  z' \in \ZZ\sp{N \setminus U} \}
\\ & =
 \{ (y,z'') \mid  y \in T, \  z'' \in \ZZ\sp{N \setminus U} \} 
\\ & =
 T \times \ZZ\sp{N \setminus U}.
\end{align*}
This shows that the restriction of $S + B$ to $U$
coincides with the projection $T$.
\finbox
\end{remark}

%% end of file %%%

%%\clearpage
%%\newpage

%%\input{fop4opefn}

%% 2019-06-06 / 2019-07-09 / 2019-09-18

\section{Operations on Discrete Convex Functions}
\label{SCfnope}

\subsection{Operations via change of variables}
\label{SCchangevar}

In this section we consider operations on discrete convex functions 
defined by changes of variables such as origin shift,
coordinate inversion, permutation of variables, and scaling of variables.

Let $f$ be a function on $\ZZ\sp{n}$, i.e.,  
$f: \ZZ\sp{n} \to \RR \cup \{ +\infty \}$.
For an integer vector $b \in \ZZ\sp{n}$, the
{\em origin shift}
of $f$ by $b$ 
means a function $g$ on $\ZZ\sp{n}$ defined by%
\begin{equation} \label{shiftfndef}
 g(y) = f(y-b).
\end{equation}

\begin{proposition} \label{PRfnshift}
The origin shift operation \eqref{shiftfndef} for a function preserves
separable convexity,
integral convexity,
L$^{\natural}$-convexity, L-convexity,
M$^{\natural}$-convexity, M-convexity,
multimodularity, 
global and local discrete midpoint convexity,
jump \Mnat-convexity, 
and jump M-convexity.
\finbox
\end{proposition}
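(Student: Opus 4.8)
The plan is to exploit the single elementary fact that translation by the integer vector $b$ commutes with every lattice operation appearing in the defining inequalities of the listed classes. Concretely, for any $x, y \in \ZZ^{n}$ one has $\lceil ((x+b)+(y+b))/2 \rceil = \lceil (x+y)/2 \rceil + b$ and likewise for $\lfloor \cdot \rfloor$ (because $b$ is integral and $\lceil t + k \rceil = \lceil t \rceil + k$ for $k \in \ZZ$), together with $(x+b) \vee (y+b) = (x \vee y) + b$, $(x+b) \wedge (y+b) = (x \wedge y) + b$, and $(x+b) - \unitvec{i} + \unitvec{j} = (x - \unitvec{i} + \unitvec{j}) + b$. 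Moreover $\| (x+b)-(y+b) \|_{\infty} = \| x-y \|_{\infty}$ and $\suppp((x+b)-(y+b)) = \suppp(x-y)$, and the smallest integer box $[x \wedge y, x \vee y]_{\ZZ}$ merely translates, so the set of $(x,y)$-increments is carried bijectively onto the set of $(x+b,y+b)$-increments.

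Given these identities, for each class I would perform the substitution $x = x'-b$, $y = y'-b$ in the defining condition for $g$. Since $g(x') = f(x'-b)$, every inequality required of $g$ at points $x', y'$ (and at the auxiliary points produced by midpoints, by $\vee/\wedge$, or by unit-vector exchanges) is term-by-term identical to the corresponding inequality required of $f$ at $x = x'-b$, $y = y'-b$, because the auxiliary points for $g$ are exactly the $b$-translates of those for $f$. Hence each defining condition holds for $g$ if and only if it holds for $f$. This disposes at once of \Lnat-convexity, L-convexity (the extra invariance $f(x+\vecone)=f(x)+r$ transfers verbatim), \Mnat- and M-convexity, submodularity and translation-submodularity, global and local discrete midpoint convexity (where the constraints $\| x-y \|_{\infty} \geq 2$ and $\| x-y \|_{\infty} = 2$ are preserved), and jump \Mnat- and jump M-convexity.

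Two classes warrant a separate remark. For separable convexity, writing $f(x) = \varphi_{1}(x_{1}) + \cdots + \varphi_{n}(x_{n})$ gives $g(y) = \psi_{1}(y_{1}) + \cdots + \psi_{n}(y_{n})$ with $\psi_{i}(t) = \varphi_{i}(t - b_{i})$, and each $\psi_{i}$ inherits the one-dimensional convexity inequality \eqref{univarconvdef} directly from $\varphi_{i}$. For integral convexity I would argue through the local convex extension: since $N(\xi - b) = N(\xi) - b$ for every $\xi \in \RR^{n}$, the defining minimization \eqref{fnconvclosureloc2} yields $\tilde g(\xi) = \tilde f(\xi - b)$, so $\tilde g$ is convex on $\RR^{n}$ exactly when $\tilde f$ is; alternatively one may invoke the local characterization of Theorem~\ref{THfavtarProp33}, whose hypothesis of an integrally convex effective domain is preserved because $\dom g = \dom f + b$ is the origin shift of an integrally convex set.

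I do not anticipate a genuine obstacle, since the whole proposition reduces to the translation-equivariance of the lattice operations recorded above. The only point requiring a word of care is the integrally convex case, where one must verify that the real-valued local convex extension commutes with the integer shift, rather than merely checking an inequality at lattice points; this follows immediately from $N(\xi-b)=N(\xi)-b$ and the convexity of $\tilde f$ being invariant under translation of its argument.
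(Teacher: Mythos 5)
Your proposal is correct and follows exactly the route the paper implicitly relies on (the paper states this proposition without proof, treating it as immediate from the translation-equivariance of the lattice operations in each defining condition). The only item you do not name explicitly is multimodularity, but your general substitution argument covers it as well, since the origin shift of $f$ corresponds to an origin shift of the associated submodular function $\tilde f$ in \eqref{multimodular1} by the integer vector $(0,D^{-1}b)$, and submodularity is preserved under origin shift by the same token.
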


For coordinate inversions, two types are distinguished.
The {\em simultaneous coordinate inversion} for $f$
means a function $g$ on $\ZZ\sp{n}$ defined by
\begin{equation} \label{signinvfndef}
g(y_{1},y_{2}, \ldots, y_{n}) = f(-y_{1},-y_{2}, \ldots, -y_{n}),
\end{equation}
and the {\em independent coordinate inversion} for $f$ 
means a function $g$ of $\ZZ\sp{n}$ defined by
\begin{equation} \label{indepsigninvfndef}
g(y_{1},y_{2}, \ldots, y_{n}) = 
f(\tau_{1} y_{1}, \tau_{2} y_{2}, \ldots,  \tau_{n}y_{n})
\end{equation}
with an arbitrary choice of $\tau_{i} \in \{ +1, -1 \}$ $(i=1,2,\ldots,n)$.

\begin{proposition} \label{PRfnsigninv}
The simultaneous coordinate inversion operation \eqref{signinvfndef} 
for a function preserves
separable convexity,
integral convexity,
L$^{\natural}$-convexity, L-convexity,
M$^{\natural}$-convexity, M-convexity,
multimodularity, global and local discrete midpoint convexity,
jump \Mnat-convexity, 
and jump M-convexity.
\finbox
\end{proposition}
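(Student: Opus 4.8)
The plan is to treat simultaneous coordinate inversion as precomposition with the linear involution $x\mapsto -x$ and to verify, class by class, that each defining condition is invariant under this map once the natural dualities are accounted for: ceiling interchanges with floor, positive support with negative support, and each $(x,y)$-increment with its negation. For separable convexity, writing $g(y)=\sum_{i}\psi_{i}(y_{i})$ with $\psi_{i}(t)=\varphi_{i}(-t)$ reduces everything to the elementary fact that \eqref{univarconvdef} for $\varphi_{i}$ yields \eqref{univarconvdef} for $\psi_{i}$. For integral convexity the key observation is that the integral neighborhood is symmetric, $N(-x)=-N(x)$, so the local convex extension satisfies $\tilde g=\tilde f\circ(-\mathrm{id})$; convexity of $\tilde f$ on $\RR^{n}$ then forces convexity of $\tilde g$, since composition with the linear map $y\mapsto -y$ preserves convexity.

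For L$^{\natural}$-convexity I would substitute $x=-u$, $y=-v$ into the discrete midpoint inequality for $g$ and invoke $-\lceil -t\rceil=\lfloor t\rfloor$ and $-\lfloor -t\rfloor=\lceil t\rceil$; this turns the inequality into \eqref{midptcnvfn} for $f$ with $\lceil\cdot\rceil$ and $\lfloor\cdot\rfloor$ interchanged, which is the same condition. Because $\|(-u)-(-v)\|_{\infty}=\|u-v\|_{\infty}$, the identical substitution simultaneously handles global and local discrete midpoint convexity, the requirement that $\dom g$ be a discrete midpoint convex set in the local case being supplied by Proposition~\ref{PRsetsigninv}. L-convexity then follows by adjoining the identity $g(y+\vecone)=f(-y-\vecone)=f(-y)-r=g(y)-r$.

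For M$^{\natural}$-convexity, given $u,v\in\dom g$ and $i\in\suppp(u-v)$, I set $x=-u$ and $y=-v$, so that $i\in\suppm(x-y)$; applying the exchange property \Mnvex to the swapped pair $(y,x)$ and translating the resulting inequality through $g(\cdot)=f(-\cdot)$ reproduces \Mnvex for $g$, the exchange index running over $\suppm(u-v)\cup\{0\}$. M-convexity is then immediate, since $\dom g=-\dom f$ has constant component sum precisely when $\dom f$ does. Multimodularity I would reduce to the preceding case via Theorem~\ref{THmmfnlnatfn}: if $p\mapsto f(Dp)$ is L$^{\natural}$-convex, then the function associated with $g$, namely $p\mapsto g(Dp)=f(-Dp)=f(D(-p))$, is its simultaneous coordinate inversion and hence again L$^{\natural}$-convex, so $g$ is multimodular.

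Finally, for the jump classes the pivotal lemma is that $s$ is an $(x,y)$-increment if and only if $-s$ is a $(-x,-y)$-increment, which holds because negation carries $[x\wedge y,\,x\vee y]_{\ZZ}$ onto $[(-x)\wedge(-y),\,(-x)\vee(-y)]_{\ZZ}$. Using this bijection I would translate (J\Mnat-EXC) and (JM-EXC) for $g$ at $(u,v)$ with increment $s$ into the same axioms for $f$ at $(-u,-v)$ with increments $-s$ and $-t$. I expect this increment bookkeeping, together with the support matching in the M$^{\natural}$ step, to be the only genuinely delicate part: one must check that the membership conditions $x+s+t\in\dom f$, $y-s-t\in\dom f$ and the companion inequality line up correctly after every sign flip. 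The remaining verifications are entirely mechanical.
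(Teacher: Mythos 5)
Your proposal is correct and is precisely the routine class-by-class verification that the paper omits (the proposition is stated with no proof, being regarded as elementary): the ceiling/floor duality for the L-type classes, the $\suppp$/$\suppm$ swap for the M-type classes, the reduction of multimodularity to \Lnat-convexity via Theorem~\ref{THmmfnlnatfn}, and the increment bijection $s \leftrightarrow -s$ for the jump classes are all the intended ingredients, and each is carried out correctly. No gaps.
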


\begin{proposition} \label{PRfnindepsigninv}
The independent coordinate inversion operation \eqref{indepsigninvfndef} 
for a function preserves
separable convexity, integral convexity, 
jump \Mnat-convexity, 
and jump M-convexity.
\finbox
\end{proposition}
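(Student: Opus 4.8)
The plan is to write the operation as composition with a single linear map and to check that this map respects each of the four structures. Put $T=\mathrm{diag}(\tau_{1},\ldots,\tau_{n})$ with $\tau_{i}\in\{+1,-1\}$, so that the independent coordinate inversion \eqref{indepsigninvfndef} reads $g=f\circ T$. Since $\tau_{i}^{2}=1$, the map $T$ is a unimodular involution ($T^{-1}=T$) that maps $\ZZ\sp{n}$ bijectively onto $\ZZ\sp{n}$ and $\RR\sp{n}$ onto $\RR\sp{n}$; note also that $T$ is the composition of the single-coordinate sign flips over $\{i:\tau_{i}=-1\}$, so it would suffice to treat one reflected coordinate, but the general map is no harder. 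For separable convexity, if $f(x)=\varphi_{1}(x_{1})+\cdots+\varphi_{n}(x_{n})$ then $g(y)=\sum_{i}\varphi_{i}(\tau_{i}y_{i})$, and each $t\mapsto\varphi_{i}(\tau_{i}t)$ is again univariate convex because the condition \eqref{univarconvdef} is symmetric under $t\mapsto-t$; hence $g$ is separable convex.

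For integral convexity I would first record that $T$ carries integral neighborhoods to integral neighborhoods: since $|(Ty)_{i}-(Tz)_{i}|=|y_{i}-z_{i}|$, a point $z$ lies in $N(y)$ if and only if $Tz$ lies in $N(Ty)$, so $T$ maps $N(y)$ bijectively onto $N(Ty)$. Substituting $w=Tz$ in the defining formula \eqref{fnconvclosureloc2} reindexes the convex combination over $N(Ty)$, uses $g(z)=f(Tz)=f(w)$, and turns the constraint $\sum_{z}\lambda_{z}z=y$ into $\sum_{w}\mu_{w}w=Ty$ with $\mu_{w}=\lambda_{Tw}$; hence $\tilde g(y)=\tilde f(Ty)$ for every $y\in\RR\sp{n}$. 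Thus $\tilde g=\tilde f\circ T$, and since $T$ is an invertible linear map, $\tilde f$ is convex on $\RR\sp{n}$ if and only if $\tilde g$ is, so integral convexity of $f$ is equivalent to that of $g$.

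For jump \Mnat- and jump M-convexity I would verify the exchange axioms (J\Mnat-EXC) and (JM-EXC) directly by transporting them along $T$. Given $x,y\in\dom g$, set $x'=Tx$ and $y'=Ty$, which lie in $\dom f$. For an increment $s=\pm\unitvec{i}$ the image $s'=Ts$ is again of the form $\pm\unitvec{i}$, and $T$ maps the smallest box $[x\wedge y,x\vee y]_{\ZZ}$ onto $[x'\wedge y',x'\vee y']_{\ZZ}$: in a reflected coordinate the roles of componentwise minimum and maximum interchange, but the enclosing box is unchanged. Consequently $s$ is an $(x,y)$-increment if and only if $s'$ is an $(x',y')$-increment. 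Applying the axiom for $f$ to $x',y',s'$ yields a witness increment $t'$; pulling it back by $t=Tt'$ (again of the form $\pm\unitvec{j}$, and an $(x+s,y)$-increment by the same box argument) produces the required $t$ for $g$, while the value inequalities \eqref{jumpmexc1} and \eqref{jumpmexc2} transfer verbatim because $g=f\circ T$. This establishes (J\Mnat-EXC), and the same computation with case (i) deleted establishes (JM-EXC). For jump M-convexity one must in addition see that $\dom g=T(\dom f)$ remains a constant-parity jump system; this holds because $\sum_{i}\tau_{i}(x_{i}-y_{i})$ and $\sum_{i}(x_{i}-y_{i})$ differ by $\sum_{i:\tau_{i}=-1}(-2)(x_{i}-y_{i})$, which is even, so $T$ preserves the parity of component-sum differences.

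The separable and integral cases are routine, reducing respectively to the symmetry of the one-dimensional inequality \eqref{univarconvdef} and to the identity $\tilde g=\tilde f\circ T$. The step that needs the most care is the jump case: one must check that an $(x,y)$-increment is transported to a genuine $(x',y')$-increment, which amounts to observing that under a coordinate reflection the direction of an increment reverses yet it still steps from $x'$ toward $y'$, and that the minimal enclosing box is preserved even though minima and maxima are swapped in the reflected coordinates. Once this bookkeeping is settled, both exchange axioms and the constant-parity condition transfer mechanically.
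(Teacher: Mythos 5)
Your proof is correct, and it fills in exactly the verification that the paper leaves implicit (the survey states Proposition \ref{PRfnindepsigninv} without proof, deferring to the definitions and the cited literature): the reduction to composition with the diagonal sign matrix, the symmetry of \eqref{univarconvdef}, the identity $\tilde g=\tilde f\circ T$ via preservation of integral neighborhoods, and the transport of $(x,y)$-increments and of the box $[x\wedge y,x\vee y]_{\ZZ}$ are all sound. No gaps.
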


The independent coordinate inversion operation 
does not preserve
L$^{\natural}$-convexity, L-convexity,
M$^{\natural}$-convexity, M-convexity,
multimodularity, or
(global and local) discrete midpoint convexity.

\begin{example} \rm \label{EXlfnsigninv} 
Let $f(x)= | x_{1} - x_{2} -3 |$
and $g(x)= | x_{1} + x_{2} -3 |$ for $x=(x_{1},x_{2}) \in \ZZ\sp{2}$,
where $g$ is obtained from $f$ by an independent coordinate inversion 
\eqref{indepsigninvfndef} with $\tau_{1}=+1$ and $\tau_{2}=-1$.
The function $f$ is L-convex,  and hence  \Lnat -convex
and globally discrete midpoint convex,
whereas $g$ is not L-convex,  \Lnat -convex, or globally discrete midpoint convex.
Indeed, for $x=(3,0)$ and $y=(0,3)$ we have
$\| x - y \|_{\infty} = 3$, 
$\left\lceil \frac{x+y}{2} \right\rceil = (2,2)$, and
$\left\lfloor \frac{x+y}{2} \right\rfloor = (1,1)$,
for which  
$ g(x) + g(y) = 0 + 0 = 0$ 
is strictly smaller than
$ g \left(\left\lceil \frac{x+y}{2} \right\rceil\right) 
  + g \left(\left\lfloor \frac{x+y}{2} \right\rfloor\right) 
= 1 + 1 = 2$.
\finbox
\end{example}

\begin{example} \rm \label{EXlfnsigninv3} 
Let $f(x)= | x_{1} - x_{2} -1 |$
and $g(x)= | x_{1} + x_{2} -1 |$ for $x=(x_{1},x_{2},x_{3}) \in \ZZ\sp{3}$,
where $g$ is obtained from $f$ by an independent coordinate inversion 
\eqref{indepsigninvfndef} with $\tau_{1}=+1$, $\tau_{2}=-1$, and $\tau_{3}=+1$.
The function $f$ is locally discrete midpoint convex
(actually  L-convex),
whereas $g$ is not locally discrete midpoint convex.
Indeed, for $x=(0,1,2)$ and $y=(1,0,0)$ we have
$\| x - y \|_{\infty} = 2$, 
$\left\lceil \frac{x+y}{2} \right\rceil = (1,1,1)$, and
$\left\lfloor \frac{x+y}{2} \right\rfloor = (0,0,1)$,
for which  
$ g(x) + g(y) = 0 + 0 = 0$ 
is strictly smaller than
$ g \left(\left\lceil \frac{x+y}{2} \right\rceil\right) 
  + g \left(\left\lfloor \frac{x+y}{2} \right\rfloor\right) 
= 1 + 1 = 2$.
\finbox
\end{example}

\begin{example} \rm \label{EXmfnsigninv} 
Let $f(x)= | x_{1} + x_{2} |$ 
and $g(x)= | x_{1} - x_{2} |$ for $x=(x_{1},x_{2}) \in \ZZ\sp{2}$,
where $g$ is obtained from $f$ by an independent coordinate inversion 
\eqref{indepsigninvfndef} with $\tau_{1}=+1$ and $\tau_{2}=-1$.
The function $f$ is M-convex  and hence  \Mnat -convex,
whereas $g$ is not M-convex or \Mnat -convex.
The function $f$ is multimodular, and $g$ is not.
\finbox
\end{example}

For a permutation $\sigma$ of $(1,2,\ldots,n)$,
the {\em permutation} of $f$ by $\sigma$  
means a function $g$ on $\ZZ\sp{n}$ defined by
\begin{equation} \label{permfndef}
 g(y_{1},y_{2}, \ldots, y_{n}) =  f(y_{\sigma(1)}, y_{\sigma(2)}, \ldots, y_{\sigma(n)}) .
\end{equation}

\begin{proposition} \label{PRfnperm}
The permutation operation \eqref{permfndef} for a function preserves
separable convexity,
integral convexity,
L$^{\natural}$-convexity, L-convexity,
M$^{\natural}$-convexity, M-convexity,
global and local discrete midpoint convexity,
jump \Mnat-convexity, 
and jump M-convexity.
\finbox
\end{proposition}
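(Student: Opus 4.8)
The plan is to treat the permutation of variables as a single linear bijection of the lattice and to check that it preserves every structural ingredient out of which the various convexity conditions are built. Concretely, realize $\sigma$ as the map $\Pi: \ZZ\sp{n} \to \ZZ\sp{n}$ with $(\Pi y)_{i} = y_{\sigma(i)}$, so that $g(y) = f(\Pi y)$; this $\Pi$ is a bijection of both $\ZZ\sp{n}$ and $\RR\sp{n}$, with inverse again a permutation. The whole argument rests on the fact that $\Pi$ commutes with, or leaves invariant, each operation appearing in the definitions: componentwise rounding (so $\Pi \lceil z \rceil = \lceil \Pi z \rceil$, and likewise for $\lfloor \cdot \rfloor$), the lattice operations (so $\Pi(x \vee y) = \Pi x \vee \Pi y$ and similarly for $\wedge$), the $\ell_{\infty}$-norm (so $\| \Pi z \|_{\infty} = \| z \|_{\infty}$), the all-ones vector ($\Pi \vecone = \vecone$), the component sum (since $(\Pi y)(N) = y(N)$), and integer boxes. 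Moreover $\Pi$ carries signed unit vectors to signed unit vectors, $\Pi(\pm \unitvec{i}) = \pm \unitvec{\sigma^{-1}(i)}$, and relabels supports, $\suppp(\Pi z) = \sigma^{-1}(\suppp(z))$ and analogously for $\suppm$.

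Granting these invariances, each class is handled by the bijective substitution $x = \Pi u$, $y = \Pi v$ of the quantified pair, after which the condition for $g$ reads exactly as the corresponding condition for $f$. For separable convexity one reindexes the sum, $g(y) = \sum_{i} \varphi_{i}(y_{\sigma(i)}) = \sum_{j} \varphi_{\sigma^{-1}(j)}(y_{j})$, a sum of univariate convex functions. For \Lnat-convexity the discrete midpoint inequality \eqref{midptcnvfn} for $g$ at $(u,v)$ becomes the same inequality for $f$ at $(\Pi u, \Pi v)$, because rounding commutes with $\Pi$; L-convexity then follows by adjoining the linearity relation \eqref{shiftlfnZ}, which survives since $\Pi \vecone = \vecone$. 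Global and local discrete midpoint convexity have the identical form but with the restriction $\| u - v \|_{\infty} \geq 2$ (resp.\ $=2$), which transfers verbatim because $\Pi$ is an $\ell_{\infty}$-isometry; in the local case one additionally uses $\dom g = \Pi^{-1}(\dom f)$, a discrete midpoint convex set by the same reasoning applied to indicator functions. Integral convexity is obtained by showing $\tilde g = \tilde f \circ \Pi$: since $z \in N(w)$ iff $\Pi z \in N(\Pi w)$, the convex combinations defining $\tilde g(w)$ in \eqref{fnconvclosureloc2} correspond bijectively, with identical coefficients and values, to those defining $\tilde f(\Pi w)$, and convexity of $\tilde g$ is then equivalent to convexity of $\tilde f$ as $\Pi$ is a linear bijection of $\RR\sp{n}$.

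The \Mnat- and M-convex cases, together with the two jump cases, are where the index bookkeeping must be carried out carefully, and I expect this to be the only delicate point. For the exchange property (\Mnat-EXC), given $i \in \suppp(u-v)$ one sets $i' = \sigma^{-1}(i) \in \suppp(\Pi u - \Pi v)$, invokes the exchange property of $f$ to get $j' \in \suppm(\Pi u - \Pi v) \cup \{ 0 \}$, and takes $j = \sigma(j')$ with the convention $\sigma(0)=0$ (so that the index $0$, i.e.\ $\unitvec{0} = \veczero$, is preserved); applying $\Pi$ to $u - \unitvec{i} + \unitvec{j}$ shows that the inequality \eqref{mnconvexc2Z} for $g$ at $(u,v,i,j)$ is precisely the inequality for $f$ at $(\Pi u, \Pi v, i', j')$, and $j \in \suppm(u-v) \cup \{0\}$ holds because $\suppm(\Pi u - \Pi v) = \sigma^{-1}(\suppm(u-v))$. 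M-convexity follows from the \Mnat\ case together with invariance of the component sum, while the jump axioms (J\Mnat-EXC) and (JM-EXC) are treated identically, using that $\Pi$ sends $(u,v)$-increments to $(\Pi u, \Pi v)$-increments (as it preserves integer boxes and signed unit vectors) and relabels the second increment $t$ through $\sigma$. Since $\Pi$ is bijective, every implication runs in both directions, so each listed class is preserved.
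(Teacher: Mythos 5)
Your proof is correct and is exactly the direct verification the paper has in mind: the paper states Proposition \ref{PRfnperm} without proof, treating it as immediate from the definitions because the permutation map commutes with rounding, lattice operations, the $\ell_{\infty}$-norm, $\vecone$, component sums, and signed unit vectors. Your careful bookkeeping of the index relabelling ($i' = \sigma^{-1}(i)$, $j = \sigma(j')$) in the M-type exchange axioms is accurate and fills in the only nontrivial detail.
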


The permutation operation does not preserve multimodularity.
We show two examples, an indicator function
and a quadratic function on the entire lattice $\ZZ\sp{3}$.

%%%%%%%%%%%%%%
\begin{example}  \rm \label{EXmmfnperm3} 
Let $f$ be the indicator function 
$\delta_{S}$ of the multimodular set
$S = \{ (0,0,0),  \allowbreak  (0,1,-1),  \allowbreak  (0,1,0), (1,0,0) \}$
considered in Example~\ref{EXmmsetproj}.
The permutation $g$ of $f$ by
$\sigma: (1,2,3) \mapsto (2,1,3)$ is 
the indicator function of the set 
$T = \{ (0,0,0), (1,0,-1), (1,0,0), (0,1,0) \}$.
Since $T$ is not multimodular,
as shown in Example~\ref{EXmmsetproj},
the function $g$ is not multimodular.
\finbox
\end{example}

\begin{example}[{\cite[Example 3.1]{MM19multm}}] \rm \label{EXmmfnperm1}
The quadratic function
$f(x) = x^{\top} A x$
with
$A  = {\small\footnotesize
\left[ \begin{array}{rrr}
 1 & 1 & 0 \\
 1 & 2 & 1 \\
 0 & 1 & 1
\end{array}\right]}$
is multimodular,
since $A$ satisfies the condition (\ref{mmfquadrcond}).
On the other hand,
$g(y_{1}, y_{2}, y_{3}) = f(y_{2}, y_{1}, y_{3})$
resulting from a transposition is not multimodular.
Indeed we have
$g(y) = y^{\top} \tilde A y$
with
$\tilde A  = {\small\footnotesize
\left[ \begin{array}{rrr}
 2 & 1 & 1 \\
 1 & 1 & 0 \\
 1 & 0 & 1  \\
\end{array}\right]}$,
for which the condition (\ref{mmfquadrcond}) fails for $(i,j)=(1,3)$.
A cyclic permutation of variables results in 
$f(y_{3}, y_{1}, y_{2})$, which 
is not multimodular, 
since it coincides with $y^{\top} \tilde A y$. 
\finbox
\end{example}

It is known that reversing the ordering of variables preserves multimodularity.

\begin{proposition} [{\cite[Remarks (1)]{Haj85}}] \label{PRmmfnvarinv}
For a multimodular function $f$,
the function $g$ defined by
$g(y_{1}, y_{2}, \ldots, y_{n}) \allowbreak = f(y_{n}, \ldots, y_{2}, y_{1})$
is multimodular.
\finbox
\end{proposition}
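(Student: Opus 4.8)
The plan is to work with the definition of multimodularity that the paper adopts for extended-real-valued functions, namely that $f$ is multimodular precisely when the associated $(n+1)$-variable function $\tilde f$ of \eqref{multimodular1} is submodular. Writing $\tilde g$ for the $(n+1)$-variable function associated with $g$ in the same way, it suffices to show that $\tilde g$ is submodular whenever $\tilde f$ is. Thus the whole argument reduces to understanding how the coordinate reversal $y \mapsto (y_{n}, \ldots, y_{1})$ interacts with the discrete-difference substitution defining $\tilde f$ and $\tilde g$.

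First I would compute $\tilde g$ explicitly. By definition $\tilde g(x_{0}, x_{1}, \ldots, x_{n}) = g(x_{1}-x_{0}, x_{2}-x_{1}, \ldots, x_{n}-x_{n-1})$, and since $g(w_{1}, \ldots, w_{n}) = f(w_{n}, \ldots, w_{1})$ this becomes
\[
\tilde g(x_{0}, x_{1}, \ldots, x_{n}) = f(x_{n}-x_{n-1},\, x_{n-1}-x_{n-2}, \ldots, x_{1}-x_{0}).
\]
The key observation is that the right-hand side is exactly $\tilde f$ evaluated at the point obtained by reversing and negating all coordinates: setting $u_{i}=-x_{n-i}$ one checks $u_{i}-u_{i-1}=x_{n+1-i}-x_{n-i}$, so that
\[
\tilde g(x_{0}, x_{1}, \ldots, x_{n}) = \tilde f(-x_{n},\, -x_{n-1}, \ldots, -x_{0}).
\]
In other words $\tilde g = \tilde f \circ \phi$, where $\phi$ is the linear bijection of $\ZZ\sp{n+1}$ that reverses the order of the coordinates and negates each of them.

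It then remains to check that precomposition with $\phi$ preserves submodularity, which is immediate from the fact that $\phi$ is built from two submodularity-preserving maps: a permutation of coordinates (which commutes with $\vee$ and $\wedge$) and the global negation $\xi \mapsto -\xi$ (which interchanges $\vee$ and $\wedge$). Concretely, for any $\xi, \eta \in \ZZ\sp{n+1}$ one has $\phi(\xi) \vee \phi(\eta) = \phi(\xi \wedge \eta)$ and $\phi(\xi) \wedge \phi(\eta) = \phi(\xi \vee \eta)$; applying submodularity of $\tilde f$ to the pair $\phi(\xi), \phi(\eta)$ then gives $\tilde g(\xi) + \tilde g(\eta) \geq \tilde g(\xi \vee \eta) + \tilde g(\xi \wedge \eta)$, so $\tilde g$ is submodular and hence $g$ is multimodular.

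The same phenomenon can be seen directly at the level of the generating directions: reversal sends the set $\mathcal{F}$ of \eqref{multimodirection1} bijectively onto $-\mathcal{F}$, so each defining inequality $g(z+d) + g(z+d') \geq g(z) + g(z+d+d')$ for $g$ translates, after the substitution and a shift of the base point, into the inequality \eqref{multimodulardef1} for $f$ with a pair of directions drawn from $\mathcal{F}$. I expect the only delicate point to be the index bookkeeping in verifying the identity $\tilde g(x_{0}, \ldots, x_{n}) = \tilde f(-x_{n}, \ldots, -x_{0})$; once this is in place, the preservation of submodularity under $\phi$ is routine and requires no case analysis.
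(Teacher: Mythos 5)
Your argument is correct: the identity $\tilde g(x_{0},\ldots,x_{n}) = \tilde f(-x_{n},\ldots,-x_{0})$ checks out (with $u_{i}=-x_{n-i}$ one gets $u_{i}-u_{i-1}=x_{n+1-i}-x_{n-i}$, matching the arguments of $f$ in the right order), and precomposition with a coordinate permutation followed by global negation does preserve submodularity since the former commutes with $\vee,\wedge$ and the latter interchanges them. The paper gives no proof of its own here (it only cites Hajek's Remarks (1)), and your reduction to the submodularity characterization \eqref{multimodular1} is exactly the natural route; the closing observation that reversal maps $\mathcal{F}$ onto $-\mathcal{F}$ is a valid independent check at the level of the defining inequalities \eqref{multimodulardef1}.
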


For a positive integer $\alpha$,
the {\em variable-scaling} 
(or {\em scaling} for short) of $f$ by $\alpha$
means a function $g$ on $\ZZ\sp{n}$ defined by
\begin{equation} \label{scalefndef}
g(y_{1},y_{2}, \ldots, y_{n}) = f(\alpha y_{1}, \alpha y_{2}, \ldots, \alpha y_{n}).
\end{equation}
Note that the same scaling factor $\alpha$ is used for all coordinates.

L-convexity and its relatives are well-behaved 
with respect to the scaling operation.%
\footnote{%%%%%%%%%%%%%
The scaled function $g$ may have an empty effective domain.
Therefore, strictly speaking, we should 
add a proviso in Proposition \ref{PRfnscale}
that the resulting function has a nonempty effective domain.
}%%% footnote %%%

\begin{proposition} \label{PRfnscale}
The variable-scaling operation \eqref{scalefndef} for a function preserves
separable convexity,
L$^{\natural}$-convexity, L-convexity,
multimodularity,  and discrete midpoint convexity.
\finbox
\end{proposition}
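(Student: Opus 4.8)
The plan is to treat the listed classes one at a time, in each case reducing the scaling identity \eqref{scalefndef} to a defining inequality that interacts well with multiplication by the positive integer $\alpha$; the two clean engines will be the translation-submodularity characterizations of \Lnat- and L-convexity in Theorems \ref{THlnatfncond} and \ref{THlfncond} and the reduction of multimodularity to \Lnat-convexity in Theorem \ref{THmmfnlnatfn}, while (globally) discrete midpoint convexity is the genuinely hard case. Separable convexity is immediate: if $f(x)=\sum_{i}\varphi_{i}(x_{i})$ as in \eqref{sepfndef}, then $g(y)=\sum_{i}\varphi_{i}(\alpha y_{i})$, so it suffices to check that each $t\mapsto\varphi_{i}(\alpha t)$ satisfies \eqref{univarconvdef}, i.e.\ $\varphi_{i}(\alpha t-\alpha)+\varphi_{i}(\alpha t+\alpha)\geq 2\varphi_{i}(\alpha t)$. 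This follows from \eqref{univarconvdef} for $\varphi_{i}$ by telescoping, since a discretely convex univariate function has nondecreasing forward differences and hence satisfies $\varphi_{i}(s-k)+\varphi_{i}(s+k)\geq 2\varphi_{i}(s)$ for every integer $k\geq 0$.

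For \Lnat-convexity I would use characterization (d) of Theorem \ref{THlnatfncond}. The key point is that for a positive integer $\alpha$ and any $\mu\in\ZZ$ the lattice operations commute with the scaling,
\[
\alpha\big((p-\mu\vecone)\vee q\big)=(\alpha p-\alpha\mu\,\vecone)\vee(\alpha q),\qquad
\alpha\big(p\wedge(q+\mu\vecone)\big)=(\alpha p)\wedge(\alpha q+\alpha\mu\,\vecone),
\]
because $\alpha\max(\cdot,\cdot)=\max(\alpha\,\cdot,\alpha\,\cdot)$ and $\alpha\min(\cdot,\cdot)=\min(\alpha\,\cdot,\alpha\,\cdot)$ when $\alpha>0$. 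Applying translation-submodularity \eqref{trsubmfn} of $f$ to the pair $(\alpha p,\alpha q)$ with parameter $\alpha\mu$ then yields exactly translation-submodularity of $g$ at $(p,q)$ with parameter $\mu$; since $\mu\geq 0$ forces $\alpha\mu\geq 0$, $g$ satisfies \eqref{trsubmfn} for all nonnegative $\mu$ and is \Lnat-convex. For L-convexity I use characterization (c) of Theorem \ref{THlfncond}: the same computation, now allowing $\mu<0$ so that $\alpha\mu$ is an integer of either sign (for which the L-convex $f$ still obeys \eqref{trsubmfn}), shows that $g$ satisfies \eqref{trsubmfn} for all $\mu\in\ZZ$; equivalently one checks \eqref{shiftlfnZ} for $g$, since iterating $f(x+\vecone)=f(x)+r$ gives $g(y+\vecone)=f(\alpha y+\alpha\vecone)=g(y)+\alpha r$. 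Multimodularity then follows with no extra work: by Theorem \ref{THmmfnlnatfn}, $g$ is multimodular iff $p\mapsto g(Dp)$ is \Lnat-convex, where $D$ is the bidiagonal matrix \eqref{matDdef}; since $\alpha$ is a scalar, $g(Dp)=f(\alpha Dp)=f\big(D(\alpha p)\big)$, and as $f$ is multimodular the function $p\mapsto f(Dp)$ is \Lnat-convex, so $p\mapsto f(D(\alpha p))$ is its scaling by $\alpha$, which is \Lnat-convex by the case just proved.

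The main obstacle is discrete midpoint convexity, where the one-step argument above has no analogue: in general $\alpha\lceil (p+q)/2\rceil\neq\lceil\alpha(p+q)/2\rceil$, so a single use of \eqref{midptcnvfn2} applied to $(\alpha p,\alpha q)$ produces the fully balanced split of $\alpha(p+q)$ rather than the desired pair $(\alpha m_{1},\alpha m_{2})$ with $m_{1}=\lceil (p+q)/2\rceil$ and $m_{2}=\lfloor (p+q)/2\rfloor$. The plan is instead to pass from $(\alpha p,\alpha q)$ to $(\alpha m_{1},\alpha m_{2})$ through a finite sequence of discrete-midpoint steps of $f$, each keeping the common component-sum $\alpha(p+q)$ fixed and not increasing the total $f$-value, while maintaining $\ell_{\infty}$-distance at least $2$ so that \eqref{midptcnvfn2} stays applicable. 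The delicate point, governed by the coordinatewise parity of $p-q$, is to arrange the sequence so that it terminates exactly at $(\alpha m_{1},\alpha m_{2})$ and does not overshoot toward the maximally balanced split; I expect this bookkeeping to be the crux, and a complete argument along these lines is carried out in \cite{MMTT19dmpc}.
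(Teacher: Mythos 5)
The paper itself offers no proof of this proposition: Remark~\ref{RMfnscalebib} simply points to \cite[Theorems 7.10--7.11]{Mdcasiam} for the L-/\Lnat-cases, \cite[Proposition 7]{MM19multm} for multimodularity, and \cite[Theorem 9]{MMTT19dmpc} for discrete midpoint convexity. Your proposal is therefore more self-contained than the survey for four of the five classes, and the arguments you give are correct and are essentially the standard ones from those references: the separable case via monotonicity of forward differences; the \Lnat- and L-cases via characterizations (d) of Theorem~\ref{THlnatfncond} and (c) of Theorem~\ref{THlfncond}, using that $\vee$, $\wedge$ and translation by $\mu\vecone$ commute with multiplication by $\alpha>0$ (with $\mu\mapsto\alpha\mu$ preserving nonnegativity, resp.\ integrality); and the multimodular case by the identity $g(Dp)=f(D(\alpha p))$ together with Theorem~\ref{THmmfnlnatfn}, which reduces it to the \Lnat-case just proved. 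These are clean and complete (modulo the paper's own proviso that the scaled function have nonempty effective domain).

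For discrete midpoint convexity you correctly identify the genuine obstruction, namely $\alpha\lceil (p+q)/2\rceil\neq\lceil\alpha(p+q)/2\rceil$ in odd coordinates, and you defer, as the paper does, to \cite[Theorem 9]{MMTT19dmpc}. One caveat about your sketch: a sequence of discrete-midpoint steps that replaces the current pair $(u,v)$ by $\left(\left\lceil (u+v)/2\right\rceil,\left\lfloor (u+v)/2\right\rfloor\right)$ cannot be steered to terminate at $(\alpha m_{1},\alpha m_{2})$, because the rounded midpoint pair depends only on the sum $u+v=\alpha(p+q)$, which is invariant; after a single step you are already at the fully balanced split and every further step is stationary, so there is no ``bookkeeping'' that avoids overshooting. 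The actual proof in \cite{MMTT19dmpc} has to pair intermediate points differently (it does not proceed by iterating the midpoint of a single evolving pair), so if you want this case to be self-contained you would need a genuinely different mechanism; as a citation to the literature, however, your treatment matches what the paper itself does.
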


\begin{remark} \rm \label{RMfnscalebib}
Here is a supplement to Proposition \ref{PRfnscale} about scaling.
The scaling operation for L-convex (resp., \Lnat-convex) functions is treated in 
\cite[Theorem 7.10]{Mdcasiam} 
(resp., \cite[Theorem 7.11]{Mdcasiam}).
The result for multimodular functions 
is due to \cite[Proposition 7]{MM19multm},
and that for 
discrete midpoint convex functions is to \cite[Theorem 9]{MMTT19dmpc}.
\finbox
\end{remark}

In contrast, M-convexity and integral convexity
are not compatible with the scaling operation, as follows.
In referring to examples in Section \ref{SCsetope},
we intend to consider the indicator functions
of the sets mentioned in the examples.

\begin{itemize}

\item
The scaling of an \Mnat-convex function
is not necessarily \Mnat-convex
(Example~\ref{EXmnatsetscdim3}).

\item
The scaling of an M-convex function
is not necessarily M-convex
(Example~\ref{EXmsetscdim3}).

\item
The scaling of a jump \Mnat-convex (resp., M-convex) function
is not necessarily jump \Mnat-convex (resp., M-convex)
(Examples \ref{EXmnatsetscdim3} and \ref{EXmsetscdim3}).

\item
The scaling of an integrally convex function
is not necessarily integrally convex.
See  Examples \ref{EXscICfnNG422indic} and \ref{EXscICfnNG422} below.
\end{itemize}

For the scaling of an integrally convex function,
we show two examples, an indicator function
and a function defined on an integer box in $\ZZ\sp{3}$.

\begin{example} \rm \label{EXscICfnNG422indic}
Let $f$ be the indicator function 
$\delta_{S}$ of the integrally convex set $S$
considered in Example~\ref{EXscICsetNG422} (Fig.~\ref{FGicsetsc}).
The scaled function $g(y) = f(2y)$ 
is the indicator function of the set 
$T = \{ (0,0,0), (1,0,0),  \allowbreak  (1,0,1), (2,1,1) \}$.
Since $T$ is not integrally convex,
as shown in Example~\ref{EXscICsetNG422},
the function $g$ is not integrally convex.
\finbox
\end{example}

\begin{example}[{\cite[Example 3.1]{MMTT19proxIC}}] \rm \label{EXscICfnNG422}
Consider the integrally convex function
$f: \ZZ^{3} \to \RR \cup \{ +\infty \}$
defined on an integer box $[(0,0,0), (4,2,2)]_{\ZZ}$ by
\[
\begin{array}{c|rrrrrl}
  \multicolumn{1}{c}{x_{2}}  &  \multicolumn{5}{c}{f(x_{1},x_{2},0)}  
\\ \cline{2-6}
 2  & 3 & 1 & 1 & 1 & \multicolumn{1}{r|}{3}
\\
 1  & 1 & 0 & 0 & 0 & \multicolumn{1}{r|}{0} 
\\
 0  & 0 & 0 & 0 & 0 & \multicolumn{1}{r|}{3}
\\ \cline{1-6}
  & 0 & 1 & 2 & 3 & 4 & x_{1} 
\end{array}  
\quad
\begin{array}{c|rrrrrl}
  \multicolumn{1}{c}{x_{2}}  &  \multicolumn{5}{c}{f(x_{1},x_{2},1)}  
\\ \cline{2-6}
 2  & 2 & 1 & 0 & 0 & \multicolumn{1}{r|}{0}
\\
 1  & 1 & 0 & 0 & 0 & \multicolumn{1}{r|}{0} 
\\
 0  & 0 & 0 & 0 & 0 & \multicolumn{1}{r|}{0}
\\ \cline{1-6}
  & 0 & 1 & 2 & 3 & 4 & x_{1} 
\end{array}  
\quad
\begin{array}{c|rrrrrl}
  \multicolumn{1}{c}{x_{2}}  &  \multicolumn{5}{c}{f(x_{1},x_{2},2)}  
\\ \cline{2-6}
 2  & 3 & 2 & 1 & 0 & \multicolumn{1}{r|}{0}
\\
 1  & 2 & 1 & 0 & 0 & \multicolumn{1}{r|}{0} 
\\
 0  & 3 & 0 & 0 & 0 & \multicolumn{1}{r|}{3}
\\ \cline{1-6}
  & 0 & 1 & 2 & 3 & 4 & x_{1} 
\end{array}  
\]
For the scaling with $\alpha = 2$, 
the function $g(y) = f(2y)$ is not integrally convex.
Indeed, 
the inequality (\ref{intcnvconddist2}) in Theorem \ref{THfavtarProp33}
fails for $g$
with $y = (0,0,0)$ and $z = (2, 1, 1)$ as follows:
\begin{align*}
\tilde {g} \, \bigg(\frac{y + z}{2} \bigg)
&= 
\tilde {g} \, \bigg( 1,\frac{1}{2}, \frac{1}{2} \bigg)
\\
&= \min \{
\frac{1}{2}g(1,1,1) + \frac{1}{2}g(1,0,0), \ 
\frac{1}{2}g(1,1,0) + \frac{1}{2}g(1,0,1) \}
\\
 & = \frac{1}{2}  \min \{ f(2,2,2) + f(2,0,0), \  f(2,2,0) + f(2,0,2) \}  
\\
 & = \frac{1}{2}  \min \{ 1+ 0, \   1 + 0  \} = \frac{1}{2} ,
\\ 
\frac{1}{2} (g(y) + g(z))
&= 
\frac{1}{2} ( f(0,0,0) + f(4,2,2) ) = 0.
\end{align*}
It is noted that  
the minimizer set $\{ x \mid f(x)=0 \}$
coincides with the integrally convex set $S$
considered in Examples \ref{EXscICsetNG422} and \ref{EXscICfnNG422indic}.
\finbox
\end{example}

In the case of $n = 2$,
M-convexity and integral convexity admit the scaling operation.

\begin{proposition} \label{PRintcnvfnscdim2}
Let $\alpha$ be a positive integer,
$f: \ZZ^{2} \to \RR \cup \{ +\infty  \}$
a function in two variables, and 
$g(y) = f(\alpha y)$ for $y \in \ZZ\sp{2}$.

\noindent
{\rm (1)}
If $f$ is integrally convex, then $g$ is integrally convex.

%%{\rm (\cite[Theorem 3.2]{MMTT19proxIC})}.

\noindent
{\rm (2)}
If $f$ is \Mnat-convex, then $g$ is \Mnat-convex.

\noindent
{\rm (3)}
If $f$ is M-convex, then $g$ is M-convex.
\finbox
\end{proposition}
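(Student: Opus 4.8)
The plan is to handle the three parts separately, obtaining (2) and (3) from the known behaviour of \Lnat-convex functions under scaling and reserving the real work for the integrally convex case (1).

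For (2), recall from Remark~\ref{RMfndim2} that in two variables \Mnat-convexity and \Lnat-convexity are interchangeable through a sign flip of one coordinate: $f$ is \Mnat-convex if and only if $h(x_1,x_2)=f(x_1,-x_2)$ is \Lnat-convex. Since the flip $y\mapsto(y_1,-y_2)$ commutes with the scaling $y\mapsto\alpha y$, I would set $g(y)=f(\alpha y)$, note that $g(y_1,-y_2)=h(\alpha y_1,\alpha y_2)$ is the $\alpha$-scaling of the \Lnat-convex function $h$, invoke Proposition~\ref{PRfnscale} to see that this scaling is again \Lnat-convex, and flip back to conclude that $g$ is \Mnat-convex. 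For (3), I first apply (2): an M-convex $f$ is in particular \Mnat-convex, so $g$ is \Mnat-convex. It then remains to check that $\dom g$ is constant-sum. Since $\dom f$ is M-convex, all its points share a common component sum $c$; and $y\in\dom g$ means $\alpha y\in\dom f$, whence $\alpha(y_1+y_2)=c$, so $y_1+y_2=c/\alpha$ for every $y\in\dom g$. Thus $\dom g$ is constant-sum, and an \Mnat-convex function with constant-sum effective domain is M-convex by definition.

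The substance is (1), for which I would use Theorem~\ref{THfavtarProp33}. First, $\dom g=\{y\in\ZZ^2\mid\alpha y\in\dom f\}$ is the $\alpha$-scaling of the integrally convex set $\dom f$, hence integrally convex by Proposition~\ref{PRsetscdim2}(1); so it suffices to verify the distance-$2$ inequality \eqref{intcnvconddist2} for $g$, i.e. $\tilde g((y+y')/2)\le\frac12(g(y)+g(y'))$ whenever $\|y-y'\|_{\infty}=2$. Writing $c=y-y'$, I split on parities. If both components of $c$ are even (the cases $c\in\{(\pm2,0),(0,\pm2),(\pm2,\pm2)\}$) the midpoint $w=(y+y')/2$ is integral, so $N(w)=\{w\}$ and $\tilde g(w)=g(w)=f(\alpha w)$; since $\alpha w=(\alpha y+\alpha y')/2$ and $\tilde f$ is convex with $\tilde f=f$ on $\ZZ^2$, convexity gives $f(\alpha w)\le\frac12(f(\alpha y)+f(\alpha y'))$ at once. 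The only other possibility (with $\ell_\infty$-distance exactly $2$) is that exactly one component of $c$ is odd; using that coordinate permutation and independent coordinate inversion preserve integral convexity (Propositions~\ref{PRfnperm} and \ref{PRfnindepsigninv}) and commute with scaling, I may assume $c=(2,1)$. Then $N(w)$ consists of two lattice points whose $\alpha$-multiples are $(\alpha,0)$ and $(\alpha,\alpha)$, so (after an origin shift, which preserves integral convexity by Proposition~\ref{PRfnshift}, placing $\alpha y'=\veczero$ and $\alpha y=(2\alpha,\alpha)$) the inequality to be proved becomes
\[ f(\alpha,0)+f(\alpha,\alpha)\ \le\ f(0,0)+f(2\alpha,\alpha). \]

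This last inequality is the crux, and I expect it to be the main obstacle. For $\alpha=1$ it is precisely an instance of \eqref{intcnvconddist2} for $f$ in direction $(2,1)$, but for general $\alpha$ the two sides straddle a region of diameter of order $\alpha$, so it can follow neither from a single local condition nor from convexity of $\tilde f$ alone; indeed $f(x)=(x_1-2x_2)^2$ is globally convex yet violates the inequality, exactly because it fails to be integrally convex. The plan is therefore to establish it by induction on $\alpha$, peeling off one lattice layer at a time and combining the distance-$2$ inequalities for $f$ with convexity of $\tilde f$ along the vertical line $x_1=\alpha$ and along the diagonal through $(0,0)$ and $(2\alpha,\alpha)$; the essential point is that in the plane these local pieces fit together without leaving a hole, whereas in three dimensions the analogous assembly breaks down (this is what Example~\ref{EXscICfnNG422} exploits). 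One could alternatively try to transfer the companion two-dimensional set-scaling theorem \cite[Theorem~3.2]{MMTT19proxIC} through the sublevel sets of $f$. Controlling the intermediate terms in the planar chaining so that they telescope is the step I would expect to demand the most care.
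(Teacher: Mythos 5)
Your treatment of parts (2) and (3) is correct and coincides with the paper's route: part (2) is exactly the reduction to \Lnat-convex scaling via the sign flip of Remark~\ref{RMfndim2} together with Proposition~\ref{PRfnscale}, and part (3) is settled by observing that the effective domain of $g$ remains constant-sum (the paper instead remarks that a two-variable M-convex function is essentially a univariate convex function, which amounts to the same thing).

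Part (1), however, is where the substance lies, and there your argument has a genuine gap. The reduction itself is fine: invoking Theorem~\ref{THfavtarProp33} after checking that $\dom g$ is integrally convex, disposing of the even-difference cases by ordinary convexity of $\tilde f$, and normalizing the remaining case to $c=(2,1)$ correctly isolates the inequality
\[
f(\alpha,0)+f(\alpha,\alpha)\ \le\ f(0,0)+f(2\alpha,\alpha),
\]
and your example $f(x)=(x_{1}-2x_{2})^{2}$ rightly shows that convex-extensibility alone cannot deliver it. But this inequality is not a corollary of anything you have established; it is essentially the entire content of the two-dimensional scaling theorem, and you leave it at the level of a plan (``induction on $\alpha$, peeling off one lattice layer at a time,'' with the telescoping admitted to be the step demanding the most care). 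The chaining is genuinely delicate: one must choose which distance-$2$ local inequalities to add so that the intermediate function values cancel, and which local inequalities are available depends on which neighbouring lattice points lie in $\dom f$; nothing in your sketch guarantees that this assembly can be carried through. The paper itself does not prove (1) either --- it cites \cite[Theorem 3.2]{MMTT19proxIC} (see Remark~\ref{RMfndim2scalebib}), where precisely this induction is executed. So your proposal correctly locates the difficulty and reproduces the standard reduction, but the key step remains unproven; as written it is not a complete proof of part (1).
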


\begin{remark} \rm \label{RMfndim2scalebib}
Here is a supplement to Proposition \ref{PRintcnvfnscdim2} about functions in two variables.
Part (1) for integrally convex functions is due to 
\cite[Theorem 3.2]{MMTT19proxIC}.
Part (2) for \Mnat-convex functions follows from 
the statement for \Lnat-convex functions in
Proposition \ref{PRfnscale} and Remark \ref{RMfndim2} in Section~\ref{SCclassfnZ}.
Part (3) for M-convex functions is almost a triviality, since
an M-convex function in two variables is essentially a univariate
convex function in the sense of \eqref{univarconvdef},
which is a special case of separable-convex functions
treated in Proposition \ref{PRfnscale}.
\finbox
\end{remark}

Table \ref{TBoperation1dcfnZ}
is a summary of the behavior of discrete convex functions 
with respect to changes of variables
such as origin shift, 
coordinate inversion, permutation, and scaling.
In this table,
``Y'' means ``Yes, this function class is closed under this operation''
and  ``\textbf{\textit{N}}'' means ``No, this function class is not closed under this operation.''

%%%%%%%%%% table %%%%%%%%%%
\begin{table}
\begin{center}
\caption{Operations on discrete convex functions via coordinate changes}
\label{TBoperation1dcfnZ}

\medskip

%% To Typesetter: The Y and N in the table are intentionally staggered.  
%% Please do not align vertically, but use the macros \YES and \NO as they are.

\begin{tabular}{l|c|cc|cc|l}
 Discrete    & Origin & \multicolumn{2}{c|}{Coord.~inversion}  & Permu- & Variable-
 & References
\\ \cline{3-4}
 \quad convexity & shift &  simult. & indep. & tation   & scaling
\\ 
  & $f(x-b)$ & $f(-x)$ & $f(\pm x_{i})$ & $f(x_{\sigma(i)})$  & $f(\alpha x)$  
\\ \hline
 Separable convex  & \YES & \YES & \YES & \YES & \YES  
\\ 
 Integrally convex & \YES & \YES & \YES  & \YES & \NO   
   & \cite{FT90,MMTT19proxIC}
\\ 
\Lnat-convex       & \YES & \YES & \NO & \YES & \YES  
  & \cite{FM00,Mdca98} 
\\ 
L-convex           & \YES & \YES & \NO & \YES & \YES
  & \cite{Mdca98}
\\ 
\Mnat-convex       & \YES & \YES & \NO& \YES & \NO 
  & \cite{Mstein96,MS99gp}
 \\ 
M-convex           & \YES & \YES & \NO & \YES & \NO 
  & \cite{Mstein96}
\\ 
Multimodular       & \YES & \YES & \NO  &  \NO  & \YES 
  & \cite{AGH00,AGH03,GY94mono,Haj85,MM19multm}
\\ 
Globally d.m.c.       & \YES & \YES & \NO & \YES & \YES 
  & \cite{MMTT19dmpc}
\\ 
Locally  d.m.c.       & \YES & \YES & \NO & \YES & \YES 
  & \cite{MMTT19dmpc}
\\ 
Jump \Mnat-convex   & \YES & \YES & \YES & \YES & \NO 
  &  \cite{Mmjump06,Mmnatjump19} 
\\ 
Jump M-convex   & \YES & \YES & \YES & \YES & \NO 
  &  \cite{KMT07jump,Mmjump06} 
\\ \hline
\multicolumn{3}{l}{d.m.c.: discrete midpoint convex.} 
\\
\multicolumn{7}{l}{``Y'' means ``Yes, this function class is closed under this operation.''} 
\\
\multicolumn{7}{l}{``\textbf{\textit{N}}'' means ``No, this function class is not closed under this operation.''} 
\end{tabular}
\end{center}
\end{table}
%%%%%%%%%% table %%%%%%%%%%

In Sections \ref{SCvalscalefn}--\ref{SCconvol} 
we consider operations such as 
nonnegative multiplication of function values,
addition of a linear function,
projection (partial minimization),
sum of two functions, and
convolution of two functions.
Table~\ref{TBoperation2dcfnZ} 
is a summary of the behavior of discrete convex functions 
with respect to those operations.
Two types of additions are distinguished:
one is the addition of $f$ with a separable convex function $\varphi$,
denoted $f + \varphi$,
and the other is the sum
$f_{1} + f_{2}$
of two functions $f_{1}$ and $f_{2}$ 
in the same convexity class. 
Similarly for convolution,
denoted $f \conv \varphi$ and $f_{1} \conv f_{2}$.
In Section \ref{SCfnconj} we consider conjugate and biconjugate functions.

%%%%%%%%%% table %%%%%%%%%%
\begin{table}
\begin{center}
\caption{Operations on discrete convex functions related to function values}
\label{TBoperation2dcfnZ}

\medskip

%% To Typesetter: The Y and N in the table are intentionally staggered.  
%% Please do not align vertically, but use the macros \YES and \NO as they are.

\addtolength{\tabcolsep}{-4pt}
\begin{tabular}{l|ccc|cc|cc|l}
 Discrete   & Value- \  & Restric- & Projec- & \multicolumn{2}{c|}{Addition} & \multicolumn{2}{c|}{Convolution} 
 & Reference
 \\ \cline{5-8}
 \quad convexity & scaling & tion   & tion  &  separ. & general &   separ. & general 
\\ 
  & $a f $ & $f(y,\bm{0})$  & $\displaystyle \inf_{z} f(y,z)$   
  & $f + \varphi$ & $f_{1} + f_{2}$ & $f \conv \varphi$ & $f_{1} \conv f_{2}$
\\ \hline
 Separable convex & \YES  & \YES  & \YES  & \YES  & \YES  & \YES  & \YES 
\\ 
 Integrally convex & \YES  & \YES  & \YES  & \YES  
  & \NO   & \YES  & \NO
   & \cite{FT90,MM19projcnvl,MS01rel}
\\ 
\Lnat-convex  & \YES  & \YES  & \YES  & \YES  & \YES  & \YES  & \NO 
  & \cite{FM00,Mdca98} 
\\ 
L-convex  & \YES  & \NO  & \YES  
  & \NO & \YES  & \YES  
  & \NO 
  & \cite{Mdca98}
\\ 
\Mnat-convex  & \YES  &  \YES  & \YES  & \YES  & \NO 
  & \YES  & \YES 
  & \cite{Mstein96,MS99gp}
\\ 
M-convex & \YES  & \YES  & \NO & \YES  
  & \NO & \NO & \YES 
  & \cite{Mstein96}
\\ 
Multimodular  & \YES  &  \YES   & \NO  
   & \YES  & \YES  & \NO & \NO 
  & \cite{AGH00,Haj85,MM19multm}
\\ 
Globally d.m.c.  & \YES   &  \YES   & \YES  & \YES  & \YES  
  & \NO  & \NO 
  & \cite{MMTT19dmpc}
\\ 
Locally d.m.c.  & \YES  &  \YES   &  \YES  & \YES  & \YES  
  & \NO & \NO 
  & \cite{MMTT19dmpc}
\\ 
Jump \Mnat-convex   & \YES  & \YES   & \YES  & \YES  
  & \NO & \YES & \YES 
  &  \cite{Mmjump06,Mmnatjump19} 
\\ 
Jump M-convex   & \YES  & \YES   & \NO & \YES  & \NO 
  & \NO  & \YES 
  &  \cite{KMT07jump,Mmjump06} 
\\ \hline
\multicolumn{1}{l}{} &
\multicolumn{7}{r}{($\varphi$: separable convex)} 
\\
\multicolumn{8}{l}{``Y'' means ``Yes, this function class is closed under this operation.''} 
\\
\multicolumn{8}{l}{``\textbf{\textit{N}}'' means ``No, this function class is not closed under this operation.''} 
\end{tabular}
\addtolength{\tabcolsep}{4pt}
\end{center}
\end{table}
%%%%%%%%%% table %%%%%%%%%%

\subsection{Value-scaling}
\label{SCvalscalefn}

For a function
$f: \ZZ\sp{n} \to \RR \cup \{ +\infty \}$ 
and a nonnegative factor $a \geq 0$,
the {\em value-scaling} of $f$ by $a$ 
means  a function $g: \ZZ\sp{n} \to \RR \cup \{ +\infty \}$ defined by%
\begin{equation} \label{valscalefndef}
 g(y) = a f(y)
  \qquad (y \in \ZZ\sp{n}) .
\end{equation}

\begin{proposition} \label{PRfnvalscale}
The value-scaling operation \eqref{valscalefndef} for a function preserves
separable convexity,
integral convexity,
L$^{\natural}$-convexity, L-convexity,
M$^{\natural}$-convexity, M-convexity,
multimodularity, 
global and local discrete midpoint convexity,
jump \Mnat-convexity,
and jump M-convexity.
\finbox
\end{proposition}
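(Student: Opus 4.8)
The plan is to handle all eleven classes by a single mechanism, namely that every defining condition in Section~\ref{SCfnclass} is an inequality (or a structural condition assembled from such inequalities) that is monotone under multiplication by a nonnegative scalar. I would split the argument into the generic case $a>0$ and the degenerate case $a=0$.

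First, suppose $a>0$. Then $\dom g=\dom f$, so each condition imposed on the effective domain---that it be an \Lnat-convex set, an M-convex set, a constant-parity jump system, and so on---is inherited at once. Every remaining defining condition has one of three shapes, and in all three the factor $a$ is harmless: (i) a two-point inequality $f(x)+f(y)\ge f(u)+f(v)$, as in discrete midpoint convexity \eqref{midptcnvfn}, \eqref{midptcnvfn2}, submodularity \eqref{submfn}, translation-submodularity \eqref{trsubmfn}, and the jump inequalities \eqref{jumpmexc1} and \eqref{jumpmexc2}, which simply scales to $a f(x)+a f(y)\ge a f(u)+a f(v)$; (ii) an inequality whose right-hand side is a minimum, as in \Mnvex via \eqref{mnconvexc2Z} and in (J\Mnat-EXC), where I invoke $a\cdot\min_{j}\{\cdots\}=\min_{j}\{a\cdot(\cdots)\}$, valid precisely because $a\ge 0$; and (iii) a condition phrased through the local convex extension, as in the integral-convexity criterion \eqref{intcnvconddist2} of Theorem~\ref{THfavtarProp33} and the submodularity of the associated $\tilde f$ used to define multimodularity via \eqref{multimodular1}. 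For (iii) the key identity is $\widetilde{a f}=a\,\tilde f$, which follows directly from the minimizing definition \eqref{fnconvclosureloc2} since the scalar factors out of the objective $\sum_{y}\lambda_{y}f(y)$; with this in hand, \eqref{intcnvconddist2} and the submodularity of the extension scale exactly as in (i). Separable convexity is immediate because $a\varphi_{i}$ is univariate convex whenever $\varphi_{i}$ is and $a\ge 0$, while L-convexity and jump M-convexity follow from their \Lnat- and M$^{\natural}$-counterparts together with the scaling of the auxiliary constant ($r\mapsto a r$ in \eqref{shiftlfnZ}) and the preservation of constant-parity of the domain, respectively.

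Second, suppose $a=0$. Then $g(y)=0$ for $y\in\dom f$ and $g(y)=+\infty$ otherwise, so $g=\delta_{\dom f}$ is the indicator function of $\dom f$. For each class, the effective domain of a member $f$ is a set in the matching set class, as recorded throughout Section~\ref{SCfnclass} (the effective domain of an \Lnat-convex function is an \Lnat-convex set, that of a jump M-convex function is a constant-parity jump system, and so forth), and the indicator function of such a set is by definition a member of the corresponding function class. Hence $g=\delta_{\dom f}$ lies in the same class as $f$, which closes the degenerate case.

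The only genuinely delicate point is the interaction of scaling with the extension-based characterizations in (iii): once $\widetilde{a f}=a\,\tilde f$ is verified for $a>0$, integral convexity and multimodularity fall into line with the elementary inequality classes, so I would isolate this identity as the one nontrivial lemma. The $a=0$ boundary is the other place to be explicit, since there convexity is no longer witnessed by scaling of inequalities but must be read off from the set-level statements for effective domains; I expect no real obstacle beyond organizing these two observations cleanly.
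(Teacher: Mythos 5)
Your proof is correct. The paper states Proposition~\ref{PRfnvalscale} without proof, treating it as immediate from the definitions, and your argument---nonnegative scalars preserve every defining inequality, pull through the minima in \Mnvex and (J\Mnat-EXC), and commute with the local convex extension and with the change of variables \eqref{multimodular1} (which is a reparametrization rather than a convex extension, though this does not affect your argument)---is exactly the reasoning the paper implicitly relies on; your separate treatment of $a=0$ via the indicator function of the effective domain is the one place where genuine care is needed, and you handle it correctly.
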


\subsection{Restriction}
\label{SCrestrfn}

Let $N = \{ 1,2,\ldots, n \}$.
For a function
$f: \ZZ\sp{N} \to \RR \cup \{ +\infty \}$ 
and a subset $U$ of the index set $N$,
the
{\em restriction}
of $f$ to $U$ is a function $g: \ZZ\sp{U} \to \RR \cup \{ +\infty \}$ defined by%
\footnote{%%%%%%%%%%%%%%%%%%%%%
For any $z\in \ZZ\sp{N \setminus U}$ we may consider a function 
$g(y) = f(y,z)$ in $y \in \ZZ\sp{U}$
as a restriction of $f$ to $U$.
For simplicity of description we choose $z=\veczero_{N \setminus U}$.
} %%%%%% footnote %%%%%%%%%%%%%%%%%
\begin{equation} \label{restrfndef}
 g(y) = f(y,\veczero_{N \setminus U})
  \qquad (y \in \ZZ\sp{U}) ,
\end{equation}
where $\veczero_{N \setminus U}$ denotes the zero vector 
in $\ZZ\sp{N \setminus U}$.
The notation 
$(y,\veczero_{N \setminus U})$ means the vector
whose $i$th component is equal to $y_{i}$ for $i \in U$
and to 0 for $i \in N \setminus U$;
for example, if $N = \{ 1,2,3 \}$ and $U = \{ 1,3 \}$,
$(y,\veczero_{N \setminus U})$ means $(y_{1}, 0, y_{3})$.

The restriction operation preserves discrete convexity as follows.%
\footnote{%%%%%%%%%%%%%
The restriction operation may result in a function with an empty effective domain.
Therefore, strictly speaking, we should 
add a proviso in Proposition \ref{PRfnrestr}
that the resulting function has a nonempty effective domain.
}%%% footnote %%%

\begin{proposition} \label{PRfnrestr}
The restriction operation \eqref{restrfndef} for a function preserves
separable convexity,
integral convexity,
L$^{\natural}$-convexity, 
M$^{\natural}$-convexity, M-convexity,
multimodularity, 
global and local discrete midpoint convexity,
jump \Mnat-convexity,
and jump M-convexity.
\finbox
\end{proposition}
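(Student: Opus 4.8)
The plan is to dispose of separable convexity directly and to treat all the remaining classes except multimodularity by a single \emph{zero-slice} argument, handling multimodularity separately at the end. For separable convexity the claim is immediate: if $f(x)=\sum_{i}\varphi_{i}(x_{i})$, then $g(y)=\sum_{i\in U}\varphi_{i}(y_{i})+\sum_{i\in N\setminus U}\varphi_{i}(0)$, which is a separable convex function of $y$ plus a constant. For every other class the defining property is a local or exchange inequality relating a pair of points, and the crux is the following observation: if we embed $x,y\in\ZZ^{U}$ as $\bar{x}=(x,\veczero_{N\setminus U})$ and $\bar{y}=(y,\veczero_{N\setminus U})$, then every point generated from $\bar{x},\bar{y}$ by the operations appearing in these inequalities again vanishes on $N\setminus U$, hence stays in the slice $\ZZ^{U}\times\{\veczero_{N\setminus U}\}$ that we identify with $\ZZ^{U}$.

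First I would record the elementary facts that make the slice argument run. Because $\lceil 0/2\rceil=\lfloor 0/2\rfloor=0$, the rounded midpoints $\lceil(\bar{x}+\bar{y})/2\rceil$ and $\lfloor(\bar{x}+\bar{y})/2\rfloor$ vanish on $N\setminus U$; together with $\|\bar{x}-\bar{y}\|_{\infty}=\|x-y\|_{\infty}$, this transfers the (global and local) discrete midpoint inequalities, and hence \Lnat-convexity, from $f$ to $g$. Because $\suppp(\bar{x}-\bar{y})$ and $\suppm(\bar{x}-\bar{y})$ are contained in $U$, the exchange indices $i,j$ lie in $U$, so the vectors $\bar{x}-\unitvec{i}+\unitvec{j}$ and $\bar{y}+\unitvec{i}-\unitvec{j}$ (and the $j=0$ case) remain in the slice; this yields \Mnvex and \Mvex for $g$, where for M-convexity one adds that $\dom g$ inherits a constant component sum since the fixed coordinates contribute $0$. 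The same support/box reasoning confines every $(\bar{x},\bar{y})$-increment and the follow-up increment in (J\Mnat-EXC) and (JM-EXC) to $U$, giving the two jump cases. For integral convexity I would use condition (b) of Theorem~\ref{THfavtarProp33}: the decisive point is that the integral neighborhood $N(w)$ of any $w$ that is integral (here zero) on $N\setminus U$ lies in the slice, so the local convex extension of $f$ at $w$ agrees with that of $g$, and condition (b) transfers once we know $\dom g$ is integrally convex, which is the set version already subsumed in Proposition~\ref{PRsetrestr}.

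The one class for which the zero-slice argument fails directly is multimodularity, and I expect this to be the main obstacle, since multimodularity is defined through the special directions $\mathcal{F}$ (equivalently the bidiagonal matrix $D$), which do not respect the slice. Here the plan is to pass to \Lnat-convexity via Theorem~\ref{THmmfnlnatfn}, writing $f(x)=g(D^{-1}x)$ with $g$ \Lnat-convex. For $U=\{u_{1}<\cdots<u_{m}\}$ the partial sums of $\bar{y}=(y,\veczero_{N\setminus U})$ depend only on the partial sums $q=(q_{1},\ldots,q_{m})$ of $y$: the $k$-th partial sum is $0$ for $k<u_{1}$ and equals $q_{j}$ for $u_{j}\le k<u_{j+1}$ (with $u_{m+1}=n+1$). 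Thus the restricted function equals $g$ evaluated at a vector obtained from $q$ by fixing the leading coordinates to $0$ and then identifying coordinates within each block. Fixing coordinates to $0$ is the \Lnat-convex restriction already treated, and identifying $p_{k}=p_{k+1}$ amounts to restricting $g$ to the sublattice $\{p:p_{k}=p_{k+1}\}$, which is closed under $\vee$, $\wedge$, and discrete midpoint rounding; both operations preserve \Lnat-convexity, so the restricted function is multimodular by Theorem~\ref{THmmfnlnatfn}. Alternatively, one may quote the set statement of \cite[Proposition~9]{MM19multm} and transfer it to functions. Finally, the set assertions of Proposition~\ref{PRsetrestr} follow by specializing all of the above to indicator functions.
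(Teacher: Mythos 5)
Your proposal is correct. For comparison: the paper does not actually prove this proposition --- Remark~\ref{RMfnrestrbib} declares the non-multimodular cases ``rather obvious'' and otherwise points to the literature (in particular to \cite[Lemma 2.3]{AGH00} and \cite[Lemma 3]{AGH03} for multimodularity). Your zero-slice argument is exactly the routine verification the paper has in mind for the classes defined by pairwise inequalities: the embedded points $\bar x,\bar y$ have supports, rounded midpoints, increments, and integral neighborhoods all confined to the slice $\ZZ^{U}\times\{\veczero_{N\setminus U}\}$, so each defining inequality for $f$ specializes to the one for $g$. Where you add genuine content is multimodularity: instead of citing Altman--Gaujal--Hordijk you derive it from Theorem~\ref{THmmfnlnatfn} by the partial-sum change of variables, observing that the restriction corresponds to fixing the leading coordinates of the \Lnat-convex partner to $0$ and identifying coordinates within consecutive blocks, and that both operations preserve \Lnat-convexity (the diagonal $\{p: p_{k}=p_{k+1}\}$ is closed under the rounded-midpoint operations, and the identification map commutes with componentwise rounding). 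That is a clean, self-contained alternative to the external lemmas. One small caveat on logical ordering: the paper states that Proposition~\ref{PRsetrestr} \emph{follows from} Proposition~\ref{PRfnrestr}, so when you invoke the set version for $\dom g$ in the integral-convexity and local-d.m.c.\ cases you should note that the set statement is proved directly by the same slice argument (for an integrally convex $S$, a point $w\in\overline{T}$ lifts to $(w,\veczero)\in\overline{S}$ with $N((w,\veczero))$ contained in the slice, whence $w\in\overline{T\cap N(w)}$), rather than quoting the proposition as given; otherwise the argument is circular within the paper's own dependency structure.
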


\begin{remark} \rm \label{RMfnrestrbib}
Here is a supplement to Proposition \ref{PRfnrestr} about restriction.
The restriction of a separable convex function
$\sum \{ \varphi_{i}(x_{i}) \mid i \in N \}$ 
to $U$ is given by the separable convex function
$\sum \{ \varphi_{i}(x_{i}) \mid i \in U \}  + C$,
where 
$C = \sum \{  \varphi_{j}(0) \mid j \in N \setminus U \}$.
The result for multimodular functions 
is shown in \cite[Lemma 2.3]{AGH00} as well as in \cite[Lemma 3]{AGH03}.
The statements for other kinds of discrete convex functions are rather obvious.
We can find the statement
for L$^{\natural}$-convexity in \cite[Theorem 7.11]{Mdcasiam},
for M$^{\natural}$-convexity in \cite[Theorem 6.14]{Mdcasiam},
for M-convexity in \cite[Theorem 6.13]{Mdcasiam},
for integral convexity in \cite[Proposition 3.19]{Mdcasiam} 
(in a more general form as in Remark \ref{RMfnrestrbox}),
for global and local discrete midpoint convexity in \cite[Table 5.1]{MM19multm}, and
for jump \Mnat- and M-convexity in \cite{Mmnatjump19} and \cite[Section 3]{KMT07jump}.
\finbox
\end{remark}

The restriction of an L-convex function is not necessarily L-convex,
simply because the restricted function lacks in
the linearity (\ref{shiftlfnZ}) in the direction of $\vecone$.

\begin{example}[cf., Example~\ref{EXlsetrestr}]  \rm \label{EXlfnrestr} 
For $S = \{ (t,t) \mid t \in \ZZ \}$, its indicator function $\delta_{S}$ 
is L-convex.
The restriction of $f=\delta_{S}$ to $U = \{ 1 \}$ is given by 
$g=\delta_{T}$ for $T = \{ 0 \}$,
which is not L-convex.
\finbox
\end{example}

\begin{remark} \rm \label{RMfnrestrbox}
The restriction of a function $f: \ZZ\sp{N} \to \RR \cup \{ +\infty \}$
to an integer box
$[a,b]_{\ZZ}$
is a function $f_{[a,b]}$ on $\ZZ\sp{N}$ defined by 
\begin{equation} \label{fboxrestrict}
 f_{[a,b]}(x) = 
   \left\{  \begin{array}{ll}
     f(x)      & (x \in [a,b]_{\ZZ}), \\
     +\infty   & (x \not\in [a,b]_{\ZZ}) .
                      \end{array}  \right.
\end{equation}
In the special case where 
$a_{i}=-\infty$ and $b_{i}=+\infty$ for $i \in U$, and
$a_{i}=b_{i}=0$ for $i \in N \setminus U$,
this function $f_{[a,b]}$ coincides with 
$f(y,\veczero_{N \setminus U})$ in \eqref{restrfndef}.
Thus, the restriction to an integer box is more general
than the restriction to a subset of $N$.
\finbox
\end{remark}

\subsection{Projection}
\label{SCproj}

For a function
$f: \ZZ\sp{N} \to \RR \cup \{ +\infty \}$ 
and a subset $U$ of the index set $N = \{ 1,2,\ldots, n \}$,
the {\em projection}
of $f$ to $U$ is a function
$g: \ZZ\sp{U} \to \RR \cup \{ -\infty, +\infty \}$
defined by
\begin{equation} \label{projfndef} 
  g(y)  =  \inf \{ f(y,z) \mid z \in \ZZ\sp{N \setminus U} \}
  \qquad (y \in \ZZ\sp{U}) ,
\end{equation}
where the notation $(y,z)$ means the vector
whose $i$th component is equal to $y_{i}$ for $i \in U$
and to $z_{i}$ for $i \in N \setminus U$.
We assume $g > -\infty$.
The projection is sometimes called {\em partial minimization}.

The projection operation preserves discrete convexity as follows.

\begin{proposition} \label{PRfnproj}
The projection operation \eqref{projfndef} for a function preserves
separable convexity,
integral convexity,
L$^{\natural}$-convexity, L-convexity,
M$^{\natural}$-convexity, 
global and local discrete midpoint convexity,
and jump \Mnat-convexity.
\finbox
\end{proposition}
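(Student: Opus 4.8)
The plan is to reduce the statement to the elimination of a single coordinate and then to split the classes into two groups, handled by two different techniques: a \emph{lift-and-project} argument for the classes defined by a quantitative midpoint or exchange inequality, and a convex-extension argument for integral convexity. First I would reduce to the case $N \setminus U = \{n\}$, so that $g(y) = \inf_{t \in \ZZ} f(y,t)$. Since each class in the statement is closed under permutation of variables (Proposition \ref{PRfnperm}), a general projection is obtained by eliminating the coordinates of $N \setminus U$ one at a time, each intermediate function again lying in the class; hence it suffices to minimize out the last coordinate.

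Second, the lift-and-project argument. For a separable convex $f = \sum_i \varphi_i$ the claim is immediate, since $g(y) = \sum_{i \in U} \varphi_i(y_i) + \inf_t \varphi_n(t)$ is separable convex plus a constant (finite by the standing assumption $g > -\infty$). For the other inequality-based classes (L$^\natural$-convex, global d.m.c., M$^\natural$-convex, jump M$^\natural$-convex) I would fix the points occurring in the defining inequality for $g$ (assuming they lie in $\dom g$, the inequality being trivial otherwise), choose for each an $\varepsilon$-minimizer in the eliminated coordinate, apply the defining inequality of $f$ to the lifted points, and then push the conclusion back down using $g = \inf_t f(\cdot,t)$, finally letting $\varepsilon \to 0$. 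For instance, in the L$^\natural$ case, picking $t_p,t_q$ with $f(p,t_p) \le g(p)+\varepsilon$ and $f(q,t_q) \le g(q)+\varepsilon$, the discrete midpoint inequality \eqref{midptcnvfn} for $f$ at $(p,t_p),(q,t_q)$ projects to $g(p)+g(q)+2\varepsilon \ge g(\lceil (p+q)/2 \rceil) + g(\lfloor (p+q)/2 \rfloor)$; the same minimizers also show $\dom g$ is L$^\natural$, so no separate domain argument is needed, and L-convexity then follows because the shift relation \eqref{shiftlfnZ} is inherited by $g$ after the substitution $t \mapsto t+1$ in the eliminated coordinate. The global d.m.c.\ case is identical, since $\|p-q\|_\infty \ge 2$ forces the lifted pair to be at $\ell_\infty$-distance $\ge 2$ as well. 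The point needing care in the M$^\natural$ and jump M$^\natural$ cases is that the exchange guaranteed for the lifted points may move mass into the eliminated coordinate: when the exchange index $j$ (resp.\ the second increment $t$) lies in $N \setminus U$, the projected point is exactly $p - \unitvec{i}$ (resp.\ $p+s$), which is precisely the degenerate alternative ``$j=0$'' in \Mnvex (resp.\ case (i) of (J\Mnat-EXC)); thus the exchange property survives projection for every $i \in \suppp(p-q)$. I would note that M-convexity is deliberately absent here, since partial minimization destroys the constant-component-sum condition.

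Third, integral convexity, which I expect to be the main obstacle. Here the lift argument fails: $\varepsilon$-minimizers in the eliminated coordinate for two points $p,q$ with $\|p-q\|_\infty = 2$ need not be close, so the distance-$2$ local characterization of Theorem \ref{THfavtarProp33} for $f$ cannot be invoked on the lifted pair. Instead the plan is to work with the local convex extension and prove the commutation identity $\tilde g(y) = \inf_{w \in \RR^{N \setminus U}} \tilde f(y,w)$; granting this, $\tilde g$ is a partial minimization of the convex function $\tilde f$ and is therefore convex, which is exactly integral convexity of $g$. The crux is the nontrivial inclusion in this identity: one must show that the continuous partial minimization of $\tilde f$ over a real fibre can be realized within a single integral neighborhood, and this is precisely where integral convexity of $f$ enters; the effective domain of $g$ is integrally convex by the set-level statement (Proposition \ref{PRsetproj}, established directly in \cite{MM19projcnvl}), so there is no circularity. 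Finally, for local discrete midpoint convexity I would combine the integral convexity just obtained with the distance-$2$ midpoint inequality; because the lifted pair can have $\ell_\infty$-distance exceeding $2$, this last inequality cannot be read off from the lift and instead requires the finer proximity-type analysis of \cite{MMTT19dmpc}, which I regard as the hardest ingredient of the proof.
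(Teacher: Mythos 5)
The paper itself contains no proof of Proposition~\ref{PRfnproj}: being a survey, it disposes of the statement in Remark~\ref{RMfnprojbib} by citation, namely \cite[Theorems 7.10, 7.11, 6.15]{Mdcasiam} for the L-, \Lnat- and \Mnat-cases, \cite[Theorems 3.3, 3.5, 3.6]{MM19projcnvl} for the integrally convex and discrete midpoint convex cases, and \cite{Mmnatjump19} for jump \Mnat-convexity. Your reconstruction follows essentially the route of those sources: reduction to eliminating one coordinate (legitimate, since each intermediate infimum is bounded below by the final one), and the $\varepsilon$-minimizer lift for the inequality-defined classes. Those lift arguments are correct as written; in particular the observation that an exchange into the eliminated coordinate collapses to the $j=0$ alternative of \Mnvex, resp.\ to case~(i) of (J\Mnat-EXC), is exactly the right point, and L-convexity does follow from the \Lnat-case together with inheritance of \eqref{shiftlfnZ} under the shift $t \mapsto t-1$. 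Two supplements. For integral convexity, the ``crux'' you flag is resolved by the standard fact that for an integrally convex $f$ the local convex extension $\tilde f$ coincides with the convex closure of $f$: hence $\sum_{y'} \lambda_{y'} f(y', t_{y'}) \geq \tilde f\bigl(y, \sum_{y'} \lambda_{y'} t_{y'}\bigr)$ even though the lifted points $(y',t_{y'})$ share no integral neighborhood, which gives the inclusion $\tilde g(y) \geq \inf_w \tilde f(y,w)$; the reverse inequality follows by taking marginals of the weights, and convexity of $\tilde g$ then follows as you say, with no circularity since the domain statement is Proposition~\ref{PRsetproj}. For local discrete midpoint convexity your proposal genuinely stops short: you correctly identify that the lift fails when $|t_p - t_q| > 2$, but you do not supply the repair, and the reference you point to is slightly off --- the needed argument is \cite[Theorem 3.6]{MM19projcnvl}, not \cite{MMTT19dmpc}. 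Since the paper itself only cites that result, this deferral leaves you exactly where the paper stands, but it is the one item on the list for which your proposal does not amount to a proof.
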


\begin{remark} \rm \label{RMfnprojbib}
Here is a supplement to Proposition \ref{PRfnproj} about projection.
The projection of a separable convex function
$\sum \{ \varphi_{i}(x_{i}) \mid i \in N \}$ 
to $U$ is given by the separable convex function
$\sum \{ \varphi_{i}(x_{i}) \mid i \in U \} + C$,
where 
$C = \sum \{ \min \varphi_{j} \mid j \in N \setminus U \}$.
The results for 
 L$^{\natural}$-convex, L-convex, and 
M$^{\natural}$-convex functions
are given in 
\cite[Theorem 7.11]{Mdcasiam}, 
\cite[Theorem 7.10]{Mdcasiam}, and
\cite[Theorem 6.15]{Mdcasiam},
respectively.
The statements for integrally convex functions,
 globally discrete midpoint convex functions,
locally discrete midpoint convex functions, and
jump \Mnat-convex functions
are obtained recently in
\cite[Theorem 3.3]{MM19projcnvl},
\cite[Theorem 3.5]{MM19projcnvl}, 
\cite[Theorem 3.6]{MM19projcnvl}, and
\cite{Mmnatjump19}, 
respectively.
\finbox
\end{remark}

The projection of an M-convex function is not necessarily M-convex,
simply because the effective domain of 
the projected function does not lie on a hyperplane of a constant component sum.
Similarly, the projection of a jump M-convex function is not necessarily jump M-convex.

%%%%%%%%%%%%%%
\begin{example}  \rm \label{EXmfnproj} 
For $S = \{ (t,-t) \mid t \in \ZZ \}$, its indicator function $\delta_{S}$ is M-convex.
The projection of $f=\delta_{S}$ to $U = \{ 1 \}$ is given by 
$g \equiv 0$ on $\ZZ$
(i.e., $g(y_{1}) = 0$ for all $y_{1} \in \ZZ$),
which is not M-convex.
The function $f$ is jump M-convex, while $g$ is not.
\finbox
\end{example}

The projection of a multimodular function 
to a subset of indices is not necessarily multimodular.
We show two examples, an indicator function
and a quadratic function on the entire lattice $\ZZ\sp{4}$.

%%%%%%%%%%%%%%
\begin{example}  \rm \label{EXmmfnproj3} 
Let $f$ be the indicator function 
$\delta_{S}$ of the multimodular set
$S = \{ (0,0,0),  \allowbreak  (0,1,-1),  \allowbreak  (0,1,0), (1,0,0) \}$
considered in Example~\ref{EXmmsetproj}.
The projection $g$ of $f$ to $U = \{ 1, 3 \}$ is 
the indicator function of the set 
$T = \{ (0,0), (0,-1), (1,0) \}$.
Since $T$ is not multimodular,
as shown in Example~\ref{EXmmsetproj},
the function $g$ is not multimodular.
\finbox
\end{example}

\begin{example}[{\cite[Example 4.1]{MM19multm}}]   \rm \label{EXmmfnproj4}
The quadratic function
$f(x) = x^{\top} A x$
with
\[
A  = {\footnotesize \small
\left[ \begin{array}{cccc}
  3 & 2 & 1 & 0 \\
 2 & 3 & 2 &  1 \\
 1 & 2 & 2 & 1 \\
 0 & 1 & 1 &  1 \\
\end{array}\right]
}%%font
\]
is multimodular, 
since $A$ satisfies the condition (\ref{mmfquadrcond}).
On the other hand,
the projection $g$ of $f$ to $U= \{ 1,2,4 \}$ is not multimodular.
Indeed we have
$g(y) = y^{\top} \tilde A y$
with
$\tilde A = {\small\footnotesize
{\displaystyle {\small\footnotesize \frac{1}{2}}  }
\left[ \begin{array}{rrr}
 5 & 2 & -1 \\
 2 & 2 & 0 \\
 -1 & 0 & 1 \\
\end{array}\right]}$,
where
$\tilde A =(\tilde a_{ij} \mid  i,j =1,2,4)$
is obtained from $A$ by the usual sweep-out operation:
 $\tilde a_{ij} = a_{ij} - a_{i3} a_{3j} / a_{33}$
$(i,j \in \{ 1,2,4 \})$.
The matrix $\tilde A$  violates the condition (\ref{mmfquadrcond}) for $(i,j)=(1,2)$.
\finbox
\end{example}

The projection of a multimodular function to a consecutive index subset is multimodular.
Recall that a consecutive set 
means a set of the form $\{ k, k+1, \ldots, l -1, l \}$ for some $k \leq l$.

\begin{proposition}[{\cite[Proposition 10]{MM19multm}}] \label{PRmmfnprojinterval}
For a multimodular function $f$ and a consecutive index subset $U$,
the projection of $f$ to $U$ in \eqref{projfndef} is multimodular.
\finbox
\end{proposition}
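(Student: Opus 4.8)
The plan is to translate the claim into the language of $\Lnat$-convexity via Theorem~\ref{THmmfnlnatfn} and then realize the projection of $f$ as a composition of operations already known to preserve $\Lnat$-convexity. Write $U=\{k,k+1,\ldots,l\}$ and let $g$ be the $\Lnat$-convex function associated with $f$ by \eqref{mmfnGbyF}--\eqref{mmfnFbyG}, so that $f(x)=g(p)$ with partial sums $p_{j}=x_{1}+\cdots+x_{j}$ and $p_{0}=0$. Let $\bar f$ denote the projection of $f$ to $U$ and let $\bar g$ be the function associated with $\bar f$ by the same transformation on $\ZZ\sp{U}$, i.e.\ $\bar g(q_{k},\ldots,q_{l})=\bar f$ evaluated at the point with $q_{j}=x_{k}+\cdots+x_{j}$. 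By Theorem~\ref{THmmfnlnatfn} it suffices to show that $\bar g$ is $\Lnat$-convex.

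First I would record the coordinate correspondence on which everything rests, and this is exactly where consecutiveness of $U$ enters. Since $U$ consists of consecutive indices, the $U$-partial sums satisfy $q_{j}=x_{k}+\cdots+x_{j}=p_{j}-p_{k-1}$ for every $j\in U$, so fixing the $U$-variables amounts to fixing the differences $p_{j}-p_{k-1}=q_{j}$. Moreover, as $(x_{1},\ldots,x_{k-1})$ and $(x_{l+1},\ldots,x_{n})$ range over integer vectors, so do $(p_{1},\ldots,p_{k-1})$ and $(p_{l+1},\ldots,p_{n})$, because the partial-sum map is lower triangular and unimodular. Hence
\[
 \bar g(q_{k},\ldots,q_{l})
 =\inf\{\, g(p)\mid p_{j}=p_{k-1}+q_{j}\ (j\in U)\,\},
\]
the infimum being over the free coordinates $p_{1},\ldots,p_{k-1}$ and $p_{l+1},\ldots,p_{n}$. (For $k=1$ there is no free base point, since $p_{0}=0$, and $\bar g$ is simply the projection of $g$ onto $\{1,\ldots,l\}$, which is $\Lnat$-convex by Proposition~\ref{PRfnproj}; so I treat $k\ge 2$ below.) For a non-consecutive $U$ this clean identity fails, consistently with Example~\ref{EXmmfnproj3}.

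Next I would split the infimum. Minimizing first over $p_{1},\ldots,p_{k-2}$ and $p_{l+1},\ldots,p_{n}$ is the projection of $g$ onto the consecutive block $\{k-1,k,\ldots,l\}$, so by Proposition~\ref{PRfnproj} it yields an $\Lnat$-convex function $\tilde g(p_{k-1},p_{k},\ldots,p_{l})$, and
\[
 \bar g(q_{k},\ldots,q_{l})
 =\inf_{r\in\ZZ}\ \tilde g(r,\,r+q_{k},\,\ldots,\,r+q_{l}).
\]
The main obstacle is this last ``diagonal'' minimization, in which the single variable $r=p_{k-1}$ is broadcast into every slot; rounding does not commute with such a substitution, so a direct discrete-midpoint check is unavailable. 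The device I would use is the L-convex lift \eqref{lfnlnatfnrelation}: set $\hat g(c,a_{k-1},a_{k},\ldots,a_{l})=\tilde g(a_{k-1}-c,\ldots,a_{l}-c)$, which is L-convex by the construction in \eqref{lfnlnatfnrelation} and satisfies the $\vecone$-invariance $\hat g(c+t,a+t\vecone)=\hat g(c,a)$. Using $\tilde g(a)=\hat g(0,a)$ and then shifting by $t=-r$ gives $\tilde g(r,r+q_{k},\ldots,r+q_{l})=\hat g(-r,0,q_{k},\ldots,q_{l})$, whence
\[
 \bar g(q_{k},\ldots,q_{l})=\inf_{c\in\ZZ}\ \hat g(c,0,q_{k},\ldots,q_{l}).
\]

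Finally I would read off the right-hand side as two $\Lnat$-preserving operations applied to the L-convex (hence $\Lnat$-convex) function $\hat g$: restricting to the slice $a_{k-1}=0$ preserves $\Lnat$-convexity by Proposition~\ref{PRfnrestr}, and the subsequent partial minimization over $c$ preserves it by Proposition~\ref{PRfnproj}. Therefore $\bar g$ is $\Lnat$-convex, and by Theorem~\ref{THmmfnlnatfn} the projection $\bar f$ is multimodular. I expect the crux to be isolating and justifying the diagonal minimization step: the $\vecone$-invariance of the L-convex lift is what converts it into a plain restriction followed by a projection, while the consecutiveness of $U$ is precisely what guarantees the coordinate identity $q_{j}=p_{j}-p_{k-1}$ on which the whole reduction depends.
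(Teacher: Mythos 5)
Your proof is correct and follows the route the paper intends (the survey itself gives no proof, citing \cite[Proposition 10]{MM19multm}): pass to the associated L$^{\natural}$-convex function via Theorem~\ref{THmmfnlnatfn}, use consecutiveness of $U$ to rewrite the projection as a constrained minimization over the partial-sum variables with $q_{j}=p_{j}-p_{k-1}$, and dispose of the residual diagonal minimization over $p_{k-1}$ by passing to the L-convex lift \eqref{lfnlnatfnrelation} and applying Propositions~\ref{PRfnrestr} and~\ref{PRfnproj}. All steps check out, including the sign-sensitive substitution $c=-r$ that makes the lift's $\vecone$-invariance applicable and the bijectivity of the partial-sum change of variables on the free coordinates.
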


\subsection{Addition}
\label{SCsumconvfn}

The {\em sum} of two functions 
$f_{1}, f_{2}: \ZZ\sp{n} \to \RR \cup \{ +\infty \}$
is defined, in an obvious way, by
\begin{equation} \label{fnsumdef}
(f_{1} + f_{2})(x)= f_{1}(x) + f_{2}(x)
\qquad (x \in \ZZ\sp{n}) .
\end{equation}
The effective domain of the sum is equal to the intersection 
of the effective domains of the given functions, that is,
\begin{equation} \label{fnsumdom}
 \dom (f_{1} + f_{2}) = \dom f_{1} \cap \dom f_{2}.
\end{equation}
For two sets $S_{1}$, $S_{2} \subseteq \ZZ\sp{n}$, 
the sum of their indicator functions
$\delta_{S_{1}}$, $\delta_{S_{2}}$
coincides with 
the indicator function of their intersection
$S_{1} \cap S_{2}$,
that is,
$\delta_{S_{1}} +  \delta_{S_{2}} = \delta_{S_{1} \cap S_{2}}$.

L-convexity and its relatives are well-behaved with respect to the sum operation.%
\footnote{%%%%%%%%%%%%%
By \eqref{fnsumdom} the sum $f_{1} + f_{2}$ may have an empty effective domain.
Therefore, strictly speaking, we should 
add a proviso in Propositions \ref{PRlfnsum}, \ref{PRmfninter}, and \ref{PRfnsepsum}
that the resulting function has a nonempty effective domain.
}%%% footnote %%%

\begin{proposition} \label{PRlfnsum}
\quad

\noindent
{\rm (1)} 
The sum of separable convex functions is separable convex.

\noindent
{\rm (2)} 
The sum of \Lnat-convex functions is \Lnat-convex.

\noindent
{\rm (3)} 
The sum of L-convex functions is L-convex.

\noindent
{\rm (4)} 
The sum of multimodular functions is multimodular.

\noindent
{\rm (5)} 
The sum of globally discrete midpoint convex functions
is globally discrete midpoint convex.

\noindent
{\rm (6)} 
The sum of locally discrete midpoint convex functions
is locally discrete midpoint convex.
\finbox
\end{proposition}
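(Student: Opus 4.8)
The plan is to exploit a single structural observation: every defining inequality in this family has the shape $f(x)+f(y)\geq f(u)+f(v)$, where the points $u,v$ are determined from $x$ and $y$ alone (e.g.\ $u=\lceil (x+y)/2\rceil$, $v=\lfloor (x+y)/2\rfloor$). Consequently, writing such an inequality for $f_1$ and for $f_2$ and adding the two yields the very same inequality for $f_1+f_2$, since the functions take values in $\RR\cup\{+\infty\}$ and no $-\infty$ arises. This disposes of parts (1), (2), and (5) at once. For (1), if $f_1(x)=\sum_i\varphi_i(x_i)$ and $f_2(x)=\sum_i\psi_i(x_i)$, then $f_1+f_2=\sum_i(\varphi_i+\psi_i)(x_i)$, and each summand inherits the univariate convexity \eqref{univarconvdef} because that inequality is itself additive. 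For (2) and (5) I would simply add the discrete midpoint inequality \eqref{midptcnvfn} (resp.\ \eqref{midptcnvfn2}) for $f_1$ and $f_2$, noting that when $x$ or $y$ lies outside $\dom(f_1+f_2)$ the left-hand side is $+\infty$ and the inequality is vacuous.

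For part (3) I would invoke Theorem~\ref{THlfncond}(b), which characterizes an L-convex function as an \Lnat-convex function satisfying the linearity \eqref{shiftlfnZ}, namely $f(x+\vecone)=f(x)+r$. By part (2) the sum $f_1+f_2$ is \Lnat-convex, and if $f_i(x+\vecone)=f_i(x)+r_i$ for $i=1,2$, then $(f_1+f_2)(x+\vecone)=(f_1+f_2)(x)+(r_1+r_2)$, so \eqref{shiftlfnZ} holds with $r=r_1+r_2$. Part (4) reduces to (2) through the unimodular correspondence of Theorem~\ref{THmmfnlnatfn}: writing $g_i(p)=f_i(Dp)$ for the \Lnat-convex function associated with the multimodular $f_i$, the function associated with $f_1+f_2$ is $(f_1+f_2)(Dp)=g_1(p)+g_2(p)$, which is \Lnat-convex by (2); hence $f_1+f_2$ is multimodular.

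The one assertion requiring more than the additivity of inequalities is part (6), since local discrete midpoint convexity carries the extra hypothesis that $\dom f$ be a discrete midpoint convex set. Here I would first handle the domain: by \eqref{fnsumdom} we have $\dom(f_1+f_2)=\dom f_1\cap\dom f_2$, and since each $\dom f_i$ is discrete midpoint convex, Proposition~\ref{PRlsetinter}(5) shows the intersection is again discrete midpoint convex. With this in hand, for any $x,y\in\dom(f_1+f_2)$ with $\|x-y\|_{\infty}=2$, discrete midpoint convexity of each $\dom f_i$ places both $\lceil (x+y)/2\rceil$ and $\lfloor (x+y)/2\rfloor$ in $\dom f_i$, hence in the intersection, so the two local inequalities \eqref{midptcnvfn2} have finite right-hand sides and may be added. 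I do not anticipate any genuine difficulty; the only point demanding care is exactly this effective-domain bookkeeping in (6), which is why that case must lean on the earlier intersection result rather than on the additivity of inequalities alone.
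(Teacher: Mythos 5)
Your proof is correct and is essentially the argument the paper intends: Remark~\ref{RMfninterbib} merely declares the proofs of (1)--(6) ``pretty easy'' and defers to the literature, and your additive-inequality argument for (1), (2), (5), the reduction of (3) to (2) via Theorem~\ref{THlfncond}(b), the reduction of (4) to (2) via the unimodular correspondence of Theorem~\ref{THmmfnlnatfn}, and the effective-domain bookkeeping for (6) via \eqref{fnsumdom} and Proposition~\ref{PRlsetinter}(5) supply exactly the details the paper leaves implicit. The only point you omit is the paper's footnoted proviso that $\dom(f_{1}+f_{2})=\dom f_{1}\cap\dom f_{2}$ must be assumed nonempty for the conclusion to make sense.
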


\begin{remark} \rm \label{RMfninterbib}
Here is a supplement to Proposition \ref{PRlfnsum} about sum.
The proofs of these statements (1)--(6) are pretty easy.
The sum of separable convex functions
$\sum_{i=1}\sp{n} \varphi_{1i}(x_{i})$ 
and 
$\sum_{i=1}\sp{n} \varphi_{2i}(x_{i})$ 
is given by 
$\sum_{i=1}\sp{n} (\varphi_{1i} + \varphi_{2i})(x_{i})$,
which is a separable convex function.
Part (2) for \Lnat-convex functions is given in 
\cite[Theorem 7.11]{Mdcasiam}.
Part (3) for L-convex functions
is given in \cite[Theorem 7.10]{Mdcasiam}.
Part (4) for multimodular functions
is given in \cite[Proposition 8]{MM19multm}.
Parts (5) and (6) for globally and locally discrete midpoint convex functions
are given in \cite[Table 5.1]{MM19multm}.
\finbox
\end{remark}

In contrast, M-convexity and integral convexity
are not compatible with the sum operation.
In referring to examples in Section \ref{SCsetope},
we intend to consider the indicator functions
of the sets mentioned in the examples.

\begin{itemize}
\item
The sum of integrally convex functions
is not necessarily integrally convex
(Example~\ref{EXicsetinter}).

\item
The sum of \Mnat-convex functions
is not necessarily \Mnat-convex
(Example~\ref{EXmnatsetinter}).

\item
The sum of M-convex functions
is not necessarily M-convex
(Example~\ref{EXmsetinter}).

\item
The sum of jump \Mnat-convex (resp., M-convex)  functions 
is not necessarily jump \Mnat-convex (resp., M-convex)
(Example~\ref{EXmsetinter}).
\end{itemize}

The sum of two M$^{\natural}$-convex functions is integrally convex,
though not M$^{\natural}$-convex.

\begin{proposition}[{\cite[Theorem 8.31]{Mdcasiam}}]  \label{PRmfninter}
The sum of two M$^{\natural}$-convex functions is integrally convex.
In particular, the sum of two M-convex functions is integrally convex.
\finbox
\end{proposition}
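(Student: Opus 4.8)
The plan is to verify integral convexity through the local, distance-two characterization in Theorem~\ref{THfavtarProp33}, treating the effective domain and the midpoint inequality separately. Write $g = f_1 + f_2$ (assumed to have nonempty effective domain). First I would settle the hypothesis of Theorem~\ref{THfavtarProp33} that the effective domain is integrally convex: by \eqref{fnsumdom} we have $\dom g = \dom f_1 \cap \dom f_2$, and since the effective domain of an M$\sp{\natural}$-convex function is an M$\sp{\natural}$-convex set, $\dom g$ is the intersection of two M$\sp{\natural}$-convex sets, hence integrally convex by Proposition~\ref{PRmsetinter}. It then remains to check condition~(b) of Theorem~\ref{THfavtarProp33}, namely the discrete midpoint inequality \eqref{intcnvconddist2} for $g$ at every pair $x, y$ with $\| x - y \|_{\infty} = 2$.

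So fix such $x, y \in \dom g$ and set $w = (x+y)/2$. Because $\tilde g(w)$ is by definition the minimum of $\sum_{p \in N(w)} \lambda_p (f_1(p) + f_2(p))$ over convex combinations with $\sum_p \lambda_p p = w$, it suffices to produce a single family of coefficients $(\lambda_p)$ supported on $N(w) \cap \dom f_1 \cap \dom f_2$ that is good for both functions at once, i.e. $\sum_p \lambda_p f_j(p) \le \tfrac12 (f_j(x) + f_j(y))$ for $j = 1, 2$; adding the two inequalities then yields \eqref{intcnvconddist2}. Note that each $f_j$ is individually integrally convex (by \eqref{fnfamM} in Theorem~\ref{THfnclassinclusion}), so $\tilde f_j(w) \le \tfrac12 (f_j(x) + f_j(y))$ always holds; the whole difficulty is that the optimal decompositions of $w$ witnessing $\tilde f_1(w)$ and $\tilde f_2(w)$ may differ, whereas here I need one decomposition that serves both. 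Equivalently, the target is the identity $\widetilde{f_1 + f_2} = \tilde f_1 + \tilde f_2$, whose ``$\ge$'' direction is automatic and whose ``$\le$'' direction is precisely this common-decomposition statement.

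To build the common decomposition I would exploit the M$\sp{\natural}$-structure rather than mere integral convexity (which, as the excerpt records, is \emph{not} preserved under sums). Partition the coordinates according to $|x_i - y_i|$: let $A = \{ i : |x_i - y_i| = 2 \}$, which is nonempty since $\| x - y \|_{\infty} = 2$, and $B = \{ i : |x_i - y_i| = 1 \}$. On $A$ and on the coordinates where $x_i = y_i$ the midpoint $w$ is already integral, so the rounding freedom inside $N(w)$ lives only on $B$, and the points $x, y$ themselves fail to lie in $N(w)$ exactly because of the $A$-coordinates. The plan is to move the pair $(x, y)$ toward each other along the $A$-directions by the exchange property \Mnvex until both endpoints land in $N(w)$, choosing the exchange steps from the sign pattern of $x - y$ alone so that the resulting decomposition is the same for $f_1$ and $f_2$; the supermodularity \eqref{setfnsubmZ} of M$\sp{\natural}$-convex functions on the box $[x \wedge y, x \vee y]_{\ZZ}$ is what controls the function values along these moves. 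I expect the main obstacle to be exactly this simultaneity: verifying that one canonical sequence of exchanges is value-nonincreasing for \emph{every} M$\sp{\natural}$-convex function, so that it applies to $f_1$ and $f_2$ together. I would organize this as an induction on $|A|$, peeling off one distance-two coordinate at a time and reducing to the trivial case $A = \emptyset$ (where $x, y \in N(w)$ and the combination placing weight $\tfrac12$ on each of $x$ and $y$ works), with the supermodular/M$\sp{\natural}$ exchange inequality supplying the step that distinguishes this result from the false analogue for general integrally convex functions.
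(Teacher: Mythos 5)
The paper does not actually prove this proposition; it only cites \cite[Theorem 8.31]{Mdcasiam}, so your argument can only be measured against the standard proof behind that citation. Your reductions are sound and match that strategy: pass to the local criterion of Theorem~\ref{THfavtarProp33} after securing integral convexity of $\dom f_1\cap\dom f_2$, then exhibit one convex combination supported on $N\bigl(\frac{x+y}{2}\bigr)$ that certifies the midpoint inequality for $f_1$ and $f_2$ simultaneously. (Note, though, that invoking Proposition~\ref{PRmsetinter} for the domain is invoking the indicator-function special case of the very statement being proved --- the survey attributes both to the same source --- so a self-contained write-up must either prove the set version independently or obtain it as the $\delta_S$ instance of your own argument.)

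The genuine gap is that the central lemma --- a decomposition determined by the sign pattern of $x-y$ alone that is value-nonincreasing for \emph{every} \Mnat-convex function --- is announced rather than proved, and it is sharper than your sketch suggests. Take $x-y=(2,-1,-1,-1)$ and apply \Mnvex with $i=1$: the candidate pairs realizing \eqref{mnconvexc2Z} are $\{x-\unitvec{1},\,y+\unitvec{1}\}$ (the case $j=0$) and $\{x-\unitvec{1}+\unitvec{j},\,y+\unitvec{1}-\unitvec{j}\}$ for $j\in\suppm(x-y)=\{2,3,4\}$; all average to $w=\frac{x+y}{2}$ and lie in $N(w)$. Supermodularity \eqref{setfnsubmZ} shows each pair with $j\neq 0$ is dominated by the $j=0$ pair (which is exactly its componentwise join and meet), but the three pairs $j=2,3,4$ are mutually incomparable under \eqref{setfnsubmZ}. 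Hence if the exchange inequality for $f_1$ is realized only by $j=2$ and that for $f_2$ only by $j=3$, you obtain two \emph{different} certified decompositions of $w$, and neither the exchange property nor one application of supermodularity singles out a combination valid for both functions; the sign pattern of $x-y$ does not canonically select one, and when two or more coordinates sit at distance $2$ the chaining of exchanges compounds the ambiguity. The proposition is of course true, but closing this step needs an additional ingredient (e.g.\ the stronger local/multiple exchange properties of \Mnat-convex functions) that the proposal does not supply, so as written the argument records the statement of the key lemma rather than its proof.
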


We next consider the sum
of a discrete convex function $f$ with a separable convex function $\varphi$.
If $f$ is equipped with a certain discrete convexity, 
the sum $f + \varphi$ also possesses the same kind of
discrete convexity, as follows.
Note that a linear function
$\sum_{i=1}\sp{n} c_{i} x_{i}$
is a very special case of a separable convex function.

\begin{proposition} \label{PRfnsepsum}
\quad

\noindent
{\rm (1)} 
The sum of an integrally convex function
with a separable convex function is integrally convex.

\noindent
{\rm (2)} 
The sum of an \Lnat-convex function 
with a separable convex function is \Lnat-convex.

\noindent
{\rm (3)} 
The sum of an \Mnat-convex function 
with a separable convex function is \Mnat-convex.

\noindent
{\rm (4)} 
The sum of an M-convex function 
with a separable convex function is M-convex.

\noindent
{\rm (5)} 
The sum of a multimodular function 
with a separable convex function is multimodular.

\noindent
{\rm (6)} 
The sum of a globally discrete midpoint convex function
with a separable convex function is globally discrete midpoint convex.

\noindent
{\rm (7)} 
The sum of a locally discrete midpoint convex function
with a separable convex function is locally discrete midpoint convex.

\noindent
{\rm (8)} 
The sum of a jump \Mnat-convex function 
with a separable convex function is jump \Mnat-convex.

\noindent
{\rm (9)} 
The sum of a jump M-convex function 
with a separable convex function is jump M-convex.
\finbox
\end{proposition}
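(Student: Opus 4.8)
The plan is to split the nine parts into two groups according to whether the function class in question is itself closed under addition. For parts (2), (5), (6) and (7) I would argue purely by inclusion: by the relations \eqref{fnfamsepLnMn}, \eqref{fnfammultm} and \eqref{fnfamdmc} of Theorem~\ref{THfnclassinclusion}, every separable convex function is simultaneously \Lnat-convex, multimodular, globally d.m.c.\ and locally d.m.c. Since each of these four classes is already closed under the sum of two of its members by Proposition~\ref{PRlfnsum}~(2), (4), (5), (6), adding a separable convex $\varphi$ to a function $f$ in the class keeps $f+\varphi$ in the class. These four parts therefore need no new argument.

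The remaining parts (1), (3), (4), (8), (9) concern classes that are \emph{not} closed under addition, so I would exploit the structure of $\varphi$ through one elementary observation: if in every coordinate $k$ the pair $\{p_k,q_k\}$ satisfies $p_k+q_k=x_k+y_k$ and lies in $[\min(x_k,y_k),\max(x_k,y_k)]$, then univariate convexity \eqref{univarconvdef} gives $\varphi_k(p_k)+\varphi_k(q_k)\le\varphi_k(x_k)+\varphi_k(y_k)$, and summing over $k$ yields $\varphi(p)+\varphi(q)\le\varphi(x)+\varphi(y)$. For (3) and (4) I would take $x,y\in\dom(f+\varphi)$ and $i\in\suppp(x-y)$, pick the index $j\in\suppm(x-y)\cup\{0\}$ furnished by the exchange property of $f$ (\Mnvexb, resp.\ \Mvexb), and set $p=x-\unitvec{i}+\unitvec{j}$, $q=y+\unitvec{i}-\unitvec{j}$. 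Since $i\in\suppp(x-y)$ and $j\in\suppm(x-y)\cup\{0\}$, the altered coordinates move inward, so $\{p_k,q_k\}$ is coordinatewise majorized by $\{x_k,y_k\}$; the observation then gives the $\varphi$-inequality for the \emph{same} $j$, and adding it to the $f$-inequality produces the exchange inequality for $f+\varphi$. Membership of $p,q$ in $\dom(f+\varphi)=\dom f\cap\dom\varphi$ is automatic, since $p,q$ lie in the box $[x\wedge y,x\vee y]$, which is contained in the box $\dom\varphi$.

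The jump cases (8) and (9) go the same way. Given $x,y\in\dom(f+\varphi)$ and an $(x,y)$-increment $s$, I would take the increment(s) realizing (J\Mnat-EXC), resp.\ (JM-EXC), for $f$; because every increment points from a point of $\dom f$ toward $y$, both the one-step target $(x+s,\,y-s)$ and the two-step target $(x+s+t,\,y-s-t)$ remain in $[x\wedge y,x\vee y]$, so again the altered pairs are coordinatewise majorized by $\{x_k,y_k\}$ and the observation supplies the $\varphi$-inequality, while $\dom\varphi\supseteq[x\wedge y,x\vee y]$ guarantees the required domain memberships.

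The genuinely delicate case is (1), which I expect to be the main obstacle. Here I would invoke Theorem~\ref{THfavtarProp33}: the effective domain $\dom f\cap\dom\varphi$ is integrally convex, being the intersection of an integrally convex set with a box (Proposition~\ref{PRsetboxinter}~(1)), so it suffices to verify \eqref{intcnvconddist2} for $f+\varphi$ at every $x,y$ with $\|x-y\|_\infty=2$. Writing $w=(x+y)/2$ and choosing $(\lambda_z)\in\Lambda(w)$ attaining $\tilde f(w)=\sum_{z}\lambda_z f(z)\le\frac12(f(x)+f(y))$, I would bound $\widetilde{f+\varphi}(w)\le\sum_z\lambda_z(f(z)+\varphi(z))=\tilde f(w)+\sum_z\lambda_z\varphi(z)$ and then show $\sum_z\lambda_z\varphi(z)\le\frac12(\varphi(x)+\varphi(y))$. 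This last inequality is the crux: it cannot be obtained from a single two-point convexity inequality, because $x,y\notin N(w)$ in the coordinates where they differ by $2$. The resolution is a coordinatewise analysis of the barycenter constraint $\sum_z\lambda_z z_k=w_k$: in a coordinate with $|x_k-y_k|=2$ every $z\in N(w)$ forces $z_k=w_k$, giving the midpoint bound $\varphi_k(w_k)\le\frac12(\varphi_k(x_k)+\varphi_k(y_k))$ from \eqref{univarconvdef}; in a coordinate with $|x_k-y_k|\le1$ the only admissible integers are $x_k$ and $y_k$, and the barycenter constraint fixes their weights so as to reproduce exactly $\frac12(\varphi_k(x_k)+\varphi_k(y_k))$. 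Summing these coordinatewise bounds yields the desired inequality and hence integral convexity of $f+\varphi$.
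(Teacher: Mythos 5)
Your proof is correct and follows essentially the same route as the paper: parts (2), (5), (6), (7) are handled exactly as in Remark~\ref{RMfnsepsumbib}, via Proposition~\ref{PRlfnsum} and the inclusions of Theorem~\ref{THfnclassinclusion}, while for the remaining parts the paper merely declares them ``immediate from the definitions,'' and your exchange-plus-coordinatewise-majorization argument for (3), (4), (8), (9) and the barycenter analysis for (1) (using Theorem~\ref{THfavtarProp33} and Proposition~\ref{PRsetboxinter}) are the standard way to make that precise. No gaps.
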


\begin{remark} \rm \label{RMfnsepsumbib}
Here is a supplement to Proposition \ref{PRfnsepsum}
about the sum with a separable convex function.
The statements (1)--(9) are immediate from the definitions.
Parts (2), (5), (6),  and (7) 
for \Lnat-convex, multimodular, and globally and locally discrete midpoint convex
functions
 are special cases of Proposition~\ref{PRlfnsum} (2), (4), (5), and (6),
respectively, since a separable convex function is 
\Lnat-convex, multimodular, and globally and locally discrete midpoint convex
(cf., Theorem \ref{THfnclassinclusion}).
\finbox
\end{remark}

In this connection we note the following facts.

\begin{itemize}
\item
The sum of an L-convex function $f$ with a separable convex function $\varphi$ is 
not necessarily L-convex.
This is because the sum $f + \varphi$ lacks in
the linearity (\ref{shiftlfnZ}) in the direction of $\vecone$
(Example \ref{EXlsetboxinter}).

\item
The sum of an L-convex function $f$ with a linear function 
$\langle c,x \rangle = \sum_{i=1}\sp{n} c_{i} x_{i}$
is  L-convex.
Let $g(x) = f(x) + \langle c,x \rangle$
and $r_{f} = f(x+ \bm{1}) - f(x)$.
For the linearity (\ref{shiftlfnZ}) in the direction of $\vecone$,
we observe that 
$g(x + \bm{1})
 = f(x+ \bm{1}) + \langle c,x+ \bm{1} \rangle
 = f(x) + r_{f} + \langle c,x \rangle + \langle c,\bm{1} \rangle
 = g(x) + r_{g}
$
with $r_{g} = r_{f} + \langle c,\bm{1} \rangle$.
\end{itemize}

\subsection{Convolution}
\label{SCconvol}

The (infimal) {\em convolution} of two functions
$f_{1}, f_{2}: \ZZ\sp{n} \to \RR \cup \{ +\infty \}$
is defined by
\begin{equation} \label{f1f2convdef}
(f_{1} \conv f_{2})(x) =
 \inf\{ f_{1}(y) + f_{2}(z) \mid x= y + z, \  y, z \in \ZZ\sp{n}  \}
\qquad (x \in \ZZ\sp{n}) ,
\end{equation}
where it is assumed that the infimum 
is bounded from below (i.e., $(f_{1} \conv f_{2})(x) > -\infty$) 
for every $x \in \ZZ\sp{n}$.
For two sets $S_{1}$, $S_{2} \subseteq \ZZ\sp{n}$, 
the convolution of their indicator functions
$\delta_{S_{1}}$, $\delta_{S_{2}}$
coincides with 
the indicator function of their Minkowski sum
$S_{1}+S_{2} = \{ y + z \mid y \in S_{1}, z \in S_{2} \}$,
that is,
$\delta_{S_{1}} \conv \delta_{S_{2}} = \delta_{S_{1}+S_{2}}$.

M-convexity  and its relatives are well-behaved 
with respect to the convolution operation.

\begin{theorem} \label{THmfnconvol}
\quad

%% To Typesetter:  Please do NOT change the labels "(1)", "(2)", ....  below, 
%% as they are cited at other places.

\noindent
{\rm (1)} 
The convolution of separable convex functions is a separable convex function.

\noindent
{\rm (2)} 
The convolution of \Mnat-convex functions is \Mnat-convex.

\noindent
{\rm (3)} 
The convolution of M-convex functions is M-convex.

\noindent
{\rm (4)} 
The convolution of jump \Mnat-convex functions is jump \Mnat-convex.

\noindent
{\rm (5)} 
The convolution of jump M-convex functions is jump M-convex.
\finbox
\end{theorem}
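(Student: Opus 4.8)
The plan is to prove the five assertions through a single exchange-type argument applied to the two ``natural'' cases, deriving the plain cases by restricting the effective domain. Part (1) is immediate and serves as a warm-up: if $f_{k}(x)=\sum_{i}\varphi_{ki}(x_{i})$ for $k=1,2$, then the infimum in \eqref{f1f2convdef} separates over coordinates, giving $(f_{1}\conv f_{2})(x)=\sum_{i}(\varphi_{1i}\conv\varphi_{2i})(x_{i})$, and the one-dimensional convolution of univariate discrete convex functions is again univariate discrete convex. The substantive content lies in parts (2) and (4).

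For part (2), put $f=f_{1}\conv f_{2}$. Its effective domain is the Minkowski sum $\dom f_{1}+\dom f_{2}$, which is \Mnat-convex by Theorem~\ref{THmsetMinkow}(2); moreover the restricted objective $y\mapsto f_{1}(y)+f_{2}(x-y)$ is integrally convex by Proposition~\ref{PRmfninter}, so (for a proper convolution) the infimum is attained and every $x\in\dom f$ has a decomposition $x=y+z$ with $f(x)=f_{1}(y)+f_{2}(z)$. To verify \Mnvex I would fix $x,x'\in\dom f$ and $i\in\suppp(x-x')$, choose minimizing decompositions $x=y+z$, $x'=y'+z'$, and among all such pairs fix one minimizing $\|y-y'\|_{1}$. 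Since $x_{i}>x'_{i}$ we have $i\in\suppp(y-y')$ or $i\in\suppp(z-z')$, and by symmetry of the two factors we may assume the former. Applying \Mnvex for $f_{1}$ to $y,y'$ at $i$ yields $j\in\suppm(y-y')\cup\{0\}$ realizing the exchange inequality for $f_{1}$; transporting the transfer $-\unitvec{i}+\unitvec{j}$ through the decompositions bounds $f(x-\unitvec{i}+\unitvec{j})+f(x'+\unitvec{i}-\unitvec{j})$ above by $f(x)+f(x')$. The main obstacle is that the index $j$ supplied by $f_{1}$ need not satisfy $j\in\suppm(x-x')\cup\{0\}$: a move admissible for one factor need not be admissible for the convolution. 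This is precisely where minimality of $\|y-y'\|_{1}$ enters — if the exchanged $j$ were inadmissible for $f$, the resulting re-decomposition would strictly decrease $\|y-y'\|_{1}$ while preserving optimality, a contradiction. (Equivalently, one runs an exchange-capacity/augmenting argument on the Minkowski sum of the two integer generalized polymatroids, again Theorem~\ref{THmsetMinkow}(2).) This establishes \Mnvex for $f$.

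Part (3) is then the constant-sum specialization of part (2): M-convex functions are in particular \Mnat-convex, so $f$ is \Mnat-convex, while $\dom f=\dom f_{1}+\dom f_{2}$ is an M-convex (constant-sum) set by Theorem~\ref{THmsetMinkow}(3); on such a domain the term $j=0$ in \Mnvex is forced to $+\infty$ because $x-\unitvec{i}\notin\dom f$, so \Mnvex collapses to \Mvex. Alternatively part (3) may be proved first and part (2) recovered through the lift \eqref{mfnmnatfnrelationvex}, since a direct check shows $\widetilde{f_{1}\conv f_{2}}=\tilde f_{1}\conv\tilde f_{2}$; I note also a quicker but less self-contained route via conjugacy, where convolution is dual to addition and M-convex functions are conjugate to \Lnat-convex ones, whose sums remain \Lnat-convex by Proposition~\ref{PRlfnsum}(2).

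The jump cases run on the same template. For part (4), with $f_{1},f_{2}$ jump \Mnat-convex the domain $\dom f_{1}+\dom f_{2}$ is a simultaneous-exchange jump system by Theorem~\ref{THmsetMinkow}(4), meeting the domain requirement of (J\Mnat-EXC); given $x,x'\in\dom f$ and an $(x,x')$-increment $s$, I take minimizing decompositions, route $s$ onto the factor for which it is again an increment, apply (J\Mnat-EXC) there to obtain the second increment $t$, and transport both moves back, the same minimality device resolving admissibility of $t$ for $f$. Part (5) is the constant-parity specialization: for jump M-convex $f_{1},f_{2}$ the domain is a constant-parity jump system by Theorem~\ref{THmsetMinkow}(5), and on such a domain case (i) of (J\Mnat-EXC) is vacuous since $x+s$ has component-sum of the wrong parity, so (J\Mnat-EXC) reduces to (JM-EXC). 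I stress that, unlike the M/\Mnat reduction, one cannot obtain part (2) from part (4) nor part (4) from part (5) by a lift: Example~\ref{EXjmnatmnat} exhibits a jump \Mnat-convex function on an \Mnat-convex domain that fails to be \Mnat-convex, and the parity lift \eqref{ftildeJ} does not commute with convolution; this is exactly why parts (2) and (4) must be argued separately, and why the single hard step — making a factor's exchange move legal for the convolution — has to be carried out twice.
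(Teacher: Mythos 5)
The paper itself does not prove Theorem~\ref{THmfnconvol}: Remark~\ref{RMmfnconvolbib} delegates parts (2)--(5) to \cite[Theorems 6.13, 6.15]{Mdcasiam}, \cite{Mmnatjump19} and \cite[Theorem 12]{KMT07jump}, so you are attempting a self-contained proof where the survey offers only citations. Your part (1), the domain statements via Theorem~\ref{THmsetMinkow}, the reductions (3) from (2) and (5) from (4) by forcing out the $j=0$ (resp.\ case (i)) alternative, and the observation that the lift \eqref{mfnmnatfnrelationvex} commutes with convolution are all fine.

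The gap is exactly at the step you call the single hard step. You claim that if the optimal decompositions $x=y+z$, $x'=y'+z'$ are chosen to minimize $\|y-y'\|_{1}$, then the index $j$ produced by \Mnvex for $f_{1}$ must lie in $\suppm(x-x')\cup\{0\}$, because otherwise ``the resulting re-decomposition would strictly decrease $\|y-y'\|_{1}$ while preserving optimality.'' This is not a valid argument: the exchange produces the points $y-\unitvec{i}+\unitvec{j}$ and $y'+\unitvec{i}-\unitvec{j}$, which decompose $x-\unitvec{i}+\unitvec{j}$ and $x'+\unitvec{i}-\unitvec{j}$, not $x$ and $x'$, so no new decomposition of $x,x'$ is produced and minimality is never contradicted. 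Moreover the conclusion itself is false. Take $f_{1}=\delta_{S_{1}}$ and $f_{2}=\delta_{S_{2}}$ for the M-convex sets $S_{1}=\{(1,0,0),(0,1,0)\}$ and $S_{2}=\{(0,1,0),(0,0,1)\}$, and let $x=(1,1,0)$, $x'=(0,1,1)$ in $\dom(f_{1}\conv f_{2})=S_{1}+S_{2}$. The decompositions are unique, hence minimal under any criterion: $y=(1,0,0)$, $y'=(0,1,0)$, $z=(0,1,0)$, $z'=(0,0,1)$. For $i=1\in\suppp(x-x')$ we have $i\in\suppp(y-y')$, and the only index $f_{1}$ can return is $j=2\in\suppm(y-y')$, yet $\suppm(x-x')=\{3\}$ and $x-\unitvec{1}\notin\dom(f_{1}\conv f_{2})$, so $j$ is inadmissible for the convolution. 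The conclusion of the theorem is rescued only by a \emph{second} exchange applied to $f_{2}$ at $j\in\suppp(z-z')$, which here yields $k=3$; but in general that second index can again fail to lie in $\suppm(x-x')$, landing back in $\suppp(y-y')$, so one needs the genuinely harder termination/negative-cycle (or conjugacy/Fenchel-duality) arguments of the cited references. The same gap recurs verbatim in your treatment of part (4). A secondary point: integral convexity of $y\mapsto f_{1}(y)+f_{2}(x-y)$ does not by itself guarantee that the infimum in \eqref{f1f2convdef} is attained, so the existence of minimizing decompositions also needs an additional hypothesis or a limiting argument.
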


\begin{remark} \rm \label{RMmfnconvolbib}
Here is a supplement to Theorem~\ref{THmfnconvol} about convolution.
The convolution of separable convex functions
$\sum_{i=1}\sp{n} \varphi_{1i}(x_{i})$ 
and 
$\sum_{i=1}\sp{n} \varphi_{2i}(x_{i})$ 
is given by 
$\sum_{i=1}\sp{n} (\varphi_{1i} \conv \varphi_{2i})(x_{i})$,
which is a separable convex function.
Part (3) for M-convex functions
originates in \cite[Theorem 6.10]{Mstein96} 
and is given in \cite[Theorem 6.13]{Mdcasiam}.
Part (2) for \Mnat-convex functions is a variant of (3) 
for M-convex functions and is given in 
\cite[Theorem 6.15]{Mdcasiam}.
Part (4) for jump \Mnat-convex functions is given by \cite{Mmnatjump19},
and  Part (5) for jump M-convex functions is by \cite[Theorem 12]{KMT07jump}.
\finbox
\end{remark}

In contrast, other kinds of discrete convexity
are not compatible with the convolution operation, as follows.
In referring to examples in Section \ref{SCsetope},
we intend to consider the indicator functions
of the sets mentioned in the examples.

\begin{itemize}
\item
The convolution of convex-extensible functions
is not necessarily convex-extensible
(Example~\ref{EXicdim2sumhole}).

\item
The convolution of integrally convex functions
is not necessarily integrally convex
(Example~\ref{EXicdim2sumhole}).

\item
The convolution of L$^{\natural}$-convex functions
is not necessarily L$^{\natural}$-convex
(Example~\ref{EXlnatsetMsum}).

\item
The convolution of L-convex functions is not necessarily L-convex
(Example~\ref{EXlsetsum}).

\item
The convolution of multimodular functions is not necessarily multimodular
(Example~\ref{EXmmsetMsum}).

\item
The convolution of globally (resp., locally) discrete midpoint convex functions
is not necessarily globally (resp., locally) discrete midpoint convex
(Example~\ref{EXicdim2sumhole}).
\end{itemize}

The convolution of two L$^{\natural}$-convex functions is integrally convex,
though not L$^{\natural}$-convex.

\begin{proposition} [{\cite[Theorem 8.42]{Mdcasiam}}] \label{PRlfnconvol}
The convolution of two L$^{\natural}$-convex functions is integrally convex.
In particular,
the convolution of two L-convex functions is integrally convex.
\finbox
\end{proposition}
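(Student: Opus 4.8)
The plan is to combine the Minkowski-sum result already available for sets with the local characterization of integral convexity in Theorem~\ref{THfavtarProp33}. Write $h = f_{1} \conv f_{2}$ for two \Lnat-convex functions $f_{1}, f_{2}$. First I would dispose of the effective domain: since $\dom h = \dom f_{1} + \dom f_{2}$ is the Minkowski sum of two \Lnat-convex sets, Proposition~\ref{PRlsetMinkow} guarantees that $\dom h$ is an integrally convex set. This is precisely the hypothesis needed to apply Theorem~\ref{THfavtarProp33}, so it remains to verify the single local inequality \eqref{intcnvconddist2}, i.e.\ $\tilde h\big(\frac{x+y}{2}\big) \le \frac{1}{2}(h(x)+h(y))$ for every pair $x, y \in \ZZ^{n}$ with $\| x - y \|_{\infty} = 2$ (the cases $h(x)=+\infty$ or $h(y)=+\infty$ being trivial).

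For such $x, y$, fix optimal (or, if the infimum in \eqref{f1f2convdef} is not attained, near-optimal) decompositions $x = a_{1} + a_{2}$ and $y = b_{1} + b_{2}$ realizing $h(x) = f_{1}(a_{1})+f_{2}(a_{2})$ and $h(y) = f_{1}(b_{1})+f_{2}(b_{2})$. Applying the discrete midpoint inequality \eqref{midptcnvfn} to $f_{1}$ on the pair $(a_{1},b_{1})$ and to $f_{2}$ on $(a_{2},b_{2})$, and setting $p_{k} = \lceil (a_{k}+b_{k})/2 \rceil$, $q_{k} = \lfloor (a_{k}+b_{k})/2 \rfloor$ for $k=1,2$, I obtain $f_{1}(a_{1})+f_{1}(b_{1}) \ge f_{1}(p_{1})+f_{1}(q_{1})$ and $f_{2}(a_{2})+f_{2}(b_{2}) \ge f_{2}(p_{2})+f_{2}(q_{2})$. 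Putting $u = p_{1}+p_{2}$ and $v = q_{1}+q_{2}$ yields two integer points with $u+v = (a_{1}+b_{1})+(a_{2}+b_{2}) = x+y$, so $\frac{x+y}{2} = \frac{u+v}{2}$, and the definition of convolution gives $h(u)+h(v) \le f_{1}(p_{1})+f_{2}(p_{2})+f_{1}(q_{1})+f_{2}(q_{2}) \le h(x)+h(y)$.

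The delicate point, which I expect to be the main obstacle, is that $u$ and $v$ need not both lie in the integral neighborhood $N\big(\frac{x+y}{2}\big)$. Coordinatewise $u_{i}-v_{i} = (p_{1}-q_{1})_{i} + (p_{2}-q_{2})_{i} \in \{0,1,2\}$, and a coordinate in which both $(a_{1}+b_{1})_{i}$ and $(a_{2}+b_{2})_{i}$ are odd produces $u_{i}-v_{i}=2$, placing $u$ and $v$ at $\ell_{\infty}$-distance exactly $1$ from the midpoint. In such a coordinate the pair $(u,v)$ is again a distance-$2$ configuration, so the bound $\tilde h\big(\frac{u+v}{2}\big) \le \frac{1}{2}(h(u)+h(v))$ is not immediate and the argument does not close directly. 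Resolving this requires choosing the decompositions so as to control these parities, or iterating the midpoint construction, exploiting that \Lnat-convex functions are not merely discretely midpoint convex but also submodular (Theorem~\ref{THlnatfncond}(c)), in order to drive $u$ and $v$ into $N\big(\frac{x+y}{2}\big)$ while only decreasing the objective.

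A conceptually cleaner alternative, which I would adopt if the transform machinery of Section~\ref{SCfnconj} is invoked, is to argue by duality: convolution is conjugate to addition under the discrete Legendre--Fenchel transform, the conjugate of an \Lnat-convex function is \Mnat-convex, and the sum of two \Mnat-convex functions is integrally convex by Proposition~\ref{PRmfninter}; transferring integral convexity back through the transform then gives the claim in one stroke. Finally, the stated special case for two L-convex functions is immediate, since every L-convex function is \Lnat-convex by \eqref{fnfamL} in Theorem~\ref{THfnclassinclusion}.
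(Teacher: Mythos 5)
The paper does not actually prove this proposition; it only cites \cite[Theorem 8.42]{Mdcasiam}, so your attempt has to stand on its own, and it does not. Your direct route stalls exactly where you say it does: after applying discrete midpoint convexity to the two summands you obtain $u,v$ with $u+v=x+y$ but possibly $u_{i}-v_{i}=2$ in some coordinates, in which case $u,v \notin N\left(\frac{x+y}{2}\right)$ and the inequality \eqref{intcnvconddist2} is not established. You correctly flag this as the main obstacle, but ``choosing the decompositions so as to control these parities, or iterating the midpoint construction'' is a plan, not a proof, and controlling those parities is precisely the nontrivial content of the theorem. (A smaller issue: the infimum in \eqref{f1f2convdef} need not be attained, so the near-optimal decompositions must be handled with an explicit $\varepsilon$ throughout.)

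The duality route you offer as a ``conceptually cleaner alternative'' is actually invalid. It is true that $(f_{1} \conv f_{2})^{\bullet} = f_{1}^{\bullet} + f_{2}^{\bullet}$ (see \eqref{f1f2convconj}) and that this sum of two \Mnat-convex functions is integrally convex by Proposition~\ref{PRmfninter}. But ``transferring integral convexity back through the transform'' is exactly what cannot be done: the integral Legendre--Fenchel transform does not preserve integral convexity --- this is the content of Example~\ref{EXconjIC} in this very paper. Hence knowing that $h^{\bullet}$ is integrally convex says nothing about $h$ itself, and invoking biconjugacy $h^{\bullet\bullet}=h$ via Theorem~\ref{THbiconjfnZ} would presuppose the integral convexity of $h$ that you are trying to establish (besides requiring $f_{1},f_{2}$ to be integer-valued, which the proposition does not assume). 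Your reduction of the L-convex case to the \Lnat-convex case via \eqref{fnfamL} is fine, but neither of your two arguments for the main claim closes.
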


The convolution of three L$^{\natural}$-convex functions
is no longer integrally convex, as the following example shows.

\begin{example} \rm \label{EXminkow3lnatfn}
Let $f_{i} = \delta_{S_{i}}$ $(i=1,2,3)$ for the three \Lnat-convex sets 
$S_1 = \{(0, 0, 0), (1, 1, 0)\}$, 
$S_2 = \{(0, 0, 0), (0, 1, 1)\}$, 
$S_3 = \{(0, 0, 0), (1, 0, 1)\}$
in Example~\ref{EXminkow3lnatset}.
Since
$S_{1}+ S_{2}+ S_{3}
 = \{(0,0,0),(0,1,1),(1,1,0),(1,0,1),(2,1,1),(1,1,2),
 \allowbreak
(1,2,1),(2,2,2)\}$ 
is not an integrally convex set,
$f_{1} \conv f_{2} \conv f_{3} = \delta_{S_{1}+ S_{2}+ S_{3}}$
is not an integrally convex function.
\finbox
\end{example}

We next consider the convolution of a discrete convex function $f$
with a separable convex function $\varphi$.

\begin{proposition} \label{PRsepfnconvol}
\quad

\noindent
{\rm (1)} 
The convolution of an integrally convex function
and a separable convex function is integrally convex.

\noindent
{\rm (2)} 
The convolution of an L$^{\natural}$-convex function
and a separable convex function is L$^{\natural}$-convex.

%%\cite[Theorem 7.11]{Mdcasiam}.

\noindent
{\rm (3)} 
The convolution of an L-convex function
and a separable convex function is L-convex.

%%\cite[Theorem 7.10]{Mdcasiam}.

\noindent
{\rm (4)} 
The convolution of an M$^{\natural}$-convex function
and a separable convex function is M$^{\natural}$-convex.

\noindent
{\rm (5)} 
The convolution of a jump \Mnat-convex function
and a separable convex function is jump \Mnat-convex.
\finbox
\end{proposition}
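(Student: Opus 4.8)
The five parts split according to whether the relevant class is closed under \emph{general} convolution. Parts (4) and (5) require no new work: a separable convex function is \Mnat-convex, and hence also jump \Mnat-convex, by the inclusions recorded in Theorem~\ref{THfnclassinclusion}. Thus $f \conv \varphi$ is a convolution of two \Mnat-convex (resp.\ two jump \Mnat-convex) functions, and Theorem~\ref{THmfnconvol}(2) (resp.\ Theorem~\ref{THmfnconvol}(4)) applies verbatim. The substance of the proposition therefore lies in parts (1)--(3), where the corresponding ``general convolution'' operation is \emph{not} closed; indeed the convolution of two \Lnat-convex functions is only integrally convex, by Proposition~\ref{PRlfnconvol}.

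My first plan for (2) and (3) is to pass to the conjugate. Writing $f^{\bullet}$ for the discrete Legendre--Fenchel transform, one has the elementary identity $(f \conv \varphi)^{\bullet} = f^{\bullet} + \varphi^{\bullet}$, valid for any pair of functions. Now $f^{\bullet}$ is \Mnat-convex (resp.\ M-convex) by the L/M conjugacy duality, while $\varphi^{\bullet}$ is again separable convex; hence $f^{\bullet} + \varphi^{\bullet}$ is \Mnat-convex (resp.\ M-convex) by the already-established Proposition~\ref{PRfnsepsum}(3) (resp.\ (4)). Conjugating once more, $(f \conv \varphi)^{\bullet\bullet}$ is \Lnat-convex (resp.\ L-convex). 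To remove the double conjugate, note that a separable convex function is \Lnat-convex, so Proposition~\ref{PRlfnconvol} shows $f \conv \varphi$ is integrally convex, hence convex-extensible, and the biconjugacy theorem gives $(f \conv \varphi)^{\bullet\bullet} = f \conv \varphi$. This settles (2) and (3).

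A self-contained alternative, which also disposes of (1), is to reduce to convolving in a single coordinate. Writing $\varphi(z) = \sum_{i} \varphi_{i}(z_{i})$ and letting $\psi_{i}$ equal $\varphi_{i}(z_{i})$ on the line $\ZZ \unitvec{i}$ and $+\infty$ off it, one checks $\varphi = \psi_{1} \conv \cdots \conv \psi_{n}$, so by associativity $f \conv \varphi = f \conv \psi_{1} \conv \cdots \conv \psi_{n}$. It then suffices to show that $h := f \conv \psi$, with $\psi(z) = \varphi_{1}(z_{1})$, inherits the convexity of $f$, and to iterate. For \Lnat-convexity this is a direct check of \eqref{midptcnvfn} for all $x,y$ (the case where $x$ or $y \notin \dom h$ being trivial): choosing decompositions that nearly attain $h(x) = f(x - t\unitvec{1}) + \varphi_{1}(t)$ and $h(y) = f(y - s\unitvec{1}) + \varphi_{1}(s)$, one applies discrete midpoint convexity of $f$ to $x - t\unitvec{1},\, y - s\unitvec{1}$ and one-dimensional convexity of $\varphi_{1}$ to $t,s$. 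Because the shift affects only coordinate $1$, a short case analysis on the parities of $t+s$ and of $x_{1}+y_{1}$ shows that the ceilings and floors split consistently, and the two resulting points recombine --- pairing the shifts $\lceil (t+s)/2 \rceil$ and $\lfloor (t+s)/2 \rfloor$ with the two midpoints --- to bound $h(\lceil (x+y)/2 \rceil) + h(\lfloor (x+y)/2 \rfloor)$. The L-convex case adds only the linearity \eqref{shiftlfnZ} along $\vecone$, which is plainly preserved.

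For part (1) the same single-coordinate reduction applies, and the effective domain is handled cleanly: $\dom h = \dom f + \dom \psi$ is the Minkowski sum of an integrally convex set with an integer box, hence integrally convex by Proposition~\ref{PRminkowsetbox}(1), so Theorem~\ref{THfavtarProp33} reduces the claim to the local inequality \eqref{intcnvconddist2} for pairs with $\| x - y \|_{\infty} = 2$. The hard part will be here: unlike the \Lnat\ case, \eqref{intcnvconddist2} is phrased through the local convex extension $\tilde{h}$, which is not simply expressible in terms of $\tilde{f}$. The crux is to show that an optimal fractional decomposition realizing $\tilde{h}$ at the midpoint $(x+y)/2$ can be chosen so that its $f$-part stays within the integral neighborhood dictated by the two near-optimal decompositions of $h(x)$ and $h(y)$; this is precisely the delicate step carried out in \cite{MM19projcnvl}, and it is where essentially all the real work of the proposition concentrates.
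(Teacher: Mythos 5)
Your handling of parts (4) and (5) is exactly the paper's: a separable convex function is \Mnat-convex, hence jump \Mnat-convex, so Theorem~\ref{THmfnconvol}(2),(4) applies directly. For parts (1)--(3) the paper itself gives no argument at all --- Remark~\ref{RMsepfnconvolbib} simply cites \cite[Theorems 7.10 and 7.11]{Mdcasiam} for the L-cases and \cite[Theorem 4.2]{MM19projcnvl} for the integrally convex case --- so here your proposal goes beyond the paper. Your second, direct route for (2)--(3) (factoring $\varphi = \psi_{1} \conv \cdots \conv \psi_{n}$ into one-coordinate pieces and verifying \eqref{midptcnvfn} for $h = f \conv \psi_{1}$ by a parity analysis on $t+s$ and $x_{1}+y_{1}$) is sound; I checked the four parity cases and the ceilings and floors do recombine as you claim, and linearity along $\vecone$ is trivially preserved for the L-convex case. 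This is essentially the argument behind the cited theorems of \cite{Mdcasiam}.

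Two caveats. First, your conjugacy route for (2)--(3) is not available at the stated level of generality: the Legendre--Fenchel machinery of Section~\ref{SCfnconj} (the identity \eqref{f1f2convconj}, Theorem~\ref{THconjfnZ}, and the biconjugacy Theorem~\ref{THbiconjfnZ}) is developed only for \emph{integer-valued} functions $f: \ZZ\sp{n} \to \ZZ \cup \{ +\infty \}$, whereas the proposition concerns $\RR$-valued functions. So that plan proves only the integer-valued case; your direct argument is the one that actually carries the statement. Second, for part (1) you reduce to the single-coordinate convolution and then, candidly, leave the verification of \eqref{intcnvconddist2} to \cite{MM19projcnvl}; since that is precisely where all the difficulty of part (1) lives, your proposal does not prove (1) --- though in fairness the survey does not either, and your diagnosis of where the hard step sits is accurate.
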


\begin{remark} \rm \label{RMsepfnconvolbib}
Here is a supplement to 
Proposition \ref{PRsepfnconvol} about convolution
with a separable convex function.
Part (1) for integrally convex functions
is due to 
\cite[Theorem 4.2]{MM19projcnvl}.
Parts (2) and (3) for \Lnat-convex and L-convex functions
are given in 
\cite[Theorem 7.11]{Mdcasiam} and \cite[Theorem 7.10]{Mdcasiam}, respectively.
Part (4) for \Mnat-convex functions
is a special case of Theorem \ref{THmfnconvol} (2).
Part (5) for jump \Mnat-convex functions is a 
special case of Theorem \ref{THmfnconvol} (4).
\finbox
\end{remark}

In this connection we note the following facts.

\begin{itemize}
\item
The convolution of an M-convex function $f$
and a separable convex function $\varphi$ is
not necessarily M-convex.
This is simply because 
$\dom \varphi$ is an integer box and hence
$\dom ( f \conv \varphi) = \dom  f + \dom \varphi$
does not lie on a hyperplane of a constant component sum.
Let $\hat \varphi$ denote the restriction of $\varphi$ 
to an M-convex set $S$, that is,
$\hat \varphi(x) = \varphi(x)$ for $x \in S$ and
$\hat \varphi(x) = +\infty$ for $x \not\in S$.
Then $\hat \varphi$ is an M-convex function, and therefore, 
$f \conv \hat \varphi$ is M-convex  by Theorem \ref{THmfnconvol} (3).

\item
The convolution of a jump M-convex function $f$
and a separable convex function $\varphi$ is
not necessarily jump M-convex.
This is simply because 
$\dom \varphi$ is an integer box and hence
$\dom ( f \conv \varphi) = \dom  f + \dom \varphi$
is not a constant-parity jump system.
Let $\hat \varphi$ denote the restriction of $\varphi$ 
to a constant-parity jump system $S$, that is,
$\hat \varphi(x) = \varphi(x)$ for $x \in S$ and
$\hat \varphi(x) = +\infty$ for $x \not\in S$.
Then $\hat \varphi$ is a jump M-convex function, and therefore, 
$f \conv \hat \varphi$ is jump M-convex by 
Theorem \ref{THmfnconvol} (5).

\item
The convolution of a multimodular function
and a separable convex function is 
not necessarily multimodular
(Example~\ref{EXmmsetMsum}).

\item
The convolution of a globally discrete midpoint convex function
and a separable convex function is 
not necessarily discrete midpoint convex.
See Examples \ref{EXdicdim3indic} and \ref{EXdicdim3fn}.

\item
The convolution of a locally discrete midpoint convex function
and a separable convex function is 
not necessarily locally discrete midpoint convex.
See Examples \ref{EXdicdim3indic} and \ref{EXdicdim3fn}.
\end{itemize}

For the convolution of a (globally or locally) discrete midpoint convex function
and a separable convex function,
we show two examples.
In the latter example,  the effective domain of $f$ is an integer box (the unit cube).

\begin{example} \rm \label{EXdicdim3indic}
Let $f = \delta_{S}$
and
$\varphi = \delta_{B}$ for  
$S = \{ (0,0,1), (1,1,0) \}$
and
$B = \{ (0,0,0), (1,0,0) \}$
considered in Example~\ref{EXdicdim3set}.
Function $f$ is (globally and locally) discrete midpoint convex,
and $\varphi$ is separable convex.
The convolution $f \conv \varphi$
coincides with the indicator function of the set 
$S+B = \{ (0,0,1), (1,1,0),  (1,0,1), (2,1,0) \}$.
Since $S+B$ is not discrete midpoint convex,
as shown in Example~\ref{EXdicdim3set},
the function $f \conv \varphi$ is not (globally or locally) 
discrete midpoint convex.
\finbox
\end{example}

%%%%%%%%%%%%%%
\begin{example}[{\cite[Example 4.4]{MM19projcnvl}}]  \rm \label{EXdicdim3fn}
Let $S = \{ (0,0,1), (1,1,0) \}$,
$B = \{ (0,0,0), (1,0,0) \}$, and
$\varphi= \delta_B$, and define
$f: \ZZ^3 \to \RR \cup \{ +\infty\}$
with  
$\dom f = [ (0,0,0), (1,1,1) ]_{\ZZ}$
by
\[
f(x)  =
   \left\{  \begin{array}{ll}
    0            &   (x \in S) ,     \\
    1       &   (x \in [ (0,0,0), (1,1,1) ]_{\ZZ} \setminus S) . \\
                      \end{array}  \right.
\]
This function is (globally and locally) discrete midpoint convex,
while $\varphi$ is separable convex.
The convolution $f \conv \varphi$
is given by 
\[
(f \conv \varphi)(x)  =
   \left\{  \begin{array}{ll}
    0            &   (x \in S+B) ,     \\
    1       &   (x \in [  (0,0,0), (2,1,1) ]_{\ZZ} \setminus (S+B))  \\
                      \end{array}  \right.
\]
with $\dom (f \conv \varphi) = [ (0,0,0), (2,1,1) ]_{\ZZ}$.
For $x = (0,0,1)$, $y = (2,1,0)$ we have 
$\| x - y \|_{\infty} = 2$, \
$x, y \in S+B$, \ 
$\left\lceil (x+y)/2 \right\rceil  = (1,1,1) \not\in S+B$, \
$\left\lfloor (x+y)/2 \right\rfloor = (1,0,0) \not\in S+B$,
and
\[
(f \conv \varphi)(x) = (f \conv \varphi)(y) =0,
\quad
(f \conv \varphi) \left(
\left\lceil \frac{x+y}{2} \right\rceil 
\right)
= 
(f \conv \varphi) \left(
\left\lfloor \frac{x+y}{2} \right\rfloor
\right) = 1 .
\]
Hence 
 $f \conv \varphi$ is not (globally or locally) discrete midpoint convex.
\finbox
\end{example}

%%%%%%%%%%%%%%
\begin{remark} \rm \label{RMprojFromConvol}
The convolution operation with a separable convex function
contains the projection operation as a special case.
Let
$ g(y)  =  \inf \{ f(y,z) \mid z \in \ZZ\sp{N \setminus U} \}$
be the projection of $f$ to $U$
as defined in \eqref{projfndef}.
Let $\varphi$ be the indicator function of the cylinder
$B = \{ (y,z) \in \ZZ\sp{U} \times \ZZ\sp{N \setminus U} \mid y = \veczero_{U} \}$,
where $B$ is an integer box and hence $\varphi$ is a separable convex function.
The convolution $f \conv \varphi$ is given as
\begin{align*}
(f \conv \varphi)(y,z) &= 
\inf \{ f(y',z') + \varphi(y'',z'')  \mid (y,z) = (y',z') + (y'',z'') \}
\\ & =
\inf \{ f(y',z') \mid  (y,z) = (y',z') + (\veczero,z'') \}
\\ & =
\inf \{ f(y,z - z'') \mid z'' \in \ZZ\sp{N \setminus U} \} 
\\ & =
\inf \{ f(y, z') \mid z' \in \ZZ\sp{N \setminus U} \} 
\\ & =
g(y).
\end{align*}
Thus, the value of  projection $g(y)$
is equal to that of convolution $(f \conv \varphi)(y,z)$ for any $z$. 
In this sense the projection can be regarded as a special case of the convolution
with a separable convex function.
\finbox
\end{remark}

\subsection{Integral Legendre--Fenchel transformation}
\label{SCfnconj}

In this section we deal with the operations related to 
integral conjugacy for integer-valued discrete convex functions.
For an integer-valued function 
$f: \ZZ\sp{n} \to \ZZ \cup \{ +\infty \}$,
we define 
a function
$f\sp{\bullet}$ on $\ZZ\sp{n}$ 
by
\begin{align}
f\sp{\bullet}(p)  &= \sup\{  \langle p, x \rangle - f(x)   \mid x \in \ZZ\sp{n} \}
\qquad ( p \in \ZZ\sp{n}),
 \label{conjvexZpZ} 
\end{align}
where 
$\langle p, x \rangle = \sum_{i=1}\sp{n} p_{i} x_{i}$
is the inner product of 
$p=(p_{1}, p_{2}, \ldots, p_{n})$ and 
$x=(x_{1}, x_{2}, \allowbreak \ldots, \allowbreak  x_{n})$.
This function $f\sp{\bullet}$ is referred to as the  
{\em integral conjugate} of $f$,
or the {\em integral Legendre--Fenchel transform} of $f$.
The function $f\sp{\bullet}$ takes values 
in $\ZZ \cup \{ +\infty \}$,
since
$\langle p, x \rangle$ and $f(x)$ are integers (or $+\infty$) for all 
$p \in \ZZ\sp{n}$ and $x \in \ZZ\sp{n}$ and $f(x)$ is finite for some $x$ by the assumption
of $\dom f \not= \emptyset$.
That is, we have 
$f\sp{\bullet}: \ZZ\sp{n} \to \ZZ \cup \{ +\infty \}$.
This allows us to apply the transformation (\ref{conjvexZpZ}) to 
$f\sp{\bullet}$ to obtain
$f\sp{\bullet\bullet} = (f\sp{\bullet})\sp{\bullet}$.
This function $f\sp{\bullet\bullet}$
is called the {\em integral biconjugate} of $f$.

Concerning conjugacy and biconjugacy 
it is natural to ask the following questions
for a given class of discrete convex functions.

\begin{itemize}
\item
For an integer-valued function $f$ in the class,
does the integral conjugate $f\sp{\bullet}$
belong to the same class?
If not, how is it characterized?

\item
For an integer-valued function $f$ in the class,
does integral biconjugacy $f\sp{\bullet\bullet} =f$ hold?
\end{itemize}

For biconjugacy the following theorem has recently been obtained.

\begin{theorem}[\cite{MT18subgrIC}] \label{THbiconjfnZ}
For an integer-valued integrally convex function $f$, 
the integral biconjugate 
$f\sp{\bullet\bullet}$ 
coincides with $f$ itself, i.e.,
$f\sp{\bullet\bullet} = f$.
\finbox
\end{theorem}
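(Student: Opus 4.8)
The plan is to prove the two inequalities $f^{\bullet\bullet} \le f$ and $f^{\bullet\bullet} \ge f$ separately, the first being a formal consequence of the definition and the second resting on the existence of \emph{integral} subgradients of an integer-valued integrally convex function. First I would record the elementary inequality $f^{\bullet\bullet} \le f$: for any $x, p \in \ZZ^n$ the definition \eqref{conjvexZpZ} gives $f^{\bullet}(p) \ge \langle p, x\rangle - f(x)$, i.e.\ $\langle p, x\rangle - f^{\bullet}(p) \le f(x)$, and taking the supremum over $p \in \ZZ^n$ yields $f^{\bullet\bullet}(x) \le f(x)$. This step needs nothing beyond the definition and holds for every function with nonempty effective domain.

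The substance is the reverse inequality $f^{\bullet\bullet}(x) \ge f(x)$, which I would split according to whether $x \in \dom f$. For $x \in \dom f$ it suffices to exhibit a vector $p \in \ZZ^n$ with $f^{\bullet}(p) = \langle p, x\rangle - f(x)$, since then $f^{\bullet\bullet}(x) \ge \langle p, x\rangle - f^{\bullet}(p) = f(x)$. Such a $p$ is exactly an \emph{integral subgradient} of $f$ at $x$: unwinding $f^{\bullet}(p) \le \langle p, x\rangle - f(x)$ means $f(y) \ge f(x) + \langle p, y - x\rangle$ for all $y \in \ZZ^n$, that is, $p \in \subgZ f(x)$. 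Thus the theorem reduces to the statement that $\subgZ f(x) \ne \emptyset$ for every $x \in \dom f$. (Recall from the text that $f^{\bullet}$ is integer-valued, so no rounding issues arise once an integral $p$ is found.)

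To produce an integral subgradient I would pass to the local convex extension $\tilde f$ of \eqref{fnconvclosureloc2}. Since $f$ is integrally convex, $\tilde f$ is a closed proper convex function on $\RR^n$ that agrees with $f$ on $\ZZ^n$ and has effective domain $\overline{\dom f}$; being piecewise linear with breakpoints on the integer lattice, its ordinary conjugate $(\tilde f)^{*}$ coincides with $f^{\bullet}$ (the supremum of $\langle p, x\rangle - \tilde f(x)$ over a linearity cell is attained at an integral vertex). The Fenchel--Moreau identity $(\tilde f)^{**} = \tilde f$ then shows that the real subdifferential $\subgR f(x)$ is nonempty for each $x \in \dom f$, and that its maximizers realize $\sup_{p \in \RR^n}\{\langle p, x\rangle - f^{\bullet}(p)\} = f(x)$. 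The crux is to show this polyhedron of maximizers contains an integer point; this is precisely the subgradient theorem for integer-valued integrally convex functions of \cite{MT18subgrIC}, and establishing it --- that $\subgR f(x) \cap \ZZ^n \ne \emptyset$, equivalently that the relevant polyhedron is integral --- is the main obstacle. I expect it to be handled by induction on the dimension together with the local characterization of Theorem~\ref{THfavtarProp33}, rounding the coordinates of a fractional real subgradient one at a time while preserving the subgradient inequality through integral convexity.

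Finally, for $x \in \ZZ^n \setminus \dom f$ I would show $f^{\bullet\bullet}(x) = +\infty$. Since $\dom f$ is an integrally convex, hence hole-free, set, such an $x$ lies outside $\overline{\dom f}$, so $\tilde f(x) = +\infty = (\tilde f)^{**}(x) = \sup_{p \in \RR^n}\{\langle p, x\rangle - f^{\bullet}(p)\}$; a recession-direction argument, using that $f^{\bullet}$ is piecewise linear so that any unbounded ray of near-maximizers can be taken in a rational and hence, after scaling, integral direction, upgrades this to $\sup_{p \in \ZZ^n}\{\langle p, x\rangle - f^{\bullet}(p)\} = +\infty$, as required. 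Combining the two inequalities over all of $\ZZ^n$ then gives $f^{\bullet\bullet} = f$.
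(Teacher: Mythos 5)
Your reduction is the right one, and it is exactly the route the paper itself indicates: the inequality $f^{\bullet\bullet}\le f$ is formal, points outside $\dom f$ are handled via hole-freeness and a rational recession direction, and on $\dom f$ the theorem is equivalent to integral subdifferentiability, $\subg f(x)\cap\ZZ^{n}\neq\emptyset$ for every $x\in\dom f$ (this equivalence is precisely Remark~\ref{RMbiconjtech}). The problem is that this last statement is not a known ingredient you may quote and then assemble around --- it \emph{is} the theorem. The survey gives no proof and defers entirely to \cite{MT18subgrIC}, and your proposal does the same: the sentence ``I expect it to be handled by induction on the dimension \dots rounding the coordinates of a fractional real subgradient one at a time'' is a conjecture about a proof, not a proof. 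So the entire mathematical content of the statement is left unestablished.

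Moreover, the rounding heuristic as stated would fail without substantial extra work. By integral convexity, $\subg f(x)$ is cut out by the inequalities $\langle p,d\rangle\le f(x+d)-f(x)$ over $d\in\{-1,0,+1\}^{n}$; this coefficient matrix is not totally unimodular, so integrality of the polyhedron is not automatic, and componentwise rounding of a fractional subgradient does not in general preserve the subgradient inequalities. Example~\ref{EXla1} in the paper is the cautionary case: there the unique real subgradient at the origin is $(1/2,1/2,1/2)$ and no integral subgradient exists at all, so any rounding scheme must exploit integral convexity in an essential and quantitative way. The argument in \cite{MT18subgrIC} does this by Fourier--Motzkin elimination on the local inequality system, showing that the upper and lower bounds generated for the eliminated coordinate are integers; that elimination step is the crux, and it is absent here. (Two smaller points to tighten if you pursue this: the identity $(\tilde f)^{*}=f^{\bullet}$ on $\ZZ^{n}$ requires the supremum over each linearity region to be attained at a lattice point, which needs an argument when $\dom f$ is unbounded; and Fenchel--Moreau gives nonemptiness of the real subdifferential only with some care at boundary points of an unbounded effective domain, where you must invoke local polyhedrality of $\tilde f$.)
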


The biconjugacy for other classes of discrete convex functions
can be obtained immediately from this result
by the inclusion relations given in Theorem \ref{THfnclassinclusion}.
See also Table~\ref{TB5propertydcfnZ}.

\begin{corollary}  \label{CObiconjfnZ}
Let $f$ be an integer-valued function on $\ZZ\sp{n}$
and $f\sp{\bullet\bullet}$ be its integral biconjugate.

\noindent
{\rm (1)}
If $f$ is separable convex, then $f\sp{\bullet\bullet} = f$.

\noindent
{\rm (2)}
If $f$ is \Lnat-convex, then $f\sp{\bullet\bullet} = f$.

%%\quad {\rm \cite[Theorem 8.12 (2)]{Mdcasiam}}

\noindent
{\rm (3)}
If $f$ is L-convex, then $f\sp{\bullet\bullet} = f$.

%%\quad {\rm \cite[Theorem 4.22]{Mdca98}}, {\rm \cite[Theorem 8.12 (1)]{Mdcasiam}}

\noindent
{\rm (4)}
If $f$ is \Mnat-convex, then $f\sp{\bullet\bullet} = f$.

%%\quad {\rm \cite[Theorem 8.12 (2)]{Mdcasiam}}

\noindent
{\rm (5)}
If $f$ is M-convex, then $f\sp{\bullet\bullet} = f$.

%%\quad  {\rm \cite[Theorem 4.8]{Mdca98}}, {\rm \cite[Theorem 8.12 (1)]{Mdcasiam}}

\noindent
{\rm (6)}
If $f$ is multimodular, then $f\sp{\bullet\bullet} = f$.

\noindent
{\rm (7)}
If $f$ is globally discrete midpoint  convex, then $f\sp{\bullet\bullet} = f$.

\noindent
{\rm (8)}
If $f$ is locally discrete midpoint  convex, then $f\sp{\bullet\bullet} = f$.
\finbox
\end{corollary}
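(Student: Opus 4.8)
The plan is to obtain every part of the corollary as an immediate consequence of Theorem~\ref{THbiconjfnZ}, which already establishes the biconjugacy identity $f\sp{\bullet\bullet}=f$ for an arbitrary integer-valued \emph{integrally convex} function. Since the biconjugacy has thus been secured for the broadest class in play, the only remaining task is to recognize each of the eight named classes as a subclass of the integrally convex functions; once that containment is in place, the identity $f\sp{\bullet\bullet}=f$ transfers without further work.

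First I would invoke the inclusion relations collected in Theorem~\ref{THfnclassinclusion}. The chain \eqref{fnfamL} shows that L-convex and \Lnat-convex functions are integrally convex, which settles parts~(3) and~(2); similarly \eqref{fnfamM} shows that M-convex and \Mnat-convex functions are integrally convex, settling parts~(5) and~(4). Part~(1) follows because, by \eqref{fnfamsepLnMn}, a separable convex function is in particular \Lnat-convex and hence integrally convex. Part~(6) is covered by \eqref{fnfammultm}, which places the multimodular functions strictly between the separable convex and the integrally convex ones. Finally, parts~(7) and~(8) follow from \eqref{fnfamdmc}, where the globally and locally discrete midpoint convex functions are both exhibited as integrally convex.

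With each of the eight classes identified as consisting of integrally convex functions, I would apply Theorem~\ref{THbiconjfnZ} to the given $f$ in each case to conclude $f\sp{\bullet\bullet}=f$. I expect no genuine obstacle here: all of the analytic substance resides in Theorem~\ref{THbiconjfnZ}, and the corollary is a bookkeeping consequence of the class inclusions. The only point deserving a moment's attention is that integer-valuedness must persist through the transform—so that $f\sp{\bullet\bullet}$ is even defined—but this is guaranteed by the observation, noted when the integral Legendre--Fenchel transform was introduced, that the integral conjugate of an integer-valued function is again integer-valued, i.e.\ $f\sp{\bullet}: \ZZ\sp{n} \to \ZZ \cup \{ +\infty \}$. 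Hence Theorem~\ref{THbiconjfnZ} applies exactly as stated to each class, and the proof is complete.
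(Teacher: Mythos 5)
Your proposal is correct and matches the paper's own justification exactly: the paper derives the corollary immediately from Theorem~\ref{THbiconjfnZ} together with the inclusion relations of Theorem~\ref{THfnclassinclusion}, which is precisely your argument. Your added remark on the integer-valuedness of $f\sp{\bullet}$ is a harmless (and correct) extra check already covered in the paper's discussion of the transform.
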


\begin{remark} \rm \label{RMfnbiconjbib}
Here is a supplement to Corollary \ref{CObiconjfnZ} about biconjugacy.
The biconjugacy of L-convex functions in Part (3) was established 
in \cite[Theorem 4.22]{Mdca98},
and that of M-convex functions in Part (5) was in \cite[Theorem 4.8]{Mdca98}.
The biconjugacy of \Lnat- and \Mnat-convex functions 
can be derived easily from these results,
and an explicit statement is made in \cite[Theorem 8.12]{Mdcasiam}.
Part (1) for separable convex functions is a special case of
Part (2) for \Lnat-convex functions and Part (4) for \Mnat-convex functions.
Parts (6), (7), and (8) 
for multimodular functions and 
globally and locally discrete midpoint convex functions
have not been given explicitly in the literature.
\finbox
\end{remark}

\begin{remark} \rm \label{RMbiconjtech}
Here is a technical remark about integral biconjugacy.
For a point $x \in \domZ f$, 
the {\em subdifferential} of $f$ at $x$
is a set of real vectors defined as 
\begin{equation} \label{subgRZdef}
 \subg f(x)
= \{ p \in  \RR\sp{n} \mid    
  f(y) - f(x)  \geq  \langle p, y - x \rangle     \ \mbox{for all }  y \in \ZZ\sp{n} \} ,
\end{equation}
and an element of $\subg f(x)$ is called a {\em subgradient} of $f$ at $x$ \cite{Roc70}.
The condition $\subg f(x) \cap \ZZ\sp{n} \not= \emptyset$ is sometimes referred to as
the {\em integral subdifferentiability}  of $f$ at $x$.
It is known that, for each $x \in \domZ f$,
$f\sp{\bullet\bullet}(x) = f(x)$ holds if and only if $\subg f(x)  \cap \ZZ\sp{n} \not= \emptyset$.
Therefore,
the integral biconjugacy $f\sp{\bullet\bullet} = f$
is equivalent to the integral subdifferentiability of $f$
(under some additional conditions to guarantee
$\dom f\sp{\bullet\bullet} = \dom f$).
See \cite[Lemma 4.2]{Mdca98} as well as \cite[Lemma 2]{MT18subgrIC} for details.
\finbox
\end{remark}

The following example demonstrates the necessity of the assumption of integral convexity
in Theorem \ref{THbiconjfnZ}.

\begin{example}[{\cite[Example 1.1]{Mdca98}}; also \cite{MT18subgrIC}]  \rm \label{EXla1}
Let 
$S = \{ (0,0,0), \pm (1,1,0),  \allowbreak \pm (0,1,1),  \allowbreak  \pm (1,0,1) \}$
and define $f: \ZZ^3 \to \ZZ \cup \{+\infty\}$ by
\begin{align*}
f(x_{1},x_{2},x_{3}) =
 \begin{cases} (x_{1}+x_{2}+x_{3})/2 & (x \in S), \\
                +\infty & (\textrm{otherwise}).  
  \end{cases}
\end{align*}
Note that the function $f$ is indeed integer-valued on $S$.
The set $S$ is hole-free in the sense of (\ref{holefree})
and the function $f$ can be naturally extended to a convex function on 
the convex hull $\overline{S}$ of $S$.
However, this function $f$ is not integrally convex,
since $S$ is not an integrally convex set.
Indeed, for 
$x = [(1,1,0) + (-1,0,-1) ] / 2 =  (0, 1/2, -1/2) \in \overline{S}$, we have
\[
N(x) \cap S = \{ (0,0,0), (0,1,0), (0,0,-1), (0,1,-1)  \} \cap S 
= \{ (0,0,0) \},
\]
and hence $x \not\in \overline{N(x) \cap S}$.

The integral conjugate of $f$ is given as 
\[
f^{\bullet}(p) = \max \{ 0, |p_{1}+p_{2}-1|, |p_{2}+p_{3}-1|, |p_{3}+p_{1}-1| \}
\qquad
(p \in \ZZ^3) .
\]
For the integral biconjugate 
$f^{\bullet\bullet}(x) 
= \sup \{ \langle p, x \rangle - f^{\bullet}(p) \mid p \in \ZZ^3 \}$
we have 
\[
f^{\bullet\bullet}(\veczero) 
= - \inf_{p \in \ZZ^3} \max \{ 0, |p_{1}+p_{2}-1|, |p_{2}+p_{3}-1|, |p_{3}+p_{1}-1| \} 
= -1.
\]
Therefore we have $f^{\bullet\bullet}(\veczero) = -1 \neq 0 = f(\veczero)$. 
This shows $f^{\bullet\bullet} \neq f$.

The subdifferential of $f$ at $x = \veczero$
%%$\subg f(\veczero)$,
can be computed as follows.
Let $p \in \subg f(\veczero)$,
which means (by definition) that
$f(y) - f(\veczero) \ge \langle p, y \rangle$ for all $y \in S$,
that is,
\begin{center}
\begin{tabular}{ccc}
$ 1 \ge  p_{1} + p_{2}$, & $ 1 \ge  p_{2} + p_{3}$, & $ 1 \ge  p_{3} + p_{1}$, \\
$-1 \ge -p_{1} - p_{2}$, & $-1 \ge -p_{2} - p_{3}$, & $-1 \ge -p_{3} - p_{1}$.
\end{tabular}
\end{center}
This system of inequalities admits a unique (non-integral) solution
$(p_{1}, p_{2}, p_{3}) = (1/2, 1/2, 1/2)$.
Hence 
$\subg f(\veczero)  = \{ (1/2, 1/2, 1/2) \} $ and
$\subg f(\veczero) \cap \ZZ\sp{3} = \emptyset$.
\finbox
\end{example}

Concerning  conjugate functions we have the following fundamental results
(cf., Table~\ref{TB5propertydcfnZ}).

\begin{theorem}[{\cite[Theorem 4.24]{Mdca98}}, {\cite[Theorem 8.12]{Mdcasiam}}] \label{THconjfnZ}
Let $f$ be an integer-valued function on $\ZZ\sp{n}$
and
$f\sp{\bullet}$ be the integral conjugate of $f$.

%% To Typesetter:  Please do NOT change the labels "(1)", "(2)", ....  below, 
%% as they are cited at other places.

\noindent
{\rm (1)}
If $f$ is separable convex, then 
$f\sp{\bullet}$ is separable convex.

\noindent
{\rm (2)}
If $f$ is \Lnat-convex, then $f\sp{\bullet}$ is \Mnat-convex.

%%{\rm \cite[Theorem 8.12(2)]{Mdcasiam}}

\noindent
{\rm (3)}
If $f$ is L-convex, then $f\sp{\bullet}$ is M-convex.

%% {\rm \cite[Theorem 8.12(1)]{Mdcasiam}}

\noindent
{\rm (4)}
If $f$ is \Mnat-convex, then $f\sp{\bullet}$ is \Lnat-convex.

%% {\rm \cite[Theorem 8.12(2)]{Mdcasiam}}

\noindent
{\rm (5)}
If $f$ is M-convex, then $f\sp{\bullet}$ is L-convex.
%% {\rm \cite[Theorem 8.12(1)]{Mdcasiam}}
\finbox
\end{theorem}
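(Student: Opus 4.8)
The plan is to dispose of the separable case directly, reduce the two ``natural'' statements (2) and (4) to the plain statements (3) and (5) by means of the one-variable liftings, and then prove the core L$\leftrightarrow$M duality. For (1), if $f(x)=\sum_{i}\varphi_{i}(x_{i})$ then the supremum in \eqref{conjvexZpZ} separates coordinatewise, giving $f\sp{\bullet}(p)=\sum_{i}\varphi_{i}\sp{\bullet}(p_{i})$ with $\varphi_{i}\sp{\bullet}$ the univariate integral conjugate; a one-dimensional check shows $\varphi_{i}\sp{\bullet}$ again satisfies \eqref{univarconvdef}, so $f\sp{\bullet}$ is separable convex.

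For the reduction I would use the liftings \eqref{lfnlnatfnrelation} and \eqref{mfnmnatfnrelationvex}. If $f$ is \Lnat-convex with L-convex lift $\hat f(x_{0},x)=f(x-x_{0}\vecone)$, the substitution $y=x-x_{0}\vecone$ in \eqref{conjvexZpZ} yields $\hat f\sp{\bullet}(p_{0},p)=f\sp{\bullet}(p)$ when $p_{0}=-p(N)$ and $+\infty$ otherwise, which is exactly the M-convex lift \eqref{mfnmnatfnrelationvex} of $f\sp{\bullet}$; hence (3) applied to $\hat f$ forces $f\sp{\bullet}$ to be \Mnat-convex, which is (2). Symmetrically, if $f$ is \Mnat-convex with M-convex lift $\hat f$, a direct computation gives $\hat f\sp{\bullet}(p_{0},p)=f\sp{\bullet}(p-p_{0}\vecone)$, the L-convex lift of $f\sp{\bullet}$, so (5) yields (4). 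These computations are routine; the only point to watch is that the suprema separate cleanly because the lifted variable enters linearly.

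It remains to prove (3) and (5), and here the two halves are mirror images. For (5), M-convexity places $\dom f$ on a hyperplane $\{x\mid x(N)=k\}$, so replacing $x$ by $x+\vecone$ in \eqref{conjvexZpZ} gives $f\sp{\bullet}(p+\vecone)=f\sp{\bullet}(p)+k$, i.e.\ the invariance \eqref{shiftlfnZ}; by Theorem \ref{THlfncond} it then suffices to establish submodularity \eqref{submfn} of $f\sp{\bullet}$. I would fix $p,q$, choose integral maximizers $x$ for $p\vee q$ and $y$ for $p\wedge q$ (passing to near-maximizers if the suprema are not attained), and, while $x\neq y$, apply the exchange property \Mvex to $f$ at some $i\in\suppp(x-y)$ to obtain $j$ with $f(x)+f(y)\ge f(x-\unitvec{i}+\unitvec{j})+f(y+\unitvec{i}-\unitvec{j})$; pairing this with the sign pattern of $p\vee q$ and $p\wedge q$ on $\{i,j\}$ shows the transferred pair is admissible for $p$ and $q$ and drives $\|x-y\|_{1}$ strictly down, so iteration delivers $f\sp{\bullet}(p)+f\sp{\bullet}(q)\ge f\sp{\bullet}(p\vee q)+f\sp{\bullet}(p\wedge q)$. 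For (3) the argument is the reflection: L-convexity gives $f(x+\vecone)=f(x)+r$, so the supremum defining $f\sp{\bullet}(p)$ is finite only on $\{p\mid p(N)=r\}$, placing $\dom f\sp{\bullet}$ on a constant-sum hyperplane, and the M-convex exchange property of $f\sp{\bullet}$ is obtained by the symmetric transfer between a maximizer for $p$ and a maximizer for $q$, certified now by submodularity \eqref{submfn} of $f$ (equivalently by the sharper arg-max inequality \eqref{lnatfnAPR} of Theorem \ref{THlnatfncond}).

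The main obstacle is exactly this combinatorial exchange bookkeeping: keeping the transferred points integral, and verifying that the induction on $\|x-y\|_{1}$ terminates with the \emph{exact} submodular (resp.\ M-exchange) inequality rather than a slack one. A cleaner but heavier alternative is to pass to the convex extensions, invoke polyhedral L/M conjugacy (the support function of an integral base polyhedron is the submodular Lov\'{a}sz-type extension, and dually), and then recover integrality of the conjugate from the integral biconjugacy of Theorem \ref{THbiconjfnZ}; I would retreat to this route if the direct exchange argument becomes unwieldy.
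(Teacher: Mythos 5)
First, note that the paper itself gives no proof of Theorem \ref{THconjfnZ}: it is quoted from \cite[Theorem 4.24]{Mdca98} and \cite[Theorem 8.12]{Mdcasiam}, so there is no internal argument to compare yours against. Judged on its own terms, the outer layers of your proposal are sound: the coordinatewise splitting of the supremum in (1), and the two reductions via the liftings \eqref{lfnlnatfnrelation} and \eqref{mfnmnatfnrelationvex} --- under which the conjugate of the L-convex lift of an \Lnat-convex $f$ is exactly the M-convex lift of $f\sp{\bullet}$, and dually --- are correct and are essentially how the ``natural'' cases are handled in the cited sources. The observation that constant component-sum of $\dom f$ dualizes to the invariance \eqref{shiftlfnZ} of $f\sp{\bullet}$, and vice versa, is also right.

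The gap is in the core exchange argument for (5). Your iteration tracks the quantity $\langle p\vee q, x\rangle + \langle p\wedge q, y\rangle - f(x) - f(y)$ and claims each \Mvex step drives $\|x-y\|_{1}$ down while keeping the bookkeeping admissible. But a single exchange $x\mapsto x-\unitvec{i}+\unitvec{j}$, $y\mapsto y+\unitvec{i}-\unitvec{j}$ changes the linear part by $-|p_{i}-q_{i}|+|p_{j}-q_{j}|$, and \Mvex gives you no control over which $j\in\suppm(x-y)$ it returns; if $p_{j}=q_{j}$ this is a strict loss that the gain in $f$ need not compensate, so the induction does not terminate with the exact inequality $f\sp{\bullet}(p)+f\sp{\bullet}(q)\geq f\sp{\bullet}(p\vee q)+f\sp{\bullet}(p\wedge q)$. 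The standard repair is to localize: prove submodularity only for the quadruple $p$, $p+\unitvec{i}$, $p+\unitvec{j}$, $p+\unitvec{i}+\unitvec{j}$, where a single exchange between a maximizer for $p$ and a maximizer for $p+\unitvec{i}+\unitvec{j}$ suffices, and then invoke the equivalence of local and global submodularity (in the spirit of condition (b) of Theorem \ref{THlnatfncond}); alternatively, use that the maximizer set $\argmax \{\langle p,x\rangle - f(x)\}$ is an M-convex set, whose support function is submodular. Direction (3) has the same issue in sharper form, since there you must bound a supremum from above and genuinely need \eqref{lnatfnAPR} or translation-submodularity \eqref{trsubmfn} rather than plain submodularity. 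Your polyhedral fallback (conjugacy of the convex extensions plus integrality via Theorem \ref{THbiconjfnZ}) is a legitimate escape, but as written the direct combinatorial argument is incomplete at exactly the step you flag as the main obstacle.
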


The conjugate of a multimodular function
can be captured through the correspondence 
between multimodularity and \Lnat-convexity.

\begin{proposition} \label{PRmmfnconj}
The conjugate $f\sp{\bullet}$
of a multimodular function $f$
can be represented as 
$f\sp{\bullet}(p) = h(D \sp{\top} p)$ 
with an \Mnat-convex function $h$ and the matrix $D$ in \eqref{matDdef}.
\end{proposition}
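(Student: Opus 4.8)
The plan is to reduce the conjugate of the multimodular function $f$ to the conjugate of its associated \Lnat-convex function, exploiting the unimodular change of variables that links the two. By Theorem~\ref{THmmfnlnatfn} together with the rewritten relation \eqref{mmfnFbyG}, the function $g(p) = f(Dp)$ is \Lnat-convex, and $f$ is recovered as $f(x) = g(D^{-1}x)$, where $D$ is the bidiagonal matrix of \eqref{matDdef}. The structural fact I would lean on is that $D$ is unimodular with integer inverse $D^{-1}$, so the linear map $u \mapsto Du$ is a bijection of $\ZZ^{n}$ onto itself.

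First I would substitute $x = Du$ in the definition \eqref{conjvexZpZ} of the integral conjugate. Since $x$ ranges over all of $\ZZ^{n}$ if and only if $u = D^{-1}x$ does, the supremum is unchanged, and using $\langle p, Du\rangle = \langle D^{\top}p, u\rangle$ one obtains
\[
 f^{\bullet}(p) = \sup_{u \in \ZZ^{n}}\{ \langle D^{\top}p, u\rangle - g(u) \} = g^{\bullet}(D^{\top}p)
 \qquad (p \in \ZZ^{n}).
\]
Thus the conjugate of $f$ is literally the conjugate of $g$ precomposed with the linear map $p \mapsto D^{\top}p$.

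It then remains to identify $g^{\bullet}$. Since $g$ is \Lnat-convex (and integer-valued, as is $f$), Theorem~\ref{THconjfnZ}\,(2) gives that $g^{\bullet}$ is \Mnat-convex. Setting $h := g^{\bullet}$ yields the desired representation $f^{\bullet}(p) = h(D^{\top}p)$ with $h$ an \Mnat-convex function, which completes the argument.

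The steps are individually routine; the only point requiring care is the change of variables. The substitution is valid as an identity of suprema precisely because both $D$ and $D^{-1}$ are integer matrices, so that the reindexing $x \leftrightarrow u$ is a bijection of the integer lattice rather than merely of $\RR^{n}$; this is exactly where the unimodularity of $D$ recorded after \eqref{matDdef} enters. One must also be attentive to the appearance of the transpose $D^{\top}$ (rather than $D$ or $D^{-1}$), which arises from moving the linear map across the inner product, so that the matrix in the final representation is indeed $D^{\top}$ as stated.
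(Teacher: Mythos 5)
Your proof is correct and follows essentially the same route as the paper's own argument: the change of variables $x = Dy$ in the supremum defining $f^{\bullet}$, justified by the unimodularity of $D$, gives $f^{\bullet}(p) = g^{\bullet}(D^{\top}p)$ with $g(y)=f(Dy)$ \Lnat-convex by Theorem~\ref{THmmfnlnatfn}, and then Theorem~\ref{THconjfnZ}\,(2) identifies $h=g^{\bullet}$ as \Mnat-convex. Your additional remarks on why the lattice reindexing is a bijection and on the provenance of the transpose are accurate and only make explicit what the paper leaves implicit.
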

\begin{proof}
Since $f$ is multimodular,
the function $g$ defined by $g(y) = f(D y)$ is \Lnat-convex,
and hence $g\sp{\bullet}$ is \Mnat-convex
by Theorem \ref{THconjfnZ} (2).
On the other hand, we have
\begin{align*}
f\sp{\bullet}(p) 
 &= \sup\{  \langle p, x \rangle - f(x)   \mid x \in \ZZ\sp{n} \}
= 
\sup\{  \langle p, Dy  \rangle - f(D y)   \mid y \in \ZZ\sp{n} \}
\\ &= 
\sup\{  \langle D\sp{\top} p, y  \rangle - g(y)   \mid y \in \ZZ\sp{n} \}
= 
g\sp{\bullet}(D\sp{\top} p),
\end{align*}
from which we obtain the claim with $h = g\sp{\bullet}$.  
\end{proof}

The integral conjugate of an integrally convex function is not necessarily integrally convex.
This is shown by the following example,
which is obtained from  \cite[Example 4.15]{MS01rel} with a minor modification.

\begin{example} \rm \label{EXconjIC}
Let $S = \{(1, 1, 0, 0), (0, 1, 1, 0), (1, 0, 1, 0), (0, 0, 0, 1)\}$.
This is obviously an integrally convex set, as it is contained in $\{ 0,1 \}\sp{4}$.
Accordingly, its indicator function $\delta_{S}: \ZZ^{4} \to \{0, + \infty\}$ 
is integrally convex.
The integral conjugate $g = \delta_{S}^\bullet$ is given
(cf., \eqref{conjvexZpZ})
 by
\[
 g(p_{1}, p_{2}, p_{3}, p_{4}) 
 = \max\{p_{1} + p_{2}, \  p_{2} + p_{3},\  p_{1} + p_{3}, \  p_{4}\} \qquad (p \in \ZZ^{4}).
\]
Let $\tilde g$ be the local convex extension of $g$.
For $p =(0,0,0,0)$ and $q=(1,1,1,2)$ 
we have  
\[
\tilde g ((p+q)/2) > (g(p)+ g(q)) /2,
\]
since
$(p+q)/2 =(1/2,1/2,1/2,1) = [(1,0,0,1) + (0,1,0,1) + (0,0,1,1) + (1,1,1,1)]/4$,
$\tilde g ((p+q)/2)  = [g(1,0,0,1) + g(0,1,0,1) + g(0,0,1,1) + g(1,1,1,1)]/4 
= (1+1+1+2)/4 = 5/4$, and $(g(p)+ g(q)) /2 = (0+2)/2 = 1$.
Thus the function $g$ violates the condition 
(\ref{intcnvconddist2}) in Theorem~\ref{THfavtarProp33},
and therefore it is not integrally convex.
\finbox
\end{example}

\begin{remark} \rm \label{RMaddconvol}
In convex analysis (for functions in continuous variables)
the addition and convolution operations are known to be conjugate to each other.
In discrete convex analysis some subtlety arises from discreteness. 
Although
\begin{equation}  \label{f1f2convconj}
(f_{1} \conv f_{2})\sp{\bullet}  = {f_{1}}\sp{\bullet} + {f_{2}}\sp{\bullet} 
\end{equation}
holds for any functions $f_{1}, f_{2}: \ZZ\sp{n} \to \ZZ \cup \{ +\infty \}$,
%%where $\conv$ denotes the integer infimal convolution (\ref{f1f2convdef})
%%and $\sp{\bullet}$ is the discrete Legendre--Fenchel transformation 
%%(\ref{disconjvex6def})$_{\ZZ}$.
a similar relation 
\begin{equation}  \label{f1f2sumconj}
(f_{1} + f_{2})\sp{\bullet} = 
{f_{1}}\sp{\bullet} \conv {f_{2}}\sp{\bullet}
\end{equation}
may not be true in general.
The identity \eqref{f1f2sumconj} holds 
if $f_{1}$ and $f_{2}$ are \Mnat-convex or 
if $f_{1}$ and $f_{2}$ are \Lnat-convex.
(The proof of \cite[Theorem 8.36]{Mdcasiam} works also for \Lnat-convex functions.)
\finbox
\end{remark}

%%%%%%%%%% table %%%%%%%%%%
\begin{table}
\begin{center}
\caption{Conjugacy operations on discrete convex functions}
\label{TB5propertydcfnZ}

\medskip

%% To Typesetter: The Y and N in the table are intentionally staggered.  
%% Please do not align vertically, but use the macros \YES and \NO as they are.

\begin{tabular}{l|ccc|l}
 Discrete  & Convex- &  Integral & Conjugate & Reference
\\ 
 \quad convexity & extension &   biconj. &  function & 
\\ \hline
 Separable convex & \YES & \YES & separ.~convex & \cite{Mdca98,Mdcasiam}
\\ 
 Integrally convex & \YES & \YES & --- & \cite{MT18subgrIC}
\\ 
\Lnat-convex &  \YES & \YES & \Mnat-convex  & \cite{Mdcasiam}
\\ 
L-convex  &  \YES & \YES & M-convex  & \cite{Mdca98,Mdcasiam}
\\ 
\Mnat-convex  &  \YES & \YES & \Lnat-convex  & \cite{Mdcasiam}
\\ 
M-convex & \YES & \YES & L-convex  & \cite{Mdca98,Mdcasiam}
\\ 
Multimodular  &  \YES & \YES & $\cong$ \Mnat-convex
\\ 
Globally d.m.c.  &  \YES &  \YES & ---
\\ 
Locally d.m.c.  &  \YES & \YES & ---
\\ 
Jump \Mnat-convex & \NO  & \NO & ---
\\ 
Jump M-convex & \NO  & \NO & ---
\\ \hline
\multicolumn{5}{l}{``Y'' means ``Yes, this property holds for this function class.''} 
\\
\multicolumn{5}{l}{``\textbf{\textit{N}}'' means ``No, this property does not hold for this function class.''} 
\end{tabular}
\end{center}
\end{table}
%%%%%%%%%% table %%%%%%%%%%

%% end of file %%%

\section*{Acknowledgement}
The author thanks Kokichi Sugihara and Takashi Tsuchiya for encouragement and help, 
and Satoru Fujishige and Satoko Moriguchi for comments.
This work was supported by 
CREST, JST, Grant Number JPMJCR14D2, Japan, and
JSPS KAKENHI Grant Number 26280004.

%%\newpage
%%\input{fop9ref}

%%% 2019-07-10  / 2019-09-18

%%\newpage
%%\tableofcontents

\end{document}